

\documentclass[12pt, leqno]{amsart}

\overfullrule=0pt
\setlength{\textwidth}{16cm} \setlength{\textheight}{21cm}
\setlength{\oddsidemargin}{0.0cm} \setlength{\evensidemargin}{0.0cm}

\usepackage{graphicx}
\usepackage{amssymb,amsmath,amsthm,amscd}
\usepackage{mathrsfs,mathdots}
\usepackage{enumerate,stmaryrd}
\usepackage{upgreek}
\usepackage[usenames,dvipsnames]{color}
\usepackage[colorlinks=true, pdfstartview=FitV,
 linkcolor=blue,citecolor=blue,urlcolor=blue]{hyperref}
\usepackage[all]{xy}

\usepackage{verbatim}
\usepackage{oldgerm}
\usepackage[normalem]{ulem}  
\usepackage{xspace}
\usepackage{xcolor,cancel}

\numberwithin{equation}{section}
\allowdisplaybreaks[4]

\setcounter{tocdepth}{1}

\newcommand{\nc}{\newcommand}

\setlength{\marginparwidth}{2cm}

\nc{\op}{\operatorname}

\theoremstyle{plain}
\newtheorem{lemma}{Lemma}[section]
\newtheorem{proposition}[lemma]{Proposition}
\newtheorem{theorem}[lemma]{Theorem}

\theoremstyle{definition}
\newtheorem{remark}[lemma]{Remark}
\newtheorem{conjecture}[lemma]{Conjecture}
\newtheorem{example}[lemma]{Example}

\newtheorem{definition}[lemma]{Definition}
\newtheorem{corollary}[lemma]{Corollary}

\nc{\Prop}{\begin{proposition}}
\nc{\enprop}{\end{proposition}}
\nc{\Lemma}{\begin{lemma}}
\nc{\enlemma}{\end{lemma}}
\nc{\Cor}{\begin{corollary}}
\nc{\encor}{\end{corollary}}

\nc{\shc}{\mathcal{C}}
\newcommand{\jj}{ \mathfrak{j}}
\newcommand{\Q}{\mathbb{Q}}
\newcommand{\C}{\mathbb{C}}
\newcommand{\Ainf}{A_{\infty}}
\newcommand{\Seq}{\Sigma}
\newcommand{\T}{\mathcal{T}}
\newcommand{\dT}{\mathrm{T}}
\newcommand{\new}{\mathrm{new}}
\newcommand{\Ser}{\mathcal{S}}
\newcommand{\A}{\mathcal{A}}
\newcommand{\FQ}{\mathcal{Q}}
\newcommand{\FO}{\Upomega}
\newcommand{\Ca}{\mathscr{C}}
\nc{\F}{\mathcal{F}}
\newcommand{\D}{\mathrm{D}}

\newcommand{\M}{\mathrm{M}}
\newcommand{\W}{\mathrm{W}}
\newcommand{\B}{\mathbf{B}}

\newcommand{\Aa}{\mathbf{A}} 
\newcommand{\Uamg}{U_\Aa^-(\g)}
\newcommand{\Uapg}{U_\Aa^+(\g)}
\newcommand{\Aan}{A_\Aa(\n)}
\newcommand{\Aanw}{A_{\Aa}(\n(w))}

\newcommand{\Ah}{{\Z[q^{\pm 1/2}]}}
\newcommand{\Z}{\mathbb{\ms{1mu}Z}}
\newcommand{\seteq}{\mathbin{:=}}
\newcommand{\hd}{{\operatorname{hd}}}
\newcommand{\soc}{{\operatorname{soc}}}
\nc{\ov}[1]{\overline{#1}}
\nc{\Wlmj}[3]{\W_{#2,#3}^{(#1)}}
\nc{\Mkl}[2]{\M_\ttww(#1,#2)}
\nc{\rmat}[1]{{\mathbf r}_{\mspace{-2mu}\raisebox{-.5ex}{${\scriptstyle{#1}}$}}}
\newcommand{\on}{\operatorname}
\nc{\de}{\on{\textfrak{d}}}
\nc{\tL}{\widetilde{\Lambda}}
\nc{\tl}{\widetilde{\lambda}}
\nc{\Cquiver}{\upsigma}

\newcommand{\g}{{\mathfrak{g}}}
\newcommand{\h}{{\mathfrak{h}}}
\newcommand{\n}{{\mathfrak{n}}}
\newcommand{\isoto}[1][]{\mathop{\xrightarrow%
[{\raisebox{.3ex}[0ex][.3ex]{$\scriptstyle{#1}$}}]%
{{\raisebox{-.6ex}[0ex][-.6ex]{$\mspace{2mu}\sim\mspace{2mu}$}}}}}
\newcommand{\ri}{{\mspace{1mu}\rm r}}
\newcommand{\li}{{\mspace{1mu}\rm l}}
\newcommand{\id}{\on{id}}

\newcommand{\soplus}{\mathop{\mbox{\normalsize$\bigoplus$}}\limits}
\newcommand{\sodot}{\mathop{\mbox{\normalsize$\bigodot$}}\limits}
\newcommand{\starop}{\mathop{\mbox{\normalsize$\star$}}\limits}

\newcommand{\snconv}{\mbox{\scriptsize$\odot$}}

\newcommand{\ww}{ \textbf{\textit{w}}}
\newcommand{\ttww}{{\widetilde{\ww}} }

\nc{\nconv}{\mathop{\mbox{\large $\odot$}}}
\nc{\nnconv}{\mathop{\mbox{\large $\star$}}}

\nc{\lb}{\llbracket}
\nc{\rb}{\rrbracket}

\newcommand{\ko}{{{\mathbf{k}}}}
\nc{\la}{\lambda}
\nc{\La}{\Lambda}
\nc{\ve}{\varepsilon}
\nc{\ep}{\epsilon}
\nc{\vp}{\varphi}
\nc{\lan}{\langle}
\nc{\ran}{\rangle}
\nc{\Uqg}{U_q(\g)}
\nc{\Aqg}{A_q(\g)}
\nc{\Aqn}{A_q(\n)}
\nc{\al}{\alpha}
\nc{\be}{\beta}
\nc{\ga}{\gamma}
\nc{\wt}{\operatorname{wt}}
\nc{\wtl}{\wt_\li}
\nc{\wtr}{\wt_\ri}
\nc{\ch}{\operatorname{ch}}

\nc{\norm}{{\mathrm{norm}}}
\nc{\aff}{{\rm{aff}}}
\nc{\Maf}{M_\aff}
\nc{\ev}{{\mathrm{even}}}
\nc{\od}{{\mathrm{odd}}}
\nc{\Sev}{\Seq^{\ev}}
\nc{\Sod}{\Seq^{\od}}
\nc{\Spl}{\Seq^{+}}
\nc{\Smi}{\Seq^{-}}
\nc{\low}{{\mathrm{low}}}
\nc{\upper}{{\mathrm{up}}}
\nc{\one}{{\bf{1}}}
\nc{\To}[1][{\hspace{2ex}}]{\xrightarrow{\,#1\,}}
\nc{\te}{\tilde{e}}
\nc{\tw}{{\widetilde{w}}}
\nc{\tww}{\ww}
\nc{\tuu}{{\mathsf{u}}}
\nc{\tel}{\tilde{e}^\low}
\nc{\teu}{\tilde{e}^\upper}
\nc{\tf}{\tilde{f}}
\nc{\tfl}{\tilde{f}^\low}
\nc{\tfu}{\tilde{f}^\upper}
\nc{\tE}{\widetilde{E}}
\nc{\tF}{\widetilde{F}}
\nc{\tFF}{\widetilde{\F}}
\nc{\tB}{\widetilde{B}}
\nc{\tz}{\tilde{z}}
\nc{\tQ}{\hspace{-.2ex}\textbf{\textit{Q}}}
\nc{\tb}{\tilde{b}}
\nc{\Ft}{\F^\dT}
\nc{\Seed}{\mathscr{S}}

\nc{\cor}{\mathbf{k}}
\nc{\tens}{\mathop\otimes}
\nc{\gmod}{\mbox{-$\mathrm{gmod}$}}
\nc{\gMod}{\mbox{-$\mathrm{gMod}$}}

\nc{\proj}{\mbox{-$\mathrm{proj}$}}
\nc{\gproj}{\mbox{-$\mathrm{gproj}$}}
\nc{\smod}{\mbox{-$\mathrm{mod}$}}
\nc{\nmod}{\mbox{-$\mathrm{nilmod}$}}

\newcommand{\cmA}{\mathsf{A}}
\nc{\Rnorm}{R^{\rm{norm}}}
\nc{\Runiv}{R^{\rm{univ}}}
\nc{\Rren}{R^{\rm{ren}}} 
\nc{\col}{\colon}
\nc{\epiTo}[1][]{\longtwoheadrightarrow[{#1}]}
\nc{\epito}{\twoheadrightarrow}
\nc{\monoTo}[1][]{\xymatrix{\ar@{>->}[r]^-{{#1}}&}}
\nc{\sym}{\mathfrak{S}}
\nc{\rl}{\mathsf{Q}}
\nc{\Qq}{{\Q(q)}}
\nc{\wl}{\mathsf{P}}   
\nc{\Oint}{\mathcal{O}_{{\rm int}}}
\newcommand{\scbul}{{\,\raise1pt\hbox{$\scriptscriptstyle\bullet$}\,}}

\nc{\conv}{\mathop{\mathbin{\mbox{\large $\circ$}}}}
\newcommand{\hconv}{\mathbin{\scalebox{.9}{$\nabla$}}}
\newcommand{\sconv}{\mathbin{\scalebox{.9}{$\Delta$}}}

\newcommand{\Hom}{\operatorname{Hom}}

\renewcommand{\Im}{\op{Im}}

\nc{\K}{{K}}
\nc{\Kex}{{\K}^{\mathrm{ex}}}
\nc{\Uex}{\Uppsi_{\mathrm{ex}}}
\nc{\Kfr}{{\K}^{\mathrm{f\mspace{.01mu}r}}}
\nc{\cl}{{\rm{cl}}}
\nc{\ben}{\begin{enumerate}}
\nc{\ee}{\end{enumerate}}
\nc{\bnum}{\begin{enumerate}[{\rm(i)}]}
\nc{\bna}{\begin{enumerate}[{\rm(a)}]}
\nc{\bc}{\begin{cases}}
\nc{\ec}{\end{cases}}

\newenvironment{myequation}
{\relax\setlength{\arraycolsep}{1pt}\begin{eqnarray}}
{\end{eqnarray}}
\newenvironment{myequationn}
{\relax\setlength{\arraycolsep}{1pt}\begin{eqnarray*}}
{\end{eqnarray*}}

\nc{\eq}{\begin{myequation}}
\nc{\eneq}{\end{myequation}}
\nc{\eqn}{\begin{myequationn}}
\nc{\eneqn}{\end{myequationn}}
\nc{\hs}{\hspace*}
\nc{\ba}{\begin{array}}
\nc{\ea}{\end{array}}
\nc{\noi}{\noindent}
\nc{\ang}[1]{\langle{#1}\rangle}
\nc{\fr}{{\mathrm{fr}}}
\nc{\qt}[1]{\quad\text{#1}}
\nc{\ol}{\overline}
\nc{\true}{\delta}
\nc{\ms}{\mspace}
\renewcommand{\mod}{\ms{3mu}\mathbin{\mathrm{mod}}\ms{1mu}}
\nc{\vs}{\vspace*}
\nc{\bl}{\bigl(}
\nc{\br}{\bigr)}
\nc{\bep}{\ol{\ep}}
\nc{\bal}{\,\ol{\al}}
\nc{\qtq}[1][{and}]{\quad\text{#1}\quad}
\nc{\set}[2]{\left\{{#1}\mid{#2}\right\}}
\nc{\ro}{{\rm(}}
\nc{\rf}{{\rm)}\xspace}
\nc{\Proof}{\begin{proof}}
\nc{\QED}{\end{proof}}
\nc{\monoto}[1][]{\xymatrix{\ar@{>->}[r]^-{{#1}}&}}
\nc{\etens}{\boxtimes}
\nc{\height}[1]{\vert{#1}\vert}
\nc{\dsum}{\mathop\sum\limits}
\nc{\Lrev}{L^{{\mathrm rev}}}
\nc{\rev}{\mathrm{rev}}
\nc{\ake}[1][2ex]{\rule[-1ex]{0ex}{#1}}

\newenvironment{magem}{\relax\color{magenta}}{\relax}

\newcommand{\bema}{\begin{magem}}
\newcommand{\ema}{\end{magem}}

\newlength{\mylength}
\setlength{\mylength}{\textwidth}
\addtolength{\mylength}{-20ex}

\title[Cluster algebra structures over quantum affine algebras]{Cluster algebra structures on module categories over quantum affine algebras}

\author[M. Kashiwara]{Masaki Kashiwara}
\thanks{The research of M.\ Kashiwara\
was supported by Grant-in-Aid for Scientific Research (B)
15H03608, Japan Society for the Promotion of Science.}
\address[M. Kashiwara]{
Kyoto University Institute for Advanced Study, Kyoto 606-8501, Japan,
Research Institute for Mathematical Sciences, Kyoto University,
Kyoto 606-8502, Japan \& Korea Institute for Advanced Study, Seoul 02455, Korea }
\email[M. Kashiwara]{masaki@kurims.kyoto-u.ac.jp}

\author[M. Kim]{Myungho Kim}
\address[M. Kim]{Department of Mathematics, Kyung Hee University, Seoul 02447, Korea}
\email[M. Kim]{mkim@khu.ac.kr}
\thanks{The research of M.\ Kim was supported by the National Research Foundation of
Korea(NRF) Grant funded by the Korea government(MSIP) (NRF-2017R1C1B2007824).}

\author[S.-j. Oh]{Se-jin Oh}
\thanks{ The research of S.-j.\ Oh was supported by the National Research Foundation of
Korea(NRF) Grant funded by the Korea government(MSIP) (NRF-2016R1C1B2013135).}
\address[S.-j. Oh]{Department of Mathematics, Ewha Womans University, Seoul 120-750, Korea}
\email[S.-j. Oh]{sejin092@gmail.com}

\author[E. Park]{Euiyong Park}
\thanks{The research of E.\ P.\ was supported by the National Research Foundation of Korea(NRF) Grant funded by the Korea Government(MSIP)(NRF-2017R1A1A1A05001058).}

\address[E. Park]{Department of Mathematics, University of Seoul, Seoul 02504, Korea}
\email[E. Park]{epark@uos.ac.kr}

\keywords{Quantum affine algebra, Quiver Hecke algebra, Quantum group, Quantum cluster algebra}

\subjclass[2010]{81R50, 16F60, 16G, 16T,17B37}

\date{April 2, 2019}

\begin{document}

\begin{abstract}
We study monoidal categorifications of certain monoidal subcategories $\Ca_J$ of finite-dimensional modules over quantum affine algebras, whose
cluster algebra structures coincide and arise from the category of finite-dimensional modules over quiver Hecke algebra of type $A_\infty$. In particular, when the quantum affine algebra is of type
$A$ or $B$, the subcategory coincides with the monoidal category $\Ca_\g^0$ introduced by Hernandez-Leclerc. As a consequence, the modules corresponding to
cluster monomials are real simple modules over quantum affine algebras.
\end{abstract}

\maketitle
\tableofcontents

\section*{Introduction}

The \emph{quiver Hecke algebras} (or \emph{Khovanov-Lauda-Rouquier algebras}),
introduced  independently
by Khovanov-Lauda (\cite{KL09, KL11}) and Rouquier (\cite{R08}), provide the categorification of a half of quantum group $U_q(\g)$.
Since then, quiver Hecke algebras have been studied actively and various new features were discovered in  the viewpoint of categorification.
Studying quantum groups via the quiver Hecke algebras has become one of the main research themes on quantum groups in aspect of categorification.
 In particular, the studies on representations of a quantum affine algebra $U_q'(\g)$ via \emph{the generalized quantum Schur-Weyl duality functors} (\cite{KKK18A}) and the quantum cluster algebra structures of quantum unipotent coordinate algebras $A_q(\n(w))$ via \emph{the monoidal categorifications} (\cite{KKKO18})  drew big attention of researchers on various areas.

The generalized quantum Schur-Weyl duality functor was developed in \cite{KKK18A}, which is a vast generalization of quantum affine Schur-Weyl duality.
Let $U_q'(\g)$ be a quantum affine algebra of \emph{arbitrary type} over a base field $\cor$, and let $\{ V_j \}_{j\in J}$ be a family of \emph{quasi-good} $U_q'(\g)$-modules.
The generalized quantum Schur-Weyl duality provides
a procedure to make a \emph{symmetric} quiver Hecke algebra $R^J$
 from the R-matrices among $\{ V_j \}_{j\in J}$ and
to construct a monoidal functor $\F\col  R^J \gmod \to\Ca_\g$ enjoying good properties.
Here  $R^J \gmod$ (resp.\ $ \Ca_\g$) denotes the monoidal category of graded $R^J$-modules (resp.\ integrable $U_q'(\g)$-modules) which are finite-dimensional over $\cor$.

Let us recall briefly the results of \cite{KKK18A}.
Let $N \in \Z_{>1}$ and $J=\Z$,  and consider the family $\{ V(\varpi_1)_{q^{2k}} \}_{k\in \Z}$ of the quantum affine algebra $U_q'(\g)$ of type $A_{N-1}^{(1)}$.
Then the corresponding
quiver Hecke algebra $R^J$ is of type $A_\infty$ and
the generalized quantum Schur-Weyl duality
associated with
$\{V(\varpi_1)_{q^{2k}} \}_{k\in \Z}$
gives a monoidal functor $\F\col  R^J \gmod \to \Ca_J \subset \Ca^0_\g$.
Here $\Ca_{\g}^0$  is the smallest Serre subcategory of $\Ca_{\g}$ which is stable by taking tensor products and contains a sufficiently large family of fundamental representations, and $\Ca_J$ is a certain subcategory of $\Ca^0_\g$ determined by the functor $\F$ (see Section \ref{sec:HL} and \ref{subsec A_inf}).
Let $\Ser_N$ be the smallest Serre subcategory of
$\A\seteq R^J\gmod$ such that
\begin{enumerate}[{\rm (i)}]
\item $\Ser_N$ contains $L[a,a+N]$ for any $a \in J$,
\item $X \conv Y, \ Y \conv X \in \Ser_N$ for all $X \in \A$ and $Y \in \Ser_N$,
\end{enumerate}
where $L[a, a+N]$ is the 1-dimensional $R^J(\epsilon_{a} - \epsilon_{a+N+1})$-module (see Section \ref{subsec: Ainfty} for the definition of $L[a,b]$).
Then they constructed the monoidal category $\T_N$
with the modified convolution product $\star$
 by localizing the quotient category $\A/\Ser_N$ at the commuting family coming from the objects  $L_a = L[a,a+N-1]$'s.
Moreover they showed that $\T_N$ is rigid,
the functor $\F \col \A \to \Ca_J$
factors through the canonical functor $ \FO_N\col \A \to \T_N$, and
the resulting functor $\T_N\to \Ca_J$
induces an isomorphism between the Grothendieck rings of $\Ca_J$
and $\T_N |_{q=1}$. The category $\T_N$ also plays the same role in
the studies on generalized quantum Schur-Weyl duality for
$\Ca_{A_{N-1}^{(2)}}^0$ (\cite{KKKO15C}) and $\Ca_{B_{N/2}^{(1)}}^0$ (for $N \in 2\Z$) (\cite{KKO18}).

The monoidal categorification of a quantum unipotent coordinate algebra $A_q(\n(w))$ using a certain monoidal subcategory  $\shc_w$ (see Definition \ref{def:cw}) of $R\gmod$ for \emph{symmetric}
quiver Hecke algebras was provided in \cite{KKKO18}, which gives a monoidal  categorical explanation on the quantum cluster algebra structure of $A_q(\n(w))$ given in Gei\ss--Leclerc--Schr\"oer \cite{GLS13}.
For each reduced expression $\tw$ of a Weyl group element $w$, the initial quantum monoidal seed in $\shc_w$ is given
by the \emph{determinantial modules} $ \M_\tw(s,0) $ which correspond to certain unipotent quantum minors of $A_q(\n(w))$. It was shown in
\cite{KKKO18} that  every cluster monomials is a member of the upper global basis of $A_q(\n(w))$ by using the monomial categorification of $A_q(\n(w))$  arising from $\shc_w$.

The cluster algebra structures also appear in certain monoidal
subcategories of the category $\Ca_\g$ of finite-dimensional
integrable representations of a quantum affine algebra $U_q'(\g)$.
The Grothendieck ring of the subcategory $\Ca_\ell$ ($\ell \in
\Z_{>0} $) of $\Ca_\g$ introduced in Hernandez--Leclerc \cite[Section 3.8]{HL10} was studied by using cluster algebra
structures (\cite{HL10}), and an algorithm for calculating
$q$-character of Kirillov-Reshetikhin modules for any untwisted
quantum affine algebras was described in \cite{HL16} by studying the
cluster algebra $\mathscr{A}$ which is isomorphic to the
Grothendieck ring of the subcategory $\Ca^-_\g$ of $\Ca_\g$
(see Section \ref{subsec: HL cluster -}). It was conjectured in
\cite{HL16} that all cluster monomials of $\mathscr{A}$ correspond
to the classes of certain simple objects of $\Ca^-_\g$ (see
Conjecture \ref{conj: HL}).

In a viewpoint of the categorification using quiver Hecke algebras, it is natural to study a cluster algebra structure on monoidal subcategories of quantum affine algebras using the monoidal categorification of quantum unipotent coordinate algebras via the generalized quantum Schur-Weyl duality.
In fact, it is what we perform in this paper.
We study a quantum cluster algebra structure of the category
$\A\seteq R^J\gmod$ of type $A_\infty$
and show that this quantum cluster algebra structure  is compatible with
the functor $\A\to\T_N$
for any $N\in \Z_{>1}$ (see Theorem \ref{th:monoidalTN}).

We further provide the condition $\eqref{eq: general assumption}$  on quasi-good modules of a quantum affine algebra $U_q'(\g)$ to make the generalized Schur-Weyl duality functor $\F\col \A  \to \Ca_J$
factor through the canonical functor $ \FO_N\col\A \to \T_N$
so that we have the exact monoidal functor $\tFF \col \T_N\to \Ca_J$ by  using the framework given in \cite{KKO18} (see Theorem \ref{thm: Factoring}).
In the cases of types $A_{N-1}^{(t)}$ ($t=1,2$), $B_{N/2}^{(1)}$ (for $N\in 2\Z$), $C_{N-1}^{(1)}$ (for $N>3$), $D_{N}^{(t)}$ (for ($N\ge 4$ and $t=1,2$) or ($ N=4$ and $t=3$)),
 we give explicit quasi-good modules satisfying the condition, which provide the cluster algebra structure on the Grothendieck ring $K(\Ca_J)$ induced from  $\T_N$.
 Moreover, all cluster monomials of $K(\Ca_J)$ correspond to the classes of real simple modules in $\Ca_J$ (Theorem \ref{Thm: main}).
 Note that the families for the types $A$ and $B$ appeared in \cite{KKK18A, KKKO15C}, and \cite{KKO18}, but the ones for types $C$ and $D$ are new.
 Since $\Ca_J = \Ca_\g^0$ in types $A$ and $B$, there exists a cluster algebra structure on $K(\Ca_\g^0)$ coming from
the quantum cluster algebra structure of $\T_N$.
In particular, we prove that the conjecture given in \cite{HL16} (see Theorem \ref{thm: HL}) is true when $\g$ is of type $A_{N-1}^{(1)}$:
all cluster monomials of $\mathscr{A}$ correspond to the classes of certain real simple objects of $\Ca_{A_{N-1}^{(1)}}^-$. Remark that for the case of the categories $\Ca_\ell$, the corresponding  property was  proved in \cite{Qin17}.

\medskip
Let us explain our results more precisely. Let $J=\Z$ and let $A^J$ be the Cartan matrix of type $A_\infty$. Let $R^J$ be the
\emph{symmetric} quiver Hecke algebra of type $A_\infty$. We first choose a special infinite sequence $\ttww$ of simple reflections
$s_j$ $(j\in J)$ of the Weyl group $W_J$ of type $A_\infty$ (see $\eqref{eq: def tw}$). It is shown in Proposition \ref{prop:
reduced} that every $p$-prefix $\ttww_{\le p}$ of $\ttww$ is a reduced expression in $W_J$ for each $p\in \Z_{\ge1}$ and, if $p =
t(2t+1) $ for $t\in \frac{1}{2}\Z_{\ge1}$, then $\ttww_{\le p}$ can be viewed as a reduced expression of the longest element of the
parabolic subgroup of $W_J$ generated by $s_j$  for $j\in [-\lfloor t-\frac{1}{2} \rfloor, \lfloor t \rfloor]$ (see Remark~\ref{rem:
longest}). In this sense, the category $R^J\gmod$ can be understood as a limit of the subcategories $\shc_{\ttww_{\le p}}$. We then
consider the determinantial modules $ \M_\ttww(p,0)$ and the cuspidal modules $ \M_\ttww(p^+,p)$ associated with $\ttww$. Let
$c\col \Z_{\ge1} \rightarrow \Z_{\ge1}\times \Z_{\ge1}$ be the bijection given in Definition \ref{def: coordinate} and let $L[a,b]$
be the 1-dimensional $R$-module for $a,b\in \Z$ defined in Section \ref{subsec: Ainfty}. For $p \in \Z_{\ge 1}$ with $c(p) = (\ell,m)$,
we prove that $\M_\ttww(p,0)$ is isomorphic to the head $\W^{(\ell)}_{m,\jj_p} $ of a certain convolution product of $m$-many
$R$-modules $L[a,b]$ with the length $\ell=|b-a|+1$  (Proposition \ref{thm :KR in KLR}), and describe also the cuspidal
module $ \M_\ttww(p^+,p)$  in terms of $L[a,b]$ (Corollary \ref{cor: cuspidal}). We next describe the quiver $\tQ$ associated with the initial seed given by
$\ttww$, which can be viewed as the square product of the bipartite Dynkin quiver $Q_{A_\infty}$ (see
$\eqref{eq: Q with W}$). Theorem \ref{Thm: monoidal cat} tells that the category $ R^J\gmod$ is a monoidal categorification of the
quantum cluster algebras  $\mathscr{A}_{q^{1/2}}(\infty)$ with the quantum seed $ [\Seed_\infty]$ arising from the determinantial
modules $ \M_\ttww(p,0)$ and the quiver $\tQ$.

We next consider the monoidal categorification structure of $R^J\gmod$ which we constructed.
In Proposition \ref{prop: first mutation}, we describe the mutation $ \M_\ttww(p,0)'$ of $\M_\ttww(p,0)$ as follows:
$$
\M_\ttww(p,0)' \simeq \begin{cases}
\W^{(\ell)}_{m,\jj_p+1 } & \text{ if } \ell+m \equiv 0 \mod 2, \\
\W^{(\ell)}_{m,\jj_p-1} & \text{ if } \ell+m \equiv 1 \mod 2.
\end{cases}
$$
We then find sequences of mutations $\mu_{\Spl}$ and $\mu_{\Smi}$ such that
$\mu_{\Spl}(\tQ)$ and $\mu_{\Smi}(\tQ)$ are the same as $ \tQ$ as quivers and
$$
\text{$\mu_{\Spl}(\M_\ttww(p,0)) \simeq \W^{(\ell)}_{m,\jj_p+1}$ and $\mu_{\Smi}(\M_\ttww(p,0)) \simeq \W^{(\ell)}_{m,\jj_p-1}$}
$$
for $p \in \K$  with $c(p)=(\ell,m)$ (Proposition \ref{pro: mu S+ S-}).
In this viewpoint, since $ \M_\ttww(p,0)$ is isomorphic to $ \W^{(\ell)}_{m,\jj_p } $,
applying $\mu_{\Spl}$ (resp.\  $\mu_{\Smi}$) to $\tQ$ is understood as shifting the indices $\jj_p$ of $ \W^{(\ell)}_{m,\jj_p } $ at all vertices of $\tQ$ by $1$ (resp.\ $-1$).
For each $N \in \Z_{\ge1}$, we define the subquiver $\tQ_N$ of $\tQ$ as in $\eqref{eq: the procedure}$ and investigate compatibility with  $\mu_{\Spl}$, $\mu_{\Smi}$ and the procedure from $\A$ to $\T_N$.
Then we can conclude that the category $\T_N$ is a monoidal categorification of the quantum cluster algebras  $\mathscr{A}_{q^{1/2}}(N)$ with the quantum seed
$ [\Seed_N]$ arising from $ \{ \FO_N(\M_{\ttww}(p,0)) \}_{c(p) \in \Kex_N }$  and the quiver $\tQ_N$ (Theorem~\ref{th:monoidalTN}).

For the construction of generalized quantum Schur-Weyl duality
functor $\T_N\to \Ca_\g$, we provide the condition $\eqref{eq: general assumption}$  on quasi-good modules of the quantum affine algebra $U_q'(\g)$.
Let $\{ V_a \}_{a\in J}$ be a family of quasi-good modules in $\Ca_\g$ satisfying the condition $\eqref{eq: general assumption}$.
Note that, to define Schur-Weyl duality functor, we have to choose duality coefficients $ P_{i,j}(u,v)$ ($i,j\in J$) which are elements in $\cor[[u,v]]$ satisfying certain conditions determined by $\{ V_a \}_{a\in J}$.
In Lemma \ref{lem: N> to 0}, we prove that any Schur-Weyl duality functor arising from $\{ V_a \}_{a\in J}$ sends $L[a,b]$ to $0$ for any $a,b\in \Z$ with $b-a \ge N$, which implies that  it factors through  $\A \rightarrow \A/ \Ser_N$.
We prove that there exists a suitable duality coefficients $\{ P_{i,j}(u,v)\}_{i,j\in J}$ such that the corresponding Schur-Weyl duality functor $\F$ factors through the canonical functor $ \FO_N\col \A \to \T_N$ by following the framework given in \cite{KKO18}.
Theorem \ref{thm: Factoring} gives an exact monoidal functor $\tFF \col \T_N\to \Ca_J$ such that the following diagram quasi-commutes$\colon$
$$
 \xymatrix{
\A\ar[rr]^-{\FO_N} \ar[drr]_-{\F}&   &\T_N\ar[d]^-{\tFF}\\
&&\Ca_J\,.}
$$
Then, the functor $\tFF$ induces an isomorphism $ K(\T_N)\vert_{q=1}
\isoto K(\Ca_J)$ and the category $\Ca_J$ gives a monoidal
categorification of the cluster algebra $\mathscr{A}(N) \seteq
\mathscr{A}_{q^{1/2}}(N)|_{q=1}$ via the functor $\tFF$. Therefore,
every cluster monomial in $\mathscr{A}(N)$ corresponds to the
isomorphism class of a real simple object in $\Ca_J$ (Theorem
\ref{Thm: main}). For types $A_{N-1}^{(t)}$ ($t=1,2$),
$B_{N/2}^{(1)}$ (for $N\in 2\Z$), $C_{N-1}^{(1)}$ (for $N>3$),
$D_{N}^{(t)}$ (for ($N\ge 4$ and $t=1,2$) or ($ N=4$ and
$t=3$)),
 we give explicit quasi-good modules satisfying the condition $\eqref{eq: general assumption}$ (see Section \ref{Sec: Ca_J}).
For types $A$ and $B$,  $\Ca_J$ is equal to $\Ca_{\g}^0$, which
means that there exists a cluster algebra structure on $K(\Ca_\g^0)$
coming from the quantum cluster algebra structure of $\T_N$. For the
other types $C$ and $D$, $\Ca_J$ is a proper subcategory of
$\Ca_{\g}^0$. As for type $A_{N-1}^{(1)}$, we obtain an additional result using our monoidal categorification. In
\cite{HL16}, for an untwisted quantum affine algebras, Hernandez and
Leclerc studied the cluster algebra $\mathscr{A}$ with initial
quiver $G^-$ of infinite rank, which is isomorphic to the
Grothendieck ring of a \emph{half} $\Ca^-_\g$ of $\Ca^0_\g$. It was
conjectured in \cite{HL16} that all cluster monomials of
$\mathscr{A}$ correspond to the classes of certain simple objects of
$\Ca^-_\g$. When $A$ is of type $A_{N-1}^{(1)}$, we prove that  this
conjecture is true using our monoidal categorification $ \Ca_{\g}^0$
(see Theorem \ref{thm: HL}).

This paper is organized as follows. In Section \ref{Sec: quantum group}, we review quantum groups and quantum affine algebras.
In Section  \ref{Sec: cluster}, we recall quantum cluster algebras and monoidal categorification.
In Section \ref{Sec: quiver Hecke}, we review quiver Hecke algebras for monoidal categorification and generalized quantum Schur-Weyl duality.
In Section \ref{Sec: Module},  we study the monoidal categorification of $R^J\gmod$ associated with the infinite sequence $\ttww$.
In Section \ref{Sec: T_N}, we investigate the cluster algebra structure of $R^J\gmod$ and prove that $\T_N$ has a monoidal categorification structure induced from $R^J\gmod$  via $ \FO_N\col R^J\gmod \to \T_N $.
In Section \ref{Sec: Main}, we provide the condition  on quasi-good $U_q'(\g)$-modules to make
the generalized Schur-Weyl duality functor $\F\col \A  \to \Ca_J$ factor through the functor $ \FO_N\col \A \to \T_N$, and give an explicit quasi-good modules satisfying the condition for various types.

\medskip
{\bf Acknowledgments}

The second, third and fourth authors gratefully acknowledge for the hospitality of RIMS (Kyoto University) during their visits in 2018 and 2019.

\section{Quantum groups, quantum coordinate rings and quantum affine algebras} \label{Sec: quantum group}
In this section, we shortly recall the basic materials on quantum groups, quantum coordinate rings and quantum affine algebras. We refer to~\cite{AK,Kash91,Kas93,Kas02,KKKO18} for details.

For simplicity, we use the following convention:
\[
\text{For a statement $P$, $\true(P)$ is $1$ if $P$ is true and $0$ if $P$ is false.}
\]

\subsection{Quantum groups}  Let $I$ be an index set. A Cartan datum is a quintuple $(\cmA,\wl,\Pi,\wl^\vee,\Pi^\vee)$ consisting of
(i) a symmetrizable generalized Cartan matrix $\cmA =(a_{i,j})_{i,j \in I}$,
(ii) a free abelian group $\wl$, called the {\it weight lattice}, (iii) $\Pi= \{\alpha_i  \in \wl \ | \ i \in I \}$, called the set of {\it simple roots},
(iv)  $\wl^\vee \seteq \Hom_\Z(\wl,\Z)$, called the {\it co-weight lattice}, (v) $\Pi^\vee =\{ h_i \ | \ i \in I \} \subset \wl^\vee$, called the set of {\it simple coroots}, satisfying %
(1) $\lan h_i,\al_j \ran = a_{i,j}$ for all $i,j\in I$,
(2) $\Pi $ is linearly independent,
(3) for each $i \in I$ there exists a $\Lambda_i \in \wl$ such that $\lan h_j,\Lambda_i \ran = \delta_{i,j}$ for all $j \in I$.
We call $\Lambda_i$ the {\it fundamental weights}. Note that there exists a diagonal matrix $\mathsf{D}={\rm diag}(\mathsf{d}_i\ | \ i \in I)$ such that
$\mathsf{d}_i\in\Z_{>0}$ and $\mathsf{D}\cmA$ is symmetric.

We denote by $\rl \seteq \bigoplus_{i \in I} \Z\alpha_i$ the {\it root lattice}, $\rl^+ \seteq \bigoplus_{i \in I} \Z_{\ge 0}\alpha_i$ the {\it positive root lattice}
and $\rl^- \seteq \bigoplus_{i \in I} \Z_{\le 0}\alpha_i$ the {\it negative root lattice}. For $\beta = \sum_{i \in I} m_i \al_i \in \rl^+$, we set $|\be|=\sum_{i \in I} m_i$.

Set $\h = \mathbb{Q} \tens_\Z \wl^\vee$. Then there exists a symmetric bilinear form $( \ , \ )$  on $\h^*$ such that
$$(\al_i,\al_j)=\mathsf{d}_ia_{i,j} \quad (i,j \in I) \  \text{ and } \ \lan h_i,\la \ran = \dfrac{2(\al_i,\la)}{(\al_i,\al_i)} \ \ \text{for any $\la \in \h^*$ and $i \in I$}. $$

Let $\g$ be the Kac-Moody algebra associated with a Cartan datum $(\cmA,\wl,\Pi,\wl^\vee,\Pi^\vee)$.
The {\it Weyl group} $W$ of $\g$ is the subgroup of ${\rm GL}(\h^*)$ generated by the set of reflections $S = \{ s_i \ | \  i \in I \}$,
where $s_i(\lambda) \seteq \lambda - \lan h_i,\lambda \ran \al_i$ for $\lambda \in \h^*$.

Let $q$ be an indeterminate and set $q_i \seteq q^{\mathsf{d}_i}$ for each $i \in I$. We denote by $U_q(\g)$ the {\it quantum group} associated to $\g$,
which is a $\mathbb{Q}(q)$-algebra generated by $e_i$, $f_i$ $(i \in I)$ and $q^{h}$ $(h \in\wl^\vee)$. We set $U_q^+(\g)$ (resp.\ $U_q^-(\g)$) the subalgebra of $U_q(\g)$ generated by $e_i$'s (resp.\,$f_i$'s).

Recall that $\Uqg$ admits the {\it weight space decomposition} $ \Uqg = \soplus_{\be \in \rl} \Uqg_\be$,
where $\Uqg_\be  \seteq \{ x \in \Uqg \ | \ q^hxq^{-h} =q^{\lan h,\be \ran} \text{ for any } h \in \wl^\vee \}$. For $x \in \Uqg_\be$, we set $\wt(x)=\be$.

Set $\Aa=\Z[q^{\pm 1}]$. Let us  denote by $\Uamg$ the $\Aa$-subalgebra of $U_q(\g)$ generated by $f_i^{(n)} \seteq f_i^n/[n]_i!$, and
by $\Uapg$ the $\Aa$-subalgebra of $U_q(\g)$ generated by $e_i^{(n)} \seteq e_i^n/[n]_i!$ ($i\in I$, $n \in \Z_{\ge 0}$), where $[n]_i =\dfrac{ q^n_{i} - q^{-n}_{i} }{ q_{i} - q^{-1}_{i} }$ and $[n]_i! =\displaystyle \prod^{n}_{k=1} [k]_i$.

There is a $\Q(q)$-algebra anti-automorphism $\varphi$ of $U_q(\g)$ given as follows$\colon$
\begin{align*}
&\varphi(e_i)=f_i, \quad \varphi(f_i)=e_i, \quad \varphi(q^h)=q^h.
\end{align*}

We say that a $U_q(\g)$-module $M$ is called {\it integrable} if the actions of $e_i$ and $f_i$  on $M$ are locally nilpotent for all $i \in I$.
We denote by $\Oint(\g)$ the category of integrable left $U_q(\g)$-module $M$ satisfying
\begin{enumerate} [{\rm (i)}]
\item $M = \bigoplus_{\eta \in \wl} M_\eta$ where $M_\eta =\{ m \in M \ | \  q^hm =q^{\lan h,\eta \ran}m \}$,  $\dim M_\eta < \infty$,
\item there exist finitely many weights $\lambda_1,\ldots,\lambda_m$ such that $\wt(M) \subset \cup_{j}(\lambda_j+\rl^-)$, where $\wt(M)=\{ \eta \in \wl \ | \ \dim M_\eta \ne 0 \}$.
\end{enumerate}
It is well-known that $\Oint(\g)$ is a semisimple category
such that every  simple object is isomorphic to an irreducible highest weight module $V(\La)$ with the highest weight vector $u_\La$ of highest weight $\La$.
Here $\La$ is an element of the set $\wl^+$ of {\it dominant integral weights}$\colon$
$$\wl^+ \seteq \{\mu \in \wl \ | \ \lan h_i, \mu \ran \ge 0 \ \text{for all} \ i \in I \}.$$

\subsection{Unipotent quantum coordinate rings and unipotent quantum minors}
Note that $U_q^+(\g) \tens U_q^+(\g)$ has an algebra structure defined by
$$(x_1 \tens x_2) \cdot (y_1 \tens y_2) = q^{-(\wt(x_2),\wt(y_1))}(x_1y_1 \tens x_2y_2),$$
for homogeneous elements $x_1$, $x_2$, $y_1$ and $y_2$.
Then we have the algebra homomorphism $\Delta_\n \colon U_q^+(\g) \to U_q^+(\g) \tens U_q^+(\g)$ given by
$$ \Delta_\n(e_i) = e_i \tens 1 + 1 \tens e_i.$$

\begin{definition} We define the {\it unipotent quantum coordinate ring} $A_q(\n)$ as follows$\colon$
$$A_q(\n) = \soplus_{\be \in \rl^-} \Aqn_\be  \qquad \text{ where $\Aqn_\be \seteq \Hom_{\Q(q)}(U_q^+(\g)_{-\be},\Q(q))$}.$$
\end{definition}

Note that $\Aqn$ has a ring structure given as follows $\colon$
$$(\psi \cdot \theta)(x) \seteq \theta(x_{(1)})\psi(x_{(2)})  \qquad \text{ where } \Delta_\n(x)=x_{(1)} \otimes x_{(2)}.$$
The $\Aa$-form of $\Aqn$ is defined by $\Aan \seteq \{ \psi \in \Aqn \ | \ \lan \psi, \Uapg \ran \subset \Aa \}$.

Recall that the algebra $\Aqn$ has the {\it upper global basis} (\cite{Kas93})
$$ \B^\upper(\Aqn) \seteq \{ G^\upper(b) \ | \  b \in B(\Aqn) \},$$
where $B(\Aqn)$ denotes the {\it crystal} of $\Aqn$.

\medskip

Let $( \ , \ )_\La$ be the non-degenerate symmetric bilinear form on $V(\La)$ such that $(u_\La,u_\La)_\La=1$ and $(xu,v)_\La=(u,\varphi(x)v)_\La$ for $u,v \in V(\La)$ and $x\in U_q(\g)$.

For $\mu, \zeta \in W \La$, the \emph{unipotent quantum minor} $D(\mu, \zeta)$ is an element in $A_q(\n)$ given by
$$D(\mu,\zeta)(x)=(x u_\mu,u_\zeta)_\La$$
for $x\in U_q^+(\g)$, where $u_\mu$ and $u_{\zeta}$ are the {\it extremal weight vectors} in $V(\La)$ of weight $\mu$ and $\zeta$, respectively.

\begin{lemma}[{\cite[Lemma 9.1.1]{KKKO18}}] $\D(\eta,\zeta)$ is either contained in $\B^\upper(\Aqn)$ or zero.
\end{lemma}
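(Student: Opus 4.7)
The plan is to identify $D(\eta,\zeta)$ with the image of a pair of lower global basis vectors of $V(\Lambda)$ under the natural matrix coefficient map, and then invoke Kashiwara's compatibility between global bases of integrable modules and global bases of quantum coordinate rings to conclude.

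First, I would observe that since $\eta,\zeta\in W\Lambda$, the weight spaces $V(\Lambda)_\eta$ and $V(\Lambda)_\zeta$ are one-dimensional. Hence, on these extremal weight spaces the lower and upper global bases of $V(\Lambda)$ coincide, and with the normalization $(u_\eta,u_\eta)_\Lambda=(u_\zeta,u_\zeta)_\Lambda=1$ one has $u_\eta = G^\low(b_\eta)$ and $u_\zeta = G^\low(b_\zeta)$ for the extremal crystal elements $b_\eta,b_\zeta\in B(\Lambda)$. Concretely, when $\eta = s_{i_1}\cdots s_{i_k}\Lambda$ in the reduced fashion producing $b_\eta$, one obtains $u_\eta = f_{i_1}^{(n_1)}\cdots f_{i_k}^{(n_k)} u_\Lambda$ with $n_j$ determined by the Weyl chamber walk; each $n_j$ equals the full pairing with the coroot, so the vector already lies in the $\Aa$-form and survives to the global basis.

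Second, I would realize $D(\eta,\zeta)$ through the matrix coefficient map. Consider the $U_q(\g)$-bimodule homomorphism $\Psi_\Lambda\col V(\Lambda)\tens V(\Lambda)^\ast\to A_q(\g)$ given by $\Psi_\Lambda(v\tens \xi)(x) = \langle \xi,\,xv\rangle$, and let $\pi\col A_q(\g)\to A_q(\n)$ denote the canonical projection dual to the embedding $U_q^+(\g)\hookrightarrow U_q(\g)$. Using the bilinear form to identify $V(\Lambda)^\ast$ with $V(\Lambda)$, the defining formula $D(\eta,\zeta)(x)=(x u_\eta,u_\zeta)_\Lambda$ gives precisely
\[
D(\eta,\zeta) \;=\; (\pi\circ \Psi_\Lambda)\bl u_\eta \tens u_\zeta^\ast\br,
\]
where $u_\zeta^\ast$ is the element dual to $u_\zeta$.

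Third, I would apply Kashiwara's theorem from \cite{Kas93}: under $\Psi_\Lambda$, the tensor $G^\low(b)\tens G^\upper(b')^\ast$ for $b,b'\in B(\Lambda)$ is carried into $A_q(\g)$ as a member of the upper global basis of $A_q(\g)$ (indexed by an element of the crystal $B(A_q(\g))$), and the projection $\pi$ sends upper global basis elements of $A_q(\g)$ either to upper global basis elements of $A_q(\n)$ or to zero. Since the extremal weight spaces are one-dimensional, $G^\upper(b_\zeta)^\ast = u_\zeta^\ast$, and step two applies. Combining the two compatibilities yields the claim.

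The main obstacle is the input from \cite{Kas93} that underlies step three. Its technical heart is a crystal-limit calculation verifying three facts for the element $\Psi_\Lambda(G^\low(b)\tens G^\upper(b')^\ast)$: bar-invariance (so that the candidate lies in the correct bar-invariant lattice), integrality with respect to the $\Aa$-form $A_\Aa(\n)$, and the identification of its class modulo $q$ with a single element of the crystal $B(A_q(\n))$ (or zero, in which case the element itself vanishes). The extremal vectors $u_\eta$ and $u_\zeta$ are distinguished in that the lower/upper dichotomy collapses, so the specialization of this theorem to $D(\eta,\zeta)$ reduces to invoking the general framework rather than reproving the crystal-limit step.
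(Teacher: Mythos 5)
The paper offers no proof of this lemma beyond the citation to \cite[Lemma 9.1.1]{KKKO18}, and your argument is essentially the proof given there: realize $\D(\eta,\zeta)$ as the image under $A_q(\g)\to\Aqn$ of the matrix coefficient attached to the extremal vectors $u_\eta\tens u_\zeta^\ast$, note that extremal weight vectors are simultaneously upper and lower global basis elements, and invoke Kashiwara's compatibility of the global basis of $A_q(\g)$ with that of $\Aqn$ under the projection. Your identification of the genuinely technical input (the bar-invariance, integrality, and crystal-limit identification underlying the compatibility in \cite{Kas93}) as the part being cited rather than reproved is exactly where the cited proof also defers, so the proposal is correct and takes the same route.
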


For $\eta,\zeta\in W\La$, we write  $\eta \preceq \zeta$
if there exists a sequence $\{ \be_k \}_{1 \le k \le l}$ of positive real
roots such that
we have $(\be_k,\la_{k-1}) \ge 0$, where $\la_0 \seteq\zeta$ and $\la_k \seteq s_{\be_k}\la_{k-1}$ for $1 \le k \le l$. Note that $\eta \preceq \zeta$ implies
$\eta -\zeta \in \rl^-$.

By \cite[Lemma 9.1.4]{KKKO18}, $\D(\eta,\zeta)\ne 0$ if and only if $\eta \preceq \zeta$, for $\La \in \wl^+$ and $\eta,\zeta \in W\La$.
The behaviors of multiplications among unipotent quantum minors were investigated intensively (see \cite{BZ93,GLS13,KKKO18}):

\begin{proposition} \label{prop: multi D} Let $\la,\mu \in \wl^+$.
\begin{enumerate}
\item[{\rm (i)}] For $u,v \in W$ such that $u \ge v$, we have
$$  \D(u\la,v\la)\D(u\mu,v\mu) = q^{-(v\la,v\mu-u\mu)}\D(u(\la+\mu),v(\la+\mu)).$$
\item[{\rm (ii)}] For $s,t,s',t' \in W$ satisfying
\begin{itemize}
\item $\ell(s's)=\ell(s')+\ell(s)$ and $\ell(t't)=\ell(t')+\ell(t)$,
\item $s's\la \preceq t'\la$ and $s'\mu \preceq t't\mu$,
\end{itemize}
we have
\begin{align*}
& \D(s's\lambda,t'\lambda)\D(s'\mu,t't\mu) = q^{(s's\lambda+t'\lambda,\;s'\mu-t't\mu)} \D(s'\mu,t't\mu)\D(s's\lambda,t'\lambda).
\end{align*}
\end{enumerate}
\end{proposition}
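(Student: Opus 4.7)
The plan is to derive both identities from the defining formula $D(\eta,\zeta)(x) = (x u_\eta, u_\zeta)_\Lambda$ by converting products of unipotent quantum minors into matrix coefficients on tensor products of integrable highest weight modules via the comultiplication $\Delta_\n$. Unfolding the twisted multiplication on $U_q^+(\g) \tens U_q^+(\g)$, the product $D(u\lambda,v\lambda)\cdot D(u\mu,v\mu)$ evaluated on $x$ becomes a matrix coefficient of $x$ acting on an extremal weight vector of $V(\mu) \tens V(\lambda)$, and similarly for the products appearing in (ii). This reduces both identities to computing how certain extremal vectors in tensor products decompose in terms of vectors in the irreducible summands.

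For (i), the key geometric input is the canonical embedding $V(\lambda+\mu) \hookrightarrow V(\lambda) \tens V(\mu)$ sending $u_{\lambda+\mu}$ to $u_\lambda \tens u_\mu$. Because each extremal weight space $V(\lambda+\mu)_{w(\lambda+\mu)}$ is one-dimensional for every $w\in W$, the extremal vector $u_{w(\lambda+\mu)}$ maps to a scalar multiple of $u_{w\lambda}\tens u_{w\mu}$. Computing these scalars for $w=u$ and $w=v$, for instance by inducting on $\ell(w)$ along a reduced expression and applying the Kashiwara operators $\te_i, \tf_i$ to both sides, produces the correction factor $q^{-(v\lambda,\,v\mu-u\mu)}$ from the weight-graded twist in the multiplication on $U_q^+(\g)\tens U_q^+(\g)$.

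For (ii), the hypotheses $\ell(s's)=\ell(s')+\ell(s)$ and $\ell(t't)=\ell(t')+\ell(t)$ let one realize the extremal vectors $u_{s's\lambda}$ and $u_{t't\mu}$ as images under Lusztig's braid operators $T_{s'}, T_{t'}$, while the conditions $s's\lambda \preceq t'\lambda$ and $s'\mu \preceq t't\mu$ guarantee both minors are nonzero by \cite[Lemma~9.1.4]{KKKO18}. Writing both sides of the claimed identity as matrix coefficients on $V(\lambda)\tens V(\mu)$ versus $V(\mu)\tens V(\lambda)$ and swapping tensor factors yields a braided-commutation scaling by $q^{(s's\lambda + t'\lambda,\,s'\mu - t't\mu)}$, which is precisely the stated $q$-power.

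The main obstacle is purely the bookkeeping of $q$-powers: the overall shape of both arguments is classical, but tracking the scalars through the tensor decompositions, the twist in the product on $U_q^+(\g)\tens U_q^+(\g)$, and the bilinear form $(\cdot,\cdot)_\Lambda$ must be done with care. Rather than redoing the full calculation, my preferred route is to invoke the corresponding identities established by Berenstein--Zelevinsky \cite{BZ93} and Geiss--Leclerc--Schr\"oer \cite{GLS13}, together with the treatment in our earlier work \cite[\S 9]{KKKO18}, and merely check that the hypotheses in (i) and (ii) translate correctly under the normalization conventions adopted here.
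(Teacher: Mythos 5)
The paper gives no proof of this proposition at all: it is stated as a known result with citations to \cite{BZ93,GLS13,KKKO18}, which is exactly where your proposal ends up. Your sketch of the underlying argument (matrix coefficients on tensor products, the embedding $V(\la+\mu)\hookrightarrow V(\la)\tens V(\mu)$, and braid-operator realizations of extremal vectors) is the standard one carried out in those references, so your proposal is correct and takes essentially the same approach as the paper.
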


\subsection{The subalgebra $A_q(\n(w))$ of $A_q(\n)$}
\label{sec:Anw}
In this subsection, we assume that the generalized Cartan matrix $\cmA$ is symmetric.

Let $\ww$ be a sequence of the set $S\seteq\{ s_i \ | \ i \in I\}$ 
of reflections of $W$:
\begin{align} \label{eq: word}
\ww=s_{i_1}s_{i_2} \ldots s_{i_\ell}.
\end{align}
We set
\begin{eqnarray} &&
\parbox{75ex}{
\begin{itemize}
\item $w_{\le k}\seteq s_{i_1}\cdots s_{i_k} \in W$ for $0\le k \le \ell$,

\item $\la_k\seteq w_{\le k}\Lambda_{i_k} \in \wl$ for $1\le k \le \ell$.
\end{itemize}
}\label{eq: word conv}
\end{eqnarray}

For a given sequence $\ww$ of $S$, $p \in \{ 1,\ldots, \ell\}$ and $j \in I$, we set
\begin{equation*} 
\begin{aligned}
p_+&\seteq\min( \{k \ | \ p<k\le \ell,\; i_k=i_p \} \cup \{ \ell+1 \}),\\
p_-&\seteq\max(\{k \ | \  1\le k<p, \; i_k=i_p \}\cup\{0\}),\\
p^+(j)&\seteq\min(\{k \ | \ p< k \le \ell,\; i_k=j \}\cup\{\ell+1\}), \\
p^-(j)&\seteq\max(\{k \ | \ 1\le k<p,\; i_k=j \}\cup\{0\}).
\end{aligned}
\end{equation*}

For a reduced expression $\tw=s_{i_1}s_{i_2} \cdots s_{i_\ell}$ of $w \in W$ and $0\le t \le s \le \ell$, we set
\begin{align}\label{eq: D_tw}
\D_\tw(s,t) \seteq \begin{cases}
\ \D(\la_s,\la_t) & \text{ if } t >0 \text{ and } i_s=i_t,\\
\D(\la_s,\Lambda_{i_s}) & \text{ if } 0=t < s \le \ell, \\
\qquad \one  & \text{ otherwise}.
\end{cases}
\end{align}

The $\Q(q)$-subalgebra of $\Aqn$ generated by $\D_\tw(i,i_-)$ $(1 \le i \le \ell)$, is independent of the choice of $\tw$. We denote it by $A_q(\n(w))$.
Then every $\D_\tw(s,t)$ is contained in $A_q(\n(w))$ \cite[Corollary 12.4]{GLS13}. The set $\B^{\upper}(A_{q}(\n(w))) \seteq \B^{\upper}(\Aqn) \cap A_q(\n(w))$ forms a
$\Q(q)$-basis of $A_q(\n(w))$~\cite[Theorem 4.25]{Kimu12}. We call $\B^{\upper}(A_{q}(\n(w)))$ the upper global basis of $A_q(\n(w))$.

The $\Aa$-module  $\Aanw$  generated by $\B^{\upper}(A_{q}(\n(w)))$ is an $\Aa$-subalgebra of $\Aqn$ (\cite[Theorem 4.27]{Kimu12}).

\subsection{Quantum affine algebras}\label{subsec:quntum affine}
In this subsection, we briefly review the representation theory of finite-dimensional integrable modules over
quantum affine algebras by following~\cite{AK,Kas02}.

When concerned with quantum affine algebras, we always take
the algebraic closure of $\C(q)$ in
$\bigcup_{m >0}\C((q^{1/m}))$ as the  base field $\cor$.

Let $I$ be an index set and  let $\cmA=(a_{ij})_{i,j\in I}$ be a generalized Cartan matrix of affine type.
We choose $0\in I$ as the leftmost vertices in the tables in~\cite[{pages 54, 55}]{Kac} except $A^{(2)}_{2n}$-case
 where we take the longest simple root as $\al_0$. Set $I_0 =I\setminus\{0\}$.

We normalize the $\Q$-valued symmetric bilinear form  $(\scbul,\scbul)$ on $\wl$ by
\begin{align*}
(\updelta, \la)=\lan \mathsf{c},\la\ran \quad \text{for any } \ \la\in \wl,
\end{align*}
where $\updelta$ denotes the {\it null root} and $\mathsf{c}=\sum_{i\in I}\mathsf{c}_ih_i$ denotes the {\it center}.
We denote by $\gamma$ the smallest positive integer such that $\gamma \dfrac{(\al_i,\al_i)}{2} \in \Z$ for all $i \in I$.

Let us denote by $U_q(\g)$ the quantum group over $\Q(q^{1/\gamma})$ associated with the affine Cartan datum $(\cmA,\wl, \Pi,\wl^{\vee},\Pi^{\vee})$.
 We denote by $U_q'(\g)$ the subalgebra of $U_q(\mathfrak{g})$ generated by
$e_i,f_i,t_i^{\pm 1} \seteq q^{\pm
\frac{(\al_i,\al_i)}{2}  h_i}$ for $i \in I$ and call it the \emph{quantum affine algebra}.

We use the comultiplication $\Delta$ of $U_q'(\g)$ given by
$$ \Delta(e_i)    = e_i \otimes t_i^{-1} + 1 \otimes e_i,  \  \Delta(f_i)    = f_i \otimes 1  + t_i \otimes f_i, \  \Delta(q^h)=q^h \otimes q^h.   $$

Let us denote by $ \ \bar{ } \ $ the involution of $U_q'(\g)$ defined as follows:
\[e_i \mapsto e_i,
\qquad\qquad
f_i \mapsto f_i,
\qquad\qquad
t_i \mapsto t_i^{-1} ,
\qquad\qquad
q^{1/\gamma} \to q^{-1/\gamma}.
\]

We denote by $\Ca_\g$ the category of finite-dimensional integrable $U_q'(\g)$-modules. For $M \in \Ca_g$, we denote by $M_\aff$ the {\it affinization} of $M$ which is  $\cor[z,z^{-1}] \tens M$ as a vector space
endowed with the $U_q'(\g)$-module structure given by
$$e_i(u_z)=z^{\delta_{i,0}}(e_iu)_z, \quad  f_i(u_z)=z^{-\delta_{i,0}}(f_iu)_z, \quad t_i(u_z)=(t_iu)_z.$$
Here $u_z$ denote the element $ \one \tens u \in M_\aff$ for $u \in M$. We also write $M_z$ instead of $M_\aff$.

A simple module $M$ in $\Ca_\g$ contains a non-zero vector $u$ of weight $\lambda\in \wl_\cl \seteq \wl/\Z \updelta$ such that
(1) $\langle h_i,\lambda \rangle \ge 0$ for all $i \in I_0$,
(2) all the weight of $M$ are contained in $\lambda - \sum_{i \in I_0} \Z_{\ge 0} \cl(\alpha_i)$,
where $\cl\colon \wl\to \wl_\cl$ denotes the canonical projection.
Such a $\la$ is unique and $u$ is unique up to a constant multiple.
We call $\lambda$ the {\it dominant extremal weight} of $M$ and $u$ the {\it dominant extremal weight vector} of $M$.

For $x \in \cor^\times$ and $M \in \Ca_\g$, we define $M_x \seteq M_\aff / (z_M-x)M_\aff$,
where $z_M$ denotes the $U_q'(\g)$-module automorphism of $M_\aff$ of weight $\updelta$. We call $x$ the {\it spectral parameter}.

For each $i \in I_0$, we set $$\varpi_i \seteq {\rm
gcd}(\mathsf{c}_0,\mathsf{c}_i)^{-1}\cl(\mathsf{c}_0\Lambda_i-\mathsf{c}_i \Lambda_0) \in \wl_\cl.$$
Then there
exists a unique simple $U_q'(\g)$-module $V(\varpi_i)$ in $\Ca_\g$,
called the {\em fundamental module of $($level $0$$)$ weight $\varpi_i$}, satisfying
the certain conditions (see \cite[\S 5.2]{Kas02}).

For a $U_q'(\g)$-module $M$, we denote by the $U_q'(\g)$-module $\overline{M}=\{ \bar{u} \mid u \in M \}$ whose module structure is given as $x \bar{u} \seteq \overline{\ms{2mu}\overline{x} u\ms{2mu}}$ for $x \in U_q'(\g)$.
Then we have
\begin{align} \label{eq: bar structure}
\overline{M_a} \simeq (\overline{M})_{\,\overline{a}}, \qquad\qquad \overline{M \otimes N} \simeq \overline{N} \otimes \overline{M}.
\end{align}
In particular, $\overline{V(\varpi_i)}\simeq V(\varpi_i)$ (see \cite[Appendix A]{AK}).

For a module $M$  in $\Ca_\g$, let us denote the right and the left dual of $M$ by ${}^*M$ and $M^*$, respectively.
That is, we have isomorphisms
\begin{align*}
&\Hom_{U_q'(\g)}(M\hspace{-.4ex} \tens \hspace{-.4ex} X,Y) \hspace{-.2ex} \simeq \hspace{-.2ex} \Hom_{U_q'(\g)}(X, \hspace{-.4ex} {}^*M \hspace{-.4ex} \tens \hspace{-.4ex} Y),\
\Hom_{U_q'(\g)}(X \hspace{-.4ex} \tens \hspace{-.4ex} {}^*M,Y)\hspace{-.2ex} \simeq \hspace{-.2ex} \Hom_{U_q'(\g)}(X, Y \hspace{-.4ex} \tens \hspace{-.4ex} M),\\
&\Hom_{U_q'(\g)}(M^* \hspace{-.4ex} \tens \hspace{-.4ex}  X,Y)\hspace{-.2ex} \simeq \hspace{-.2ex} \Hom_{U_q'(\g)}(X, M \hspace{-.4ex}  \tens \hspace{-.4ex}  Y),\
\Hom_{U_q'(\g)}(X \hspace{-.4ex} \tens \hspace{-.4ex}  M,Y)\hspace{-.2ex} \simeq \hspace{-.2ex} \Hom_{U_q'(\g)}(X, Y \hspace{-.4ex}  \tens \hspace{-.4ex} M^*),
\end{align*}
which are functorial in $U_q'(\g)$-modules $X$ and $Y$.
In particular, the module $V(\varpi_i)_x$ $(x\in \cor^\times)$ has the left dual and right dual
as follows:
\begin{equation*}
\bigl( V(\varpi_i)_x\bigr)^*  \simeq   V(\varpi_{i^*})_{x(p^*)^{-1}},
\qquad {}^*\bigl(V(\varpi_i)_x \bigr) \simeq   V(\varpi_{i^*})_{xp^*}
\end{equation*}
where $p^*$ is an element in $\cor^\times$ depending only on $U_q'(\g)$
(see \cite[Appendix A]{AK}), and $i \mapsto i^*$ denotes the involution on $I_0$ given by $\al_i=-w_0\,\al_{i^*}$.
Here $w_0$ is the longest element of $W_0 = \lan s_i \ | \ i \in I_0 \ran \subset W$.

We say that a $U_q'(\g)$-module $M$ is {\em good}
if it has a {\em bar involution}, a crystal basis with {\em simple
crystal graph}, and a {\em global basis} (see~\cite{Kas02} for
the precise definition). For instance, every fundamental module
$V(\varpi_i)$ for $i \in I_0$ is a good module. Note that
every good module is a simple $U_q'(\g)$-module. Moreover the tensor product of good modules is again good.
Hence any good module $M$ is {\it real} simple, i.e., $M\tens M$ is simple.

\begin{definition}
We call a $U_q'(\g)$ module $M$ {\it quasi-good} if
$$ M \simeq V_c $$
for some good module $V$ and $c \in \cor^\times$.
\end{definition}

\subsection{R-matrices}
In this subsection, we briefly review the notion of $R$-matrices for quantum affine algebras following~\cite[\S 8]{Kas02}.

For $M, N \in \Ca_\g$, there is a morphism of
$\cor[[z_N/z_M]] \tens _{\cor[z_N/z_M]} \cor[z_M^{\pm 1},z_N^{\pm 1}] \tens  U_q'(\g)$-modules, denoted by  $\Runiv_{M_{z_M},N_{z_N}}$ and  called the
{\it universal $R$-matrix}:
\begin{align*}
\Runiv_{M_{z_M},N_{z_N}} \colon \cor[[z_N/z_M]] \tens_{\cor[z_N/z_M]} (M_{z_M} \tens  N_{z_N}) \to \cor[[z_N/z_M]] \tens _{\cor[z_N/z_M]} (N_{z_N}\tens  M_{z_M}).
\end{align*}

We say that $\Runiv_{M_{z_M},N_{z_N}}$ is {\it rationally renormalizable} if there exist $a \in \cor((z_N/z_M))$ and a $\cor[z_M^{\pm 1},z_N^{\pm 1}] \tens  U_q'(\g)$-module homomorphism
\[
\Rren_{M_{z_M},N_{z_N}} \colon M_{z_M} \tens  N_{z_N} \to N_{z_N} \tens  M_{z_M}
\]
such that $\Rren_{M_{z_M},N_{z_N}} = a\Runiv_{M_{z_M},N_{z_N}}$.
Then we can choose $\Rren_{M_{z_M},N_{z_N}}$ so that for any $a_M, a_N \in \cor^\times$, the specialization of $\Rren_{M_{z_M},N_{z_N}}$ at $z_M=a_M$, $z_N=a_N$,
\begin{align*}
\Rren_{M_{z_M},N_{z_N}}|_{z_M=a_M,z_N=a_N} \colon M_{a_M} \otimes N_{a_N} \to N_{a_N} \otimes M_{a_M}
\end{align*}
does not vanish provided that $M$ and $N$ are non-zero $U_q'(\g)$-modules in $\Ca_\g$. It is called a {\it renormalized $R$-matrix}.

We denote by
\[
\rmat{M,N} \seteq \Rren_{M_{z_M},N_{z_N}}|_{z_M=1,z_N=1} \colon M \tens  N \to N \tens  M
\]
and call it the {\it $R$-matrix}. By the definition $ \rmat{M,N}$ never vanishes.

For simple $U_q'(\g)$-modules $M$ and $N$ in $\Ca_\g$, the universal $R$-matrix $\Runiv_{M_{z_M},N_{z_N}}$ is rationally renormalizable. Then, for
dominant extremal weight vectors $u_M$ and $u_N$ of $M$ and $N$, there exists $a_{M,N}(z_N/z_M) \in \cor[[z_N/z_M]]^\times$ such that
\begin{align}\label{eq: aMN}
\Runiv_{M_{z_M},N_{z_N}} \big( (u_M)_{z_M} \tens  (u_N)_{z_N}) \big) =  a_{M,N}(z_N/z_M) \big( (u_N)_{z_N} \tens  (u_M)_{z_M}) \big).
\end{align}
Then $\Rnorm_{M_{z_M},N_{z_N}} \seteq a_{M,N}(z_N/z_M)^{-1}\Runiv_{M_{z_M},N_{z_N}}$ is
a unique $\cor(z_M,z_N) \tens _{\cor[z_N/z_M]} \cor[z_M^{\pm 1},z_N^{\pm 1}] \tens U_q'(\g)$-module homomorphism sending
$\big( (u_M)_{z_M} \tens  (u_N)_{z_N} \big)$ to $\big( (u_N)_{z_N} \tens  (u_M)_{z_M} \big)$.

It is known that $\cor(z_M,z_N) \tens _{\cor[z_M^{\pm1},z_N^{\pm1}]} ( M_{z_M} \tens  N_{z_N})$ is a simple
$\cor(z_M,z_N) \tens _{\cor[z_M^{\pm1},z_N^{\pm1}]} U_q'(\g)$-module (\cite[Proposition 9.5]{Kas02}). We call
$\Rnorm_{M_{z_M},N_{z_N}}$ the {\it normalized $R$-matrix}.

Let us denote by $d_{M,N}(u) \in \cor[u]$ a monic polynomial of the smallest degree such that the image of
$d_{M,N}(z_N/z_M)\Rnorm_{M_{z_M},N_{z_N}}$ is contained in $N_{z_N} \tens  M_{z_M}$. We call $d_{M,N}$
the {\it denominator} of $\Rnorm_{M_{z_M},N_{z_N}}$. Then,
\[
d_{M,N}(z_N/z_M)\Rnorm_{M_{z_M},N_{z_N}} \colon M_{z_M} \tens  N_{z_N} \to N_{z_N} \tens  M_{z_M}
\]
is a renormalized $R$-matrix, and the $R$-matrix $\rmat{M,N} \colon M \tens  N \to N \tens  M$
is equal to $d_{M,N}(z_N/z_M)\Rnorm_{M_{z_M},N_{z_N}}\big|_{z_M=1,z_N=1}$ up to a constant multiple.

\subsection{Denominators of normalized $R$-matrices}
The denominators of the normalized $R$-matrices $d_{k,l}(z)\seteq d_{V(\varpi_k),V(\varpi_{l})}(z)$ between $V(\varpi_k)$ and $V(\varpi_l)$ were calculated in~\cite{AK,DO94,KKK15B,Oh15} for classical affine types
and in \cite{OT18} for exceptional affine types (see also~\cite{FR15,KMN2,KMOY07,Ya98}). In this subsection, we recall $d_{k,l}(z)$ for quantum affine algebras of type $A$ and $B$. In Table~\ref{table:Dynkin}, we list the Dynkin diagrams with an enumeration of simple roots and the corresponding fundamental weights for types $A$ and $B$.

\begin{table}[ht]
\renewcommand{\arraystretch}{2.2}
\centering
\begin{tabular}[c]{c|c|p{5cm}}
Type& Dynkin diagram & ${}^{\quad}$  Fundamental weights\\
\hline \hline
 $A_{1}^{(1)} $ &
 $$\xymatrix@R=3ex{  *{\circ}<3pt> \ar@{<=>}[r]_<{\alpha_0 \ }  &*{\circ}<3pt> \ar@{}[l]^<{ \ \ \alpha_1}  }$$ & $\varpi_1 = \cl(\Lambda_1 -\Lambda_0)$ \\
\hline
 $A_{n}^{(1)}$
($n \ge 2$) &
$$
\xymatrix@R=3ex{ & &&*{\circ}<3pt> \ar@{-}[drrr]^<{\alpha_0} \ar@{-}[dlll]\\
*{\circ}<3pt> \ar@{-}[r]_<{\alpha_1}  &*{\circ}<3pt>
\ar@{-}[r]_<{\alpha_2} & *{\circ}<3pt> \ar@{-}[r]_<{\alpha_3} & {}
\ar@{.}[r]  & *{\circ}<3pt> \ar@{-}[r]_>{\,\,\,\ \alpha_{n-1}}
&*{\circ}<3pt>\ar@{-}[r]_>{\,\,\,\,\alpha_n} &*{\circ}<3pt> }
$$ & $\varpi_i = \cl(\Lambda_i -\Lambda_0)$ \newline $(1 \le i \le n)$\\
\hline $A_{2}^{(2)}$ & \hskip-3em $$ \put(0,0){\circle{4}}
\put(40,0){\circle{4}} \put(3.7,-3.6){\line(1,0){23}}
\put(5,-1.15){\line(1,0){26}} \put(5,1.15){\line(1,0){26}}
\put(3.7,3.6){\line(1,0){23}} \put(20.5,-5.2){{\LARGE\mbox{$>$}}}
\put(-3,-10){\tiny $\alpha_0$} \put(37,-10){\tiny $\alpha_1$}
$$
& $\varpi_1=\cl(2\Lambda_1- \Lambda_0)$
 \\[1ex]
\hline $A_{3}^{(2)}$ &
$$\xymatrix@R=3ex{*{\circ}<3pt>  \ar@{<=}[r]_<{\alpha_0} & *{\circ}<3pt> \ar@{=>}[r]_<{\alpha_2} \ar@{}[r]_>{\,\,\,\ \alpha_1}  & *{\circ}<3pt>}$$
 & $\varpi_1=\cl(\Lambda_1- \Lambda_0)$,  \newline   $\varpi_2=\cl(\Lambda_2-2\Lambda_0)$ \\
\hline $A_{2n-1}^{(2)}$ ($n \ge 3$) &
$$
\xymatrix@R=3ex{ & *{\circ}<3pt> \ar@{-}[d]^<{\alpha_0}\\
*{\circ}<3pt> \ar@{-}[r]_<{\alpha_1}  &*{\circ}<3pt>
\ar@{-}[r]_<{\alpha_2}  &
  *{\circ}<3pt> \ar@{-}[r]_<{\alpha_3} & {}
\ar@{.}[r] & *{\circ}<3pt> \ar@{-}[r]_>{\,\,\,\ \alpha_{n-1}}
&*{\circ}<3pt>\ar@{<=}[r]_>{\,\,\,\,\alpha_{n}} &*{\circ}<3pt> }
$$ & $\varpi_i=\cl(\Lambda_i-\Lambda_0)$ \newline $(i=1,n)$,  \newline $\varpi_i=\cl(\Lambda_i- 2\Lambda_0)$ \newline $(2\le i \le n-1)$  \\
\hline $A_{2n}^{(2)}$ ($n \ge 2$) & $$
\raisebox{-.7em}{\xymatrix@R=3ex{
*{\circ}<3pt> \ar@{=>}[r]_<{\alpha_0}  &*{\circ}<3pt>
\ar@{-}[r]_<{\alpha_1}   & *{\circ}<3pt> \ar@{-}[r]_<{\alpha_2} & {}
\ar@{.}[r] & *{\circ}<3pt> \ar@{-}[r]_>{\,\,\,\ \alpha_{n-1}}
&*{\circ}<3pt>\ar@{=>}[r]_>{\,\,\,\,\alpha_n} &*{\circ}<3pt> }}
$$ & $\varpi_i = \cl(\Lambda_i -\Lambda_0)$
\newline $(i=1,\ldots,n-1)$, \newline $\varpi_n =\cl(2\Lambda_n - \Lambda_0) \ $ \\ \hline
\raisebox{-2em}{$B_{n}^{(1)}$ ($n \ge 3$)}
 &
$
\xymatrix@R=3ex{ *{\circ}<3pt> \ar@{-}[dr]^<{0}   \\
&*{\circ}<3pt> \ar@{-}[r]_<{2}  & {} \ar@{.}[r]
& *{\circ}<3pt> \ar@{-}[r]_>{\,\,\,\ n-1} &*{\circ}<3pt>\ar@{=>}[r]_>{\,\,\,\,n} &*{\circ}<3pt>  \\
*{\circ}<3pt> \ar@{-}[ur]_<{1} }
$
&
$\varpi_1=\cl( \Lambda_1-\Lambda_0)$,  \newline
$\varpi_i =\cl(\Lambda_i-2\Lambda_0)$ \newline
$(2\le i \le N-1)$, \newline
$\varpi_n=\cl(\Lambda_n-\Lambda_0) $
\\
\hline
\raisebox{-1em}{$B_{2}^{(1)}$} &
$$\xymatrix@R=3ex{*{\circ}<3pt>  \ar@{=>}[r]_<{\alpha_0} & *{\circ}<3pt> \ar@{<=}[r]_<{\alpha_2} \ar@{}[r]_>{\,\,\,\ \alpha_1}  & *{\circ}<3pt>}$$
& $\varpi_1=\cl( \Lambda_1-\Lambda_0)$,  \newline
$\varpi_2 =\cl(\Lambda_2-\Lambda_0 )$  \\
\hline
\end{tabular}
\caption{Dynkin diagrams and fundamental weights$\ake[3.5ex]$}
\label{table:Dynkin}
\end{table}
\begin{remark} \hfill
\begin{enumerate} [{\rm (a)}]
\item Note that the convention for Dynkin diagram of type $A^{(2)}_{2n}$ is different from the one in~\cite[page 54,\;55]{Kac}. However,
for each $i \in I_0$ the corresponding fundamental modules $V(\varpi_i)$ are isomorphic to each other, since the corresponding fundamental weights are conjugate to each other
under the Weyl group action (see~\cite[\S 5.2]{Kas02}).
\item Note that the Dynkin diagrams of type $B^{(1)}_2$ and $A^{(2)}_3$
in Table~\ref{table:Dynkin} are denoted by $C^{(1)}_2$ and $D^{(2)}_3$
in~\cite[page 54,\;55]{Kac}, respectively.
\item Our conventions on quantum affine algebras are different from~\cite{HL16,HO18}. To compare, we refer to \cite[Remark 3.28]{HO18}.
\end{enumerate}
\end{remark}

\begin{theorem}[{\cite{DO94,Oh15}}]\label{thm: denominators}
We have the following denominator formulas.
\begin{enumerate}[{\rm (a)}]
\item For $\g=A^{(1)}_{n-1}$ $(n \ge 2)$, $1\leq k,l \leq n-1$, we have
\begin{align*} 
d_{k,l}(z)=\hspace{-4ex} \prod\limits_{s=1}^{\min(k, l, n-k, n-l)}\hspace{-2ex} \bigl( z-(-q)^{|k-l|+2s}\bigr).
\end{align*}
\item For $\g=A^{(2)}_{n-1}$ $(n \ge 3)$, $1\leq k,l \leq \lfloor n/2 \rfloor$, we have
\begin{align*} 
  d_{k,l}(z) = \prod_{s=1}^{\min(k,l)} (z-(-q)^{|k-l|+2s})(z+q^n(-q)^{-k-l+2s}).
\end{align*}
\item For $\g=B^{(1)}_{n}$ $(n \ge 2)$ $1\leq k,l \leq n-1$, we have
\begin{align}\label{eq:denom B^(1) le n}
d_{k,l}(z)  =\prod_{s=1}^{\min (k,l)} (z-(-q)^{|k-l|+2s})(z+(-q)^{2n-k-l-1+2s}).
\end{align}
For $1 \leq k \leq n$, we have
\begin{align}\label{eq:denom B^(1) n}
d_{k,n}(z) = \begin{cases}
\displaystyle\prod_{s=1}^{k}(z-(-1)^{n+k}q_s^{2n-2k-1+4s}) & \text{ if } 1 \leq k \leq n-1, \\*
\displaystyle\prod_{s=1}^{n} (z-(q_s)^{4s-2}). & \text{ if } k =n,
\end{cases}
\end{align}
where $q_s \seteq q_n=q^{1/2}$. 
\end{enumerate}
\end{theorem}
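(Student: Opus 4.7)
The plan is to reduce every denominator $d_{k,l}(z)$ to the vector-representation case $d_{1,1}(z)$ via a fusion construction, and then read off the zeros of the normalized $R$-matrix on the fused modules. First I would treat the base case: using the explicit action of $U_q'(\g)$ on $V(\varpi_1)$ and the universal $R$-matrix, compute $d_{1,1}(z)$ directly, confirming that it agrees with the $k=l=1$ specialization of the asserted formulas. This base computation is routine because $V(\varpi_1)$ is small enough that $\mathrm{End}_{U_q'(\g)}(V(\varpi_1)\otimes V(\varpi_1)_z)$ can be analyzed by hand in each of the types $A^{(1)}$, $A^{(2)}$, $B^{(1)}$.

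Next I would realize each $V(\varpi_k)$ (for $k$ away from the "spin" node in type $B^{(1)}$) as a quotient of a suitable fusion product $V(\varpi_1)_{a_1}\otimes \cdots \otimes V(\varpi_1)_{a_k}$ with $a_j = (-q)^{-(k-1)+2(j-1)}$ (up to the appropriate twist in type $A^{(2)}$). Using the compatibility of the universal $R$-matrix with tensor products and the identity $\Rnorm_{M\otimes N,L} = (\Rnorm_{M,L}\otimes 1)\circ(1\otimes \Rnorm_{N,L})$, the denominator $d_{k,l}(z)$ is then expressed as a divisor of the product $\prod_{i,j} d_{1,1}(a_i b_j^{-1} z)$. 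The poles of this product are explicit, and one then eliminates the poles that do not persist after passing to the fusion quotient. The surviving factors yield the formulas in (a), (b), and the first clause of (c). The pole-cancellation analysis is made rigorous by showing, via weight considerations on the extremal weight vectors, that certain specializations of $\Rnorm$ factor through the fusion kernel.

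For type $A^{(2)}$ the same scheme works, but one must account for the extra family of poles coming from the twist $\ast\colon i\mapsto i^{\ast}$ on the Dynkin diagram. Concretely, the second factor $(z+q^n(-q)^{-k-l+2s})$ in (b) reflects the duality $V(\varpi_k)^{\ast}\simeq V(\varpi_{k^{\ast}})_{x}$ and appears as the pole of the $R$-matrix between $V(\varpi_k)$ and the left dual of $V(\varpi_l)$; I would extract it by combining the fusion argument with the explicit $p^{\ast}$ used in Section~\ref{subsec:quntum affine} and the formula $\Rnorm_{M,{}^{\ast}N}$ expressed via $\Rnorm_{M,N}$. The same mechanism produces the second factor in \eqref{eq:denom B^(1) le n} for type $B^{(1)}$ with $k,l<n$.

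The main obstacle, and the step that really requires separate work, is the spin case $d_{k,n}(z)$ in \eqref{eq:denom B^(1) n}, because $V(\varpi_n)$ is not obtainable from a fusion of $V(\varpi_1)$'s in any straightforward way and the shift $q_s = q^{1/2}$ changes the arithmetic of spectral parameters. The plan for this case is to use the folding embedding of $U_q'(B_n^{(1)})$ into $U_q'(D_{n+1}^{(2)})$ (or equivalently to exploit a known identification of $V(\varpi_n)$ with a Kirillov--Reshetikhin module whose crystal is the spin crystal), transport the denominator computation from the already-analyzed $D$-type side, and match spectral parameters. Equivalently, one can realize $V(\varpi_n)\otimes V(\varpi_k)$ inside $V(\varpi_n)\otimes V(\varpi_1)^{\otimes k}$ via a modified fusion at half-integral powers of $q$ and argue as before; the combinatorics of half-integer shifts is where the factor $q_s^{2n-2k-1+4s}$ enters. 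Once this is in hand, the case $k=n$, $l=n$ follows by Frenkel--Reshetikhin type arguments applied to the spin-spin $R$-matrix, giving the product $\prod_{s=1}^n (z-q_s^{4s-2})$.
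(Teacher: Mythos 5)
First, note that the paper does not prove Theorem~\ref{thm: denominators} at all: it is imported verbatim from \cite{DO94} and \cite{Oh15}, so there is no internal proof to measure your argument against. Judged on its own terms, your proposal is a reasonable reconstruction of the strategy actually used in those references (compute $d_{1,1}$ by hand, realize $V(\varpi_k)$ by fusion of shifted copies of $V(\varpi_1)$, and use the factorization $\Rnorm_{M\otimes N,L}=(\Rnorm_{M,L}\otimes 1)\circ(1\otimes\Rnorm_{N,L})$ together with duality and $p^{*}$ to propagate denominators), but as written it establishes only one inequality. The fusion/divisibility argument shows that $d_{k,l}(z)$ divides a product of shifted copies of $d_{1,1}$; the entire content of the theorem, however, is the \emph{exact} list of surviving factors with their multiplicities (all equal to one). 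Your "pole-cancellation analysis \ldots made rigorous by weight considerations on the extremal weight vectors" is asserted rather than carried out, and it is precisely here that the real work lies: one must exhibit, for each listed zero, a vector on which the renormalized $R$-matrix genuinely degenerates (equivalently, a nontrivial submodule of the specialized tensor product), and rule out every other candidate zero coming from the upper bound. Without this the formulas in (a), (b) and \eqref{eq:denom B^(1) le n} are not established.

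The second gap is the one you yourself flag: the spin case \eqref{eq:denom B^(1) n}. Saying that one should either fold from $D^{(2)}_{n+1}$ or perform a "modified fusion at half-integral powers of $q$" is a statement of intent, not an argument; in particular the appearance of $q_s=q^{1/2}$ and the sign $(-1)^{n+k}$ in $\prod_{s=1}^{k}(z-(-1)^{n+k}q_s^{2n-2k-1+4s})$ cannot be extracted from the vector-representation data alone, since $V(\varpi_n)$ is not a subquotient of any tensor power of $V(\varpi_1)$. In \cite{Oh15} this case is handled by a separate explicit computation with the spin representation (using surjections of the form $V(\varpi_n)_a\otimes V(\varpi_n)_b\twoheadrightarrow V(\varpi_k)_c$ to bootstrap $d_{k,n}$ and $d_{n,n}$), which is genuinely additional input. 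So the proposal correctly identifies the architecture of the known proof but leaves both the exactness of the denominator and the entire spin-node computation unproved.
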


\subsection{Hernandez-Leclerc category}\label{sec:HL}
For each quantum affine algebra $U_q'(\g)$, we define a quiver $\Cquiver(\g)$ as follows:
\begin{enumerate}[{\rm (i)}]
\item Take the set of equivalence classes $\hat{I}_{\g} \seteq (I_0 \times \cor^\times) /  \sim$ as the set of vertices,
where the equivalence relation is given by $(i,x) \sim (j,y)$ if and only if $V(\varpi_i)_x  \simeq  V(\varpi_j)_y$.
\item  Put $d$-many arrows from $(i,x)$ to $(j,y)$, where $d$ denotes the order of zero of $d_{i,j}(z_j / z_i)$ at $z_j / z_i = {y / x}$.
\end{enumerate}
Note that  $(i,x)$ and $(j,y)$ are connected by at least one arrow in $\Cquiver(\g)$  if and only if $V(\varpi_i)_x \tens V(\varpi_j)_y$ is reducible (\cite[Corollary 2.4]{AK}).

\smallskip

Let $\Cquiver_0(\g)$ be a connected component of $\Cquiver(\g)$.
Note that a connected component of $\Cquiver(\g)$ is unique up to a spectral parameter shift
and hence $\Cquiver_0(\g)$ is uniquely determined up to a quiver isomorphism. For types $A$ and $B$, one can take
\eq&&\ba{rl}
\Cquiver_0(A^{(1)}_{n}) &\seteq \{(i,(-q)^{p}) \in I_0 \times \cor^{\times} \ | \  p \equiv i+1\ \mathrm{mod}\  2 \}, \\
\Cquiver_0(A^{(2)}_{2n-1}) &\seteq \{( i,\pm(-q)^{p}) \in I_0 \times \cor^{\times} \ | \ \text{$i  \in I_0$, $p \equiv i+1 \ {\rm mod}\  2$} \}, \\
\Cquiver_0(A^{(2)}_{2n}) &\seteq \{(i,(-q)^{p}) \in  I_0\times \cor^{\times} \ | \, p \in \Z \},  \\
\Cquiver_0({B_{n}^{(1)}}) &\seteq \{ (i,(-1)^{i-1}q_{\kappa} q^m),  (n,q^m)\ | \ 1\le i \le n-1, \ m \in \Z\},
\ea\label{eq:S0B^(1)_n}
\eneq
where $q_{\kappa}$ in~\eqref{eq:S0B^(1)_n} is defined as follows:
\begin{align*}
q_{\kappa} \seteq (-1)^{n+1}q_s^{2n+1}.
\end{align*}
We remark here that
\begin{enumerate}[{\rm (i)}]
\item $V(\varpi_n)_x\simeq V(\varpi_n)_{-x}$ in the $A^{(2)}_{2n-1}$-case,
\item $\Cquiver_0(A^{(1)}_{2n-1}) \simeq \Cquiver_0(A^{(2)}_{2n-1})$ as quivers (\cite[(2.7)]{KKKO15C}).
\end{enumerate}

\noi
Let us denote by $\Ca^0_\g$ the smallest abelian subcategory of  $\Ca_\g$ such that
\begin{enumerate}
\item[{\rm (a)}] $\Ca^0_\g$ contains $\{V (\varpi_i)_x \ | \  (i,x) \in \Cquiver_0(\g) \}$,
\item[{\rm (b)}] it is stable under taking submodules, quotients, extensions and tensor products.
\end{enumerate}
The category $\Ca^0_\g$ for symmetric affine type $U_q'(\g)$ was introduced in~\cite{HL10}.
Note that every simple module in $\Ca_\g$ is a tensor product of certain parameter shifts of some simple modules in $\Ca^0_\g$~\cite[\S 3.7]{HL10}.
The Grothendieck ring $K(\Ca^0_\g)$ of $\Ca^0_\g$ is the polynomial ring generated by the classes of modules in $\{V (\varpi_i)_x \ | \ (i,x) \in \Cquiver_0(\g)\}$~\cite{FR99}.

\section{Quantum cluster algebras and monoidal categorification} \label{Sec: cluster}
In this section, we recall the definition of quantum cluster algebras introduced in~\cite{BZ05,FZ02}.
Then we review the monoidal categorification of a quantum cluster algebra developed in~\cite{KKKO18} (see also~\cite{HL10}).

\subsection{Quantum cluster algebras} Fix a countable index set
$\K=\Kex \sqcup \Kfr$ which is decomposed into the subset $\Kex$ of exchangeable indices and the subset $\Kfr$ of frozen indices.
Let $L=(\la_{ij})_{i,j\in \K}$ be a skew-symmetric integer-valued $\K \times \K$-matrix.

Let $A$ be a $\Ah$-algebra.
We say that a family $\{ x_i \}_{i \in \K}$ of elements in $A$ is {\it $L$-commuting} if it satisfies
$$x_ix_j=q^{\la_{ij}}x_jx_i \quad \text{ for any } i,j \in \K.$$
We say that an $L$-commuting family $\{ x_i \}_{i \in \K}$
is {\em  algebraically independent} if the family
$$\set{x_{i_1}\cdots x_{i_\ell}}{\ell\in\Z_{\ge0},\; i_1,\ldots,i_\ell\in K,\; i_1\le\cdots\le i_\ell}$$
is linearly independent over  $\Z[q^{\pm1/2}]$.
Here $\le$ is a total order on $K$.

Let $\tB=(b_{ij})_{(i,j)\in \K \times \Kex}$ be an integer-valued matrix such that
\eq &&
\parbox{75ex}{
\begin{enumerate}[{\rm (a)}]
\item
 for each $j \in \Kex$, there exist finitely many $i \in \K$ such that $b_{ij} \ne 0$,
\item  the {\it principal part} $B \seteq (b_{ij})_{i,j \in \Kex}$ is skew-symmetric.
\end{enumerate}
}\label{eq: condition B}
\eneq
We extend the definition of $b_{ij}$ for $(i,j)\in K\times K$ by:
$$\text{
$b_{ij}=-b_{ji}$ if $i\in \Kex$ and $j\in K$ and
$b_{ij}=0$ for $i,j\in \Kex$. }$$

To the matrix $\tB$, we associate the quiver $Q_{\tB}$ such that
the set of vertices is $\K$ and
the number of arrows from $i\in\K$ to $ j\in\K$ is $\max(0,b_{ij})$.
Then, $Q_{\tB}$ satisfies that
\begin{eqnarray} &&
\parbox{75ex}{
\begin{enumerate}[{\rm (a)}]
\item the set of vertices of $Q_{\tB}$ are labeled by $\K$,
\item $Q_{\tB}$ does not have  loops, $2$-cycle and arrows between frozen vertices,
\item each exchangeable vertex $v$ of $Q_{\tB}$ has {\it finite degree}; that is, the number of arrows incident with $v$ is finite.
\end{enumerate}
}\label{eq: Quiver condition}
\end{eqnarray}

Conversely, for a given quiver satisfying~\eqref{eq: Quiver condition}, we can associate a matrix $\tB$ by
\begin{align}\label{eq: bij}
b_{ij} \seteq \text{(the number of arrows from $i$  to $j$)} \hspace{-.2ex}   -  \hspace{-.2ex}  \text{(the number of arrows from $j$  to $i$)}.
\end{align}
Then $\tB$ satisfies~\eqref{eq: condition B}.

We say that the pair $(L,\tB)$ is {\it compatible with a positive integer $d$}, if
$$   \sum_{k \in \K} \lambda_{ik}b_{kj} =\delta_{i,j}d \qquad \text{ for each $i \in \K$ and $j \in \Kex$}.$$

\begin{definition}
For a $\Ah$-algebra $A$, a triple $\Seed=(\{x_i\}_{i \in \K},L,\tB)$ consisting of
\begin{enumerate}
\item[{\rm (i)}] a compatible pair $(L,\tB)$,
\item[{\rm (ii)}] an $L$-commuting algebraically independent family $\{x_i\}_{i \in \K}$
\end{enumerate}
is called a {\it quantum seed} in $A$. We also call
\begin{enumerate}
\item[{\rm (a)}] $\{x_i\}_{i \in \K}$ the {\it cluster} of $\Seed$ and elements in $\{x_i\}_{i \in \K}$ the {\it cluster variables},
\item[{\rm (b)}] $x_i$ $(i \in \Kex)$ the {\it exchangeable variables}  and $x_i$ $(i \in \Kfr)$ the {\it frozen variables},
\item[{\rm (c)}] $x^{{\bf a}}$ $\bigl({\bf a} \in \Z_{\ge 0}^{\oplus \K}\bigr)$ the {\it quantum cluster monomials},
\end{enumerate}
where
$$ x^{{\bf a}} \seteq q^{1/2 \sum_{s>t}a_{i_s}a_{i_t} \la_{i_s,i_t}} x_{i_1}^{a_{i_1}}
 \cdots x_{i_r}^{a_{i_r}},$$
for ${\bf a}=(a_{i})_{i\in\K}$,
$\set{i\in\K}{a_i\not=0}\subset\{i_1,\ldots,i_r\}$. Note that $x^{{\bf a}}$ does not depend on the choice of $\{i_1,\ldots,i_r\}$.
\end{definition}

For $k \in \Kex$, the {\it mutation $\mu_k(L,\tB) \seteq (\mu_k(L),\mu_k(\tB))$ of a compatible pair $(L,\tB)$ in direction $k$} is a pair $(\mu_k(L),\mu_k(\tB))$ consisting of
a $\K \times \K$-matrix $\mu_k(L)$ and a $\K \times \Kex$-matrix $\mu_k(\tB)$ defined as follows$\colon$
\begin{eqnarray} &&
\parbox{81ex}{
\begin{enumerate}
\item[{\rm (a)}] $\mu_k(L)_{ij} =
\begin{cases}
  -\la_{ij}+\displaystyle\sum _{t\in\K} \max(0, -b_{tk}) \la_{tj} \quad \  & \text{if} \ i=k, \ j\neq k, \\
  -\la_{ij}+\displaystyle\sum _{t\in\K} \max(0, -b_{tk}) \la_{it} & \text{if} \ i \neq k, \ j= k, \\
   \la_{ij} & \text{otherwise.}
\end{cases}$

\vs{1ex}
\item[{\rm (b)}]  $\mu_k(\tB)_{ij} =
\begin{cases}
  -b_{ij} & \text{if}  \ i=k \ \text{or} \ j=k, \\
  b_{ij} + (-1)^{\true(b_{ik} < 0)} \max(b_{ik} b_{kj}, 0) & \text{otherwise.}
\end{cases}
$
\end{enumerate}
}\label{eq: mutation in a direction k}
\end{eqnarray}
Then one can check that $\mu_k(\tB)$ satisfies~\eqref{eq: condition B} and the pair $(\mu_k(L),\mu_k(\tB))$ is compatible with the same integer $d$ as
in~\cite{BZ05}.

We define
\begin{align*} 
a_i'= \begin{cases}
  -1 & \text{if} \ i=k, \\
 \max(0,b_{ik}) & \text{if} \ i\neq k,
\end{cases} \qquad
a_i''= \begin{cases}
  -1 & \text{if} \ i=k, \\
 \max(0,-b_{ik}) & \text{if} \ i\neq k.
\end{cases}
\end{align*}
and set ${\bf a}'\seteq(a_i')_{i\in\K}$ and ${\bf a}''\seteq(a_i'')_{i\in\K}$ which are contained in $\Z^{\oplus \K}$.

Let $A$ be a $\Ah$-algebra contained in a skew-field $\mathscr{F}$. Let $\Seed=(\{x_i\}_{i \in \K},L,\tB)$ be a quantum seed in $A$. For $k \in \Kex$, we define the elements $\mu_k(x)_i$ of $K$ as follows$\colon$
$$
\mu_k(x)_i  =\begin{cases}
x^{{\bf a}'}  +   x^{{\bf a}''}, & \text{if} \ i=k, \\
x_i & \text{if} \ i\neq k.
\end{cases}
$$
Then $\{\mu_k(x)_i\}_{i \in \K}$ is
a $\mu_k(L)$-commuting algebraically independent family. 
We call
\begin{align*}
\mu_k(\Seed)\seteq \bigl(\{\mu_k(x)_i\}_{i\in\K},\mu_k(L),\mu_k(\tB)\bigr)
\end{align*}
the {\it mutation of $\Seed$ in direction $k$}.

 \begin{definition}
Let $\Seed=(\{x_i\}_{i\in\K},L, \tB)$ be a quantum seed in $A$.
The {\it quantum cluster algebra $\mathscr{A}_{q^{1/2}}(\Seed)$} associated to the quantum seed $\Seed$ is  the $\Ah$-subalgebra of the skew field $\mathscr{F}$
generated by all the quantum cluster variables in the quantum seeds obtained from $\Seed$ by any sequence of mutations.
 \end{definition}
We call $\Seed$ the {\it initial quantum seed} of the quantum cluster algebra $\mathscr{A}_{q^{1/2}}(\Seed)$.

\subsection{Quantum cluster algebras $\mathscr{A}_{q^{1/2}}(\n(w))$}
In this subsection, we assume that the generalized Cartan matrix $\cmA$ is symmetric. 
Let $\tw=s_{i_1}\cdots s_{i_r}$ be a reduced expression of $w\in W$.
By Proposition~\ref{prop: multi D},
$\D_\tw(i,0)$ and $\D_\tw(j,0)$ {\it $q$-commute}; i.e., there exists $\la_{ij} \in \Z$ satisfying
$$  \D_\tw(i,0)\D_\tw(j,0)=q^{\la_{ij}}\D_\tw(j,0)\D_\tw(i,0).$$
Hence we have an integer-valued skew-symmetric matrix $L=(\lambda_{ij})_{1 \le i,j \le r}$.

Set
\begin{align} \label{eq: decomposition of indices}
\text{$\K=\{1,\ldots,r\}$, $\Kfr=\{ k \in \K \ | \ k_+=r+1\}$, and $\Kex \seteq K \setminus \Kfr$.}
\end{align}

\begin{definition}[\cite{GLS13}] \label{def: quiver Q assoc tw}
We define the quiver $Q$ with the set of vertices $Q_0$ and the set of arrows $Q_1$ which is associated to $\tw$ as follows:
\begin{enumerate}
\item[$(Q_0)$] $Q_0=\K=\{1,\ldots,r\}$.
\item[$(Q_1)$] There are two types of arrows:
\begin{itemize}
\item ordinary arrows $ \  \  \colon$ $s\To[\ |a_{i_s,i_t}|\ ] t$ \hs{3.8ex} if $1\le s<t<s_+<t_+\le r+1$,
\item horizontal arrows $\colon$ $s\To[{ \hspace{6.3ex} }] s_-$
\hs{3ex}if $1\le s_-<s\le r$.
\end{itemize}
\end{enumerate}
Let $\tB = (b_{ij})$ be the integer-valued $\K\times\Kex$-matrix associated to the quiver $Q$ by~\eqref{eq: bij}.
\end{definition}

\begin{proposition}[{\cite[Proposition 10.1]{GLS13}}] The pair $(L,\tB)$ is compatible with $d=2$.
\end{proposition}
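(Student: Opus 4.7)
The plan is to verify the compatibility identity $\sum_{k\in\K}\la_{ik}b_{kj}=2\delta_{i,j}$ for all $i\in\K,\,j\in\Kex$ by computing $L$ and $\tB$ explicitly in terms of the reduced word $\tw$. First I would derive a closed-form expression for $\la_{ij}$ using Proposition~\ref{prop: multi D}(ii): for $i<j$, setting $\la=\La_{i_j}$, $\mu=\La_{i_i}$, $s'=w_{\le i}$, $s=w_{\le i}^{-1}w_{\le j}$, $t=t'=e$, the length-additivity $\ell(w_{\le j})=\ell(w_{\le i})+\ell(w_{\le i}^{-1}w_{\le j})$ is automatic because $\tw$ is reduced, and the preorder conditions reduce to $w\La\preceq\La$ for dominant $\La$. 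The proposition then yields $\la_{ij}=-(\la_j+\La_{i_j},\,\la_i-\La_{i_i})$ for $i<j$, extended by skew-symmetry.

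Next I would enumerate the nonzero entries of the $j$-th column of $\tB$ using Definition~\ref{def: quiver Q assoc tw}. For $j\in\Kex$ they are $b_{j_-,j}=-1$ and $b_{j_+,j}=+1$ (from the two horizontal arrows at $j$, both existing since $j\in\Kex$ forces $j_+\le r$), together with $b_{k,j}=+|a_{i_k,i_j}|$ for $k$ with $k<j<k_+<j_+$ and $b_{k,j}=-|a_{i_k,i_j}|$ for $k$ with $j<k<j_+<k_+$. Substituting gives
\begin{align*}
\sum_k\la_{ik}b_{kj}=\la_{i,j_+}-\la_{i,j_-}+\!\!\!\sum_{k<j<k_+<j_+}\!\!\!|a_{i_k,i_j}|\la_{ik}\;-\!\!\sum_{j<k<j_+<k_+}\!\!\!|a_{i_k,i_j}|\la_{ik},
\end{align*}
with the convention $\la_{i,0}=0$.

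The heart of the proof is a telescoping identity. The weights along $\tw$ satisfy $\la_m-\la_{m_-}=-w_{\le m_-}s_{i_{m_-+1}}\cdots s_{i_{m-1}}(\alpha_{i_m})$, which expands as $-w_{\le m_-}\alpha_{i_m}$ plus a nonnegative integer combination of roots $w_{\le m_-}\alpha_{i_l}$ for $m_-<l<m$, with coefficients given by products of Cartan integers along the path. Substituting the explicit formula for $\la_{ik}$ from Step~1 and pairing $\la_{j_+}-\la_{j_-}$ with $\la_i-\La_{i_i}$ via the symmetric bilinear form, the coefficients produced by the telescoping are precisely those appearing in the ordinary-arrow sums, because in the simply-laced setting $|a_{i_k,i_j}|=-(\alpha_{i_k},\alpha_{i_j})$. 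All internal terms cancel, and only a boundary residue survives: it vanishes unless $i=j$, in which case it isolates the $\alpha_{i_j}$-component of $\la_j-\la_{j_-}$ and evaluates to $(\alpha_{i_j},\alpha_{i_j})=2$.

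The main obstacle lies in the bookkeeping: one must split cases by the position of $i$ relative to the triple $j_-<j<j_+$, handle boundary situations such as $j_-=0$ or $i=j$, and track signs carefully when invoking the skew-symmetry $\la_{ij}=-\la_{ji}$. The symmetric-Cartan hypothesis is essential, since it converts the integer weights $|a_{i_k,i_j}|$ of the ordinary arrows into the bilinear-form values $-(\alpha_{i_k},\alpha_{i_j})$ required for the telescoping cancellation with the uniform constant $d=2$.
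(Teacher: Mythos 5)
The paper does not actually prove this statement---it is imported wholesale from \cite[Proposition 10.1]{GLS13}---so the benchmark is the computation in that reference, which follows exactly the architecture you propose: extract $L$ from Proposition~\ref{prop: multi D}\,(ii), list the nonzero entries of the $j$-th column of $\tB$ from Definition~\ref{def: quiver Q assoc tw}, and verify $\sum_{k}\la_{ik}b_{kj}=2\delta_{i,j}$ by a telescoping identity among the extremal weights $\la_k=w_{\le k}\La_{i_k}$. Your Steps 1 and 2 are correct: the substitution $s'=w_{\le i}$, $s=w_{\le i}^{-1}w_{\le j}$, $t=t'=e$ is legitimate and yields $\la_{ij}=-(\la_j+\La_{i_j},\,\la_i-\La_{i_i})$ for $i<j$, and the $j$-th column of $\tB$ is as you describe (with the caveat that the arrow $j\to j_-$ exists only when $j_-\ge 1$).

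The gap is in Step 3, the cancellation itself. The expansion you invoke, $\la_m-\la_{m_-}=-w_{\le m_-}\al_{i_m}-\sum_{m_-<l<m}c_l\,w_{\le m_-}\al_{i_l}$, obtained by pushing $\al_{i_m}$ through $s_{i_{m_-+1}}\cdots s_{i_{m-1}}$, has coefficients $c_l$ that are sums of \emph{products} of Cartan integers over subwords and is supported on essentially all positions $l\in(m_-,m)$, including those whose letters are not adjacent to $i_m$ (e.g.\ $s_1s_2\al_3=\al_1+\al_2+\al_3$ in type $A_3$). These coefficients are not the single integers $|a_{i_k,i_j}|$, and the supporting positions are not the index sets $\{k : k<j<k_+<j_+\}$ and $\{k : j<k<j_+<k_+\}$ labelling the ordinary arrows, so the asserted term-by-term matching with the arrow contributions fails. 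The identity that does make the computation close up comes from expanding the simple root in the fundamental-weight basis instead: from $s_{i_m}\La_{i_m}=\La_{i_m}-\al_{i_m}$ and $\al_{i_m}=\sum_{c}a_{c,i_m}\La_c$ one gets
$$\la_m+\la_{m_-}=\sum_{c\ne i_m}|a_{c,i_m}|\,\la_{m^-(c)},$$
where $m^-(c)$ denotes the last occurrence of the letter $c$ before $m$ (with $\la_{m^-(c)}\seteq\La_c$ if there is none, and $\la_{m_-}\seteq\La_{i_m}$ if $m_-=0$). Taking this at $m=j$ and $m=j_+$ and subtracting expresses $\la_{j_+}-\la_{j_-}$ as $\sum|a_{i_k,i_j}|\la_k$ summed over exactly the two arrow index sets, up to boundary terms in the $\La_c$, after which pairing with $\la_i\pm\La_{i_i}$ produces $2\delta_{i,j}$. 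With this replacement your plan is sound; as written, the crucial cancellation rests on the wrong expansion and would not go through.
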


\begin{theorem}[{\cite[Theorem 12.3]{GLS13}, \cite[Corollary 11.2.8]{KKKO18}}]  Let
$\mathscr{A}_{q^{1/2}}(\Seed)$ be the quantum cluster algebra associated to the initial quantum seed
$$ \Seed \seteq( \{ q^{-(d_s,d_s)/4}\D_\tw(s,0) \}_{1 \le s \le r}, L, \tB ),$$
where $d_s \seteq\wt\bl\D_\tw(s,0)\br$.
Then we have $\Z[q^{\pm 1/2}]$-algebra isomorphism
\begin{align*}
 \mathscr{A}_{q^{1/2}}(\Seed) \simeq
\mathscr{A}_{q^{1/2}}(\n(w))\seteq \Z[q^{\pm 1/2}] \otimes_{\Z[q^{\pm 1}]}\Aanw.
 \end{align*}
\end{theorem}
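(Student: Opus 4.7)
The plan is to verify that $\Seed$ defines a valid quantum seed inside $\mathscr{A}_{q^{1/2}}(\n(w))$ and then to prove the two-sided inclusion between the cluster algebra it generates and $\mathscr{A}_{q^{1/2}}(\n(w))$. That $\{q^{-(d_s,d_s)/4}\D_\tw(s,0)\}_{1\le s\le r}$ is an $L$-commuting algebraically independent family follows from Proposition~\ref{prop: multi D}(ii): choosing $s,s',t,t'$ appropriately, one obtains the $q$-commutation relations defining $L$, and the rescaling by $q^{-(d_s,d_s)/4}$ normalizes self-pairings so that the resulting matrix is integer-valued and skew-symmetric. Algebraic independence is inherited from the fact that the $\D_\tw(s,0)$ are pairwise distinct members of the upper global basis $\B^\upper(\Aqn)$ and hence $\Q(q)$-linearly independent. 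The compatibility of $(L,\tB)$ with $d=2$ is already supplied by the preceding proposition.

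The heart of the proof, and the main obstacle, is the quantum exchange (T-system) relation for unipotent quantum minors: for each exchangeable index $k\in\Kex$, one must establish an identity in $\Aanw$ of the form
\begin{align*}
\D_\tw(k,0)\,\D_\tw(k_+,0) \;=\; q^{\alpha_k}\,\D_\tw(k_+,k)\,\D_\tw(k_-,0) \;+\; q^{\beta_k}\prod_{j\ne i_k}\D_\tw(k^+(j),0)^{-a_{i_k,j}},
\end{align*}
for suitable integers $\alpha_k,\beta_k$. After rewriting this relation in the normalized cluster variables $q^{-(d_s,d_s)/4}\D_\tw(s,0)$, the two monomials on the right-hand side must match the monomials $x^{\mathbf{a}'}$ and $x^{\mathbf{a}''}$ appearing in the mutation rule $\mu_k(x)_k=x^{\mathbf{a}'}+x^{\mathbf{a}''}$, with the exponents read off from the quiver $Q_{\tB}$ of Definition~\ref{def: quiver Q assoc tw}. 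At $q=1$ this reduces to a classical Pl\"ucker-type identity for generalized minors, but in the quantum case one must track the $q$-powers carefully using the symmetric bilinear form and the weights $\lambda_k$; the key technical input is again the $q$-commutation of unipotent quantum minors from Proposition~\ref{prop: multi D}, combined with a pentagon-type identity relating $\D(\eta,\zeta)$ for three consecutive Weyl group elements. In the categorical framework this identity lifts to a short exact sequence of determinantial modules over the quiver Hecke algebra, which is the route of \cite{KKKO18}.

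Granted the exchange relation, the inclusion $\mathscr{A}_{q^{1/2}}(\Seed)\subseteq \mathscr{A}_{q^{1/2}}(\n(w))$ follows by induction on the length of a mutation sequence, since each mutation replaces one normalized cluster variable by an element of $\Ah\otimes_\Aa\Aanw$. For the reverse inclusion, recall that $\Aanw$ is generated over $\Aa$ by the family $\{\D_\tw(i,i_-)\}_{1\le i\le r}$, and in fact by all $\D_\tw(s,t)$ with $0\le t\le s\le r$ (\cite[Corollary 12.4]{GLS13}). Iterating the exchange relation shows that each $\D_\tw(s,t)$ is reached as a cluster variable by an explicit finite mutation sequence from $\Seed$: mutating successively at the exchangeable indices of the same color $i_s$, starting from the smallest such index and moving up, transforms $\D_\tw(s,0)$ into $\D_\tw(s,t)$ for each $0< t\le s$ (up to normalization). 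Combining the two inclusions yields the desired $\Z[q^{\pm 1/2}]$-algebra isomorphism.
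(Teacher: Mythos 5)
The paper offers no proof of this statement: it is imported verbatim from \cite[Theorem 12.3]{GLS13} and \cite[Corollary 11.2.8]{KKKO18}, so your sketch can only be measured against those references. Your architecture --- verify that $\Seed$ is a quantum seed, establish the quantum T-system exchange identities for the minors, then prove the two inclusions --- is indeed the strategy of \cite{GLS13}, with \cite{KKKO18} supplying the categorical lift you allude to, so the route is the right one.

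The genuine gap is in the step you yourself call the heart of the proof: the displayed exchange relation cannot hold. It is not weight-homogeneous: $\D_\tw(k,0)\D_\tw(k_+,0)$ has weight $(\la_k-\La_{i_k})+(\la_{k_+}-\La_{i_k})$, while $\D_\tw(k_+,k)\D_\tw(k_-,0)$ has weight $(\la_{k_+}-\la_k)+(\la_{k_-}-\La_{i_k})$; the difference $2\la_k-\la_{k_-}-\La_{i_k}=(\la_k-\la_{k_-})+(\la_k-\La_{i_k})$ is the sum of $-w_{\le k-1}\al_{i_k}$ and a nonzero element of $\rl^-$, hence nonzero, and no powers of $q$ can repair this. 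Structurally, $\D_\tw(k_+,0)$ is already the initial cluster variable at the vertex $k_+$, so it cannot play the role of the mutated variable at $k$: by \eqref{eq: prime} the new variable is $\ch$ of $\M_\tw(k_+,k) \hconv \bigl(\sodot_{t < k <t_+ <k_+} \M_\tw(t,0)^{\snconv |a_{i_k,i_t}|}\bigr)$, and the two exchange monomials, read off from the quiver of Definition~\ref{def: quiver Q assoc tw}, are (up to powers of $q$) $\D_\tw(k_+,0)\prod_{s<k<s_+<k_+}\D_\tw(s,0)^{|a_{i_s,i_k}|}$ and $\D_\tw(k_-,0)\prod_{k<t<k_+<t_+}\D_\tw(t,0)^{|a_{i_k,i_t}|}$. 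Neither has the form $\prod_{j\ne i_k}\D_\tw(k^+(j),0)^{-a_{i_k,j}}$: the relevant indices are the sources and targets of the ordinary arrows at $k$, which need not be the $k^+(j)$, and the horizontal factors $\D_\tw(k_\pm,0)$ are missing. Since both inclusions in your final paragraph are deduced from this identity, the argument does not close as written. Two further soft spots: algebraic independence of the cluster needs more than pairwise distinctness of global basis elements (one uses that ordered monomials in strongly commuting real simples are again, up to $q$-powers, global basis elements), and your mutation sequence realizing $\D_\tw(s,t)$ as a cluster variable is asserted rather than proved --- in \cite{GLS13} this is a separate induction.
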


\subsection{Cluster algebras $K(\Ca_\g^-)$} \label{subsec: HL cluster -}
In \cite{HL16}, Hernandez and Leclerc introduced a proper
subcategory $\Ca_\g^-$ of $\Ca_\g$ for untwisted quantum affine
algebras which contains all simple objects of $\Ca_\g$ up to
parameter shifts. The definitions of $\Ca_\g^-$ for types
$A_n^{(1)}$ and $B_n^{(1)}$ can be taken as follows. Take
$\Cquiver_-(\g)$ the subset of $I \times \cor^\times$ as
\begin{align*}
\Cquiver_-(A^{(1)}_{n}) \seteq & \{(i,(-q)^p)  \ | \ p \le 0 \text{ and } p \equiv i \; \mathrm{mod}\  2 \}, \\
\Cquiver_-({B_{n}^{(1)}}) \seteq & \{ (i,(-1)^{i-1}q_{\kappa} q^m) \in \Cquiver_0(B^{(1)}_{n}) \ | \ n+\frac{1}{2}+m  \in \frac{1}{2}\Z_{\le 0}\} \\
& \hs{30ex}\cup  \{ (n,q^m)\in \Cquiver_0(B^{(1)}_{n}) \ | \ m \in \Z_{\le 0} \}.
\end{align*}
The category $\Ca^-_\g$ is the smallest abelian full subcategory of
$\Ca_\g$ such that
\begin{enumerate}
\item[{\rm (a)}] $\Ca^-_\g$ contains $\{V (\varpi_i)_x \ | \  (i,x) \in \Cquiver_-(\g) \}$,
\item[{\rm (b)}] it is stable under taking submodules, quotients, extensions and tensor products.
\end{enumerate}

\noi Also, they defined the quiver $G_\g^-$ of infinite rank, whose
vertices are labeled by a certain subset $\{(i,x)\}$ of $I_0 \times
\Z_{\le 0}$ (see \cite{HL16} for details). Let
$\mathbf{z}=\{z_{i,x}\}$ be the indeterminates labeled by the
$\{(i,x)\}$.

\begin{theorem}[{\cite[Theorem 5.1]{HL16}}] \label{thm: HL16} There exists an isomorphism between the cluster algebra $\mathscr{A}$ associated with the initial seed $(\mathbf{z},G_\g^-)$
and the Grothendieck ring $K(\Ca^-_\g)$ of $\Ca^-_\g$.
\end{theorem}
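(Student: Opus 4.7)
The plan is to construct an explicit $\Z$-algebra homomorphism $\Phi \colon \mathscr{A} \to K(\Ca^-_\g)$ sending the initial cluster variables to classes of Kirillov--Reshetikhin (KR) modules, and then to argue it is an isomorphism by combining three ingredients: a polynomial description of $K(\Ca^-_\g)$ in terms of fundamental classes, a vertex-by-vertex match between mutations of the initial seed and T-system relations among KR-modules, and a surjectivity argument using iterated mutations. The Frenkel--Reshetikhin theorem that $K(\Ca^-_\g)$ is the polynomial ring in $\{[V(\varpi_i)_x] \mid (i,x)\in\Cquiver_-(\g)\}$ will play a structural role throughout.

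First, I would fix a bijection between the vertices of $G_\g^-$ and a family of parameters for KR-modules, and define $\Phi(z_{i,x}) \seteq [\W^{(i)}_{k_{i,x},s_{i,x}}] \in K(\Ca^-_\g)$, where the integers $k_{i,x},s_{i,x}$ are read off from the coordinates of $(i,x)$ in $G_\g^-$. Each KR-class is a positive monomial in the fundamental classes, so the family $\{\Phi(z_{i,x})\}$ is algebraically independent in $K(\Ca^-_\g)$; this already gives injectivity of any candidate extension of $\Phi$.

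The main obstacle is to show that $\Phi$ is compatible with cluster mutation, which amounts to checking that for each exchangeable vertex $k=(i,x)\in \Kex$ of $G_\g^-$ the T-system short exact sequence
\[ 0 \,\to\, \bigotimes_{j} (\W^{(j)}_{*,*})^{\otimes |a_{i,j}|} \,\to\, \W^{(i)}_{k',s'}\otimes \W^{(i)}_{k',s'+2} \,\to\, \W^{(i)}_{k'+1,s'}\otimes \W^{(i)}_{k'-1,s'+2} \,\to\, 0 \]
translates, under $\Phi$, into precisely the cluster exchange identity $z_k\cdot \mu_k(z)_k = z^{\mathbf{a}'} + z^{\mathbf{a}''}$ prescribed by the matrix $\tB$ attached to $G_\g^-$. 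This requires a combinatorial verification that the arrows of $G_\g^-$ entering and leaving $(i,x)$ encode exactly the two monomials occurring in the T-system; for non-simply-laced types such as $B^{(1)}_n$, the twisted form of the T-system must be used and the multiplicities of arrows in $G_\g^-$ matched against entries of the symmetrized Cartan matrix, which is the delicate core of the argument.

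Once compatibility with mutation at every exchangeable vertex is established, $\Phi$ extends uniquely to a $\Z$-algebra homomorphism on the whole cluster algebra $\mathscr{A}$. Surjectivity then follows by exhibiting each fundamental class $[V(\varpi_i)_x]$ in the image: iterating mutations from the initial seed in a suitable order reproduces every KR-class lying in $\Ca^-_\g$, and Frenkel--Reshetikhin then expresses each fundamental class as a monomial in those KR-classes; since the fundamental classes generate $K(\Ca^-_\g)$, this gives surjectivity. Combined with the algebraic independence noted above, this produces the desired ring isomorphism $\mathscr{A} \isoto K(\Ca^-_\g)$.
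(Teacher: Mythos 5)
First, a point of comparison: the paper does not prove this statement at all --- it is imported verbatim from \cite[Theorem 5.1]{HL16} and used only as an input to Theorem~\ref{thm: HL}. So there is no in-paper argument to measure your proposal against; what follows assesses it as a free-standing reconstruction of Hernandez--Leclerc's proof.

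Your sketch has two genuine gaps. First, the claim that ``each KR-class is a positive monomial in the fundamental classes'' is false: $[\W^{(i)}_{k,r}]$ is the class of the \emph{head} of a tensor product of fundamental modules, so in $K(\Ca^-_\g)$ it equals that monomial \emph{minus} the classes of the remaining composition factors (already for $\mathfrak{sl}_2$ one has $[W^{(1)}_{2,r}]=[V(\varpi_1)_r]\,[V(\varpi_1)_{rq^2}]-1$). Algebraic independence of the initial KR classes is still true, but it must be extracted from the unitriangularity of the transition matrix between KR classes and monomials in fundamental classes (via highest $\ell$-weight monomials of $q$-characters); the same unitriangularity is what shows these classes \emph{freely generate} $K(\Ca^-_\g)$, a fact your surjectivity step silently uses. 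Second, and more seriously, your argument only delivers the inclusion $K(\Ca^-_\g)\subseteq\Phi(\mathscr{A})$ --- which is in fact immediate once the initial KR classes are known to freely generate, since $\mathscr{A}$ always contains the polynomial ring in its initial cluster variables. The substantive direction is $\Phi(\mathscr{A})\subseteq K(\Ca^-_\g)$: one must show that \emph{every} cluster variable, obtained by an arbitrary finite sequence of mutations, maps to an honest element of the Grothendieck ring and not merely of its fraction field. Matching the first mutation at each vertex with a T-system relation does not give this, because after one mutation the cluster variables are no longer KR classes and no T-system governs the subsequent exchange relations; carrying the ``realize every exchange relation by an identity of module classes'' strategy through all seeds is tantamount to proving the monoidal categorification statement (Conjecture~\ref{conj: HL}), which is strictly stronger than the theorem and is precisely what the present paper establishes in type $A$ by entirely different means. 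Hernandez--Leclerc close this inclusion by ring-theoretic arguments (finite-rank approximations and truncated $q$-characters), not by categorifying every mutation, and your proposal would need to supply an argument of that kind to be complete.
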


\begin{conjecture}[{\cite[Conjecture 13.2]{HL10}, \cite[Conjecture 5.2]{HL16}}] \label{conj: HL}
The cluster monomials of $\mathscr{A}$ can be identified with
the real simple modules of $\Ca^-_\g$ under the isomorphism in Theorem~\ref{thm: HL16}.
\end{conjecture}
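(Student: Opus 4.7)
The plan is to establish the conjecture for $\g$ of type $A^{(1)}_{N-1}$ by bootstrapping from the monoidal categorification of the larger category $\Ca_\g^0 = \Ca_J$ provided by Theorem~\ref{Thm: main}. The key idea is that the Hernandez-Leclerc cluster algebra $\mathscr{A}$ on $K(\Ca_\g^-)$ should sit naturally inside the cluster algebra $\mathscr{A}(N) \simeq K(\Ca_\g^0)$ in a way that is compatible with cluster variables and mutations, so that the ``cluster monomial $\Rightarrow$ real simple'' property for $\mathscr{A}(N)$ can be transferred to $\mathscr{A}$.

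First I would make precise the inclusion $\Ca_\g^- \hookrightarrow \Ca_\g^0$. Since both categories are generated as monoidal Serre subcategories of $\Ca_\g$ by fundamental modules $V(\varpi_i)_x$ for prescribed $(i,x)$, and $\Cquiver_-(A^{(1)}_{N-1}) \subset \Cquiver_0(A^{(1)}_{N-1})$, this inclusion is fully faithful and exact; it induces an injective ring homomorphism $\iota\colon K(\Ca_\g^-) \hookrightarrow K(\Ca_\g^0)$ that sends simple classes to simple classes, and real simple classes to real simple classes. In particular, every tensor product of modules in $\Ca_\g^-$ that is simple in $\Ca_\g^0$ is automatically simple in $\Ca_\g^-$.

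Next I would compare the HL initial seed (quiver $G^-$ with initial variables given by certain Kirillov--Reshetikhin modules in $\Ca_\g^-$) with the induced cluster structure on $K(\Ca_\g^0)$ coming from the $\T_N$-categorification. Using $\M_\ttww(p,0) \simeq \W^{(\ell)}_{m,\jj_p}$ from Proposition~\ref{thm :KR in KLR} and the explicit form of $\tQ_N$, I would identify the HL initial cluster variables with a subfamily of the cluster variables $[\tFF(\M_\ttww(p,0))]$, and realize $G^-$ as the quiver obtained from (a limit of) $\tQ_N$ by freezing the vertices whose associated modules fall outside $\Ca_\g^-$. A stability-in-$N$ argument is used to cover the full half-infinite quiver $G^-$ since $\mathscr{A}$ has infinite rank; note that for fixed cluster monomial only finitely many rows of $G^-$ are involved, so one only needs $N$ large enough.

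Once the two seeds are matched, any sequence of mutations producing a cluster monomial in $\mathscr{A}$ lifts to a sequence of mutations in $\mathscr{A}(N)$ by the compatibility of frozen variables under mutation, so each cluster monomial of $\mathscr{A}$ is simultaneously a cluster monomial of $\mathscr{A}(N)$. By Theorem~\ref{Thm: main} it therefore corresponds to the class of a real simple module $L$ in $\Ca_J = \Ca_\g^0$; and since $L$ is a quotient of a tensor product of modules in $\Ca_\g^-$ and $\Ca_\g^-$ is closed under subquotients and tensor products, $L$ lies in $\Ca_\g^-$, concluding the proof. The principal obstacle is the seed-matching step: verifying precisely that $G^-$ arises from $\tQ_N$ by freezing, and that the HL initial KR modules coincide with the relevant images $\tFF(\W^{(\ell)}_{m,\jj_p})$ up to relabeling---this is combinatorial but delicate, because the HL labelling in \cite{HL16} and the indexing via the sequence $\ttww$ used here need to be reconciled explicitly before any transport of cluster monomials is legitimate.
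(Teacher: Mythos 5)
Your overall strategy --- deducing the statement from the monoidal categorification of $\Ca^0_\g=\Ca_J$ given by Theorem~\ref{Thm: main} --- is the same as the paper's, but the bridging step between the two seeds is where your plan breaks down, and that step is essentially the entire content of the paper's proof of Theorem~\ref{thm: HL}. You propose to realize $G^-$ from $\tQ_N$ ``by freezing the vertices whose associated modules fall outside $\Ca^-_\g$'' and to identify the HL initial variables with a subfamily of the $[\tFF(\M_\ttww(p,0))]$. Neither is the case. First, $\overline{\tQ_N}$ and $G^-_{A^{(1)}_{N-1}}$ both have empty frozen part and vertex sets of the same shape $\{1,\ldots,N-1\}\times\Z_{\ge1}$; their relationship is not freezing but mutation equivalence, via the infinite (locally finite) sequence $\mu_{\Seq^{{\rm HL}}_{N-1}}\circ\mu_{\Sev_N}$, whose individual steps realize the T-system exchange relations~\eqref{eq: T-system A1}. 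Second, the initial cluster variables $\tFF(\M_\ttww(p,0))\simeq W^{(\ell)}_{m,(-q)^{2\jj_p-\ell+1}}$ have spectral exponent $1-m$ or $2-m$ according to the parity of $\ell+m$, so their top fundamental constituents have nonnegative exponent; these modules are generally \emph{not} objects of $\Ca^-_\g$, and no subfamily of them coincides with the HL initial variables $W^{(\ell)}_{m,(-q)^{\true_\ell-2m+1}}$. Even after performing the mutation sequence one only reaches $W^{(\ell)}_{m,(-q)^{2-\true_\ell}}$ (Remark~\ref{eq: finite sequence2}); the paper then needs the bar involution, $\overline{W^{(\ell)}_{m,(-q)^{2-\true_\ell}}}\simeq W^{(\ell)}_{m,(-q)^{\true_\ell-2m}}$, followed by a global spectral shift by $-q$, together with the observation~\eqref{eq: bar behaviour} that the bar involution carries exchange exact sequences to exchange exact sequences (hence commutes with mutation), in order to land on the HL seed.

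So the gap is concrete: without exhibiting an explicit mutation path from $[\Seed_N]_{q=1}$ to the HL seed, and checking that the involution and parameter shift used to reconcile the two labellings are compatible with mutation, the claim that ``any sequence of mutations producing a cluster monomial in $\mathscr{A}$ lifts to a sequence of mutations in $\mathscr{A}(N)$'' is unjustified --- a priori the HL cluster structure on $K(\Ca^-_\g)$ and the $\T_N$-induced structure on $K(\Ca^0_\g)$ are defined independently, and a ring inclusion between cluster algebras carries no mutation compatibility by itself. Your closing step (a real simple object of $\Ca^0_\g$ whose class lies in $K(\Ca^-_\g)$ already lies in $\Ca^-_\g$) and the reduction to finitely many rows for a fixed cluster monomial are fine and also appear, in equivalent form, in the paper.
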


We will give a proof of Conjecture~\ref{conj: HL} for $\Ca^-_{A^{(1)}_{N-1}}$ in Section~\ref{Sec: CJ for affine A}.

\subsection{Monoidal categorification of quantum cluster algebras}
\label{sec:monoiddal}

In this subsection, we fix a base field $\cor$
and a free abelian group $\rl$ equipped with a symmetric bilinear form
$( \ , \ ) \colon \rl \times \rl$ such that $(\be,\be) \in 2\Z \text{ for all } \be \in \rl$.

Let $\shc$ be a $\cor$-linear abelian monoidal category (see~\cite[Appendix A.1]{KKK18A}) in the sense that
it is abelian and the tensor functor $\tens$ is $\cor$-bilinear and exact.
A simple object $M$ in $\shc$ is called {\it real} if $M \otimes M$ is simple.

We assume that $\shc$ satisfies the following conditions$\colon$
\bnum
\item  Any object of $\shc$ is of a finite length. \label{item: cond1}
\item $\cor \isoto \Hom_\shc(M,M)$ for any simple object $M$ of $\shc$. \label{item: cond2}
\item $\shc$ admits a direct sum decomposition $\shc = \soplus_{\be \in \rl} \shc_\be$ such that the tensor functor $\tens$ sends $\shc_\be \times \shc_\gamma \to \shc_{\be+\gamma}$ for every
$\be,\gamma \in \rl$. \label{item: cond3}
\item There exists an object $Q \in \shc_0$ satisfying
\begin{itemize}
\item[{\rm (a)}] there is an isomorphism $R_Q(X) \colon Q \tens X \isoto X \tens Q$
functorial in $X \in \shc$ that
$$\xymatrix@C=7ex{
Q\tens X\tens Y\ar[r]_-{R_Q(X)}\ar@/^3ex/[rr]^{R_Q(X\tens Y)}
&X\tens Q\tens Y\ar[r]_-{R_Q(Y)}
&X\tens Y\tens Q}$$
 commutes for any $X,Y\in \shc$,
\item[{\rm (b)}] the functor $X \mapsto Q \tens X$ is an  equivalence of categories.
\end{itemize} \label{item: cond4}
\item For any $M$, $N\in\shc$, we have
$\Hom_\shc(M,Q^{\tens n}\tens N)=0$ except finitely many integers $n$ (see Remark~\ref{rem: Qn} below). \label{item: cond5}
\end{enumerate}

\begin{remark}\label{rem: Qn}
Note that, since the functor $X \mapsto Q \tens X$ is an  equivalence of categories, there exists an object $Q^{-1} \in \shc_0$ such that $ Q \tens Q^{-1} \simeq Q^{-1} \tens Q \simeq \one$,
and the functor $X \mapsto Q^{-1} \tens X$ give an equivalence of categories also (see \cite[Appendix A.1]{KKK18A}). Thus, for each $n \in \Z$, we define
$Q^{\tens n} \seteq  \underbrace{Q \tens Q \tens \cdots \tens Q}_{\text{$n$-times}}$ if $n \ge 0$, $\underbrace{Q^{-1} \tens Q^{-1} \tens \cdots \tens Q^{-1}}_{\text{$-n$-times}}$  if $n < 0$.
\end{remark}

We denote by $q$ the auto-equivalence $Q \tens \scbul$, and call it the {\it grading shift functor}. With the grading shift functor, the Grothendieck ring
$K(\shc)$ becomes a $\rl$-graded $\Z[q^{\pm 1}]$-algebra.

For $M \in \shc_\be$, we write $\be = \wt(M)$ and call it the {\it weight} of $M$. Similarly, for $x \in \Ah \otimes_{\Z[q^{\pm 1}]} K(\shc_\be)$, we write $\be = \wt(x)$ and call it the weight of $x$.

\begin{definition} \label{def:quantum monoidal pair}
A pair $(\{M_i\}_{i\in\K},\tB)$ consisting of
\begin{enumerate}
\item[{\rm (1)}] a family of real simple objects $\{M_i\}_{i\in\K}$ of $\shc$, where $M_i \in \shc_{d_i}$ for some $d_i \in  \rl $,
\item[{\rm (2)}] an integer valued $\K \times \Kex$-matrix $\tB$
\end{enumerate}
is called a \emph{quantum monoidal seed} if it satisfies the following properties:
\begin{itemize}
\item[{\rm (i)}] for all $ i, j \in\K$, there exists an integer $\lambda_{ij}$ satisfying $$M_i \tens M_j \simeq q^{\lambda_{ij}} M_j \otimes M_i,$$
\item[{\rm (ii)}] $M_{i_1} \tens \cdots \tens M_{i_t}$ is simple for any finite sequence  $(i_1,\ldots,i_t)$ in $\K$,
\item[{\rm (iii)}] the  integer valued $\K \times \Kex$-matrix  $\tB$ satisfies~\eqref{eq: condition B},
\item[{\rm (iv)}] $(L,\tB)$  is  compatible with $d=2$, where $L =(\la_{ij})_{i,j\in \K}$,
\item[{\rm (v)}] $\la_{ij} - (d_i,d_j) \in 2\Z$ for all $i,j \in\K$, where $D\seteq= \{ d_i \in \rl \ | \ M_i \in \shc_{d_i}  \}_{i \in \K} \subset \rl$.
\item[{\rm (vi)}] $\sum_{i\in\K}b_{ik}d_i =0$ for all $k\in\Kex$.
\end{itemize}
\end{definition}
Note that for a quantum monoidal seed $(\{M_i\}_{i\in\K},\tB)$, the matrix $L=(\la_{ij})_{i,j\in \K}$ and the family $D \seteq \{ d_i \}_{i \in \K}$ is determined by the family $\{M_i\}_{i\in\K}$.

Let $\Seed=(\{M_i\}_{i \in \K} ),\tB)$ be a quantum monoidal seed in $\shc$.
For $X \in \shc_\be$ and $Y \in \shc_\ga$ such that $X \otimes Y \simeq q^c\, Y \otimes X$ and $c+(\be,\ga) \in 2\Z$, we define
\begin{align*}
\widetilde{\la}(X,Y) \hspace{-.2ex}\seteq\hspace{-.2ex} \dfrac{1}{2}\bigl( -c+(\be,\ga) \bigr) \hspace{-.2ex} \in\hspace{-.2ex} \Z \   \text{ and }  \  X \sodot Y \hspace{-.2ex} \seteq \hspace{-.2ex} q^{\widetilde{\la}(X,Y)}X \otimes Y \hspace{-.2ex}\simeq \hspace{-.2ex} q^{\widetilde{\la}(Y,X)} Y \otimes X.
\end{align*}
Then we have $X \sodot Y \simeq Y \sodot X$.

For any finite sequence $(i_1,\ldots,,i_\ell)$ in $\K$, we define
$$\sodot_{k=1}^{\ell} M_{i_k} \seteq(\cdots(M_{i_1}\sodot M_{i_2})\sodot\cdots)\sodot M_{i_{\ell-1}})\sodot M_{i_\ell}.$$
When the $L$-commuting family $\{ [M_i] \}_{i \in \K}$ of elements in $\Ah \tens_{\Z[q^{\pm 1}]} K(\shc)$ is algebraically independent,
we define a quantum seed $[\Seed]$ in $\Ah \tens_{\Z[q^{\pm 1}]}K(\shc)$ by
$$  [\Seed] = \bigl( \{q^{-(d_i,d_i)/4} [M_i] \}_{i \in \K},L,\tB \bigr).$$

For a given $k\in\Kex$, we define the {\it mutation $\mu_k(D) \in \rl$ of $D$ in direction $k$ with respect to $\tB$} by
\begin{align*}
\mu_k(D)_i =d_i \ (i \neq k), \quad \mu_k(D)_k=-d_k+\sum_{b_{ik} >0}   b_{ik} d_i.
\end{align*}
Note that, for any $k\in\Kex$, we have
$\mu_k(\mu_k(D))=D$, and $(\mu_k(L),\mu_k(\tB),\mu_k(D))$ satisfies
(v) and (vi) in Definition~\ref{def:quantum monoidal pair}.

For $k \in \Kex$, set
\begin{align} \label{eq:mm'}
m_k \seteq \dfrac{1}{2}(d_k,\zeta) +\dfrac{1}{2} \displaystyle  \sum_{b_{ik} < 0}\la_{ki}  b_{ik} \quad  \text{ and } \quad
m'_k \seteq \dfrac{1}{2}(d_k,\zeta) +\dfrac{1}{2} \displaystyle \sum_{b_{ik} > 0} \la_{ki}b_{ik},
\end{align}
where $\zeta=-d_k+\sum_{b_{ik}>0}b_{ik}d_i$.
\begin{definition} \label{def:monoidal mutation}
We say that a quantum monoidal seed $\Seed$ in $\shc$ {\it admits a mutation in direction $k\in\Kex$}
if there exists a real simple object  $M_k' \in \shc_{\mu_k(D)_k}$ such that
\begin{enumerate}
\item[{\rm (i)}]
there exist exact sequences in $\shc$
\begin{align*}
& 0 \to q \sodot_{b_{ik} >0} M_i^{\snconv b_{ik}} \to q^{m_k} M_k \otimes M_k' \to
 \sodot_{b_{ik} <0} M_i^{\snconv (-b_{ik})} \to 0, \\
& 0 \to q \sodot_{b_{ik} <0} M_i^{\snconv(-b_{ik})} \to q^{m_k'} M_k' \otimes M_k \to
  \sodot_{b_{ik} >0} M_i^{\snconv b_{ik}} \to 0,
\end{align*}
where $m_k$ and $m'_k$ are given in~\eqref{eq:mm'}.
\item[{\rm (ii)}] $\mu_k(\Seed) \seteq\bigl(\{M_i\}_{i\neq k}\sqcup\{M_k'\}, \mu_k(\tB)\bigr)$ is a quantum monoidal seed  in $\shc$.
\end{enumerate}
We call $\mu_k(\Seed)$ the {\it mutation} of $\Seed$ in direction $k$.
\end{definition}

\begin{definition}
Assume that a $\cor$-linear abelian monoidal category $\shc$ satisfies the conditions {\rm \eqref{item: cond1}--\eqref{item: cond5}} in the beginning of
this subsection. The category $\shc$ is called a {\it monoidal categorification of a quantum cluster algebra $\mathscr{A}$ over $\Z[q^{\pm1/2}]$} if
\begin{enumerate}
\item[{\rm (i)}] $\Z[q^{\pm1/2}]\tens_{\Z[q^{\pm1}]} K(\shc)$ is isomorphic to $\mathscr{A}$,
\item[{\rm (ii)}] there exists a quantum monoidal seed $\Seed =(\{M_i\}_{i\in\K},\tB)$ in $\shc$ such that
$[\Seed]\seteq(\{q^{-(d_i,d_i)/4}[M_i]\}_{i\in\K}, L, \widetilde B)$ is a quantum seed of $\mathscr{A}$,
\item[{\rm (iii)}] $\Seed$ admits successive mutations in all the directions.
\end{enumerate}
\end{definition}

\section{Quiver Hecke algebras} \label{Sec: quiver Hecke}
\subsection{Quiver Hecke algebras}
Now we briefly recall the definition of quiver Hecke algebra associated to a symmetrizable Cartan datum $(\cmA,\wl,\Pi,\wl^\vee,\Pi^\vee)$ (see \cite{KL09,R08} for more detail).

Let $\cor$ be a base field. We take a family of polynomials $(Q_{i,j})_{i,j\in I}$ in $\cor[u,v]$ satisfying
\begin{align}\label{eq:Q}
Q_{i,j}(u,v) = \true(i \ne j) \times \sum\limits_{ \substack{ (p,q)\in \Z^2_{\ge0}
\\ (\al_i , \al_i)p+(\al_j , \al_j)q=-2(\al_i , \al_j)}}
t_{i,j;p,q} u^p v^q,
\end{align}
where $t_{i,j;p,q}=t_{j,i;q,p}$ and $t_{i,j:-a_{ij},0} \in \cor^{\times}$. Then one can check that $Q_{i,j}(u,v)=Q_{j,i}(v,u)$.

For $n \in \Z_{\ge 0}$ and $\beta \in \rl^+$ such that $|\be| = n$, we set
$$I^{\beta} = \{\nu = (\nu_1, \ldots, \nu_n) \in I^{n} \ | \ \alpha_{\nu_1} + \cdots + \alpha_{\nu_n} = \beta \}.$$

We denote by
$\sym_{n} = \langle s_1, \ldots, s_{n-1} \rangle$ the symmetric group
of degree $n$, where $s_i\seteq (i, i+1)$ is the transposition of $i$ and $i+1$.
Then $\sym_n$ acts on $I^\beta$ by place permutations.

\begin{definition}
For $\beta \in \rl^+$ with $|\beta|=n$, the {\em
quiver Hecke algebra}  $R(\beta)$  at $\beta$ associated
with a symmetrizable Cartan datum $(\cmA,\wl, \Pi,\wl^{\vee},\Pi^{\vee})$ and a matrix
$(Q_{i,j})_{i,j \in I}$ is the $\cor$-algebra generated by
the elements $\{ e(\nu) \}_{\nu \in  I^{\beta}}$, $ \{x_k \}_{1 \le
k \le n}$ and $\{ \tau_m \}_{1 \le m \le n-1}$ satisfying the following
defining relations$\colon$
\begin{align*}
& e(\nu) e(\nu') = \delta_{\nu, \nu'} e(\nu), \ \ \sum_{\nu \in  I^{\beta} } e(\nu) = 1,  \ \
 x_{k} x_{m} = x_{m} x_{k}, \ \ x_{k} e(\nu) = e(\nu) x_{k}, \allowdisplaybreaks\\
& \tau_{m} e(\nu) = e(s_{m}(\nu)) \tau_{m}, \ \ \tau_{k} \tau_{m} = \tau_{m} \tau_{k} \ \ \text{if} \ |k-m|>1, \ \ \tau_{k}^2 e(\nu) = Q_{\nu_{k}, \nu_{k+1}} (x_{k}, x_{k+1}) e(\nu), \allowdisplaybreaks\\
& (\tau_{k} x_{m} - x_{s_k(m)} \tau_{k}) e(\nu) = \begin{cases}
-e(\nu) \ \ & \text{if} \ m=k, \ \nu_{k} = \nu_{k+1}, \\
e(\nu) \ \ & \text{if} \ m=k+1, \  \nu_{k}=\nu_{k+1}, \\
0 \ \ & \text{otherwise},
\end{cases} \allowdisplaybreaks \\
& (\tau_{k+1} \tau_{k} \tau_{k+1}-\tau_{k} \tau_{k+1} \tau_{k}) e(\nu)
\\*
& \hspace{8ex}=\begin{cases} \dfrac{Q_{\nu_{k}, \nu_{k+1}}(x_{k},
x_{k+1}) - Q_{\nu_{k}, \nu_{k+1}}(x_{k+2}, x_{k+1})} {x_{k} -
x_{k+2}}e(\nu) \ \ & \text{if} \
\nu_{k} = \nu_{k+2}, \\
0 \ \ & \text{otherwise}.
\end{cases}
\end{align*}
\end{definition}

The above relations are homogeneous with
\begin{equation*} \label{eq:Z-grading}
\deg e(\nu) =0, \quad \deg\, x_{k} e(\nu) = (\alpha_{\nu_k}
, \alpha_{\nu_k}), \quad\deg\, \tau_{l} e(\nu) = -
(\alpha_{\nu_l} , \alpha_{\nu_{l+1}}),
\end{equation*}
and $R( \beta )$ is endowed with a $\Z$-graded algebra structure.

 For a graded $R(\beta)$-module $M=\bigoplus_{k \in \Z} M_k$, we define
$qM =\bigoplus_{k \in \Z} (qM)_k$, where
 \begin{align*}
 (qM)_k = M_{k-1} & \ (k \in \Z).
 \end{align*}
We call $q$ the \emph{grading shift functor} on the category of
graded $R(\beta)$-modules.

For graded $R(\beta)$-modules $M$ and $N $, $\Hom_{R(\beta)}(M,N)$ denotes the space of degree preserving module homomorphisms.
We set $ \deg(f) \seteq k$ for $f \in \Hom_{R(\beta)}(q^{k}M, N)$.

For an $R(\beta)$-module $M$, we set $\wt(M)\seteq -\beta \in \rl^-$
and call it the {\em weight} of $M$.

Let us denote
by $R(\beta)\gmod$ the category of graded $R(\beta)$-modules which are finite-dimensional over $\cor$. We set
$$R\gmod=\soplus_{\beta\in\rl^+}R(\beta)\gmod.$$

For $\beta, \gamma \in \rl^+$ with $|\beta|=m$, $|\gamma|= n$, we define an idempotent $e(\beta,\gamma)$ as follows:
$$e(\beta,\gamma)\seteq \displaystyle\sum_{\substack{\nu \in I^{\beta+\gamma}, \\ (\nu_1, \ldots ,\nu_m) \in I^{\beta}}} e(\nu) \in R(\beta+\gamma). $$
Then we have an injective ring homomorphism
$$R(\beta)\tens R(\gamma)
\monoto e(\beta,\gamma)R(\beta+\gamma)e(\beta,\gamma).$$

For an $R(\beta)$-module $M$ and an $R(\gamma)$-module $N$,
\begin{itemize}
\item the \emph{convolution product} $M\conv N$ is an $R(\be+\ga)$-module defined by
$$M\conv N=R(\beta + \gamma) e(\beta,\gamma) \tens_{R(\beta )\otimes R( \gamma)}(M\otimes N),$$
\item the dual space $M^* \seteq \Hom_{\cor}(M, \cor)$ admits an $R(\beta)$-module structure via
\begin{align*}
(r \cdot  f)(u) \seteq f(\psi(r) u) \quad (r \in R( \beta), \ u \in M),
\end{align*}
where $\psi$ denotes the $\cor$-algebra anti-involution on $R(\beta)$ fixing the generators.
\end{itemize}
We denote by $u\etens v$ the image of $u\tens v$ in $M\conv N$.

A simple module $M$ in $R \gmod$ is called \emph{self-dual} if $M^* \simeq M$.
Every simple module is isomorphic to a grading shift of a self-dual simple module (\cite[\S 3.2]{KL09}).

Then $R\gmod$ has a monoidal category structure with $\conv$ as a tensor product. 
Let us denote by $K(R\gmod)$ the  Grothendieck ring of $R\gmod$
which is an algebra over  $\Aa =\Z[q^{\pm 1}]$ with  the multiplication induced by the convolution product and the $\Aa$-action induced by the  grading shift functor $q$.

In~\cite{KL09,KL11,R08}, it is shown that a quiver Hecke algebra \emph{categorifies} the corresponding unipotent quantum coordinate ring. More precisely, we have the following theorem.

\begin{theorem}[{\cite{KL09,KL11,R08}}] \label{thm:categorification 1}
For a given symmetrizable Cartan datum $(\cmA,\wl, \Pi,\wl^{\vee},\Pi^{\vee})$, we take a parameter matrix $(Q_{ij})_{i,j \in I}$ satisfying the conditions in~\eqref{eq:Q}, and let $A_q(\n)$ and $R(\beta) $ be
the associated unipotent quantum coordinate ring and quiver Hecke algebra, respectively.
Then there exists an $\Aa$-algebra isomorphism
\begin{align} \label{eq:KLRU}
  \ch \colon K(R\gmod) \isoto  \Aan.
\end{align}
\end{theorem}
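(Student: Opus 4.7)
The plan is to construct the map $\ch$ explicitly, verify that it is a well-defined $\Aa$-algebra homomorphism, and then establish bijectivity by comparing the two sides graded piece by graded piece.

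First I would define $\ch$ on the level of modules. For $M\in R(\be)\gmod$ with $|\be|=n$ and for each $\nu\in I^\be$, the graded subspace $e(\nu)M$ is finite-dimensional, so its graded dimension $\dim_q e(\nu)M\in\Aa$ is defined. Using the pairing between $\Uapg$ and $\Aan$, one organizes these graded dimensions into an element $\ch(M)\in\Aan_{-\be}$. Concretely, $\ch(M)$ is the unique element whose pairing with the standard monomial $e_{\nu_1}^{(a_1)}\cdots e_{\nu_r}^{(a_r)}$ recovers the appropriate graded multiplicity in $M$. Because dimension is additive on short exact sequences and behaves as expected under the grading shift $q$, the assignment descends to an $\Aa$-linear map $\ch\colon K(R\gmod)\to \Aan$.

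Next I would verify that $\ch$ is a ring homomorphism. The multiplication in $\Aan$ is induced from the twisted comultiplication $\Delta_\n$ on $\Uapg$, while the multiplication in $K(R\gmod)$ comes from the convolution product $M\conv N=R(\be+\ga)e(\be,\ga)\tens_{R(\be)\tens R(\ga)}(M\tens N)$. The compatibility $\ch(M\conv N)=\ch(M)\cdot\ch(N)$ is checked by a Mackey-type argument: decompose $R(\be+\ga)e(\be,\ga)$ as a left $R(\be)\tens R(\ga)$-module using the shuffle basis of minimal length coset representatives in $\sym_{m+n}/(\sym_m\times\sym_n)$, and match the resulting sum of graded dimensions with the formula for $\Delta_\n$ on $\Uapg$ applied to monomials. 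The grading shifts produced by the defining relation $\deg\tau_k e(\nu)=-(\al_{\nu_k},\al_{\nu_{k+1}})$ are exactly those built into the twisted product on $A_q(\n)\tens A_q(\n)$, so the powers of $q$ match.

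Then I would check that $\ch$ sends generators to generators, so that its image contains a generating set of $\Aan$. For each $i\in I$, the algebra $R(\al_i)\simeq \cor[x_1]$ has a unique graded simple module $L(i)$ concentrated in degree $0$; one verifies $\ch(L(i))=e_i^*$, the dual generator of $\Aan$. More generally, one sees that the self-dual graded simple module $L(i^n)$ arising from the divided-power idempotent in $R(n\al_i)$ maps to $e_i^{*(n)}$, so the $\Aa$-form $\Aan$ sits inside the image.

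The main obstacle, and what I would save for last, is to pin down that $\ch$ is an isomorphism of $\Aa$-modules. For surjectivity, since the image is an $\Aa$-subalgebra containing the divided-power duals of the $e_i$'s, it contains the full integral form $\Aan$. For injectivity, one argues weight-by-weight: both $K(R(\be)\gmod)$ and $\Aan_{-\be}$ are free $\Aa$-modules whose ranks can be independently computed and shown to agree. On the categorical side, $K(R(\be)\gmod)$ has a $\Z[q^{\pm1}]$-basis indexed by self-dual simple modules, whose cardinality matches the dimension of $U_q^+(\g)_\be$ by a PBW/Lyndon basis comparison; on the Hopf dual side, $\Aan_{-\be}$ has the same rank by definition. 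A dimension/rank count together with surjectivity then yields that $\ch$ is an isomorphism. The delicate point is producing enough self-dual simple modules of the correct weight and showing their classes are $\Aa$-linearly independent; this is typically accomplished by analysing the weight space decomposition $M=\soplus_{\nu\in I^\be}e(\nu)M$ and arguing that distinct simple modules have distinct graded character functions, which reduces injectivity to a statement about polynomials in $q$ with nonnegative integer coefficients.
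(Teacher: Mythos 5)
This theorem is stated in the paper with a citation to Khovanov--Lauda and Rouquier and no proof is given, so your proposal has to be measured against the standard proof in those references. Your first half is that standard proof: the character $\ch(M)=\sum_{\nu}(\dim_q e(\nu)M)\,\nu$, descent to $K(R\gmod)$ by exactness of taking graded dimensions, and the multiplicativity via the shuffle decomposition of $R(\be+\ga)e(\be,\ga)$ matched against the twisted comultiplication $\Delta_\n$ — all of this is correct and is how the result is actually proven.

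The genuine gap is in your surjectivity argument. You claim that since the image of $\ch$ is an $\Aa$-subalgebra containing the divided-power duals $e_i^{*(n)}$, it contains all of $\Aan$. That is false: $\Aan$ is the \emph{dual lattice} $\set{\psi}{\lan\psi,\Uapg\ran\subset\Aa}$, and the $\Aa$-subalgebra generated by the $e_i^{*(n)}$ is in general a proper sublattice. Already for $\mathfrak{sl}_3$ in weight $\al_1+\al_2$ one computes (with the paper's twisted product) $e_1^*e_2^*=q\,(e_1e_2)^*+(e_2e_1)^*$ and $e_2^*e_1^*=(e_1e_2)^*+q\,(e_2e_1)^*$, whose transition matrix has determinant $q^2-1$, not a unit in $\Aa$; so $(e_1e_2)^*$ and $(e_2e_1)^*$ — which \emph{are} the characters of the two simples of $R(\al_1+\al_2)$ — lie in $\Aan$ but not in the subalgebra generated by $e_1^*,e_2^*$. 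Since your injectivity argument is designed to piggyback on surjectivity plus a rank count, the whole bijectivity step collapses. The actual route in \cite{KL09} is dual: one shows the easy surjectivity of $K(R\proj)\to\Uapg$ (divided-power projectives hit the $e_i^{(n)}$), and transports it through the perfect $\Hom$-pairing between $K(R\proj)$ and $K(R\gmod)$; the hard input is the $\Aa$-linear \emph{independence} of the characters of non-isomorphic self-dual simples, proven by a leading-term induction with the operators $\tE_i$, $\ve_i$ (and, in general symmetrizable type, the count of simples comes from the crystal $B(\infty)$ rather than a Lyndon-word comparison, which is a finite-type tool). Note also that ``distinct simple modules have distinct characters'' is strictly weaker than linear independence of their classes, so even that reduction as you state it does not suffice.
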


\begin{definition}
We say that a quiver Hecke algebra $R$ is {\em symmetric} if
$Q_{i,j}(u,v)$ is a polynomial in $u-v$ for all $i,j\in I$.
\end{definition}

In particular, the corresponding generalized Cartan matrix $\cmA$ is symmetric.
In symmetric case,  we assume $\mathsf{d}_i=1$ for all $i \in I$.

\begin{theorem}[\cite{R11,VV09}] \label{thm:categorification 2}
Assume that the quiver Hecke algebra $R$ is symmetric and the base field $\cor$ is of characteristic $0$. Then, under the isomorphism~\eqref{eq:KLRU}
in {\rm Theorem~\ref{thm:categorification 1}}, the upper global basis $\B^\upper(\Aqn)$  corresponds to the set of the isomorphism classes of self-dual simple $R$-modules.
\end{theorem}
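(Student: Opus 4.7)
The plan is to reduce the statement to Lusztig's geometric construction of the canonical basis via perverse sheaves on quiver representation varieties, using the geometric realization of symmetric quiver Hecke algebras due to Varagnolo--Vasserot.

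First, on the geometric side, for each $\beta \in \rl^+$ let $E_\beta$ be the variety of $I$-graded representations of dimension $\beta$ of the quiver encoding the parameters $(Q_{i,j})_{i,j \in I}$, acted upon by the gauge group $G_\beta$. Lusztig singled out a class of $G_\beta$-equivariant semisimple perverse sheaves on $E_\beta$ whose simple summands (up to shift) are indexed by the canonical basis of $U_q^-(\g)_{-\beta}$; passing to the Grothendieck group of the corresponding category $\mathcal{Q}_\beta$ recovers the lower global basis of $U_\Aa^-(\g)_{-\beta}$ together with its bar-invariance and $\Z_{\ge 0}[q^{\pm 1}]$-positivity.

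Second, invoke the Varagnolo--Vasserot theorem: in the symmetric case over a base field $\cor$ of characteristic $0$, the quiver Hecke algebra $R(\beta)$ is isomorphic as a graded algebra to the $\op{Ext}$-algebra of a distinguished direct sum of Lusztig sheaves on $E_\beta$. This upgrades the identification of Grothendieck groups to an equivalence between the category of finitely generated graded projective $R(\beta)$-modules and $\mathcal{Q}_\beta$, under which the indecomposable graded projectives $P(b)$ match the simple perverse sheaves $\mathcal{L}_b$. Consequently, the classes $\{[P(b)]\}_b$ form the lower global basis of $U_\Aa^-(\g)_{-\beta}$ under the Khovanov--Lauda identification dual to~\eqref{eq:KLRU}.

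Third, dualize. The Khovanov--Lauda bilinear pairing between $K(R\gproj)$ and $K(R\gmod)$ matches Kashiwara's pairing between $U_\Aa^-(\g)$ and $\Aan$ under~\eqref{eq:KLRU}. By definition, $\B^\upper(\Aqn)$ is the basis of $\Aan$ dual to the lower global basis of $U_\Aa^-(\g)$; categorically, the basis of $K(R\gmod)$ dual to $\{[P(b)]\}_b$ is precisely the set of classes of simple modules. Finally, the $\cor$-linear duality $M \mapsto M^*$ corresponds under~\eqref{eq:KLRU} to the bar involution on $\Aan$, so a simple module is self-dual if and only if its class is bar-invariant. Since $\B^\upper(\Aqn)$ is characterized as the bar-invariant dual basis, the self-dual simples correspond bijectively to $\B^\upper(\Aqn)$.

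The main obstacle is the Varagnolo--Vasserot geometric realization step: proving the isomorphism between $R(\beta)$ and the $\op{Ext}$-algebra of Lusztig sheaves. It relies on the decomposition theorem for semisimple perverse sheaves and uses the characteristic $0$ hypothesis in an essential way, both to control the Ext-algebras and to avoid torsion phenomena. Rouquier's alternative approach replaces the explicit geometry by an abstract 2-representation-theoretic criterion, but the structural difficulty is comparable: one must certify that indecomposable projectives give rise to a bar-invariant $\Z_{\ge 0}[q^{\pm 1}]$-basis of $U_\Aa^-(\g)$ matching Lusztig's, which fails in general outside the symmetric case.
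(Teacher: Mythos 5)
The paper offers no proof of this statement: it is quoted as a known theorem with the citation [R11, VV09], and your outline is precisely the Varagnolo--Vasserot argument that those references supply (geometric realization of $R(\beta)$ as the Yoneda algebra of Lusztig's perverse sheaves in characteristic $0$, matching indecomposable projectives with the canonical basis, then dualizing so that self-dual simples correspond to the bar-invariant dual basis $\B^\upper(\Aqn)$). So your proposal is correct and takes essentially the same route as the paper's intended (cited) proof.
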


\subsection{R-matrices}

For $\beta,\gamma\in \rl^+$ with $|\beta|=m$ and $|\gamma|=n$,  let
$M$ be an $R(\beta)$-module and $N$ an $R(\gamma)$-module. Then, by~\cite[Lemma 1.5]{KKK18A}, there exists an $R( \beta +\gamma)$-module homomorphism
(up to a grading shift)
\begin{align}\label{eq: R}
R_{M,N}\colon M\conv N\To  N\conv M,
\end{align}
which is defined by {\it intertwiners} $\varphi_k \in R(\be+\ga)$ $(1 \le k \le m+n-1)$ (see \cite[\S 1.3.1]{KKK18A}) and satisfies the Yang-Baxter equation (see~\cite[(1.9)]{KKK18A}).

{\it For the rest of this paper, we assume that quiver Hecke algebra is symmetric and $\mathsf{d}_i=1$ for all $i \in I$}.
{\it We also work always in the category of graded $R$-modules}.

Then each $R(\beta)$-module $M$ admits an {\it affinization} $$M_z \simeq \cor[z] \tens_\ko M$$
with the action of $R(\beta)$ twisted by the algebra homomorphism $\psi_z \colon R(\be) \to \cor[z] \tens R(\beta)$ (see \cite[\S 1.3.2]{KKK18A})
sending $x_k$ to $x_k+z$, where $z$ is an indeterminate of homogeneous degree $2$.

By~\cite[Proposition 1.10]{KKK18A}, the $R( \beta +\gamma)$-module homomorphism (up to a grading shift)
$$ R_{M_z,N}\colon M_z\conv N\To  N\conv M_z  $$
induces an $R( \beta +\gamma)$-module homomorphism
$$\rmat{M,N} \colon M \conv N \to q^{-\Lambda(M,N)}  N \conv M,$$
which also satisfies the Yang-Baxter equation and is non-zero provided that $M$ and $N$ are non-zero.
Here $\Lambda(M,N)$ denotes the degree of $\rmat{M,N} $ and we call $\rmat{M,N}$ the {\it $R$-matrix}.

\begin{definition}[{\cite{KKKO18}}]
For non-zero $R$-modules $M$ and $N$ in $R\gmod$, we define integers $\de(M,N)$ and $\tL(M,N)$ as follows$\colon$
\begin{enumerate}
\item[{\rm (a)}] $\de(M,N)= \dfrac{1}{2}\left( \Lambda(M,N)+\Lambda(N,M) \right) \in \Z_{\ge 0}$,
\item[{\rm (b)}] $\tL(M,N)= \dfrac{1}{2}\left( \Lambda(M,N)+\bigl(\wt(M),\wt(N)\bigr) \right) \in \Z_{\ge 0}$.
\end{enumerate}
\end{definition}

A simple module $M \in R\gmod$ is called \emph{real} if $M\conv M$ is simple.

\begin{lemma}[\cite{KKKO15}] \label{lem:commute_equiv}
 Let $M$ and $N$ be simple modules in $R\gmod$, and assume that one of them is real. Then
\begin{enumerate}
\item[{\rm (i)}] $M \conv N$ and $N \conv M$ have simple socles and simple heads.
\item[{\rm (ii)}] $\Im(\rmat{M,N})$ is equal to the head of $M \conv N$ and the socle of $N \conv M$.
\end{enumerate}
\end{lemma}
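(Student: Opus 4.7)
The plan is to derive both assertions from one key auxiliary result: when one of the simple modules $M, N$ is real, the Hom-space
\[
\Hom_{R(\be+\ga)}\bigl(M \conv N,\; q^{-\Lambda(M,N)} N \conv M\bigr)
\]
is one-dimensional and spanned by the R-matrix $\rmat{M,N}$. Without loss of generality, assume $M$ is real.

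To obtain this Hom-space computation, I would pass to the affinization $M_z$ and first establish that $\End_{R \otimes \cor[z]}(M_z) = \cor[z]$. The realness of $M$ enters essentially here: any nonzero endomorphism of $M_z$ must be injective, because $M_z$ is free over $\cor[z]$ and its specialization at $z=0$ is a nonzero endomorphism of the simple module $M$, hence an isomorphism. Combined with the fact that $M_z$ has simple head $M$, this yields $\End(M_z) = \cor[z]$. Propagating through the convolution by a standard argument yields $\Hom_{R \otimes \cor[z]}(M_z \conv N,\; N \conv M_z) = \cor[z] \cdot R_{M_z, N}$, and specialization at $z = 0$ delivers the desired one-dimensionality.

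Granted this, set $S \seteq \Im(\rmat{M,N}) \subseteq q^{-\Lambda(M,N)} N \conv M$. To prove $S$ is simple, suppose $0 \neq S' \subsetneq S$. Then the composition $M \conv N \twoheadrightarrow S \twoheadrightarrow S/S'$, followed by a suitable choice lifting $S/S'$ back into $N \conv M$, produces a second nonzero element of the Hom-space whose image differs from $S$, contradicting one-dimensionality; hence $S$ is simple. For the head/socle identification in (ii), any simple quotient $T$ of $M \conv N$ yields, upon composing the quotient map $M \conv N \twoheadrightarrow T$ with an embedding $T \hookrightarrow N \conv M$ (which exists since $T$ must be a composition factor of $N \conv M$), a nonzero homomorphism that by one-dimensionality equals a scalar multiple of $\rmat{M,N}$; hence $T \simeq S$, so $S$ is the unique simple quotient (the head) of $M \conv N$ and dually the simple socle of $N \conv M$. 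For the remaining assertions of (i), namely the simplicity of the head of $N \conv M$ and the socle of $M \conv N$, I would repeat the argument with $\rmat{N,M}$ in place of $\rmat{M,N}$; the hypothesis that $M$ is real still supplies the one-dimensionality of $\Hom(N \conv M, M \conv N)$ by the same affinization argument.

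The main obstacle is the one-dimensionality of the Hom-space, i.e., the transition from $\End(M_z) = \cor[z]$ to the analogous rigidity for $M_z \conv N$; this is where realness of $M$ and simplicity of $N$ must be combined carefully to avoid losing information upon specialization at $z = 0$. Once established, the rest reduces to formal manipulations with kernels, images, and heads/socles of R-matrices.
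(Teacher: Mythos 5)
The paper gives no proof of this lemma; it is quoted verbatim from \cite{KKKO15}. Measured against the proof there, your proposal has two genuine gaps. First, your deduction of the head/socle statements from one-dimensionality of the Hom-space repeatedly requires a simple \emph{subquotient} of $N\conv M$ to embed as a \emph{submodule} of $N\conv M$: you embed a composition factor $T$, and later the quotient $S/S'$ of the submodule $S=\Im(\rmat{M,N})$. Neither embedding exists in general --- subquotients are not submodules. (For simple quotients of $M\conv N$ one can manufacture an embedding into $N\conv M$ via the duality $(M\conv N)^*\simeq q^{c}\,N^*\conv M^*$, but the resulting maps land in various grading shifts of $N\conv M$, so you would then need one-dimensionality of the full graded $\HOM$; and this device does not repair the $S/S'$ step at all.)

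Second, and more fundamentally, the key input is not established. The rank-one statement $\Hom_{R\otimes\cor[z]}(M_z\conv N,\,N\conv M_z)=\cor[z]\cdot \Rren_{M_z,N}$ is indeed routine (generic simplicity of $\cor(z)\otimes M_z\conv N$ plus the fact that $\cor[z]$ is a PID), but it does not ``specialize'' to $\Hom(M\conv N,N\conv M)=\cor\,\rmat{M,N}$: the specialization map $\Hom(M_z\conv N,N\conv M_z)\otimes_{\cor[z]}\cor[z]/z\to\Hom(M\conv N,N\conv M)$ need not be surjective, and in \cite{KKKO15} the one-dimensionality of this Hom-space is a \emph{corollary} of the present lemma (deduced from the multiplicity-one occurrence of $\Im\rmat{M,N}$ in $M\conv N$), not an input to it. A telling symptom is that realness never does any work in your argument: $\End(M_z)=\cor[z]$ holds for \emph{every} simple module admitting an affinization, yet the lemma is false without the realness hypothesis. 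In \cite{KKKO15} realness enters through a Yang--Baxter/associativity argument on $M\conv M\conv N_z$, using that $\rmat{M,M}$ is a nonzero scalar multiple of the identity, to show directly that every nonzero submodule of $N\conv M$ contains $\Im\rmat{M,N}$ (equivalently, every proper submodule of $M\conv N$ lies in $\ker\rmat{M,N}$); that statement immediately yields both parts of the lemma and is precisely the step your proposal is missing.
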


For $R$-modules $M$ and $N$, we denote by $M \hconv N$ the head of $M \conv N$ and by $M \sconv N$ the socle of $M \conv N$.

\begin{proposition}[{\cite[Corollary 3.7]{KKKO15}}] \label{prop: head inj}
For $\be,\gamma \in \rl^+$, let M be a real simple $R(\beta)$-module.
Then the map $N \mapsto M \hconv N$ is injective from the set of the isomorphism
classes of simple objects $N$
of $R(\gamma)\gmod$ to the set of the isomorphism classes of simple objects of
$R(\be+\gamma)\gmod$.
\end{proposition}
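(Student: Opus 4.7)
Suppose $N, N' \in R(\ga)\gmod$ are simple modules with $M \hconv N \simeq M \hconv N'$; call this common simple module $L$. The goal is to conclude $N \simeq N'$. By Lemma~\ref{lem:commute_equiv}, $L$ is the simple head of $M \conv N$, giving a nonzero surjection $p \col M \conv N \twoheadrightarrow L$, and similarly $p' \col M \conv N' \twoheadrightarrow L$.

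I would apply Frobenius reciprocity\textup{:} the convolution $M \conv N$ is the induction of $M \tens N$ from $R(\be)\tens R(\ga)$-modules to $R(\be+\ga)$-modules, which is left adjoint to restriction along the idempotent $e(\be,\ga)$. Thus $p$ corresponds to a nonzero $R(\be)\tens R(\ga)$-homomorphism $M \tens N \to e(\be,\ga) L$. Since $M$ and $N$ are simple, $M \tens N$ is simple over $R(\be)\tens R(\ga)$, so this map is an embedding; the same reasoning applied to $N'$ yields an embedding $M \tens N' \hookrightarrow e(\be,\ga) L$. The core task is now a uniqueness claim\textup{:} among the simple $R(\be)\tens R(\ga)$-submodules of $e(\be,\ga) L$ whose first tensor factor is a grading shift of $M$, the isomorphism class is unique. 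Granted this, $M \tens N \simeq M \tens N'$ forces $N \simeq N'$, with matching grading shift since both embeddings land in the same graded module $e(\be,\ga) L$.

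For the uniqueness, I would again use Lemma~\ref{lem:commute_equiv}\textup{:} $L = \Im(\rmat{M,N})$ is the simple socle of $q^{-\Lambda(M,N)}\, N \conv M$. A Mackey-type analysis of $e(\be,\ga)(N \conv M)$, combined with the reality of $M$ (which makes $M \conv M$ simple and constrains how the intertwiners permute repeated copies of $M$), pins down the ``$M$-isotypic'' part of the socle of $e(\be,\ga) L$ as a single simple factor. I expect this uniqueness step to be the main obstacle. The realness hypothesis on $M$ is essential here\textup{:} it prevents several non-isomorphic $M \tens (-)$ submodules from arising via distinct shuffle terms in the Mackey filtration. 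In practice, one tracks the composition $M \conv N \twoheadrightarrow L \hookrightarrow q^{-\Lambda(M,N)}\, N \conv M$ and exploits $\End_{R(\be+\ga)}(L) = \cor$ together with the factorization properties of $\rmat{M,N}$ to close the argument, following~\cite[Corollary 3.7]{KKKO15}.
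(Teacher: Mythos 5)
This proposition is imported verbatim from \cite[Corollary 3.7]{KKKO15}; the present paper supplies no proof of its own, so your argument has to stand alone, and as written it does not close. The first reduction is fine: by Frobenius reciprocity a nonzero map $M\conv N\to L$ does give an embedding $M\etens N\hookrightarrow e(\be,\ga)L$, since $M\etens N$ is simple over $R(\be)\tens R(\ga)$. But the ``core task'' you isolate --- uniqueness, among simple submodules of $e(\be,\ga)L$ of the form $(q^aM)\etens N''$, of the isomorphism class of $N''$ --- is not a stepping stone toward the proposition; it is \emph{equivalent} to it. Indeed, any embedding $M\etens N''\hookrightarrow e(\be,\ga)L$ gives back, by the same adjunction, a nonzero and hence surjective map $M\conv N''\to L$, so that $L\simeq M\hconv N''$; thus your uniqueness claim for this $L$ says precisely that every $N''$ with $M\hconv N''\simeq L$ is isomorphic to $N$. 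The argument therefore restates the injectivity rather than proving it, and all of the content is deferred to the step you acknowledge is unproved.

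The sketch offered for that step does not fill the gap. The shuffle (Mackey) lemma filters $e(\be,\ga)(M\conv N)$ and so controls composition factors of the restriction of $M\conv N$; it does not identify simple \emph{submodules} of $e(\be,\ga)L$, where $L$ is a proper quotient of $M\conv N$ --- submodules of a quotient are not visible in a filtration of the ambient module. Reality of $M$ enters the actual proof in \cite{KKKO15} through the R-matrix machinery rather than through restriction: using the affinization $M_z$ and the functoriality of the universal R-matrix, one shows that every nonzero homomorphism $M\conv N\to N'\conv M$ is, up to scalar, of the form $(f\conv M)\circ\rmat{M,N}$ for some $f\in\Hom(N,N')$. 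Applying this to the nonzero composite
$M\conv N\twoheadrightarrow L\simeq \soc(N'\conv M)\rightarrowtail N'\conv M$
(both arrows supplied by Lemma~\ref{lem:commute_equiv}) forces $\Hom(N,N')\ne0$, hence $N\simeq N'$ by simplicity. If you want a self-contained proof, you should establish a factorization statement of that kind; the restriction-and-socle route you propose would require new input that neither the shuffle lemma nor the reality of $M$ provides directly.
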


\begin{lemma}[{\cite[Lemma 3.1.4]{KKKO18}}] \label{lem: tLa} Let $M$ and $N$ be self-dual simple modules. If one of them is real, then
$$ q^{\tL(M,N)}M \hconv N \text{ is a self-dual simple module}.$$

\end{lemma}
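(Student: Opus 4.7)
The plan is to handle simplicity and self-duality separately. Simplicity of $M\hconv N$ is immediate from Lemma \ref{lem:commute_equiv}\,(i), since one of $M,N$ is assumed real. It therefore suffices to show that
$$(M\hconv N)^* \simeq q^{2\tL(M,N)}\,(M\hconv N),$$
because then the balanced shift $q^{\tL(M,N)}(M\hconv N)$ is preserved under $*$ using $(qX)^* \simeq q^{-1}X^*$.

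The central tool is the standard duality formula for convolution products in the symmetric KLR setting, namely
$$(M\conv N)^* \simeq q^{(\wt M,\,\wt N)}\,(N^*\conv M^*).$$
Using the hypotheses $M^*\simeq M$ and $N^*\simeq N$, the right-hand side becomes $q^{(\wt M,\wt N)}(N\conv M)$. Since the dualization functor interchanges heads and socles, and $M\hconv N=\mathrm{hd}(M\conv N)$, we deduce
$$(M\hconv N)^* \simeq q^{(\wt M,\,\wt N)}\,\soc(N\conv M).$$

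Next I would identify the socle on the right with a shift of $M\hconv N$ by means of the $R$-matrix. By Lemma \ref{lem:commute_equiv}\,(ii), $\soc(N\conv M) = \Im(\rmat{M,N})$, and since $\rmat{M,N}\colon M\conv N\to q^{-\Lambda(M,N)}(N\conv M)$ has image the head $M\hconv N$, comparing grading shifts yields $\soc(N\conv M)\simeq q^{\Lambda(M,N)}(M\hconv N)$. Assembling the two shifts gives
$$(M\hconv N)^* \simeq q^{(\wt M,\,\wt N)+\Lambda(M,N)}\,(M\hconv N) = q^{2\tL(M,N)}\,(M\hconv N),$$
which is what we wanted.

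The only delicate point is verifying the sign and magnitude of the grading shift in the convolution-duality formula $(M\conv N)^*\simeq q^{(\wt M,\wt N)}(N^*\conv M^*)$, so that the $(\wt M,\wt N)$-contribution combines additively with $\Lambda(M,N)$ to give precisely $2\tL(M,N)$ and not a cancellation. I would confirm this by writing the induced bilinear pairing between $M\conv N=R(\beta+\gamma)e(\beta,\gamma)\otimes_{R(\beta)\otimes R(\gamma)}(M\otimes N)$ and $N^*\conv M^*$ via the anti-involution $\psi$, and tracking the degree of the corresponding idempotent $e(\gamma,\beta)\psi(\cdot)e(\beta,\gamma)$; the shift $(\wt M,\wt N)=(\beta,\gamma)$ arises precisely from twisting $e(\beta,\gamma)$ through $\psi$ into $e(\gamma,\beta)$. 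Everything else is bookkeeping.
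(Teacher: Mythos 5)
Your proof is correct: simplicity follows from Lemma \ref{lem:commute_equiv}\,(i), and the grading computation $(M\hconv N)^* \simeq q^{(\wt M,\wt N)}\soc(N\conv M) \simeq q^{(\wt M,\wt N)+\Lambda(M,N)}(M\hconv N)=q^{2\tL(M,N)}(M\hconv N)$ checks out with the paper's conventions (degree of $\rmat{M,N}$ equal to $\Lambda(M,N)$ and $(qX)^*\simeq q^{-1}X^*$). The paper gives no proof of its own — it only cites \cite[Lemma 3.1.4]{KKKO18} — and your argument is essentially the one given there, including the standard duality formula $(M\conv N)^*\simeq q^{(\wt M,\wt N)}(N^*\conv M^*)$ whose verification you correctly flag as the only point requiring care.
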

Thus $\tL(M,N)$ indicates the degree shift that makes $M \hconv N$ self-dual.

\begin{definition}
For non-zero $R$-modules $M_1$ and $M_2$, we set
\begin{align*}
& M_1\sodot M_2\seteq q^{\tL(M_1,M_2)}M_1\conv M_2.
\end{align*}
\end{definition}

The non-negative integer $\de(M,N)$ measures the degree of complexity of $M\conv N$
as seen in the following lemma.

\begin{lemma}[{\cite{KKKO15,KKKO18}}]\label{lem:linked}
Let $M$ and $N$ be simple modules in $R \gmod$, and assume that one of them is real.

\begin{enumerate}
\item[{\rm (i)}] $M\conv N$ is simple if and only if $\de(M,N)=0$.
\item[{\rm (ii)}] If $\de(M,N)=1$, then $M\conv N$ has length $2$, and there exists an exact sequence
$$0\to M\sconv N\to M\conv N\to M\hconv N\to 0.$$
\end{enumerate}
\end{lemma}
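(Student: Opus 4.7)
The plan is to reduce both parts to the key identity, established in the affinized setting, that for the affinization $M_z$ of the real simple module (assume without loss of generality that $M$ is the real one),
\begin{align*}
\rmat{N, M_z}\circ\rmat{M_z, N} = c(z)\cdot\id_{M_z\conv N}\qtq \rmat{M_z, N}\circ\rmat{N, M_z} = c(z)\cdot\id_{N\conv M_z},
\end{align*}
for some $c(z)\in\cor[z]$ with $z$-adic order exactly $\de(M,N)$. This identity reflects the fact that $\End_R(M_z\conv N)$ is a free $\cor[z]$-module of rank one in the appropriate degree when $M$ is real simple, and this is where realness enters essentially.

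For part~(i), I would handle the forward direction directly: if $M\conv N$ is simple, then $\rmat{M,N}$ is a nonzero map out of a simple module, hence injective, with image the simple head $M\hconv N$ (Lemma~\ref{lem:commute_equiv}), so $M\conv N\simeq q^{-\Lambda(M,N)}(N\sconv M)$. The analogous statement for $\rmat{N,M}$, combined with the above, yields $M\conv N\simeq q^{-2\de(M,N)}M\conv N$, forcing $\de(M,N)=0$. For the converse, I would specialize the key identity at $z=0$: with $\de(M,N)=0$, we get $\rmat{N,M}\circ\rmat{M,N}=c(0)\cdot\id_{M\conv N}$ with $c(0)\neq 0$, so $\rmat{M,N}$ is injective, and since its image is the simple head $M\hconv N$, the module $M\conv N$ embeds into a simple module and is itself simple.

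For part~(ii), I would perform a Smith normal form analysis over the PID $\cor[z]$. Fix $\cor[z]$-bases of the free rank-$d$ modules $M_z\conv N$ and $N\conv M_z$, where $d\seteq\dim_\cor(M\conv N)$, and represent $\rmat{M_z,N}$, $\rmat{N,M_z}$ as matrices $A(z), B(z)\in M_d(\cor[z])$ satisfying $B(z)A(z) = c(z)\cdot I$ with $\op{ord}_{z=0}c(z)=1$. Writing $A(z)$ in Smith normal form $\mathrm{diag}(z^{a_1},\ldots,z^{a_d})$, the requirement that $B = c(z)\cdot A^{-1}$ be polynomial forces $a_i\in\{0,1\}$ for all $i$. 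Setting $r\seteq\#\{i : a_i=1\}$, reduction modulo $z$ yields $\dim_\cor\ker\rmat{M,N} = r$, so $r = d - \dim_\cor(M\hconv N)$ since $\Im\rmat{M,N}=M\hconv N$. The parallel computation with $B$ (whose Smith form has invariant factors $z^{1-a_i}$) gives $\dim_\cor\ker\rmat{N,M} = d-r$, hence $r = \dim_\cor(N\hconv M) = \dim_\cor(M\sconv N)$, the last equality using the shifted isomorphism $N\hconv M\simeq q^{-\Lambda(N,M)}M\sconv N$ from Lemma~\ref{lem:commute_equiv}. Thus $\dim_\cor(M\sconv N) + \dim_\cor(M\hconv N) = d$. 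Since the socle $M\sconv N$ is contained in $\mathrm{rad}(M\conv N)=\ker\rmat{M,N}$, and both have the common dimension $d-\dim_\cor(M\hconv N)$, they coincide; this makes $M\conv N$ of length at most $2$. As (i) rules out length $1$ (because $\de(M,N)=1\neq 0$), the length is exactly $2$ and the composition series is precisely the exact sequence in the statement.

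The main obstacle is the key identity itself, which rests on a careful analysis of $\End_R(M_z\conv N)$ and requires the realness hypothesis on $M$ in an essential way; without it, $M\conv N$ need not even have a simple head or socle and the entire structural argument collapses. Once the identity is granted, part~(i) is a direct consequence of specialization at $z=0$, and part~(ii) reduces to an elementary Smith normal form computation over $\cor[z]$ combined with the dimension bookkeeping above.
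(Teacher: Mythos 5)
The paper gives no proof of this lemma---it is quoted verbatim from [KKKO15, KKKO18]---and your argument is correct and essentially the one in those references: the only nontrivial input is that $\End(M_z\conv N)$ reduces to $\cor[z]\,\id$ when $M$ is real, after which your ``key identity'' is automatic by homogeneity, since the composition of the two renormalized $R$-matrices has degree $\Lambda(M,N)+\Lambda(N,M)=2\de(M,N)$ while $\deg z=2$, forcing $c(z)=az^{\de(M,N)}$ with $a\neq0$. Two steps deserve one more line each: in the forward direction of (i), before invoking ``the analogous statement for $\rmat{N,M}$'' you should note that $N\conv M$ is simple (it is the isomorphic image of the simple module $M\conv N$ under the injective map $\rmat{M,N}$, by a dimension count); and in (ii) the containment $M\sconv N\subseteq\ker\rmat{M,N}$ holds because the socle is the \emph{unique} simple submodule, hence is contained in every nonzero submodule, and $\ker\rmat{M,N}\neq 0$ by part (i) since $\de(M,N)=1$. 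With those remarks supplied, the Smith normal form bookkeeping closes the argument exactly as you describe.
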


\begin{definition}
For simple $R$-modules $M$ and $N$, we say that
\begin{enumerate}
\item[{\rm (i)}] $M$ and $N$ \emph{strongly commute} if $M\conv N$ is simple,
\item[{\rm (ii)}] $M$ and $N$ are \emph{simply-linked} if $\de(M,N)=1$.
\end{enumerate}
\end{definition}

\subsection{Determinantial modules}

The set of self-dual simple $R$-modules corresponds to
the upper global basis of $\Aqn$ by Theorem~\ref{thm:categorification 2}.

\smallskip

Recall the $\Aa$-algebra isomorphism $\ch$ in~\eqref{eq:KLRU}.

\begin{definition}[{\cite[\S\;9.1, \S\,10.2]{KKKO18}, 
\cite[Proposition~4.1]{KKOP18}}]
For $\La \in \wl^+$ and $\eta,\zeta \in W\La$ such that $\eta \preceq \zeta$, let $\M(\eta,\zeta)$ be the self-dual simple $R(\zeta -\eta)$-module such that
$\ch\bigl(\M(\eta,\zeta)\bigr)=\D(\eta,\zeta).$
\end{definition}

By Proposition~\ref{prop: multi D} {\rm (i)}, the module $\M(\eta,\zeta)$ is real. We call $\M(\eta,\zeta)$ the {\it determinantial module}.
We also write $\M_\tw(s,t)$ (see \eqref{eq: D_tw}) which is the determinantial module such that
\begin{align}\label{eq: M_tw(s,t)}
\D_\tw(s,t) = \ch(\M_\tw(s,t)).
\end{align}

\begin{theorem}[{\cite[Theorem 10.3.1]{KKKO18},
\cite[Proposition~4.6]{KKOP18}}] \label{thm: p+,p monoidal}
For $\La \in\wl^+$ and $\eta_1,\eta_2,\eta_3 \in W\La$ with $\eta_1 \preceq \eta_2\preceq \eta_3$, we have
$$ \M(\eta_1,\eta_2) \hconv \M(\eta_2,\eta_3) \simeq \M(\eta_1,\eta_3).$$
\end{theorem}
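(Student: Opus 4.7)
The plan is to identify both sides as the same self-dual simple $R$-module via the categorification of $A_q(\n)$. Both $\M(\eta_1, \eta_3)$ and (a grading shift of) $\M(\eta_1, \eta_2) \hconv \M(\eta_2, \eta_3)$ live in $R(\eta_3 - \eta_1)\gmod$ and are, or become, self-dual simple modules. Since by Theorem~\ref{thm:categorification 2} the upper global basis $\B^\upper(A_q(\n))$ corresponds bijectively to the isomorphism classes of self-dual simple modules, it suffices to show that the head corresponds to $\D(\eta_1, \eta_3)$ under the character isomorphism $\ch$.

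The argument proceeds in two main steps. Since $\M(\eta_1, \eta_2)$ is a real simple module, Lemma~\ref{lem:commute_equiv} ensures that $\M(\eta_1, \eta_2) \hconv \M(\eta_2, \eta_3)$ is simple, Proposition~\ref{prop: head inj} that it is uniquely determined by the pair, and Lemma~\ref{lem: tLa} that it becomes self-dual after a grading shift by $q^{\tL(\M(\eta_1, \eta_2), \M(\eta_2, \eta_3))}$. By Theorem~\ref{thm:categorification 1}, the Grothendieck class of the convolution equals
$$\ch\bigl(\M(\eta_1, \eta_2) \conv \M(\eta_2, \eta_3)\bigr) = \D(\eta_1, \eta_2)\,\D(\eta_2, \eta_3)$$
in $A_q(\n)$. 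The crux is then to show that, when this product is expanded in $\B^\upper(A_q(\n))$, the element $\D(\eta_1, \eta_3)$ is the unique maximal term in a suitable partial order on the crystal $B(A_q(\n))$, with all other contributions strictly lower. Under the categorification, the simple head of a convolution corresponds to this maximal upper global basis element, forcing the head to be $\M(\eta_1, \eta_3)$; a parallel check of the grading via the $\tL$-computation confirms the isomorphism as stated, without any residual shift.

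The main obstacle is the leading-term quantum minor identity
$$\D(\eta_1, \eta_2)\,\D(\eta_2, \eta_3) = q^{c}\, \D(\eta_1, \eta_3) + (\text{strictly lower global basis terms}).$$
A natural route is to reduce by induction on a saturated chain from $\eta_1$ to $\eta_3$ passing through $\eta_2$ (which exists by the very definition of $\preceq$ via positive roots) to the case where consecutive weights differ by a single reflection. In that base case, Proposition~\ref{prop: multi D}(i) applied with $u, v, \lambda, \mu$ chosen so that $u\lambda = \eta_1$, $v\lambda = \eta_2 = u\mu$, $v\mu = \eta_3$ yields $\D(u(\lambda+\mu), v(\lambda+\mu))$ on the right-hand side, which must then be decomposed via quantum Pl\"ucker-type relations (in the spirit of Berenstein-Zelevinsky and Geiss-Leclerc-Schr\"oer) to extract $\D(\eta_1, \eta_3)$ as its leading term modulo strictly lower elements of the upper global basis. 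Tracking the $\Lambda$-values and weights through this reduction simultaneously determines $c$ and confirms it equals $\tL(\M(\eta_1, \eta_2), \M(\eta_2, \eta_3))$, closing the argument.
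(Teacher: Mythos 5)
The paper does not prove this statement itself; it imports it from \cite[Theorem 10.3.1]{KKKO18} and \cite[Proposition 4.6]{KKOP18}, where the argument is an induction along a chain of simple reflections refining $\eta_1\preceq\eta_2\preceq\eta_3$: one peels off factors $L(i^m)$ using the description $\M(\eta,s_i\zeta)\simeq\M(\eta,\zeta)\hconv L(i^m)$ and $\M(s_i\eta,\zeta)\simeq L(i^n)\hconv\M(\eta,\zeta)$ of Proposition~\ref{prop: weyl left right}, and then re-associates the resulting heads using normality/commutation lemmas for real simple modules. Your proposal instead tries to work entirely in $A_q(\n)$ by exhibiting $\D(\eta_1,\eta_3)$ as the leading global-basis term of $\D(\eta_1,\eta_2)\D(\eta_2,\eta_3)$, and this is where it breaks down.

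Concretely, there are two gaps. First, the intended base case via Proposition~\ref{prop: multi D}(i) cannot be set up: you need $u\la=\eta_1$, $v\la=\eta_2$, $u\mu=\eta_2$, $v\mu=\eta_3$ with $\la,\mu\in\wl^+$, but since $\eta_1,\eta_2,\eta_3$ all lie in the single orbit $W\La$ and the dominant representative of an orbit is unique, this forces $\la=\mu=\La$ and hence $\eta_1=u\La=u\mu=\eta_2$, so the identity only applies in the degenerate case. Moreover Proposition~\ref{prop: multi D}(i) produces a \emph{single} minor with no lower-order terms, whereas the product $\D(\eta_1,\eta_2)\D(\eta_2,\eta_3)$ genuinely has extra terms in general (this is exactly the content of the T-system), so no choice of parameters in that proposition can yield the claimed expansion. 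Second, even granting a leading-term identity, the principle that the simple head of $M\conv N$ corresponds to the ``maximal'' upper global basis element occurring in $\ch(M)\ch(N)$ is asserted but not justified; what the categorification gives is only that $[M\conv N]$ is a sum of classes of simples containing $[M\hconv N]$ once, and identifying \emph{which} global basis element that is requires a compatible order that you would still have to construct. The reliable route is the module-theoretic induction of \cite{KKKO18,KKOP18} sketched above, which uses Proposition~\ref{prop: weyl left right} rather than any quantum Pl\"ucker relations.
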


Now we recall the definition of several functors on $R$-modules.

\begin{definition} Let $\beta\in\rl^+$.
\begin{enumerate}
\item[{\rm (i)}] For $i \in I$ and $1 \le a \le |\beta|$, set
$$ e_a(i)=\sum_{\nu \in I^\beta,\nu_a=i}e(\nu) \ \  \text{ and } \ \ e^*_a(i)=\sum_{\nu \in I^\beta,\nu_{|\be|+1-a}=i}e(\nu) \in R(\beta).$$
\item[{\rm (ii)}] For $M \in R(\beta) \gmod$,
\begin{align*}
&E_iM \seteq  e_1(i)M  \quad \text{ and } \quad  E^*_iM=e^*_{1}(i)M,
\end{align*}
which are functors from $R(\beta) \gmod$ to $R(\beta-\al_i) \gmod$.
\item[{\rm (iii)}] Let $L(i)$ be the $1$-dimensional $R(\al_i)$-module $R(\al_i)/R(\al_i)x_1$. For a simple $R$-module $M$, we set
\begin{align*}
&\ve_i(M)=\max\{ n\in\Z_{\ge0}\ | \ E_i^nM\ne 0 \}, \allowdisplaybreaks\\
&\ve^*_i(M)=\max\{n\in\Z_{\ge0}\ | \ E_i^{*\;n}M\ne 0\},\allowdisplaybreaks\\
& \tF_iM = q^{\ve_i(M)} L(i) \hconv M,  \quad \quad \  \tF^*_iM = q^{\ve^*_i(M)} M \hconv L(i), \allowdisplaybreaks\\
& \tE_iM = q^{1-\ve_i(M)} \soc(E_iM), \quad   \tE^*_iM = q^{1-\ve^*_i(M)} \soc(E^*_iM).
\end{align*}
Here, for an $R$-module $N$, $\soc(N)$ denotes the socle of $N$ and $\hd(N)$ denotes the head of $N$.
\item[{\rm (iv)}] For $i \in I$ and $n \in \Z_{\ge 0}$, we set $L(i^n)=q^{n(n-1)/2} L(i)^{\conv n}$ which  is a self-dual real simple  $R(n\alpha_i)$-module.
\end{enumerate}
\end{definition}

\begin{proposition}[{\cite[Proposition 10.2.3]{KKKO18}}] \label{prop: weyl left right}
Let $\La \in \wl^+$, $\eta,\zeta \in W\La$
such that $\eta \preceq \zeta$ and $i \in I$.
\begin{enumerate}
\item[{\rm (i)}] If $n \seteq \lan h_i,\eta \ran \ge 0$, then
$$ \ve_i(\M(\eta,\zeta))=0 \  \text{ and } \  \M(s_i\eta,\zeta) \simeq \widetilde{F}_i^n\M(\eta,\zeta) \simeq L(i^n) \hconv \M(\eta,\zeta)
\ \text{in $R\gmod$.}$$
\item[{\rm (ii)}] If $\lan h_i,\eta \ran \le 0$ and $s_i\eta \preceq \zeta$, then we have $\ve_i(\M(\eta,\zeta))=-\lan h_i,\eta \ran$ and
$\M(s_i\eta,\zeta)\simeq \tE_i^{-\ang{h_i,\eta}}\M(\eta,\zeta)$.
\item[{\rm (iii)}] If $m \seteq -\lan h_i,\zeta \ran \ge 0$, then
$$ \ve^*_i(\M(\eta,\zeta))=0 \  \text{ and } \  \M(\eta,s_i\zeta) \simeq
\tF_i^{*\,m}\M(\eta,\zeta) \simeq \M(\eta,\zeta) \hconv L(i^m )
\ \text{ in $R\gmod$.}$$
\item[{\rm (iv)}] If $\lan h_i,\zeta \ran \ge 0$ and $\eta \preceq s_i\zeta$, then $\ve^*_i(\M(\eta,\zeta))=\lan h_i,\zeta \ran$
and $\M(\eta,s_i\zeta)\simeq \tE_i^{\ang{h_i,\zeta}}\M(\eta,\zeta)$.
\end{enumerate}
\end{proposition}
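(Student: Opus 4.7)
The overall approach is to combine the multiplicativity of determinantial modules along a chain (Theorem \ref{thm: p+,p monoidal}) with the fact that the symmetric quiver Hecke algebra $R(n\al_i)$ has a unique self-dual simple module $L(i^n)$, and with the categorification Theorem \ref{thm:categorification 2} identifying self-dual simples with the upper global basis. The four statements split into two dual pairs: (iii)--(iv) are the analogues of (i)--(ii) for the right action, with the roles of the two slots of $\M(\cdot,\cdot)$ interchanged and with $\conv$ performed on the opposite side. The proofs of (iii) and (iv) are entirely parallel to those of (i) and (ii), so I focus on (i) and (ii).

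For (i), set $n=\lan h_i,\eta\ran\ge 0$, so $s_i\eta=\eta-n\al_i$. The single-step sequence $(\al_i)$ witnesses $s_i\eta\preceq\eta$, which combined with $\eta\preceq\zeta$ yields $s_i\eta\preceq\eta\preceq\zeta$. Theorem \ref{thm: p+,p monoidal} then gives
$$\M(s_i\eta,\eta)\hconv\M(\eta,\zeta)\simeq\M(s_i\eta,\zeta).$$
The module $\M(s_i\eta,\eta)$ is a self-dual simple $R(n\al_i)$-module. Because $R$ is symmetric, $Q_{i,i}=0$ and $R(n\al_i)$ is the nilHecke algebra of rank $n$, whose unique self-dual simple module is $L(i^n)$. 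Hence $\M(s_i\eta,\eta)\simeq L(i^n)$ and
$$L(i^n)\hconv\M(\eta,\zeta)\simeq\M(s_i\eta,\zeta).$$
To obtain $\ve_i(\M(\eta,\zeta))=0$ and the identification with $\tF_i^n$, transport to the crystal side via Theorem \ref{thm:categorification 2}: $\M(\eta,\zeta)$ corresponds to $D(\eta,\zeta)\in\B^\upper(\Aqn)$ and $\ve_i$ of the module equals the corresponding crystal value. The vanishing $\ve_i(D(\eta,\zeta))=0$ under $\lan h_i,\eta\ran\ge 0$ is the classical compatibility property of the dual canonical basis with the extremal weight structure. Granting $\ve_i(\M(\eta,\zeta))=0$, the definition $\tF_iM=q^{\ve_i(M)}L(i)\hconv M$ iterated $n$ times (with the intermediate $\ve_i$-values equal to $0,1,\ldots,n-1$) yields $\tF_i^n\M(\eta,\zeta)\simeq L(i^n)\hconv\M(\eta,\zeta)$ up to grading, the shift being determined by the self-duality of $\M(s_i\eta,\zeta)$.

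For (ii), set $m=-\lan h_i,\eta\ran\ge 0$ and apply (i) to the pair $(s_i\eta,\zeta)$: one has $\lan h_i,s_i\eta\ran=m\ge 0$, and the hypothesis $s_i\eta\preceq\zeta$ is exactly what is required. This yields $\ve_i(\M(s_i\eta,\zeta))=0$ together with $\M(\eta,\zeta)=\M(s_i(s_i\eta),\zeta)\simeq\tF_i^m\M(s_i\eta,\zeta)$. Applying $\tE_i^m$ then gives $\M(s_i\eta,\zeta)\simeq\tE_i^m\M(\eta,\zeta)$ and $\ve_i(\M(\eta,\zeta))=m$. The main obstacle throughout is the vanishing $\ve_i(\M(\eta,\zeta))=0$ when $\lan h_i,\eta\ran\ge 0$: this cannot be extracted from Theorem \ref{thm: p+,p monoidal} alone by formal module-theoretic manipulation, and the required input is the crystal-theoretic compatibility of the upper global basis with extremal weights, transported to $R\gmod$ via Theorem \ref{thm:categorification 2}.
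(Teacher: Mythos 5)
The paper does not prove this proposition at all: it is imported verbatim as \cite[Proposition 10.2.3]{KKKO18}, so there is no in-paper argument to compare yours against. Judged on its own, your reconstruction follows essentially the route of the cited source and is sound in outline. The reduction of the module-theoretic isomorphism to Theorem~\ref{thm: p+,p monoidal} is clean: $s_i\eta \preceq \eta$ is witnessed by the single root $\al_i$ since $(\al_i,\eta)\ge 0$, the chain $s_i\eta\preceq\eta\preceq\zeta$ gives $\M(s_i\eta,\eta)\hconv\M(\eta,\zeta)\simeq\M(s_i\eta,\zeta)$, and $\M(s_i\eta,\eta)\simeq L(i^n)$ because it is a self-dual simple over the nilHecke algebra $R(n\al_i)$. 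The reduction of (ii) to (i) via $\langle h_i,s_i\eta\rangle=-\langle h_i,\eta\rangle$ and $\tE_i\tF_i=\mathrm{id}$ on simples is also correct, as is the observation that (iii)--(iv) are the $*$-twisted mirror images.

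Two points remain asserted rather than argued, and you should be aware of where they actually come from. First, the vanishing $\ve_i(\M(\eta,\zeta))=0$ for $\langle h_i,\eta\rangle\ge 0$ is not merely ``classical compatibility of the dual canonical basis''; in \cite{KKKO18} it is a specific lemma (Lemma 9.1.5 there) proved from the pairing description $\D(\eta,\zeta)(x)=(xu_\eta,u_\zeta)_\La$: the relevant $q$-derivation of $\D(\eta,\zeta)$ is controlled by $e_iu_\eta$, and $e_iu_\eta=0$ precisely because $u_\eta$ is extremal of weight $\eta$ with $\langle h_i,\eta\rangle\ge 0$. That short argument is the real content and is worth supplying. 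Second, Theorem~\ref{thm:categorification 2} as stated only matches the upper global basis with the self-dual simples as sets; to conclude that the module-theoretic $\ve_i$ (defined via $E_i^nM\ne0$) equals the crystal $\ve_i$ of the corresponding basis element you need the crystal-compatible refinement of the categorification (Lauda--Vazirani, or \cite[\S10]{KKKO18}), which is standard but is an extra input beyond what this paper records. With those two citations made precise, the proof is complete.
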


\subsection{Admissible pair and  $\mathscr{A}_{q^{1/2}}(\n(w))$} In this subsection, we review the results in~\cite{KKKO18} on the monoidal categorification by using graded modules over quiver Hecke algebras.  Throughout this subsection, we focus on a category $\shc$ which is a full subcategory of $R\gmod$ which is stable under taking convolution products, subquotients,
extensions, and grading shift. Then we have
$$ \shc = \soplus_{\be \in \rl^-} \shc_\be \qquad \text{ where } \shc_\be \seteq \shc \cap R(-\be)\gmod.$$

\begin{definition}[{cf. Definition~\ref{def:quantum monoidal pair}}] \label{def: admissible}
A pair $(\{M_i\}_{i\in\K},\tB)$ consisting of
\begin{enumerate}
\item[{\rm (i)}] a family of self-dual real simple objects $\{M_i\}_{i\in\K}$ of $\shc$ strongly commuting with each other,
\item[{\rm (ii)}] an integer valued $\K \times \Kex$-matrix $\tB$ satisfying~\eqref{eq: condition B},
\end{enumerate}
is called {\it admissible} if, for each $k\in\Kex$, there exists an object $M'_k$ of $\shc$  such that
\begin{enumerate}
\item[{\rm (a)}] there is an exact sequence in $\shc$
\begin{align} \label{eq:ses graded mutation KLR}
0 \to q \sodot_{b_{ik} >0} M_i^{\snconv b_{ik}} \to q^{\tL(M_k,M_k')} M_k \conv M_k' \to
 \sodot_{b_{ik} <0} M_i^{\snconv (-b_{ik})} \to 0,
\end{align}
\item[{\rm (b)}] $M_k'$ is self-dual simple and strongly commutes with $M_i$ for any  $i \neq k$.
\end{enumerate}
\end{definition}

For an admissible pair $(\{M_i\}_{i\in\K},\tB)$, we can take
\begin{itemize}
\item  $-\Lambda = (-\Lambda_{i,j})_{i,j \in \K}$  the skew-symmetric integer-valued matrix by $$\Lambda_{i,j}\seteq \Lambda(M_i,M_j),$$
\item $D=\{ d_i \}$ the family of elements in $\rl^-$ by $d_i=\wt(M_i)$,
\end{itemize}
as in Section~\ref{sec:monoiddal}.

\begin{proposition}[{\cite[Proposition 7.1.2]{KKKO18}}] \label{prop: KKKO Prop 7.1.2}
For an admissible pair $(\{M_i\}_{i\in\K},\widetilde B)$, we have the following properties$\colon$
\bnum
\item $\Seed\seteq (\{M_i\}_{i\in\K}, \tB)$ is a quantum monoidal seed in $\shc$. \label{item:1}
\item The self-dual simple object $M_k'$ is real for every $k\in\Kex$.
\item The quantum monoidal seed $\Seed$ admits a mutation in each direction $k\in\Kex$.\label{item:3}
\item $M_k$ and $M'_k$ is simply-linked for any $k\in\Kex$ $($i.e., $\de(M_k,M'_k)=1)$.\label{item:4}
\end{enumerate}
\end{proposition}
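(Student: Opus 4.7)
The plan is to derive all four assertions from the defining exact sequence \eqref{eq:ses graded mutation KLR} combined with the strong-commutation assumption on $\{M_i\}_{i\in\K}$.

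I would begin by establishing (iv), that $M_k$ and $M_k'$ are simply-linked. Both outer terms of the exact sequence are iterated $\sodot$-products of strongly commuting self-dual real simples, so by induction on the number of factors (using Lemma~\ref{lem:commute_equiv} and Lemma~\ref{lem: tLa}) each end is itself a self-dual simple module. Their underlying weights coincide (forced by the middle term lying in a single weight space), yet as graded objects they are distinct, so $M_k \conv M_k'$ has exactly two composition factors. Lemma~\ref{lem:linked}(i) then rules out $\de(M_k, M_k') = 0$, and part (ii) of the same lemma pins down $\de(M_k, M_k') = 1$. Along the way the socle of $M_k \conv M_k'$ is identified with $M_k \sconv M_k'$ and the head with $M_k \hconv M_k'$ via Lemma~\ref{lem:commute_equiv}.

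Next I would show that $\Seed$ is a quantum monoidal seed. Conditions (i)--(iii) of Definition~\ref{def:quantum monoidal pair} are immediate from admissibility; condition (v) follows from $\tL(M_i, M_j) \in \Z$ together with its defining formula, which forces $\Lambda(M_i, M_j) \equiv (d_i, d_j) \pmod 2$; condition (vi) is obtained by extracting weights from the exact sequence, since the outer terms live in the same weight space as the middle. The substantive step is the compatibility condition (iv), $\sum_{k\in\K}\lambda_{ik}b_{kj} = 2\delta_{ij}$ for $i\in\K$ and $j\in\Kex$. I would apply $\Lambda(M_i, -)$ to the exact sequence in direction $j$; using additivity of $\Lambda$ with respect to convolution together with the strong commutation between $M_i$ and each factor appearing in the outer terms, one obtains
\[
\sum_{k} b_{kj}\,\Lambda(M_i, M_k) = \Lambda(M_i, M_j) + \Lambda(M_i, M_j'),
\]
from which the off-diagonal case $i \ne j$ reduces to the vanishing of both sides via strong commutation of $M_i$ with $M_j$ and $M_j'$, while the diagonal case $i = j$ is settled by the already-established identity $\Lambda(M_j, M_j') + \Lambda(M_j', M_j) = 2\de(M_j, M_j') = 2$.

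Finally I would settle reality of $M_k'$ (assertion (ii)) and existence of the mutation (assertion (iii)). For reality, if $M_k' \conv M_k'$ were non-simple then $\de(M_k', M_k') \ge 1$; analyzing the composition structure of $M_k \conv M_k' \conv M_k'$ via the length-$2$ structure of $M_k \conv M_k'$ and applying Proposition~\ref{prop: head inj} with the real simple module $M_k$ produces a contradiction with the assumption that $M_i$ strongly commutes with $M_k'$ for all $i \ne k$. Granting reality of $M_k'$, assertion (iii) reduces to verifying Definition~\ref{def:monoidal mutation}: the first required exact sequence is given by \eqref{eq:ses graded mutation KLR}, the second is obtained by dualizing (all constituents being self-dual), and $\mu_k(\Seed)$ is again a quantum monoidal seed because $\{M_i\}_{i \ne k}\cup\{M_k'\}$ strongly commutes pairwise by admissibility while the pair $(\mu_k(L), \mu_k(\tB))$ inherits compatibility from the original via the general mutation calculus \eqref{eq: mutation in a direction k}. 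The principal obstacle throughout is the $\Lambda$-additivity-based verification of compatibility (iv), as it controls every subsequent matrix-level calculation.
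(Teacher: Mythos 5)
The paper itself contains no proof of this proposition: it is quoted verbatim from \cite[Proposition 7.1.2]{KKKO18}, so your attempt can only be compared with the argument in that reference. Your overall strategy --- derive everything from the exact sequence \eqref{eq:ses graded mutation KLR} plus strong commutation, prove (iv) first, and feed $\de(M_k,M_k')=1$ into the diagonal entry of the compatibility check --- is indeed the strategy of the original proof. Two of your steps, however, do not go through as written. For (iv) you invoke Lemma~\ref{lem:linked}(ii) in the converse direction: that lemma says $\de=1$ implies length $2$, not that length $2$ implies $\de=1$. From the exact sequence and Lemma~\ref{lem:linked}(i) you only obtain $\de(M_k,M_k')\ge 1$. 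The actual proof uses the grading shift $q$ in front of the submodule: the socle of $q^{\tL(M_k,M_k')}M_k\conv M_k'$ is $q$ times a self-dual simple while its head is self-dual, and the degree gap between the self-dual normalizations of socle and head of a product of simples (one real) equals $\de$; the ``$q$'' is precisely what pins down $\de(M_k,M_k')=1$, and your sketch never uses it.

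Second, the intermediate identity $\sum_k b_{kj}\Lambda(M_i,M_k)=\Lambda(M_i,M_j)+\Lambda(M_i,M_j')$ is false for $i\ne j$: its right-hand side is generically nonzero (strong commutation gives $\Lambda(M_i,M_j)=-\Lambda(M_j,M_i)$, not $\Lambda(M_i,M_j)=0$), while compatibility forces the left-hand side to vanish. The correct computation applies additivity of $\Lambda(M_i,-)$ separately to the submodule and to the quotient, giving $\sum_{b_{kj}>0}b_{kj}\Lambda(M_i,M_k)=\Lambda(M_i,M_j)+\Lambda(M_i,M_j')=\sum_{b_{kj}<0}(-b_{kj})\Lambda(M_i,M_k)$; it is the \emph{difference} of these two outer evaluations, namely $\sum_k b_{kj}\Lambda(M_i,M_k)$, that vanishes for $i\ne j$. (Once $\de(M_j,M_j')=1$ is secured, your treatment of the diagonal entry via $\Lambda(M_j,M_j')+\Lambda(M_j',M_j)=2$ is correct.) Finally, your argument for the reality of $M_k'$ is only a gesture --- ``produces a contradiction'' --- whereas the reference derives it from the fact that $M_k\hconv M_k'$ is the real simple module $\sodot_{b_{ik}<0}M_i^{\snconv(-b_{ik})}$ together with $\de(M_k,M_k')=1$; as written I cannot verify your version.
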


For an admissible pair $(\{M_i\}_{i\in\K},\tB)$, let us denote by
\begin{align*} 
[\Seed] \seteq(\{q^{-\frac{1}{4}(\wt(M_i), \wt(M_i))}[M_i]\}_{i\in\K}, -\Lambda,\tB).
\end{align*}

\begin{theorem}[{\cite[Theorem 7.1.3, Corollary 7.1.4]{KKKO18}}] \label{thm: monoidal}
For an admissible pair $(\{M_i\}_{i\in\K},\tB)$, we assume that
\begin{align*} 
\Ah \tens_{\Z[q^{\pm 1}]} K(\shc) \text{ is isomorphic to the quantum cluster algebra } \mathscr{A}_{q^{1/2}}([\Seed]).
\end{align*}
Then $\shc$ is a monoidal categorification of the quantum cluster algebra $\mathscr{A}_{q^{1/2}}([\Seed])$.

In particular, the following statements holds$\colon$
\begin{enumerate}
\item[{\rm (i)}] Each cluster monomial in $\mathscr{A}_{q^{1/2}}([\Seed])$ corresponds to the isomorphism class of a real simple object in $\shc$ up to a power of $q^{1/2}$.
\item[{\rm (ii)}] Each cluster monomial in $\mathscr{A}_{q^{1/2}}([\Seed])$ is a Laurent polynomial of the initial cluster variables with coefficient in $\Z_{\ge0}[q^{\pm1/2}]$.
\end{enumerate}
\end{theorem}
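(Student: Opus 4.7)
The plan is to bootstrap from Proposition~\ref{prop: KKKO Prop 7.1.2} via induction on the length of a mutation sequence. That proposition already shows $\Seed=(\{M_i\}_{i\in\K},\tB)$ is a quantum monoidal seed in $\shc$ admitting a one-step mutation in every direction $k\in\Kex$, producing a self-dual real simple $M_k'$ that is simply linked to $M_k$. Combined with the hypothesis $\Ah\tens_{\Z[q^{\pm1}]}K(\shc)\simeq \mathscr{A}_{q^{1/2}}([\Seed])$, the defining exact sequence \eqref{eq:ses graded mutation KLR} descends, after passage to the Grothendieck ring, to precisely the quantum cluster exchange relation for $\mu_k([\Seed])$. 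Hence the appropriately normalized class of $M_k'$ is the mutated quantum cluster variable, and $\mu_k(\Seed)$ categorifies the quantum seed $\mu_k([\Seed])$.

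The heart of the argument is to upgrade this one-step statement to iterated mutations, i.e.\ to verify condition (iii) of the definition of monoidal categorification. I would proceed by induction: assume that after $n$ mutations the resulting pair $\Seed^{(n)}$ is again admissible, with its cluster consisting of self-dual real simple modules whose normalized classes match those of $\mu_{k_n}\cdots\mu_{k_1}([\Seed])$ under the fixed isomorphism. For the inductive step, for each $j\in\Kex$ one must exhibit an exchange object $N\in\shc$ witnessing admissibility in direction $j$ for $\Seed^{(n)}$. The candidate $N$ is characterized as the self-dual simple module in $\shc$ whose class matches the mutated cluster variable produced algebraically by $\mu_j$; its existence is guaranteed by the cluster isomorphism together with Theorem~\ref{thm:categorification 2}, and its uniqueness follows from the head-injection principle (Proposition~\ref{prop: head inj}) combined with the normalization supplied by Lemma~\ref{lem: tLa}. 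One then verifies $\de(M_j^{(n)},N)=1$ by matching the degree shifts predicted by the mutation formulas \eqref{eq: mutation in a direction k} with the $R$-matrix invariants $\Lambda$ and $\tL$, and applies Lemma~\ref{lem:linked}(ii) to obtain the required short exact sequence in $\shc$.

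The main obstacle is precisely this matching step: one must rule out the possibility that the categorified exchange acquires an unwanted Jordan--H\"older factor or an incorrect grading shift, which forces careful bookkeeping of $\Lambda$ and $\tL$ under iterated mutation, and in particular requires that strong commutativity and the simply-linked property be transported coherently across the mutation from $\Seed^{(n)}$ to $\Seed^{(n+1)}$. Once the induction is complete, every quantum cluster monomial is realized as a $q^{1/2}$-shifted class of a convolution product of strongly commuting self-dual real simple modules belonging to some mutated seed, hence as the class of a self-dual simple module; that this simple module is moreover \emph{real} follows from Lemma~\ref{lem:commute_equiv} applied within its mutated seed. This establishes~(i). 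For~(ii), expanding a cluster monomial in the initial cluster amounts to decomposing a convolution product (localized at the frozen variables) in the basis of simples of $R\gmod$; since Jordan--H\"older multiplicities of graded modules automatically lie in $\Z_{\ge 0}[q^{\pm 1/2}]$, the Laurent expansion inherits coefficients in $\Z_{\ge 0}[q^{\pm 1/2}]$, yielding the asserted positivity.
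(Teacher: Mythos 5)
The paper does not prove this theorem at all --- it is quoted verbatim from \cite[Theorem 7.1.3, Corollary 7.1.4]{KKKO18} --- so the relevant comparison is with the proof given there. Your high-level skeleton (start from Proposition~\ref{prop: KKKO Prop 7.1.2}, induct on the length of the mutation sequence, and propagate admissibility) is the right one, and your treatment of (ii) via non-negativity of Jordan--H\"older multiplicities is essentially the argument in \cite{KKKO18}. But the heart of your inductive step contains a genuine circularity. You propose to \emph{obtain} the exchange object $N$ for the mutated seed as ``the self-dual simple module in $\shc$ whose class matches the mutated cluster variable,'' invoking the isomorphism $\Ah\tens K(\shc)\simeq\mathscr{A}_{q^{1/2}}([\Seed])$ together with Theorem~\ref{thm:categorification 2}. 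The isomorphism only tells you that the mutated cluster variable is \emph{some} element of $K(\shc)$, i.e.\ a $\Z[q^{\pm1/2}]$-linear combination of classes of simples with possibly negative coefficients; that it is the class of a single simple module (equivalently, an upper global basis element) is precisely the content of statement (i), the conclusion you are trying to prove. The exchange relation $[M_j]\cdot x = q^a[P_1]+q^b[P_2]$ does not by itself exclude cancellations in the expansion of $x$ in the basis of simples, so existence of $N$ cannot be extracted this way. (Uniqueness, by contrast, is trivial --- classes of self-dual simples are linearly independent --- so Proposition~\ref{prop: head inj} and Lemma~\ref{lem: tLa} are not doing the work you assign them.)

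The actual proof in \cite{KKKO18} avoids this entirely: Theorem 7.1.3 there is the purely categorical statement that the mutation of an admissible pair is again admissible, proved \emph{without} the Grothendieck-ring hypothesis, by explicitly constructing the new exchange modules as heads and socles of convolution products built from the current seed's modules (and the previously obtained $M_k'$), and then verifying the two short exact sequences by a careful case analysis on the signs of $b_{jk}$, $b_{kj}$ together with bookkeeping of the invariants $\Lambda$, $\de$ and $\tL$ under $\hconv$ and $\sconv$. Only after admissibility has been propagated does the isomorphism with $\mathscr{A}_{q^{1/2}}([\Seed])$ enter, to identify the categorical mutations with the algebraic ones and deduce (i) and (ii). Your remaining steps (matching degree shifts, applying Lemma~\ref{lem:linked}(ii), transporting strong commutation) are the right verifications to list, but without an independent construction of $N$ they have nothing to be applied to; this is the missing idea.
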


\begin{definition} \label{def:cw}
For $w \in W$, let $\shc_w$ be the smallest full subcategory of $R\gmod$ satisfying the following properties:
\begin{enumerate}
\item[{\rm (i)}] $\shc_w$ is stable by convolution, taking subquotients, extensions, and grading shifts,
\item[{\rm (ii)}] $\shc_w$ contains $\M_\tw(s,s_-)$ $(1 \le s \le \ell(w))$, where $\tw$ is a reduced expression of $w$.
\end{enumerate}
\end{definition}

By~\cite{GLS13}, we have an $\Aa$-algebra isomorphism
$$   K(\shc_w) \simeq \Aanw.$$

Let $\tB$ be the integer-valued $\K\times\Kex$-matrix associated to $\tw$ (see Definition~\ref{def: quiver Q assoc tw}).

\begin{theorem}[{\cite[Theorem 11.2.2, Theorem 11.2.3]{KKKO18}}] \label{thm: monoidal categorification}
The pair $(\{ \M_\tw(s,0) \}_{1 \le s \le r},\tB )$ is admissible.
Thus $\shc_w$ is a monoidal categorification of the quantum cluster algebra $\mathscr{A}_{q^{1/2}}(\n(w))$,
with the quantum monoidal seed $(\{ \M_\tw(s,0) \}_{1 \le s \le r},\tB )$.

Furthermore, the self-dual real simple $R$-module
$\M_\tw(k,0)'$ for $k \in \Kex$ in~\eqref{eq:ses graded mutation KLR} is given as follows$\col$ $($up to a grade shift$)$
\begin{align} \label{eq: prime}
\M_\tw(k,0)' \simeq \M_\tw(k_+,k) \hconv \Bigl(\ms{5mu}\sodot_{t < k <t_+ <k_+} \M_\tw(t,0)^{\snconv |a_{i_k,i_t}|}\Bigr).
\end{align}
\end{theorem}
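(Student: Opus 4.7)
The proof proceeds by verifying that the pair $(\{\M_\tw(s,0)\}_{1\le s\le r}, \tB)$ is admissible in the sense of Definition~\ref{def: admissible}, after which Proposition~\ref{prop: KKKO Prop 7.1.2} and Theorem~\ref{thm: monoidal} immediately yield the monoidal categorification together with the formula~\eqref{eq: prime} for $\M_\tw(k,0)'$. The ring isomorphism $\Ah\tens_\Aa K(\shc_w) \simeq \mathscr{A}_{q^{1/2}}(\n(w))$ required to apply Theorem~\ref{thm: monoidal} is the GLS13 result recalled just before the statement.

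I would first check strong commutation within the family. Each $\M_\tw(s,0)$ is self-dual and simple by construction, and real because Proposition~\ref{prop: multi D}(i), applied with $u=w_{\le s}$, $v=e$, $\la=\mu=\Lambda_{i_s}$, implies that $[\M_\tw(s,0)]^2$ is a power-of-$q$ multiple of an upper global basis element of $\Aqn$, forcing $\M_\tw(s,0)\conv\M_\tw(s,0)$ to be simple via Theorem~\ref{thm:categorification 2}. For $s\ne t$, an appropriate specialization of Proposition~\ref{prop: multi D}(ii) produces the $q$-commutation $\D_\tw(s,0)\,\D_\tw(t,0) = q^{\la_{st}}\,\D_\tw(t,0)\,\D_\tw(s,0)$; since both sides lie in $\B^\upper(\Aqn)$, Theorem~\ref{thm:categorification 2} forces $\M_\tw(s,0)$ and $\M_\tw(t,0)$ to strongly commute. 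Compatibility of $(L,\tB)$ with $d=2$ was already recorded in Definition~\ref{def: quiver Q assoc tw}.

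Next I would define $\M_\tw(k,0)'$ by the formula in~\eqref{eq: prime} and verify it is a self-dual real simple module by iteratively applying Theorem~\ref{thm: p+,p monoidal} to identify each successive head of the convolution with a single determinantial module $\M(\eta,\zeta)$ for explicit $\eta,\zeta$; strong commutation of $\M_\tw(k,0)'$ with every $\M_\tw(i,0)$ ($i\ne k$) then reduces to $q$-commutation of the corresponding unipotent quantum minors by the same lifting argument as above.

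The crux is producing the short exact sequence \eqref{eq:ses graded mutation KLR}. Using Proposition~\ref{prop: multi D}, the quantum cluster exchange relation of GLS13 rewritten at the level of $K(\shc_w)$ reads
$$[\M_\tw(k,0)]\cdot[\M_\tw(k,0)'] = q^{\alpha}\bigl[\sodot_{b_{ik}>0}\M_\tw(i,0)^{\snconv b_{ik}}\bigr] + q^{\beta}\bigl[\sodot_{b_{ik}<0}\M_\tw(i,0)^{\snconv(-b_{ik})}\bigr]$$
for explicit shifts $\alpha,\beta$. To lift this identity to an exact sequence of modules I must establish that $\M_\tw(k,0)$ and $\M_\tw(k,0)'$ are simply linked, i.e., $\de(\M_\tw(k,0),\M_\tw(k,0)') = 1$. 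Granting this, Lemma~\ref{lem:linked}(ii) produces a length-two convolution product; Theorem~\ref{thm: p+,p monoidal} together with the definition of $\M_\tw(k,0)'$ identifies the head as $\sodot_{b_{ik}<0}\M_\tw(i,0)^{\snconv(-b_{ik})}$, and the Grothendieck-ring identity then forces the socle to be $q\sodot_{b_{ik}>0}\M_\tw(i,0)^{\snconv b_{ik}}$, yielding \eqref{eq:ses graded mutation KLR} after incorporating the $q^{\tL(M_k,M_k')}$ shift. The main obstacle is the simply-linked assertion: computing $\Lambda(\M_\tw(k,0),\M_\tw(k,0)')$ exactly requires careful filtration arguments on cuspidal determinantial modules $\M_\tw(k_+,k)$ and precise bookkeeping of bilinear-form contributions from every factor appearing in~\eqref{eq: prime}, and this is the technical heart of the argument.
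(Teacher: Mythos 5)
First, a structural point: the paper gives no proof of this statement at all --- it is quoted verbatim from [KKKO18, Theorems 11.2.2 and 11.2.3] and used as a black box, so there is no in-paper argument to measure your proposal against. Judged against the proof in the cited source, your outline follows the right overall strategy (verify admissibility in the sense of Definition~\ref{def: admissible}, then invoke Proposition~\ref{prop: KKKO Prop 7.1.2} and Theorem~\ref{thm: monoidal}), and the preliminary steps --- realness and mutual strong commutation of the $\M_\tw(s,0)$ via Proposition~\ref{prop: multi D} and Theorem~\ref{thm:categorification 2}, and compatibility of $(L,\tB)$ --- are sound.

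However, there is a genuine gap exactly where you locate ``the technical heart'': the exact sequence \eqref{eq:ses graded mutation KLR} is asserted, not derived. Two problems. First, the simply-linked property $\de(\M_\tw(k,0),\M_\tw(k,0)')=1$ is what [KKKO18] spends most of its Sections 10--11 establishing, via explicit formulas for $\Lambda$ and $\de$ between determinantial modules and a categorified T-system among the $\M_\tw(s,t)$; deferring this as an acknowledged obstacle leaves the main content unproved. Second, your route from the length-two filtration to the identification of the socle is circular as stated: you propose to read off the socle from the GLS quantum exchange relation, but that relation involves the mutated cluster variable $x_k'$, and the identity $[\M_\tw(k,0)']=x_k'$ (up to a power of $q^{1/2}$) is precisely what the exact sequence is needed to establish --- one cannot assume $[\M_\tw(k,0)][\M_\tw(k,0)']$ equals the claimed two-term sum in $\Aanw$ before the sequence is in hand, and computing $\ch(\M_\tw(k,0)')$ from the head formula \eqref{eq: prime} is itself nontrivial since a head of a convolution need not have the product of the characters as its class. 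In [KKKO18] the logic runs the other way: the module-level exact sequence (the T-system) is proved first, directly from $\Lambda$-degree computations and Theorem~\ref{thm: p+,p monoidal}, and the exchange relation is then a consequence.
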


\subsection{Generalized quantum affine Schur-Weyl duality functor} \label{Subsec: functors} In this subsection, we recall the
generalized quantum affine Schur-Weyl duality functor in~\cite{KKK18A}.

\smallskip

Let us assume that we are given an index set $J$ and a family $\{V_j\}_{j\in J}$ of quasi-good $U_q'(\g)$-modules.

We define a quiver $\Gamma^J$ associated with the pair $(J, \{V_j\}_{j\in J})$ as follows:
\begin{eqnarray} &&
\parbox{81ex}{
\begin{enumerate}
\item[{\rm (a)}] we take $J$ as the set of vertices,
\item[{\rm (b)}]  we put $d_{ij}$ many arrows from $i$ to $j$, where $d_{ij}$ denotes the order of zero of $d_{V_{i},V_{j}}(u)$ at $u=1$.
\end{enumerate}
}\label{eq: GammaJ}
\end{eqnarray}
Note that we have $d_{ij}d_{ji}=0$ for $i,j\in J$ (see \cite[Theorem 2.2]{KKK18A}).

We define a symmetric Cartan matrix $\cmA^J =(a^J_{ij})_{i,j\in J}$  by
$a^J_{ij}=2$ if $i=j$ and $a^J_{ij}=-d_{ij}-d_{ji}$ otherwise.
Then we choose a family of polynomial $\{Q_{i,j}(u,v)\}_{i,j\in J}$ satisfying~\eqref{eq:Q} with the form,
$$Q_{i,j}(u,v)=\pm(u-v)^{-a^J_{ij}}\quad \text{ for }i\ne j $$
for some choices of sign $\pm$.

Now we take a family $\{P_{i,j}(u,v)\}_{i,j \in J}$
of elements in $\cor[[u,v]]$  satisfying the following conditions$\colon$
\begin{eqnarray} &&
\parbox{81ex}{
\begin{enumerate}
\item[{\rm (a)}] $P_{i,i}(u,v)=1$ for $i \in J$.
\item[{\rm (b)}] The homomorphism $P_{ij}(u,v) \Rnorm_{V_i,V_j} (z_{V_i}, z_{V_j})$
has neither pole nor zero at $u=v=0$,
where   $\cor[z_{V_i}^{\pm 1}, z_{V_j}^{\pm 1}] \to\cor[[u,v]]$ is  given  by
 $z_{V_i} \mapsto 1+u $ and  $z_{V_j} \mapsto 1+v$.
\item[{\rm (c)}]  $ Q_{i,j}(u,v)=P_{i,j}(u,v)P_{j,i}(v,u)$ for $i \ne j$.
\end{enumerate}
}\label{eq:Pij}
\end{eqnarray}
We call such a family $\{P_{i,j}(u,v)\}_{i,j\in J}$ a {\it duality coefficient}.

Let $\{\al^J_i \ | \ i \in J\}$  be the set of simple roots associated to $\cmA^J$ and $\rl^+_J=\sum_{i \in J} \Z_{\ge 0} \al^J_i$ be the corresponding positive root lattice.
We define the symmetric bilinear form $(\ ,\ )$ satisfying $(\al^J_i,\al^J_j)=a^J_{ij}$.

Let us denote by $R^{J}( \beta )$  $(\beta \in \rl^+_J)$  the symmetric quiver Hecke algebra associated with $\cmA^J$ and $\{ Q_{i,j}(u,v)\}_{i,j\in J}$.

In \cite[\S 3]{KKK18A}, Kang-Kashiwara-Kim constructed a functor, called the {\it generalized quantum affine Schur-Weyl duality functor}
\begin{align}\label{eq: F}
\F\col\soplus_{\beta \in \rl^+_J} R^J( \beta) \gmod \to\Ca_\g,
\end{align}
which is a monoidal functor in the following sense: There exist canonical $U_q'(\g)$-isomorphisms
\begin{align*}
\F(R^J(0))\simeq \cor,  \quad  \F(M_1 \conv M_2) \simeq \F(M_1) \otimes \F(M_2)
\end{align*}
for any $M_1 ,M_2 \in R^J\gmod$ such that the diagrams in ~\cite[(A.2)]{KKK18A} are commutative.

\begin{proposition}[{\cite[Proposition 3.2.2]{KKK18A}}] \label{prop:image of tau}The monoidal functor $\F$ is a unique \ro up to an isomorphism\rf
functor which satisfies the following properties:
\bnum
\item For any $i \in J$, we have
\begin{align*}
\mathcal F(L(i)_z) \simeq \cor[[z]]  \tens_{\cor[z_{V_i}^{\pm 1}]} (V_i)_\aff,
\end{align*}
where $\cor[z_{V_i}^{\pm 1}] \to\cor[[z]]$ is given by $z_{V_i} \mapsto 1+z $.
In particular, we have
\begin{align*}
\F(L(i))\simeq V_i  \quad \text{for } i \in J.
\end{align*}

\item For $i,j\in J$, let
$R_{L(i)_z,L(j)_{z'}}\col L(i)_z \conv L(j)_{z'} \rightarrow L(j)_{z'} \conv L(i)_z$
be the $R^J(\al^J_i+\al^J_j)$-module homomorphism in~\eqref{eq: R}. Then we have
\begin{align} \label{eq:Fphi}
\F(R_{L(i)_z,L(j)_{z'}}) =P_{i,j}(z,z')\Rnorm_{V_i,V_j} (z_{V_i}, z_{V_j}) .
\end{align}
\end{enumerate}
\end{proposition}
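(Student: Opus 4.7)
The plan is to establish uniqueness and existence by exploiting that $R^J(\beta)\gmod$ is controlled by a natural family of cyclic modules built from convolutions of the $L(i)$'s and their affinized analogues. First, for uniqueness, fix $\beta \in \rl^+_J$ with $|\beta|=n$ and consider, for each sequence $\nu \in J^\beta$, the universal convolution
$$N(\nu) \seteq L(\nu_1)_{z_1} \conv L(\nu_2)_{z_2} \conv \cdots \conv L(\nu_n)_{z_n},$$
a module over $R^J(\beta) \otimes \cor[z_1,\ldots,z_n]$. Condition (i) together with the monoidal property of $\F$ uniquely determines $\F(N(\nu))$ as the ordered tensor product of the $\F(L(\nu_k)_{z_k})$. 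The generators of $R^J(\beta)$ act on $N(\nu)$ in explicit ways: the idempotents $e(\nu)$ project onto the summand indexed by $\nu$, the variable $x_k$ acts (up to a shift) by multiplication by $z_k$, and the intertwiner $\tau_m$ acts via the homomorphism $R_{L(\nu_m)_{z_m},\,L(\nu_{m+1})_{z_{m+1}}}$ from \eqref{eq: R}. Condition (ii) then fixes the image of this last operator under $\F$. Specializing $z_k \mapsto 0$ and using that $R^J(\beta) = \bigoplus_\nu R^J(\beta) e(\nu)$ is faithfully represented on the resulting modules, I conclude $\F$ is uniquely determined on $R^J(\beta)\gmod$ for every $\beta$.

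For existence I would reverse the argument, defining $\F$ by the above prescription and then checking the quiver Hecke relations. The key relation is the quadratic one $\tau_m^2 e(\nu) = Q_{\nu_m,\nu_{m+1}}(x_m,x_{m+1}) e(\nu)$: this is matched by \eqref{eq:Pij}(c), which reads $Q_{i,j}(u,v) = P_{i,j}(u,v) P_{j,i}(v,u)$, combined with the standard fact that $\Rnorm_{V_i,V_j}$ and $\Rnorm_{V_j,V_i}$ are mutually inverse after the appropriate spectral identification. The braid relation among the $\tau_m$'s follows from the Yang-Baxter equation for $\Rnorm$, while the commutation relations between $x_k$ and $\tau_m$ follow from the intertwining behaviour of $\Rnorm$ under shifts of the spectral parameters. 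The regularity of the resulting action at $z_k=0$, which is essential so that the functor actually lands in $\Ca_\g$ after specialization rather than merely in a localized category, is precisely what \eqref{eq:Pij}(b) was designed to ensure: the factor $P_{i,j}(u,v)$ is chosen so that $P_{i,j}(u,v)\Rnorm_{V_i,V_j}(z_{V_i},z_{V_j})$ has neither pole nor zero at the origin.

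The main obstacle in implementing this plan is the identification, on $N(\nu)$, of the intertwiner $\varphi_m \in R^J(\beta)$ defining \eqref{eq: R} with the corresponding normalized R-matrix on the tensor product of affinizations. Carrying this out involves tracking degree shifts, the formal power series nature of $\Rnorm_{V_i,V_j}$, and the normalization by $P_{i,j}$ so that everything is well-defined and consistent with the formula \eqref{eq:Fphi}; this identification is the technical heart of the construction in \cite[\S 3]{KKK18A}. Once it is established, both directions of the proof reduce to bookkeeping about the quiver Hecke relations, their realization on the universal modules $N(\nu)$, and the compatibility of the prescribed action with the monoidal structure. The uniqueness clause, meanwhile, amounts to the observation that any two such functors agree on these universal modules and hence, by exactness and the fact that every object in $R^J(\beta)\gmod$ is a subquotient of a sum of specializations of $N(\nu)$'s, on the whole category.
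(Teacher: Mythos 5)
First, note that the paper offers no proof of this proposition: it is quoted verbatim from \cite[Proposition 3.2.2]{KKK18A}, so your proposal can only be measured against the construction carried out there. Your overall strategy --- realize $\F$ on convolutions of affinized one-dimensional modules, read off the action of the generators $e(\nu)$, $x_k$, $\tau_m$, and match the quiver Hecke relations with identities among normalized R-matrices --- is indeed the strategy of \cite[\S 3]{KKK18A}, where the functor is built as $M\mapsto \hat V^{\otimes\beta}\otimes_{R^J(\beta)}M$ for a completed bimodule on which the generators act by essentially the operators you describe.

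However, there is a genuine gap in the case $\nu_m=\nu_{m+1}$, and it infects both halves of your argument. You assert that the generator $\tau_m$ acts via the homomorphism $R_{L(\nu_m)_{z_m},L(\nu_{m+1})_{z_{m+1}}}$ of \eqref{eq: R}, and that the quadratic relation $\tau_m^2e(\nu)=Q_{\nu_m,\nu_{m+1}}(x_m,x_{m+1})e(\nu)$ is matched by \eqref{eq:Pij}~(c) together with the mutual inverse property of $\Rnorm_{V_i,V_j}$ and $\Rnorm_{V_j,V_i}$. Both statements are correct only for $\nu_m\ne\nu_{m+1}$. The homomorphism $R_{M,N}$ of \eqref{eq: R} is defined by the intertwiners $\varphi_k$, and $\varphi_ke(\nu)=\bigl(\tau_k(x_k-x_{k+1})+1\bigr)e(\nu)$ when $\nu_k=\nu_{k+1}$, not $\tau_ke(\nu)$. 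If one nonetheless lets $\tau_m$ act by $P_{i,i}\Rnorm_{V_i,V_i}=\Rnorm_{V_i,V_i}$, the inverse property yields $\tau_m^2=1$, whereas the relation demands $\tau_m^2e(\nu)=Q_{i,i}(x_m,x_{m+1})e(\nu)=0$; note that \eqref{eq:Pij}~(c) is stated only for $i\ne j$ and cannot rescue this. The correct prescription is the divided-difference operator sending $\tau_m$ to $(z_{m+1}-z_m)^{-1}\bigl(\Rnorm_{V_i,V_i}-1\bigr)$, extracted from condition (ii) by solving $\F(\varphi_m)=\F(\tau_m)(z_m-z_{m+1})+1$; its very existence requires the nontrivial fact that $\Rnorm_{V_i,V_i}$ specializes to the identity on the diagonal of the spectral parameters, so that the division is regular. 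This point --- rather than the $i\ne j$ bookkeeping you defer to \cite{KKK18A} --- is where the substance of both the existence and the uniqueness arguments lies, and your proposal does not address it. A second, smaller repair: the modules $N(\nu)$ are not objects of $R^J(\beta)\gmod$ (they are infinite-dimensional over $\cor$), so the uniqueness argument must be run through their finite-dimensional quotients $N(\nu)/(z_1^{m_1},\ldots,z_n^{m_n})$, which do generate the category, and one must check that the natural isomorphism on these quotients is compatible with all morphisms, not merely that the two functors agree on objects.
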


Note that
the functor $\F$ depends on the choice of $\{P_{i,j}(u,v)\}_{i,j\in J}$ as seen in~\eqref{eq:Fphi}.

\begin{lemma}[{\cite[Lemma 1.7.8]{KKO18}}] \label{lem: non-simple tensor}
Let $M$, $N\in R^J \gmod$ be simple modules, and assume that one of them is real.
Assume that
\bna
\item the functor $\F$ in~\eqref{eq: F} is exact,
\item $\de(M,N)\le 1$,
\item $\F(M),\F(N) \neq 0$ and $\F(M)\otimes \F(N)$ is not simple.
\end{enumerate}
Then we have
\bnum
\item $\F(\rmat{M,N})= \rmat{\F(M),\F(N)}$ up to a non-zero constant multiple,
\item $\F(M\hconv N) \simeq \F(M)\hconv \F(N)$ which is simple.
\end{enumerate}
\end{lemma}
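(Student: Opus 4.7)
The plan is to apply the exact functor $\F$ to the head/socle short exact sequence of $M\conv N$ and to identify $\F(\rmat{M,N})$ with $\rmat{\F(M),\F(N)}$ up to a nonzero scalar via the Schur--Weyl construction of the R-matrix.

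First I would reduce to $\de(M,N)=1$. If $\de(M,N)=0$, Lemma~\ref{lem:linked}(i) says $M\conv N$ is simple and $\rmat{M,N}$ is an isomorphism onto $N\conv M$ (up to a grade shift), so $\F(\rmat{M,N})$ would be an isomorphism $\F(M)\tens\F(N)\isoto \F(N)\tens\F(M)$. Once $\F(\rmat{M,N})$ is identified (in the next paragraph) with a nonzero scalar multiple of $\rmat{\F(M),\F(N)}$, this would force $\rmat{\F(M),\F(N)}$ to be surjective, hence its image --- the simple head $\F(M)\hconv\F(N)$ --- to equal all of $\F(N)\tens\F(M)$, contradicting~(c). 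Hence $\de(M,N)=1$ and Lemma~\ref{lem:linked}(ii) gives
\[
0 \to M \sconv N \to M \conv N \to M \hconv N \to 0.
\]
Applying $\F$ and the monoidal isomorphism $\F(M \conv N)\simeq\F(M)\tens\F(N)$ yields
\[
0 \to \F(M \sconv N) \to \F(M)\tens \F(N) \to \F(M \hconv N) \to 0,
\]
with the second map factoring $\F(\rmat{M,N})\col \F(M)\tens\F(N)\to \F(N)\tens\F(M)$.

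The crux is the identification $\F(\rmat{M,N})=c\cdot\rmat{\F(M),\F(N)}$ for some $c\in\cor^\times$. The R-matrix $\rmat{M,N}$ is assembled from the intertwiners $\varphi_k\in R(\be+\ga)$ by affinizing $M$, composing intertwiners to obtain a morphism $M_z\conv N\to N\conv M_z$, normalizing by the leading power of $z$, and specializing $z\to 0$. By Proposition~\ref{prop:image of tau}, $\F$ turns each atomic R-matrix $R_{L(i)_z, L(j)_{z'}}$ into $P_{i,j}(z,z')\Rnorm_{V_i,V_j}$, and by~\eqref{eq:Pij}(b) the $P$-values contribute an invertible scalar after specialization. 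Chaining these along the chain defining $\rmat{M,N}$ and comparing with the intrinsic construction of $\rmat{\F(M),\F(N)}$ from the universal R-matrix on $\Ca_\g$, the two morphisms agree up to a nonzero scalar $c$, proving~(i); in particular $\F(\rmat{M,N})\neq 0$, so its image $\F(M\hconv N)$ is nonzero.

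For~(ii), by~(i) the image of $\F(\rmat{M,N})$ coincides with that of $\rmat{\F(M),\F(N)}$, which on the quantum affine side is the simple head $\F(M)\hconv\F(N)$. This uses the $\Ca_\g$-analog of Lemma~\ref{lem:commute_equiv}(ii): reality of one of $\F(M),\F(N)$ is inherited from the atomic modules $V_j = \F(L(j))$ being quasi-good, hence real simple, propagated through the convolutions realizing $M$ and $N$ as subquotients of products of $L(j)$'s. Combining this with the image description from the SES gives $\F(M\hconv N)\simeq\F(M)\hconv\F(N)$ simple. The main obstacle is the crux identification step: rigorously matching the Schur--Weyl assembly of R-matrices with the intrinsic R-matrix in $\Ca_\g$ demands careful bookkeeping of normalization constants through the chain of intertwiners together with an inductive use of the Yang--Baxter equation; once this is in place, the surrounding exact-sequence chase is formal.
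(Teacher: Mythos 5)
This lemma is imported from \cite[Lemma 1.7.8]{KKO18}; the present paper gives no proof, so your attempt has to stand on its own, and it does not. The step you yourself flag as the crux is not merely ``careful bookkeeping'': as you have set it up it is false, because your argument for the identification $\F(\rmat{M,N})=c\cdot\rmat{\F(M),\F(N)}$ with $c\neq0$ never uses hypothesis (c) (nor, really, (b)), and without (c) the conclusion fails. The paper's own Lemma~\ref{lem: N> to 0} supplies a counterexample: take $M=L[a,a+N-1]$ and $N=L(a+N)$. Both are simple, $N$ is real, $\de(M,N)=1$, $\F$ is exact, and $\F(M)\simeq\cor$, $\F(N)\simeq V_{a+N}$ are non-zero; yet $\F(M\hconv N)=\F(L[a,a+N])=0$, hence $\F(\rmat{M,N})=0$, whereas $\rmat{\F(M),\F(N)}$ never vanishes. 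Here (c) fails precisely because $\F(M)\otimes\F(N)\simeq V_{a+N}$ is simple. So any chain-of-intertwiners computation that outputs a nonzero proportionality constant unconditionally must be wrong: when one descends from convolutions of the $L(i)$'s to the subquotients $M$ and $N$ and renormalizes the spectral-parameter-dependent R-matrices on the two sides of the duality, the two normalizations can differ by a positive power of $z$ that kills the morphism at the specialization point. Hypothesis (c) is exactly what rules this out, and the entire content of the lemma is the argument doing so; your proposal postpones that argument rather than supplying it.

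Two further gaps. First, your reduction to $\de(M,N)=1$ is circular: it invokes conclusion (i) together with the claim that the image of $\rmat{\F(M),\F(N)}$ is the \emph{simple} head of $\F(M)\otimes\F(N)$, which requires $\F(M)$ and $\F(N)$ to be simple with one of them real. Second, that simplicity and reality are neither hypotheses nor established by your ``propagation'' argument: $\F$ does not preserve simplicity in general, and subquotients of tensor products of real simple modules need not be simple, let alone real. In the actual applications (e.g.\ the induction proving Lemma~\ref{lem: N> to 0}) the simplicity and reality of $\F(M)$ and $\F(N)$ are known from the inductive setup; a correct proof must either assume or derive them before it can speak of $\F(M)\hconv\F(N)$ being simple.
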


\begin{theorem}[{\cite[Theorem 3.8]{KKK18A}}] \label{thm:exact}
If the Cartan matrix $\cmA^J$ associated with  $\Gamma^J$ is of type
$A_n \ (n \ge 1), D_n \ (n \ge 4)$ or $E_n  \ (n = 6,7,8)$, then the functor $\F$ is exact.
\end{theorem}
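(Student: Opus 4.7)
The plan is to prove exactness by exhibiting $\F$ as tensoring with a flat bimodule. Recall that $\F$ is a monoidal functor constructed in \cite{KKK18A} via the intertwiner action on affinizations: on each weight component $\beta\in\rl^+_J$ with $|\beta|=n$, it has the form
\[
\F(M)\;\simeq\; T(\beta)\tens_{R^J(\beta)} M,
\]
where $T(\beta)$ is the $(U_q'(\g), R^J(\beta))$-bimodule obtained by taking $\soplus_{\nu\in J^\beta}(V_{\nu_1})_\aff\tens\cdots\tens(V_{\nu_n})_\aff$ (with the $x_k$'s acting by the spectral parameters), and endowing it with the right $R^J(\beta)$-structure on generators by the normalized $R$-matrices multiplied by the duality coefficients as in \eqref{eq:Fphi}. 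Thus $\F$ is exact if and only if $T(\beta)$ is flat as a right $R^J(\beta)$-module for every $\beta\in\rl^+_J$.

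First I would reduce flatness to a graded-dimension comparison. Since $R^J(\beta)$ is positively graded with finite-dimensional graded pieces, it suffices to check that for every $\nu\in J^\beta$ the canonical map
\[
T(\beta)e(\nu)\;\tens_{\cor[x_1,\ldots,x_n]}\; e(\nu)R^J(\beta)e(\nu)\;\epito\; T(\beta)e(\nu)\cdot e(\nu)R^J(\beta)e(\nu)
\]
is an isomorphism. The graded dimension of the right side is directly computable from the $z$-decomposition of the affinized tensor product, while the graded rank of the left is governed by the PBW character of $R^J(\beta)$ furnished by Theorem~\ref{thm:categorification 2}; matching these two will yield flatness by a Nakayama-type argument against the grading augmentation.

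Next I would invoke the finite $ADE$ hypothesis. In this case $a^J_{ij}\in\{0,-1\}$ for $i\neq j$, so the polynomials $Q_{i,j}(u,v)$ have degree at most one, and the braid/intertwiner relations of $R^J(\beta)$ match, via the factorization $Q_{i,j}(u,v)=P_{i,j}(u,v)P_{j,i}(v,u)$ of \eqref{eq:Pij}(c), the relations satisfied by the operators $P_{i,j}(z_a,z_{a+1})\,\Rnorm_{V_{\nu_a},V_{\nu_{a+1}}}$ acting on the affinized tensor product. Proposition~\ref{prop:image of tau} shows these operators indeed define a right $R^J(\beta)$-action, and the matching of relations shows that no further relations are imposed; hence $T(\beta)e(\nu)$ is a free module of the expected rank over the polynomial subalgebra $\cor[x_1,\ldots,x_n]\subset e(\nu)R^J(\beta)e(\nu)$ by a standard triangular-basis argument.

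The main obstacle is precisely this matching step: outside $ADE$ type the polynomials $Q_{i,j}$ have degree $\ge 2$, producing intertwiner relations in $R^J(\beta)$ that impose strictly more identities than are forced on $T(\beta)$ by the simple-pole $R$-matrices (whose order of zero $d_{ij}$ at $u=1$ agrees with $-a^J_{ij}$ only in the simply-laced case, by construction of $\Gamma^J$ in \eqref{eq: GammaJ}). A careful verification that, in the $ADE$ case, the quadratic intertwiner relation $\tau_k^2e(\nu)=Q_{\nu_k,\nu_{k+1}}(x_k,x_{k+1})e(\nu)$ translates exactly into the known identity $\bigl(P_{i,j}(u,v)\Rnorm_{V_i,V_j}\bigr)\bigl(P_{j,i}(v,u)\Rnorm_{V_j,V_i}\bigr)=Q_{i,j}(u,v)\cdot\mathrm{id}$ on the affinized tensor product is the key technical input; once this and the analogous braid relation are in hand, flatness of $T(\beta)$ follows and exactness of $\F$ is complete.
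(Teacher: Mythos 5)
The reduction of exactness to flatness of the bimodule $T(\beta)$ is indeed how the proof in \cite{KKK18A} begins, but the mechanism you propose for where the $ADE$ hypothesis enters is not the right one, and the argument has a genuine gap at that point. The verification that the operators $P_{i,j}(u,v)\Rnorm_{V_i,V_j}$ satisfy the defining relations of $R^J(\beta)$ is built into the construction for an \emph{arbitrary} quiver $\Gamma^J$: conditions (b) and (c) of \eqref{eq:Pij} are chosen exactly so that $\tau_k^2e(\nu)=Q_{\nu_k,\nu_{k+1}}(x_k,x_{k+1})e(\nu)$ and the braid relation hold on the affinized tensor product, and this is what makes $T(\beta)$ a right $R^J(\beta)$-module and $\F$ a well-defined functor in \emph{every} type (Proposition~\ref{prop:image of tau}). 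So ``matching of relations'' cannot be the content of the theorem: the relations match equally well in the cases where $\F$ fails to be exact. Similarly, freeness of $T(\beta)e(\nu)$ over the polynomial subalgebra $\cor[x_1,\ldots,x_n]$ holds in all types (the affinizations are by construction free over the spectral parameters) and does not by itself give flatness over the full noncommutative algebra $R^J(\beta)$; the displayed ``canonical map'' whose bijectivity you propose to check is not a valid criterion for flatness of a right module, and the ``Nakayama-type argument'' is not supplied.

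The missing ingredient is homological: for $\cmA^J$ of finite type $A$, $D$, $E$ the quiver Hecke algebra $R^J(\beta)$ has \emph{finite global dimension} (a theorem of Kato, extended by McNamara and Brundan--Kleshchev--McNamara), and this is precisely where the hypothesis of the theorem is used. The proof in \cite{KKK18A} then runs an Auslander--Buchsbaum-type depth argument: $T(\beta)$ is free of maximal depth $n=|\beta|$ over the polynomial part, every object of $R^J(\beta)\gmod$ is finite-dimensional over $\cor$ and hence supported at the origin of the spectrum of the symmetric polynomials, and finite projective dimension together with maximal depth forces $\operatorname{Tor}_i^{R^J(\beta)}(T(\beta),N)=0$ for all $i>0$ and all $N\in R^J(\beta)\gmod$, which is exactness of $\F$. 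For symmetric Cartan matrices not of finite type the global dimension of $R^J(\beta)$ is infinite and exactness genuinely fails, so no argument that avoids this input can succeed.
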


\section{Cluster structure on $R^{A_\infty}\gmod$} \label{Sec: Module}
In this section,
we study the cluster structure on the monoidal category $R^{A_\infty}\gmod$.
Here Dynkin diagram of type $\Ainf$ is depicted as follows:
$$\xymatrix@R=0.5ex@C=4ex{
\ar@{.}[r] & *{\circ}<3pt> \ar@{-}[r]_>{-1} &*{\circ}<3pt>\ar@{-}[r]_>{ \ \ 0} &*{\circ}<3pt> \ar@{-}[r]_>{ \ \ 1} &*{\circ}<3pt> \ar@{-}[r] &  *{\circ}<3pt> \ar@{.}[r] &  }
$$
We first introduce an infinite sequence $\ttww$ of simple reflections whose first $p$-parts is reduced for every
$p \in \Z_{\ge 0}$. Then we explicitly compute determinantial modules of type $\Ainf$ which are associated to $\ttww$.
In the last part of this section, we will construct a certain quantum monoidal seed $\Seed_\infty$ and prove that the category $R^J\gmod$
of type $A_\infty$ gives a monoidal categorification
of the quantum cluster algebra $\Aan$ of type $\Ainf$
whose initial quantum seed is $[\Seed_\infty]$.

\subsection{Sequence of simple reflections of length $\infty$} Let $J=\Z$ be an index set.
We also take the weight lattice $\wl_J =\soplus_{i \in \Z}\Z \La_i$
with $(\La_i,\La_j)=-|i-j|/2$.
We set $\ep_a=\La_a-\La_{a-1}$ and $\al_j =\ep_j -\ep_{j+1}$.
We have $(\ep_a,\ep_b)=\delta_{a,b}$ and $(\Lambda_i,\alpha_j)=\delta_{i,j}$.
The matrix $A^J\seteq\bl (\al_i,\al_j)\br_{i,j\in J}$ is
a Cartan matrix of type $A_\infty$.
We write the root lattice $\rl_J = \soplus_{j \in J}\Z \al_j\subset \wl_J$.

Let $W_J$ be the Weyl group of type $\Ainf$ generated by the set of simple reflections $S_J\seteq\{ s_j \mid j \in J\}$.
Note that, for all $i,j \in J$, we have
\begin{align} \label{eq: computation}
s_i(\ep_j) = \ep_{s_i(j)} \quad  \text{ and } \quad  s_i(\Lambda_{j}) =
\begin{cases}
\Lambda_{j+1}-\ep_j =\La_{j-1}+\ep_{j+1} &\text{if } i=j, \\
\Lambda_{j}  &\text{otherwise}.
\end{cases}
\end{align}
For $k \in \Z_{> 0}$, we sometimes use the notation $\ov{k}$ to denote $-k \in \Z_{< 0} \subset J$.

A pair of integers $[a,b]$ with $a \le b$ is called a {\it segment}. The length of $[a,b]$ is defined to be the positive integer $b-a+1$.
For a segment $[a,b]$, we denote by $W_{[a,b]}$ the subgroup of
$W_J$ generated by $s_{k}$ for  $a \le k \le b$.

Recall the convention in~\eqref{eq: word} and~\eqref{eq: word conv} on sequences of simple reflections.
For each $t \in \Z_{\ge 0}$, let $\ww^{(t)}$ be the sequence
in $S_J$ of length $4t+3$ defined by$\colon$
\begin{align*}
\ww^{(t)} &= s_{\ov{t}}s_{\ov{t}+1}\cdots s_{\ov{1}}s_{0}s_1\cdots s_{t-1}s_{t}  \ s_{t+1} \ s_{t} s_{t-1}\cdots s_2s_1s_{0}s_{\ov{1}}\cdots s_{\ov{t}+1}s_{\ov{t}}.
\end{align*}
We set
$$ \ww^{(s,t)} \seteq
\begin{cases}
\ww^{(s)}*\ww^{(s+1)}* \cdots \ww^{(t-1)}*\ww^{(t)} & \text{ if } 0 \le s \le t, \\
\qquad\qquad\qquad \id  & \text{ otherwise,}
\end{cases}
$$
where $*$ denotes the concatenation of sequences.

Finally, we define an infinite sequence $\ttww$ of $S_J$ by$\colon$
\begin{equation} \label{eq: def tw}
\begin{aligned}
\ttww \seteq \lim_{t \to \infty} \ww^{(0,t)} & = \ww^{(0)} * \ww^{(1)} * \ww^{(2)} * \cdots \\
& = s_{0}s_{1}s_{0}\ms{6mu}s_{\ov{1}}s_{0}s_{1}s_{2}s_{1}s_{0}s_{\ov{1}}\ms{6mu}
s_{\ov{2}}s_{\ov{1}}s_{0}s_{1}s_{2}s_3s_2s_{1}s_{0}s_{\ov{1}}s_{\ov{2}} \; \cdots   \cdots.
\end{aligned}
\end{equation}

We define $\jj_p\in J$ ($p\in\Z_{\ge1}$) by
\eqn
\ttww=s_{\jj_1}s_{\jj_2}s_{\jj_3}\cdots\cdots.
\eneqn

For $t \in \frac{1}{2} \Z$, we set
\begin{align*} 
a(t) \seteq t(2t+1) \in \Z.
\end{align*}
Note that, for $t \in \Z_{\ge 0}$, $a(t)$ coincides with the length of
$\ww^{(0,t-1)}$.

We have for $t\in\Z_{\ge0}$
\eq
\hs{5ex}\jj_p=\left\{\ba{l}
p-a(t)-t-1=p-a(t+1/2)+t\\
\hs{35ex}\text{if $a(t)+1\le p\le a(t+1/2)+1$,}\\[1ex]
t+2+a(t+1/2)-p=-t+a(t+1)-p\\
\hs{35ex}\text{if $a(t+1/2)+1\le p\le a(t+1)+1$.}
\ea\right.\label{def:j}
\eneq

For each $p\in\Z_{\ge1}$, we will denote the element in the Weyl group $W_J$ defined by the sequence $\ttww_{\le p}$ by the same symbol.

\Prop \label{prop: reduced}
The sequence $\ttww$ has the following properties$\colon$
\bnum
\item For $t\in\Z_{\ge0}$, we have
$$\ttww_{\le p-1}(\al_{\jj_p})=
\left\{\ba{l}
\ep_{-t}-\ep_{t+2-p+a(t)}=\ep_{-t}-\ep_{1-t+a(t+1/2)-p}\\[1ex]
\hs{30ex}\text{if $a(t)+1\le p\le a(t+1/2)$,}\\[1.5ex]
\ep_{-t-1+p-a(t+1/2)}-\ep_{t+2}=\ep_{t+1+p-a(t+1)}-\ep_{t+2}\\[1ex]
\hs{30ex}\text{if $a(t+1/2)+1\le p\le a(t+1)$.}
\ea
\right.
$$
\item For any $p \in \Z_{\ge 1}$, $\ttww_{\le p}$ is reduced.
\ee
\enprop

\begin{proof}
Let us first show (i).
Note that, for each $t \in \Z_{\ge 0}$, we have
\begin{enumerate}
\item[{\rm (a)}] $\jj_{a(t)+1}=-t$  and $\jj_{p} > -t $ for all $p \le a(t)$,
\item[{\rm (b)}] $\jj_{a(t+\frac{1}{2})+1}= t+1$ and $\jj_{p} < t+1$ for all $p < a(t+\frac{1}{2})+1$.
\end{enumerate}
Assume that $a(t)+1 \le p \le a(t+\frac{1}{2})$. Then we have
\begin{align*}
\ttww_{\le p-1}(\al_{\jj_p})=\ttww_{\le a(t)}s_{-t}\cdots s_{\jj_p-1}(\ep_{\jj_p}-\ep_{\jj_{p+1}})
=\ttww_{\le a(t)}(\ep_{-t}-\ep_{\jj_p+1}).
\end{align*}
Note that $\ttww_{\le a(t)}$ is contained in $W_{[-t+1,t]}$ as a Weyl group element.
Thus, for all $a(t)+1 \le p \le a(t+\frac{1}{2})$, we have
$$
\ttww_{\le p-1}(\al_{\jj_p})=\ep_{-t}-\ep_{\ttww_{\le a(t)}(\jj_p+1)}=\ep_{-t}-\ep_{\ttww_{\le a(t)}(-t+p-a(t))}.
$$

Now we claim that
$$\ttww_{\le a(t)}(-t+p-a(t)) = t+2-(p-a(t)).$$
Note that \begin{itemize}
\item  $\ttww_{\le a(t)} = \ww^{(0,t-1)} = \ww^{(0,t-2)} * \ww^{(t-1)}$,
\item $\ww^{(t-1)}(-t+1)=t+1$ and $\ww^{(t-1)}(t+1)=-t+1$,
\item $\ww^{(t-1)}(k)=k$ for all $k \in \Z \setminus \{t+1,-t+1\}$.
\end{itemize}
Thus the claim follows from an induction on $t$. Hence we have
$\ttww_{\le p-1}(\al_{\jj_p})=\ep_{-t}-\ep_{t+2-p+a(t)}$.

The case when $a(t+\frac{1}{2})+1 \le p \le a(t+1)$ can be proved in a similar way.

\smallskip\noi
(ii) follows from (i) since $\ttww_{\le p-1}\al_{\jj_p}$ is a positive root for any $p\in\Z_{\ge1}$.
\end{proof}

\begin{remark} \label{rem: longest} \hfill
\begin{enumerate}[{\rm (a)}]
\item For each $k \in \frac{1}{2}\Z_{\ge 1}$, $a(k)$ coincides with the length of the longest element of Weyl group of type $A_{2k}$. Thus
$\ttww_{\le a(k)}$ is the longest element of
$$
\begin{cases}
W_{[-k+1,k]} & \text{ if } k \in \Z_{\ge 1}, \\
W_{[ -k +\frac{1}{2}  , k - \frac{1}{2}  ]} &\text{if $k\in\frac{1}{2}+\Z_{\ge0}$.}
\end{cases}
$$
\item For $t >0$, the reduced expressions $\tww^{(0,t)}$  are not {\it adapted} (\cite{B99}) in the sense that there exists no Dynkin quiver $\mathfrak{Q}$ of type $A_{2t+2}$ satisfying
$$    \text{$i_k$ is a sink of $s_{i_{k-1}}s_{i_{k-1}}\cdots s_{i_1}(\mathfrak{Q})$} \quad \text{ for all } k.$$
Here $s_i(\mathfrak{Q})$ denotes the quiver obtained by reversing all arrows incident with $i$.
\end{enumerate}
\end{remark}

\begin{definition} \label{def: coordinate}
For each $p \in \Z_{\ge 1}$, we assign a pair $c(p)=(\ell,m)$ of positive integers in the following way$\colon$
\begin{align} \label{eq: ell m}
(\ell,m) \seteq \begin{cases}
 \bigl(p-a(t), a(t+\frac{1}{2})-p+1\bigr) & \text{ if } a(t) <  p \le a(t+\frac{1}{2}), \\
\qquad\qquad \ (2t+2,1) & \text{ if } p =a(t+\frac{1}{2})+1, \\
\bigl( a(t+1)+1-p ,p-a(t+\frac{1}{2}) \bigr) & \text{ if } a(t+\frac{1}{2})+2 \le p \le a(t+1),
\end{cases}
\end{align}
where $t$ is a unique non-negative integer satisfying
\begin{align} \label{eq: t}
a(t) < p \le a(t+1).
\end{align}
\end{definition}

\begin{remark} \label{rem: coordinate}
The map $c\colon \Z_{\ge 1} \to \Z_{\ge 1}\times \Z_{\ge 1}$ in~\eqref{eq: ell m} satisfies the following properties$\colon$
\begin{itemize}
\item $\ell+m = \begin{cases} 2t+2 \equiv 0 \mod 2 & \text{ if } a(t) <  p \le a(t+\frac{1}{2}), \\
 2t+3 \equiv 1 \mod 2 & \text{ if } a(t+\frac{1}{2}) <  p \le a(t+1).
 \end{cases}$
\item $c$ is a bijective map whose inverse is given as follows$\colon$
\begin{align}\label{eq: c inverse}
c^{-1}(\ell,m) = \begin{cases}
a(\frac{\ell+m-2}{2})+\ell \ & \text{ if } \ell+m \equiv 0 \mod 2, \\
a(\frac{\ell+m-2}{2})+ m   \ & \text{ if } \ell+m \equiv 1 \mod 2.
\end{cases}
\end{align}
\item The integer $t$ in~\eqref{eq: t} is equal to
$\lfloor (\ell+m-2)/2 \rfloor=\lceil (\ell+m-3)/2\rceil$.
\end{itemize}
\end{remark}

Using the lattice point $\Z_{\ge 1} \times \Z_{\ge 1}$, the pairs $(p,\jj_p)$ corresponding to $c(p)=(\ell,m)$ can be exhibited as follows:
\begin{align*} 
\raisebox{6em}{\scalebox{0.78}{\xymatrix@C=2ex@R=1.8ex{
\vdots &  \vdots & \\
5& (11,-2) & \cdots\\
4& (10,-1) & (12,-1) & \cdots\\
3& (4,-1) & (9,0)& (13,0)& \cdots\\
2& (3,0) & (5,0) & (8,1) & (14,1) & \cdots\\
1& (1,0) & (2,1) & (6,1), & (7,2) & (15,2) & \cdots \\
m / \ell & 1 & 2 & 3 & 4 & 5 & \cdots}}}
\end{align*}

\begin{lemma} \label{lem: ell,m ip}
For $p \in \Z_{\ge 1}$ with $c(p)=(\ell,m)$, the index $\jj_p$ is given as follows$\colon$
$$
\jj_p =\left\lfloor(\ell-m+1)/2\right\rfloor =
\left\lceil(\ell-m)/2\right\rceil
=\begin{cases}
(\ell-m)/{2}& \text{if $\ell+m\equiv0\mod 2$,}\\[1ex]
({\ell-m+1})/{2}& \text{if $\ell+m\equiv1\mod 1$.}
\end{cases}
$$
\end{lemma}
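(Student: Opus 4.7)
My plan is a direct case analysis, carried out on the three-part partition of $\Z_{\ge 1}$ built into Definition~\ref{def: coordinate}, namely $a(t) < p \le a(t+\tfrac{1}{2})$, $p = a(t+\tfrac{1}{2})+1$, and $a(t+\tfrac{1}{2})+1 < p \le a(t+1)$. This is the same partition governing the piecewise formula~\eqref{def:j} for $\jj_p$, so in each range both $\jj_p$ and the coordinates $(\ell, m) = c(p)$ are explicit affine functions of $p$ (with $t$ a parameter), and the lemma reduces to checking one algebraic identity per range.

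Before the case check I would first record the two basic identities $a(t+\tfrac{1}{2}) - a(t) = 2t+1$ and $a(t+1) - a(t+\tfrac{1}{2}) = 2t+2$, both immediate from $a(s) = s(2s+1)$. From these one reads off $\ell + m = 2t+2$ in the first range, $\ell + m = 2t+3$ in the third range, and $(\ell, m) = (2t+2, 1)$ at the boundary. In particular the parity of $\ell + m$ is pinned down, which selects the branch of $\lfloor (\ell - m + 1)/2 \rfloor$: namely $(\ell - m)/2$ when $\ell + m$ is even and $(\ell - m + 1)/2$ when $\ell + m$ is odd. The ceiling form $\lceil (\ell - m)/2 \rceil$ agrees with this by the same parity argument.

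For Case 1, substituting $\ell = p - a(t)$ and $m = a(t+\tfrac{1}{2}) - p + 1$ yields $(\ell - m)/2 = p - \tfrac{1}{2}\bigl(a(t) + a(t+\tfrac{1}{2}) + 1\bigr)$, and the arithmetic identities simplify this to $p - a(t) - t - 1$, matching the first branch of~\eqref{def:j}. Case 2 at $p = a(t+\tfrac{1}{2})+1$ is an immediate check yielding $t+1$ on both sides. Case 3 is handled by the same kind of computation after using $\ell = a(t+1) + 1 - p$, $m = p - a(t+\tfrac{1}{2})$, and the second arithmetic identity, matching the second branch of~\eqref{def:j}.

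There is no real obstacle here: the lemma is combinatorial bookkeeping, and the only point to keep straight is the consistency of the definitions at the boundary $p = a(t+\tfrac{1}{2})+1$, which is already built into both~\eqref{def:j} and Definition~\ref{def: coordinate}.
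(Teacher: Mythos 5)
Your proposal is correct and follows essentially the same route as the paper: the paper also verifies the identity by a case analysis over the ranges $a(t)<p\le a(t+\tfrac12)$, $p=a(t+\tfrac12)+1$, and $a(t+\tfrac12)+2\le p\le a(t+1)$, substituting the explicit formulas \eqref{def:j} and \eqref{eq: ell m} and checking that $\ell-m-2\jj_p$ equals $0$ or $-1$ accordingly. Your reformulation of the check as computing $(\ell-m)/2$ or $(\ell-m+1)/2$ directly is an equivalent bookkeeping choice, not a different argument.
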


\begin{proof}
It is enough to show that
$\ell-m-2\jj_p$ is $0$ or $-1$.
Assume that $a(t)<p\le a(t+1/2)$ for $t\in\Z_{\ge0}$.
Then by \eqref{def:j} and \eqref{eq: ell m}, we have
\eqn
\ell-m-2\jj_p&&=
\bl p-a(t)\br-\bl a(t+1/2)-p+1\br\\
&&\hs{10ex}-\bl p-a(t)-t-1\br-\bl p-a(t+1/2)+t\br=0.
\eneqn
If  $a(t+1/2)+2\ge p\le a(t+1)$, then we have
\eqn
\ell-m-2\jj_p&&=
\bl a(t+1)+1-p\br-\bl p-a(t+1/2)\br\\
&&\hs{10ex}-\bl t+2+a(t+1/2)-p\br-\bl -t+a(t+1)-p\br=-1.
\eneqn
If  $p=a(t+1/2)+1$, then we have
\eqn
\ell-m-2\jj_p&&=
\bl 2t+2\br-1
-2\bl t+2+a(t+1/2)-p\br
=-1. \qedhere
\eneqn
\end{proof}

\subsection{Determinantial modules of type $\Ainf$} \label{subsec: Ainfty}
We keep the notations in the previous subsection. For $i,j \in J$, let us set
$$
Q^J_{i,j}(u,v)= \begin{cases}
\pm(u-v) & \text{ if } j=i\pm 1, \\
\quad 0 & \text{ if } i=j, \\
\quad 1 & \text{ otherwise}.
\end{cases}
$$

We denote by $R^J$ the quiver Hecke algebra of type $\Ainf$ which is associated to $(Q^J_{i,j})_{i,j \in J}$.
{\it In the sequel, we sometimes drop the subscript $J$, if there is no danger of confusion.}

A {\it multisegment} is a finite sequence of segments. We assign a total order on the set of segments as follows$\colon$
$$ [a_1,b_1] > [a_2,b_2]  \quad \text{ if } a_1>a_2, \text{ or } a_1=a_2 \text{ and } b_1>b_2.$$

If a multisegment $\bigl( [a_1,b_1],\ldots,[a_r,b_r] \bigr)$ satisfies $[a_k,b_k] \ge [a_{k+1},b_{k+1}]$ for each $k$, we call it an {\it ordered multisegment}.

For $j \in J$ and a pair of positive integer $(\ell,m) \in \Z_{\ge 1} \times \Z_{\ge 1}$, we define an ordered multisegment $\lb \ell,m \rb^{(j)}$ as follows$\colon$
$$\lb \ell,m \rb^{(j)} \seteq   \bigl(\underbrace{[j-\ell+m,j+m-1], \ldots,[j-\ell+2,j+1],[j-\ell+1,j]}_{\text{$m$-times}}\bigr).$$

For each segment $[a,b]$ of length $\ell$, there exists a graded $1$-dimensional $R(\ep_a-\ep_{b+1})$-module $L[a,b] =\cor u[a,b]$ which is generated by a vector $u[a,b]$ of degree $0$. The action
of $R(\ep_a-\ep_{b+1})$ is given as follows (see~\cite[Lemma 1.16]{KKK18A})$\colon$
\begin{align*}
x_ku[a,b] & = 0, \ \ \tau_mu[a,b]=0, \ \ e(\nu) u[a,b] = \begin{cases}
u[a,b] & \text{ if } \nu=(a,a+1,\ldots,b),\\
\ \ \ 0 & \text{ otherwise}.
\end{cases}
\end{align*} Then one can check that
$$  L[a,b] \simeq \tF_{a}\tF_{a+1}\cdots \tF_{b} \cdot \one  \simeq  \tF^*_{b}\cdots \tF^*_{a+1}\tF^*_{a} \cdot \one,$$
where $\one$ is the trivial $R(0)$-module.

\begin{proposition}[{ \cite[Theorem 7.2 and Section 8.4]{KR11}}]  \label{prop:multisegments}\hfill
\begin{enumerate}[{\rm (i)}]
\item Let $M$ be a simple module in $R^J(\beta) \gmod$ with $\beta\in\rl_J^+$.
Then there exists a unique pair of an ordered multisegment $\big([a_1,b_1], \ldots , [a_t,b_t] \big)$ and $c \in \Z$ such that
$$ M \simeq q^c\hd \big(L[a_1,b_1]\conv \cdots \conv L[a_t,b_t] \big),$$  where $\hd$ denotes the head.
\item For an ordered multisegment   $\big ([a_1,b_1], \ldots , [a_t,b_t] \big)$,
$$\hd \big(L[a_1,b_1]\conv \cdots \conv L[a_t,b_t] \big)$$ is
a
simple $R^J(\beta)$-module, where $\be=\sum_{k=1}^t(\ep_{a_k}-\ep_{b_k+1}) $.
\end{enumerate}
\end{proposition}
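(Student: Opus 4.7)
The plan is to deduce the proposition from the general cuspidal/standard-module machinery of Kleshchev--Ram for quiver Hecke algebras of finite type $A$, after first reducing the $A_\infty$ statement to finite type. The key observation is that for any $\beta \in \rl_J^+$, the support $\{a \in J : \text{the coefficient of } \al_a \text{ in } \beta \neq 0\}$ is finite, so there exists an interval $[c,d] \subset J$ such that $R^J(\beta) = R^{[c,d]}(\beta)$, i.e.\ the quiver Hecke subalgebra of finite type $A_{d-c+1}$ supported on the roots $\ep_a - \ep_{b+1}$ with $c \le a \le b \le d$. Moreover the modules $L[a,b]$ for $[a,b] \subset [c,d]$ all live in this finite type subalgebra, so it suffices to prove the statement for each finite type $A_n$.

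Next I would verify that for each positive root $\al_{a,b} \seteq \ep_a - \ep_{b+1}$, the one-dimensional module $L[a,b]$ is the (unique up to grading shift) \emph{cuspidal module} for $\al_{a,b}$ with respect to the convex order on positive roots defined by $\al_{a_1,b_1} \prec \al_{a_2,b_2}$ iff $a_1 > a_2$, or $a_1 = a_2$ and $b_1 > b_2$. Concretely, cuspidality means that $E_i L[a,b] = 0$ for every $i$ such that $\al_i$ is such that splitting off $\al_i$ from the left would yield $\al_{a,b} - \al_i$ which is a sum of roots strictly smaller than $\al_{a,b}$ in the convex order, and dually on the right. Using the explicit action of $R$ on $L[a,b]$ (the crystal operators realize $L[a,b] \simeq \tF_a\tF_{a+1}\cdots \tF_b \cdot \mathbf 1$), this verification is a direct check on 1-dimensional modules. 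One also checks that the chosen total order on segments is indeed a convex order on the positive roots of $A_\infty$.

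With cuspidality in hand, I would invoke the main theorem of Kleshchev--Ram (and Brundan--Kleshchev--McNamara in the refined form): for any convex order on positive roots and any choice of cuspidal modules $\{L_\alpha\}$, the \emph{proper standard modules}
\[
\Delta(\pi) \seteq L_{\al^{(1)}} \conv L_{\al^{(2)}} \conv \cdots \conv L_{\al^{(t)}}
\]
indexed by Kostant partitions $\pi = (\al^{(1)} \succeq \al^{(2)} \succeq \cdots \succeq \al^{(t)})$ of $\beta$ have simple heads, these heads exhaust the isomorphism classes of self-dual simple modules (up to grading shift), and are pairwise non-isomorphic. Translating this via the bijection between Kostant partitions of $\beta$ (for our convex order) and ordered multisegments of total weight $\beta$, and substituting $L_{\al_{a,b}} = L[a,b]$, gives both parts of the proposition: the uniqueness of $\pi$ gives (i), while the simplicity of $\hd \Delta(\pi)$ gives (ii).

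The main obstacle is the correct bookkeeping to show that our total order on segments coincides with a genuine convex order on the positive roots of $A_n$ and that the 1-dimensional modules $L[a,b]$ really are the cuspidal modules for this order (as opposed to cuspidal modules shifted by a grading or mixed with other simple modules of the same weight). Once these two compatibilities are established, everything else is a direct application of the Kleshchev--Ram classification, which is exactly the content cited from \cite{KR11}.
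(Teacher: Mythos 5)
Your proposal is correct and follows essentially the same route as the paper, which proves nothing itself but cites exactly the Kleshchev--Ram classification (\cite[Theorem 7.2 and \S 8.4]{KR11}) that you invoke: in type $A$ the cuspidal modules for the lexicographic convex order on segments are the one-dimensional modules $L[a,b]$, and the simple modules are precisely the heads of the standard modules indexed by Kostant partitions, i.e.\ ordered multisegments. The only additions you make --- the reduction from $A_\infty$ to a finite-type $A_n$ subalgebra via the finiteness of the support of $\beta$, and the verification that the segment order is convex and that $L[a,b]$ is cuspidal --- are the correct (and implicitly assumed) bookkeeping needed to apply the cited theorem.
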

We call the ordered multisegment $\big ([a_1,b_1], \ldots , [a_t,b_t] \big)$ in Proposition~\ref{prop:multisegments} {\rm (i)} {\it the multisegment associated with $M$}.
It is uniquely determined by $M$.

For $j \in J$ and a pair of positive integer $(\ell,m) \in \Z_{\ge 1} \times \Z_{\ge 1}$, we define a self-dual simple $R^J$-module $\W^{(\ell)}_{m,j}$ associated to $\lb \ell,m \rb^{(j)}$ as follows$\colon$ (up to a grading shift)
\begin{align} \label{eq: def Wlmj}
\W^{(\ell)}_{m,j} \simeq \hd( \underbrace{L[j\hspace{-.15ex}-\hspace{-.15ex}\ell\hspace{-.15ex}+\hspace{-.15ex}m,j\hspace{-.15ex}+\hspace{-.15ex}m\hspace{-.15ex}-\hspace{-.15ex}1]\conv \cdots \conv
L[j\hspace{-.15ex}-\hspace{-.15ex}\ell\hspace{-.15ex}+\hspace{-.15ex}2,j\hspace{-.15ex}+\hspace{-.15ex}1] \conv L[j\hspace{-.15ex}-\hspace{-.15ex}\ell\hspace{-.15ex}+\hspace{-.15ex}1,j]}_{\text{$m$-times}}).
\end{align}

\begin{remark}
The $R^J$-module $\W^{(\ell)}_{m,j}$ is known to be 
{\em homogeneous }in the sense that its grading is concentrated in a single degree (\cite{KR10}).
\end{remark}

Since $\ttww_{\le p}$ is reduced for any $p \in \Z_{\ge 1}$, we can define $\M_\ttww(p_1,p_2)$ for $0 \le p_2 \le p_1$ (see \eqref{eq: D_tw} and \eqref{eq: M_tw(s,t)}):
$$ \M_\ttww(p_1,p_2) \seteq \M_{\ttww_{\le p_1}}(p_1,p_2).$$

Note that, for any $w \in W_J$ and $\Lambda_j$ $(j \in J)$, we have
\begin{align*}
-1 \le \lan h_k, w \Lambda_j \ran \le 1  \quad \text{ for any $k \in J$}.
\end{align*}

Let us find the multisegment corresponding to $\M_\ttww(p,0)$.

For $j \in J$ and $(\ell,m) \in \Z_{\ge 1}\times  \Z_{\ge 1}$, we shall define a sequence $\tw_{\lb \ell,m \rb^{(j)}}$ in $S_J$ which is reduced
(as seen in Lemma~\ref{lem:PBW} below),
and arises from the ordered multisegment $\lb \ell,m \rb^{(j)}$ as follows$\colon$
\begin{itemize}
\item $\tw_{[a,b]} \seteq s_{a}s_{a+1} \cdots  s_{b-1}s_b$ for $a \le b$.
\item $\tw_{\lb \ell,m \rb^{(j)}} = \underbrace{\tw_{[j-\ell+m,j+m-1]} \cdots\cdots \tw_{[j-\ell+2,j+1]}\;\tw_{[j-\ell+1,j]}}_{\text{$m$-times}}$.
\end{itemize}

\begin{proposition} \label{prop: KR quiver redex}
For every $p \in \Z_{\ge 1}$,
$$  \ttww_{\le p} \Lambda_{\jj_p} = \tw_{\lb \ell,m \rb^{(\jj_p)}} \Lambda_{\jj_p}, \quad \text{ where } \  \ c(p)=(\ell,m).$$
\end{proposition}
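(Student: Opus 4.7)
The plan is to compute both sides explicitly in the weight lattice $\wl_J$ and match them. First I would establish the formula
\[
\tw_{\lb \ell,m\rb^{(j)}}\La_j \;=\; \La_{j-\ell}+\sum_{k=j-\ell+m+1}^{j+m}\ep_k
\]
by iterating the basic identity $\tw_{[a,b]}\La_c = \La_{a-1}+\sum_{k=a+1}^{c+1}\ep_k$ (valid for $a\le c\le b$ and verified from $s_i\La_k = \La_k - \delta_{ik}\al_k$ and $s_i\ep_k = \ep_{s_i(k)}$), together with the cycle rule $\tw_{[a,b]}\ep_k = \ep_{k+1}$ for $a\le k\le b$ and $\tw_{[a,b]}\ep_{b+1}=\ep_a$, through the $m$ blocks $\tw_{[j-\ell+r,j+r-1]}$ $(r=1,\ldots,m)$ constituting $\tw_{\lb\ell,m\rb^{(j)}}$. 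After the first block the $\La$-part becomes $\La_{j-\ell}$; every subsequent block fixes this $\La$-part and shifts each $\ep$-index up by one, yielding the claimed formula.

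For the left-hand side I would exploit the block structure of $\ttww$. The key observation is that the benchmark lengths $a(t) = t(2t+1)$, $a(t+\tfrac12) = (2t+1)(t+1)$, $a(t+1) = (t+1)(2t+3)$ coincide with those of the longest elements of the type-$A$ parabolic subgroups $W_{[-t+1,t]}$, $W_{[-t,t]}$, $W_{[-t,t+1]}$. Since each prefix $\ttww_{\le a(t)}$ (resp.\ $\ttww_{\le a(t+\tfrac12)}$, $\ttww_{\le a(t+1)}$) is reduced and lies in the corresponding parabolic---only those simple reflections appear in $\ww^{(0)}*\cdots*\ww^{(t-1)}$, etc.---it must coincide with the longest element of that parabolic. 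The longest element of $W_{[a,b]}$ acts on $\ep_k$ for $k\in[a,b+1]$ as the reversal $\ep_k\mapsto \ep_{a+b+1-k}$ and fixes every $\La_k$ with $k\notin[a,b]$.

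I would then split into the two cases of Definition~\ref{def: coordinate}. For $a(t) < p \le a(t+\tfrac12)$ one has $\ttww_{\le p} = \ttww_{\le a(t)}\cdot s_{-t}s_{-t+1}\cdots s_{\jj_p}$; applying the tail string to $\La_{\jj_p}$ gives $\La_{-t-1}+\sum_{k=-t+1}^{\jj_p+1}\ep_k$, and then the longest element of $W_{[-t+1,t]}$ (acting by $\ep_k\mapsto\ep_{2-k}$) converts this into $\La_{-t-1}+\sum_{k=1-\jj_p}^{t+1}\ep_k$, which matches the formula of Step~1 via $\jj_p = \ell-t-1$ and $m = 2t+2-\ell$. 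For $a(t+\tfrac12) < p \le a(t+1)$ one has $\ttww_{\le p} = \ttww_{\le a(t+\tfrac12)}\cdot s_{t+1}s_t\cdots s_{\jj_p}$; the tail sends $\La_{\jj_p}$ to $\La_{\jj_p-1}+\ep_{t+2}$ (only the innermost $s_{\jj_p}$ produces an $\ep$, and subsequent reflections merely shift its index up to $t+2$), and the longest element of $W_{[-t,t]}$ fixes $\ep_{t+2}$ and converts $\La_{\jj_p-1}$ into $\La_{-t-1}+\sum_{k=m-t}^{t+1}\ep_k$, giving $\La_{-t-1}+\sum_{k=m-t}^{t+2}\ep_k$, which again matches Step~1 via $\jj_p = t+2-m$ and $\ell = 2t+3-m$.

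The main obstacle is the parabolic identification $\ttww_{\le a(t)} = w_0(W_{[-t+1,t]})$ and likewise for $a(t+\tfrac12)$: once this is secured, everything else reduces to disciplined bookkeeping with $\La$'s and $\ep$'s. A little extra care is needed at the boundary value $m=2t+2$ in the second case, where $\jj_p-1=-t-1$ falls outside $[-t,t]$; but there the longest element of $W_{[-t,t]}$ simply fixes $\La_{-t-1}$ and the same final formula still holds.
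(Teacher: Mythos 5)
Your proof is correct, but it takes a genuinely different route from the paper's. The paper argues by induction on $\ell+m$: it peels off the last string of $\ttww_{\le p}$ (namely $s_{-t}\cdots s_{\jj_p}$ or $s_{t+1}s_t\cdots s_{\jj_p}$), reduces the claim to the one for $p^-(\jj_p\pm1)$ with coordinate $(\ell,m-1)$ or $(\ell-1,m)$, and verifies that the concatenated words agree with $\tw_{\lb \ell,m\rb^{(\jj_p)}}$ up to commutation relations. You instead compute both sides in closed form. Your Step~1 formula is equivalent, via $\La_{j+m}-\La_{j-\ell}=\sum_{k=j-\ell+1}^{j+m}\ep_k$, to Lemma~\ref{lem:PBW}(ii) of the paper (there proved by induction on $m$), and your treatment of the left-hand side rests on identifying $\ttww_{\le a(t)}$ and $\ttww_{\le a(t+\frac{1}{2})}$ with the longest elements of $W_{[-t+1,t]}$ and $W_{[-t,t]}$; this is exactly Remark~\ref{rem: longest}(a), and your justification is legitimate and non-circular, since Proposition~\ref{prop: reduced} is proved beforehand and the length counts $a(t)=\binom{2t+1}{2}$, $a(t+\frac{1}{2})=\binom{2t+2}{2}$ force a reduced word of that length inside the parabolic to be its longest element. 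I checked the index bookkeeping in both cases, including the boundary $p=a(t+1)$ where $\jj_p-1=-t-1$ leaves $[-t,t]$, and it is consistent. What your approach buys is an explicit, non-inductive expression for $\ttww_{\le p}\La_{\jj_p}$ (information the paper only extracts afterwards in Corollary~\ref{cor: La computation}); what the paper's induction buys is that it never needs the longest-element identification and directly produces the commutation-equivalences of reduced words that are reused later, e.g.\ in the proof of Proposition~\ref{prop: p+,p ready}.
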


\begin{proof}
We shall use the induction on $\ell+m$.
Let $t$ be a unique non-negative integer in~\eqref{eq: t}.

\smallskip\noi
(a) Assume that $a(t) < p \le a(t+\frac{1}{2})$. In this case, we have
\begin{itemize}
\item $\ell+m \equiv 0 \mod 2$, by Remark~\ref{rem: coordinate},
\item $\jj_p +t+1 =\ell$ since $\jj_{a(t)+1}=-t$ and $\jj_{a(t)+u}=-t+u-1$ for $1 \le u \le a(t+\frac{1}{2})-a(t)+1$.
\end{itemize}
Then we have
$$  \ttww_{\le p}= \tww^{(0,t-1)}  s_{\ov{t}}s_{\ov{t}+1} \cdots    s_{\jj_p-1}  s_{\jj_p}
=\tww^{(0,t-1)} *\tw_{[-t,\jj_p]}.$$
Using~\eqref{eq: computation}, we have
\begin{equation} \label{eq: step 1 for a}
\begin{aligned}
\tw_{[-t,\jj_p]}\Lambda_{\jj_p} & =s_{\ov{t}}s_{\ov{t}+1} \cdots    s_{\jj_p-1}  s_{\jj_p}\Lambda_{\jj_p} = \Lambda_{-t-1}+\sum_{k=-t+1}^{\jj_p+1}\ep_{k} =\Lambda_{\jj_p+1}-\ep_{-t}.
\end{aligned}
\end{equation}

Note that
$$ s_{k} (\Lambda_{\jj_p+1}-\ep_{-t}) = \Lambda_{\jj_p+1}-\ep_{-t} \quad \text{ for } \  -t+1 \le k \le \jj_p.$$
Thus we have
$$\tww^{(0,t-1)}(\Lambda_{\jj_p+1}-\ep_{-t}) =  \ttww_{\le p^-(\jj_p+1)}(\Lambda_{\jj_p+1}-\ep_{-t}),$$
where $c(p^-(\jj_p+1))=(\ell,m-1)$ by Lemma~\ref{lem: ell,m ip}.
(If $m=1$, i.e. $p=a(t+1/2)$, then $p^-(\jj_p+1)=0$ and $\ttww_{\le p^-(\jj_p+1)}=1$.)

Note that,  $\tww^{(0,t-1)} \in W_{[-t+1,t]}$ and
$W_{[-t+1,t]}$ fixes $\ep_{-t}$.
Hence we have
$$\ttww_{\le p}  \La_{\jj_p} =\ttww_{\le p^-(\jj_p+1)}\Lambda_{\jj_p+1}-\ep_{-t}.$$

By the  induction hypothesis, we have
$$\ttww_{\le p^-(\jj_p+1)}\Lambda_{\jj_p+1} =  \tw_{\lb \ell,m-1 \rb^{(\jj_{p}+1)}}\Lambda_{\jj_p+1},$$
and one can check that the following two reduced expressions coincide
$$\tw_{\lb  \ell,m-1  \rb^{(\jj_{p}+1)} } *\tw_{[-t,\jj_p]} = \tw_{\lb  \ell,m  \rb^{(\jj_{p})}},$$
where $*$ denotes the concatenation.
Note that $\ttww_{\le p^-(\jj_p+1)}\in W_{[-t+1,t]}$ also fixes $\ep_{-t}$.
Then our assertion follows from~\eqref{eq: step 1 for a}.

(b) Assume that $a(t+\frac{1}{2}) < p \le a(t+1)$. Then one can check that
\begin{equation} \label{eq: step 1 for b}
\begin{aligned}
 \ttww_{\le p} \Lambda_{\jj_p} & = \tww^{(0,t-1)}    s_{\ov{t}}s_{\ov{t}+1}  \cdots s_{\jj_p-1}s_{\jj_p}  \cdots  s_{t} \ s_{t+1}s_{t} \cdots    s_{\jj_p+1}  s_{\jj_p}\Lambda_{\jj_p} \\
&  = \tww^{(0,t-1)} s_{\ov{t}}s_{\ov{t}+1}  \cdots s_{\jj_p-1}s_{\jj_p}  \cdots  s_{t} \left (\Lambda_{\jj_p-1}+ \ep_{t+2} \right) \\
&  = \tww^{(0,t-1)} s_{\ov{t}}s_{\ov{t}+1}  \cdots s_{\jj_p-1} \left (\Lambda_{\jj_p-1}+ \ep_{t+2} \right) \\
& =  \ttww_{\le p^-(\jj_p-1)} \left (\Lambda_{\jj_p-1}+ \ep_{t+2} \right),
\end{aligned}
\end{equation}
as in the previous case. Note that
\begin{itemize}
\item $\ell+m \equiv 1 \mod 2$ by Remark~\ref{rem: coordinate},
\item $t+2-\jj_p=m$ and $c(p^-(\jj_p-1))=(\ell-1,m)$, by Lemma~\ref{lem: ell,m ip},
\item $\ttww_{\le p^-(\jj_p-1)} \in W_{[t,-t]}$,
\end{itemize}
By the induction hypothesis, we have
$$     \ttww_{\le p^-(\jj_p-1)}\Lambda_{\jj_p-1} =  \tw_{\lb \ell-1,m \rb^{(\jj_{p}-1)}}\Lambda_{\jj_p-1},$$
and one can check that the following two reduced expressions are
in the same equivalence class
$$ \tw_{\lb  \ell-1,m  \rb^{(\jj_{p}-1)}} * \ s_{t+1}s_{t} \cdots    s_{\jj_p+1}  s_{\jj_p} \equiv\tw_{\lb  \ell,m  \rb^{(\jj_{p})}}$$
with respect to the commutation relation $s_is_j \equiv s_js_i$ $(|i-j|>1)$. Then our assertion follows from~\eqref{eq: step 1 for b}.
\end{proof}

\Lemma\label{lem:PBW}
For $j \in J$ and $(\ell,m) \in \Z_{\ge 1} \times \Z_{\ge 1}$, we have
\bnum
\item $\tw_{\lb \ell,m \rb^{(j)}} $ is reduced,
\item
$\tw_{\lb \ell,m \rb^{(j)}} \Lambda_{j}=\La_{j+m}-\sum_{k=1}^m\ep_{j-\ell+k}$,
\item
$\M( \tw_{\lb \ell,m \rb^{(j)}} \Lambda_{j},\La_j)\simeq \W_{m,j}^{(\ell)}$. \label{item:KR}
\ee
\enlemma

\begin{proof}
Let us prove (i) and (ii) by induction on $m$.
Write $\tw\seteq\tw_{\lb \ell,m \rb^{(\jj_p)}}= s_{i_1}s_{i_2} \cdots s_{i_r} $.
Now we shall prove that
\begin{align}\label{eq: claim}
\lan h_{i_s}, \tw_{\le s-1}\La_{\jj_p}  \ran = 1 \quad \text{ for all } 1 \le s \le r.
\end{align}
By the induction hypothesis, we have
$$\tw_{\lb \ell,m-1 \rb^{(j)}}(\La_{j}) = \La_{j+m-1}-\sum_{k=1}^{m-1}\epsilon_{j-\ell+k}.$$
Hence, we have
\begin{align*}
\tw_{\lb \ell,m \rb^{(j)}}(\La_{j} ) &=   s_{j-\ell+m}\cdots s_{j+m-1}\tw_{\lb \ell,m-1 \rb^{(j)}}
(\La_{j})  \\
&=s_{j-\ell+m}\cdots s_{j+m-1}(\La_{j+m-1}-\sum_{k=1}^{m-1}\epsilon_{j-\ell+k}).
\end{align*}
Since
$$s_{j-\ell+m}\cdots s_{j+m-1}(\La_{j+m-1})
=s_{j-\ell+m}\cdots s_{j+m-2}(\La_{j+m}-\ep_{j+m-1})
=\La_{j+m}-\ep_{j-\ell+m}$$
and
\eqn
\ang{h_{j+m-k},\;s_{j+m-k+1}\cdots s_{j+m-1}\tw_{\lb \ell,m-1 \rb^{(j)}} \Lambda_{j}}=1
\qt{for $1\le k\le \ell$,}\label{eq:wred}
\eneqn
(ii) and \eqref{eq: claim}  follow.
(i) follows from \eqref{eq: claim}.

\smallskip
\noi(iii)
Let us prove (iii) by induction on $m$.
By Theorem~\ref{thm: p+,p monoidal}, we have
\eqn
\M(\tw_{\lb \ell,m \rb^{(j)}} \Lambda_{j},\La_j)
&&\simeq\M(\tw_{\lb \ell,m \rb^{(j)}} \Lambda_{j},\tw_{\lb \ell,m-1 \rb^{(j)}} \Lambda_{j})\hconv
\M(\tw_{\lb \ell,m-1 \rb^{(j)}} \Lambda_{j},\La_j)\\
&&\simeq
\M(\tw_{\lb \ell,m \rb^{(j)}} \Lambda_{j},\tw_{\lb \ell,m-1 \rb^{(j)}} \Lambda_{j})\hconv
\W_{m-1,j}^{(\ell)}.
\eneqn
Hence it is enough to show
\eqn
&&\M(\tw_{\lb \ell,m \rb^{(j)}} \Lambda_{j},\tw_{\lb \ell,m-1 \rb^{(j)}} \Lambda_{j})\simeq L[j-\ell+m,j+m-1].
\eneqn

Then Proposition~\ref{prop: weyl left right} implies
\[\M(\tw_{\lb \ell,m \rb^{(j)}} \Lambda_{j},\tw_{\lb \ell,m-1 \rb^{(j)}} \Lambda_{j})\simeq \tF_{j-\ell+m}\cdots\tF_{j+m-1}\one
\simeq L[j-\ell+m,j+m-1]. \qedhere \]
\end{proof}

\begin{theorem} \label{thm :KR in KLR}
For $p \in \Z_{\ge 1}$ with $c(p)=(\ell,m)$, we have
$$\M_\ttww(p,0) = \W_{m,\jj_p}^{(\ell)}.$$
\end{theorem}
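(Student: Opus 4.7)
The plan is to observe that this theorem is essentially a direct synthesis of the two preceding results, Proposition~\ref{prop: KR quiver redex} and Lemma~\ref{lem:PBW}~\eqref{item:KR}, once we unwind the definition of $\M_\ttww(p,0)$.

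First I would recall that, by the definition of $\D_\tw(s,t)$ in \eqref{eq: D_tw} specialized to $s=p$, $t=0$, and the reduced expression $\ttww_{\le p}$, we have
\[
\D_\ttww(p,0) = \D(\lambda_p, \Lambda_{\jj_p}),
\qquad \lambda_p = \ttww_{\le p}\Lambda_{\jj_p}.
\]
Consequently, by the definition of the determinantial module via $\ch(\M_\ttww(p,0)) = \D_\ttww(p,0)$, we obtain
\[
\M_\ttww(p,0) \simeq \M\bigl(\ttww_{\le p}\Lambda_{\jj_p},\, \Lambda_{\jj_p}\bigr).
\]

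Next, I invoke Proposition~\ref{prop: KR quiver redex}, which gives the equality of weights
\[
\ttww_{\le p}\Lambda_{\jj_p} = \tw_{\lb \ell,m\rb^{(\jj_p)}}\Lambda_{\jj_p}
\]
for $c(p)=(\ell,m)$. Since the determinantial module $\M(\eta,\zeta)$ depends only on the pair of weights $(\eta,\zeta)$ and not on how $\eta$ is expressed as a Weyl-group translate of $\zeta$, this substitution yields
\[
\M_\ttww(p,0) \simeq \M\bigl(\tw_{\lb \ell,m\rb^{(\jj_p)}}\Lambda_{\jj_p},\, \Lambda_{\jj_p}\bigr).
\]

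Finally, Lemma~\ref{lem:PBW}~\eqref{item:KR} identifies the right-hand side with the module $\W^{(\ell)}_{m,\jj_p}$ built from the ordered multisegment $\lb \ell,m\rb^{(\jj_p)}$, completing the proof. There is no genuine obstacle here; all of the real work has been carried out in the two earlier statements, namely the computation that $\ttww_{\le p}\Lambda_{\jj_p}$ agrees with $\tw_{\lb \ell,m\rb^{(\jj_p)}}\Lambda_{\jj_p}$ (an induction on $\ell+m$ using the two cases $a(t)<p\le a(t+\tfrac12)$ and $a(t+\tfrac12)<p\le a(t+1)$), and the identification of the determinantial module attached to $\tw_{\lb \ell,m\rb^{(\jj_p)}}$ with $\W^{(\ell)}_{m,\jj_p}$ via Theorem~\ref{thm: p+,p monoidal} and Proposition~\ref{prop: weyl left right}.
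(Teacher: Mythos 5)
Your proof is correct and follows exactly the paper's own argument: the paper likewise deduces the theorem immediately from Proposition~\ref{prop: KR quiver redex} and Lemma~\ref{lem:PBW}, with the unwinding of $\M_\ttww(p,0)$ as $\M(\ttww_{\le p}\Lambda_{\jj_p},\Lambda_{\jj_p})$ left implicit. Your write-up just makes that reduction explicit.
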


\begin{proof}
The assertion immediately follows from Proposition~\ref{prop: KR quiver redex}
and Lemma~\ref{lem:PBW}.
\end{proof}

Using the lattice points $\Z_{\ge 1} \times \Z_{\ge 1}$, we can exhibit $\{ \M_\ttww(p,0) \simeq \W_{m,i_p}^{(\ell)} \}$ as follows$\colon$
\begin{align*}
\raisebox{4em}{\scalebox{0.64}{\xymatrix@C=2ex@R=3ex{
\vdots &\vdots & \vdots &\vdots &\vdots  & \iddots\\
\Wlmj{1}{3}{-1} & \Wlmj{2}{3}{0} & \Wlmj{3}{3}{0} & \Wlmj{4}{3}{1} & \Wlmj{5}{3}{1} & \cdots \\
\Wlmj{1}{2}{0} & \Wlmj{2}{2}{0} & \Wlmj{3}{2}{1} & \Wlmj{4}{2}{1} & \Wlmj{5}{2}{2} & \cdots \\
\Wlmj{1}{1}{0} & \Wlmj{2}{1}{1} & \Wlmj{3}{1}{1} & \Wlmj{4}{1}{2} & \Wlmj{5}{1}{2}  & \cdots }}}
 \hspace{-1ex} =
\raisebox{4em}{\scalebox{0.64   }{\xymatrix@C=2ex@R=3.9ex{
\vdots &\vdots & \vdots &\vdots &\vdots  & \iddots\\
\Mkl{4}{0} & \Mkl{9}{0} &\Mkl{13}{0} &\Mkl{18}{0} &\Mkl{27}{0} &   \cdots\\
\Mkl{3}{0} & \Mkl{5}{0} &\Mkl{8}{0} &\Mkl{14}{0} &\Mkl{17}{0} &   \cdots\\
\Mkl{1}{0} & \Mkl{2}{0} &\Mkl{6}{0} &\Mkl{7}{0} &\Mkl{15}{0} &   \cdots }}}
\end{align*}
Note that we have
\eqn
&&\{\M_\ttww(p,0)\}_{p\in \Z_{\ge 1}}
=\{\hd\bl L[a,b]\conv L[a-1,b-1]\cdots\conv L[-b,-a]\br\mid a\le b,a+b\ge0\}\\
&&\hs{15ex}\sqcup\{\hd\bl L[a,b]\conv L[a-1,b-1]\cdots\conv L[1-b,1-a]\br\mid a\le b,\;a+b\ge 1\}.
\eneqn
Here we confuse $M$ and the isomorphic class of $M$.

\begin{corollary} \label{cor: La computation}
For $p \in \Z_{\ge 1}$ with $c(p)=(\ell,m)$, we have
$$ \M_\ttww(p,0) = \M\Bigl( \La_{\jj_p+m}-\sum_{k=1}^m \ep_{\jj_p-\ell+k},\;\La_{\jj_p}\Bigr) \text{ and } \wt(\M_\ttww(p,0)) =\sum_{k=1}^m (\ep_{\jj_p+k}-\ep_{\jj_p-\ell+k}).$$
\end{corollary}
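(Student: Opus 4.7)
The plan is straightforward: this corollary is an immediate consequence of Theorem~\ref{thm :KR in KLR} combined with parts (ii) and (iii) of Lemma~\ref{lem:PBW}, followed by a one-line weight calculation. Because the substantive work is entirely carried out in those preceding results, I do not anticipate any real obstacle.

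First, I would apply Theorem~\ref{thm :KR in KLR} to identify $\M_\ttww(p,0)\simeq \W^{(\ell)}_{m,\jj_p}$. Next, part (iii) of Lemma~\ref{lem:PBW} rewrites this module as $\M(\tw_{\lb \ell,m \rb^{(\jj_p)}}\La_{\jj_p},\La_{\jj_p})$, and part (ii) of the same lemma evaluates the upper argument as $\tw_{\lb \ell,m \rb^{(\jj_p)}}\La_{\jj_p}=\La_{\jj_p+m}-\sum_{k=1}^m\ep_{\jj_p-\ell+k}$. Substituting yields the first equality in the statement.

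For the weight formula, I plan to use the general identity $\wt(\M(\eta,\zeta))=\eta-\zeta$. This follows from the two conventions in place: the weight grading on $A_q(\n)$ puts $\D(\eta,\zeta)$ in the component $A_q(\n)_{\eta-\zeta}$, while the character isomorphism~\eqref{eq:KLRU} identifies an $R(\beta)$-module with an element of $A_q(\n)_{-\beta}$. Hence $\M(\eta,\zeta)$ is an $R(\zeta-\eta)$-module, and the convention $\wt(M)\seteq -\beta$ for $M\in R(\beta)\gmod$ gives $\wt(\M(\eta,\zeta))=\eta-\zeta$. Applying this to the first formula and using the telescoping $\La_{\jj_p+m}-\La_{\jj_p}=\sum_{k=1}^m\ep_{\jj_p+k}$ produces the stated weight.

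The only sanity check worth recording is that the right-hand side $\sum_{k=1}^m(\ep_{\jj_p+k}-\ep_{\jj_p-\ell+k})$ does lie in $\rl^-$, as required by the convention: each summand unpacks as $-(\al_{\jj_p-\ell+k}+\al_{\jj_p-\ell+k+1}+\cdots+\al_{\jj_p+k-1})$ using $\al_j=\ep_j-\ep_{j+1}$. Beyond this, there is nothing further to verify.
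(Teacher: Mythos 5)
Your proposal is correct and follows exactly the route the paper intends: the first identity is the combination of Theorem~\ref{thm :KR in KLR} (equivalently, Proposition~\ref{prop: KR quiver redex}) with Lemma~\ref{lem:PBW} (ii)--(iii), and the weight formula is the standard fact that $\M(\eta,\zeta)$ is an $R(\zeta-\eta)$-module so $\wt(\M(\eta,\zeta))=\eta-\zeta$, together with the telescoping $\La_{\jj_p+m}-\La_{\jj_p}=\sum_{k=1}^m\ep_{\jj_p+k}$. Nothing is missing.
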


For a pair $(a,b)$ of integers with $a>b$, we denote also by
$$\Lrev[a,b]=\tF_{a}\tF_{a-1}\cdots \tF_{b} \cdot \one
\simeq \tF^*_{b}\cdots \tF^*_{a+1}\tF^*_{a} \cdot \one$$
the graded $1$-dimensional $R(\ep_b-\ep_{a+1})$-module.

\begin{proposition} \label{prop: p+,p ready} \hfill
\begin{enumerate}
\item[{\rm (a)}] For $p,p' \in\Z_{\ge 1}$ with $c(p)=(\ell,m)$ and  with $c(p')=(\ell,m+1)$, we have
$$
\begin{cases}
L[\jj_p-\ell+m+1,\jj_p+m]  \conv \M_\ttww(p,0) \twoheadrightarrow \M_\ttww(p',0) & \text{ if }  \ell+m \equiv 0 \mod 2, \\
\M_\ttww(p,0) \conv L[\jj_p-\ell,\jj_p-1] \twoheadrightarrow \M_\ttww(p',0) & \text{ if }  \ell+m \equiv 1 \mod 2.
\end{cases}
$$
\item[{\rm (b)}] For each $p\in\Z_{\ge 1}$ with $c(p)=(\ell,m)$, we have
$$
\begin{cases}
L[\jj_p-\ell+m+1,\jj_p+m] \hconv \M_\ttww(p,0) \simeq \M_\ttww(p^+,0) & \text{ if } \ell+m \equiv 0 \mod 2, \\
\Lrev[\jj_p-\ell+m-1,\jj_p-\ell]  \hconv \M_\ttww(p,0) \simeq \M_\ttww(p^+,0) & \text{ if } \ell+m \equiv 1 \mod 2.
\end{cases}
$$
\end{enumerate}
\end{proposition}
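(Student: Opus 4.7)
The plan is to reduce both (a) and (b) to identifying certain determinantial modules $\M(\eta_1,\eta_2)$ and then applying Theorem~\ref{thm: p+,p monoidal}, following the same blueprint used for Lemma~\ref{lem:PBW}(iii) and Theorem~\ref{thm :KR in KLR}.

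For part~(b), set $j=\jj_p$. A direct inspection of $\ttww$ using Lemma~\ref{lem: ell,m ip} shows that $\jj_{p^+}=j$, and that
\[
c(p^+)=\begin{cases}(\ell,m+1)&\text{if }\ell+m\equiv 0\mod 2,\\ (\ell+1,m)&\text{if }\ell+m\equiv 1\mod 2,\end{cases}
\]
corresponding to whether the next occurrence of $j$ lies in the descending half of block $t=\lfloor(\ell+m-2)/2\rfloor$ or in the ascending half of block $t+1$. Accordingly, set $\eta_3=\La_j$, $\eta_2=\tw_{\lb\ell,m\rb^{(j)}}\La_j$, and let $\eta_1=\tw_{\lb\ell,m+1\rb^{(j)}}\La_j$ in case~1, $\eta_1=\tw_{\lb\ell+1,m\rb^{(j)}}\La_j$ in case~2. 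Lemma~\ref{lem:PBW}(iii) gives $\M(\eta_2,\eta_3)=\M_\ttww(p,0)$ and $\M(\eta_1,\eta_3)=\M_\ttww(p^+,0)$, while Theorem~\ref{thm: p+,p monoidal} yields
\[
\M_\ttww(p^+,0)\simeq\M(\eta_1,\eta_2)\hconv\M_\ttww(p,0),
\]
so (b) reduces to showing $\M(\eta_1,\eta_2)\simeq L[j-\ell+m+1,j+m]$ in case~1 and $\M(\eta_1,\eta_2)\simeq\Lrev[j-\ell+m-1,j-\ell]$ in case~2. I will do this by exhibiting the reduced path $s_{j-\ell+m+1}\cdots s_{j+m}$ (case~1) or $s_{j-\ell+m-1}\cdots s_{j-\ell}$ (case~2) carrying $\eta_2$ to $\eta_1$, checking by a short pairing computation -- entirely parallel to~\eqref{eq:wred} in the proof of Lemma~\ref{lem:PBW} -- that at every step the relevant $\lan h_i,\cdot\ran$ equals $1$, and applying Proposition~\ref{prop: weyl left right}(i) iteratively. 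This produces
\[
\M(\eta_1,\eta_2)\simeq\tF_{j-\ell+m+1}\cdots\tF_{j+m}\cdot\one=L[j-\ell+m+1,j+m]
\]
in case~1, and $\M(\eta_1,\eta_2)\simeq\tF_{j-\ell+m-1}\cdots\tF_{j-\ell}\cdot\one=\Lrev[j-\ell+m-1,j-\ell]$ in case~2.

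Part~(a) then follows. When $\ell+m\equiv 0$, the formula for $c(p^+)$ gives $p'=p^+$, so the surjection is the canonical quotient map $L\conv\M_\ttww(p,0)\twoheadrightarrow L\hconv\M_\ttww(p,0)$. When $\ell+m\equiv 1$, Lemma~\ref{lem: ell,m ip} gives $\jj_{p'}=j-1$, hence $\M_\ttww(p',0)=\W^{(\ell)}_{m+1,j-1}$; by~\eqref{eq: def Wlmj} the latter is the simple head of
\[
L[j-\ell+m,j+m-1]\conv\cdots\conv L[j-\ell+1,j]\conv L[j-\ell,j-1],
\]
whose first $m$ factors surject onto $\W^{(\ell)}_{m,j}=\M_\ttww(p,0)$. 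Exactness of $\conv$ produces a surjection onto $\M_\ttww(p,0)\conv L[j-\ell,j-1]$; Lemma~\ref{lem:commute_equiv} implies that this target has a simple head, and since it is a quotient of a module with simple head $\W^{(\ell)}_{m+1,j-1}$, that head must be $\M_\ttww(p',0)$, yielding the required surjection. The only nontrivial ingredient is the positivity of pairings used in part~(b), a routine variant of the calculation behind~\eqref{eq:wred}.
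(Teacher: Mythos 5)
Your proof is correct and follows essentially the same route as the paper: the decomposition $\M(\eta_1,\eta_3)\simeq\M(\eta_1,\eta_2)\hconv\M(\eta_2,\eta_3)$ from Theorem~\ref{thm: p+,p monoidal}, the identification of the one-dimensional factor via Proposition~\ref{prop: weyl left right} as in Lemma~\ref{lem:PBW}(iii), and the head/convolution argument from Theorem~\ref{thm :KR in KLR} for the surjections in (a). The only difference is a harmless reordering — you prove (b) uniformly first and deduce the even case of (a) from it, whereas the paper proves (a) directly and reduces the even case of (b) to (a).
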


\begin{proof}
{\rm (a)} is a consequence of Theorem~\ref{thm :KR in KLR},
 since we have
$$
\jj_{p'} = \begin{cases}
\jj_p & \text{ if }  \ell+m \equiv 0 \mod 2, \\
\jj_p-1 & \text{ if }  \ell+m \equiv 1 \mod 2,
\end{cases}
\quad \text{by Lemma~\ref{lem: ell,m ip}.}
$$
For {\rm (b)}, if $\ell+m \equiv 0 \mod 2$, then $\jj_{p'}=\jj_{p^+}$ and hence it is the case of
{\rm (a)}. Now let us consider when $\ell+m \equiv 1 \mod 2$. In this case, $c(p^+)=(\ell+1,m)$ by Lemma~\ref{lem: ell,m ip}.
Then we have
\begin{itemize}
\item $\M_\ttww(p,0) \simeq \hd\bigl( L[\jj_p-\ell+m,\jj_p+m-1]\conv \cdots \conv L[\jj_p-\ell+2,\jj_p+1] \conv L[\jj_p-\ell+1,\jj_p]\bigr)$,
\item $\M_\ttww(p^+,0) \simeq \hd\bigl( L[\jj_p-\ell+m-1,\jj_p+m-1]\conv \cdots \conv L[\jj_p-\ell+1,\jj_p+1] \conv L[\jj_p-\ell,\jj_p]\bigr)$.
\end{itemize}
Note that
$$w_{\lb \ell+1,m \rb^{(\jj_p)}} \equiv   s_{\jj_p-\ell+m-1} \cdots s_{\jj_p-\ell+1} s_{\jj_p-\ell} *w_{ \lb \ell,m \rb^{(\jj_p)}}$$
with respect to the commutation relation. By Theorem~\ref{thm: p+,p monoidal}, we have
\begin{align*}
\M(\tw_{\lb \ell+1,m \rb^{(\jj_p)}} \Lambda_{\jj_p},\Lambda_{\jj_p}) &\simeq\M(\tw_{\lb \ell+1,m \rb^{(\jj_p)}} \Lambda_{\jj_p},\tw_{\lb \ell,m \rb^{(\jj_p)}} \Lambda_{\jj_p})\hconv \M(\tw_{\lb \ell,m \rb^{(\jj_p)}} \Lambda_{\jj_p},\Lambda_{\jj_p})\\
&\simeq
\M(\tw_{\lb \ell+1,m \rb^{(\jj_p)}} \Lambda_{\jj_p},\tw_{\lb \ell,m \rb^{(\jj_p)}} \Lambda_{\jj_p})\hconv \W_{m,\jj_p}^{(\ell)}.
\end{align*}
As in the proof of Lemma~\ref{lem:PBW} \eqref{item:KR}, we have
$$  \M(\tw_{\lb \ell+1,m \rb^{(\jj_p)}} \Lambda_{\jj_p},\tw_{\lb \ell,m \rb^{(\jj_p)}}\Lambda_{\jj_p} ) \simeq  \tF_{\jj_p-\ell+m-1} \cdots \tF_{\jj_p-\ell+1} \tF_{\jj_p-\ell} \cdot \one.$$
Hence our assertion follows.
\end{proof}

\begin{corollary} \label{cor: cuspidal}
For each $p\in\Z_{\ge 1}$ with $c(p)=(\ell,m)$, we have
$$
\M_\ttww(p^+,p) \simeq \begin{cases}
L[\jj_p-\ell+m+1, \jj_p+m]     & \text{ if } \ell+m \equiv 0 \mod 2, \\
\Lrev[\jj_p-\ell+m-1,\jj_p-\ell]   & \text{ if } \ell+m \equiv 1 \mod 2.
\end{cases}
$$
\end{corollary}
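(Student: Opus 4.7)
The plan is to identify $\M_\ttww(p^+,p) = \M(\la_{p^+},\la_p)$ directly by writing both weights $\la_p$ and $\la_{p^+}$ in the explicit Weyl-transform form of Proposition~\ref{prop: KR quiver redex}, and then applying Proposition~\ref{prop: weyl left right} to the resulting one-simple-reflection-at-a-time factorization. No injectivity arguments or cancellation on $\hconv$ are needed; the content of the corollary is essentially the ``difference determinantial module'' that already appears inside the proofs of Lemma~\ref{lem:PBW}\,(iii) and Proposition~\ref{prop: p+,p ready}\,(b).

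First I would fix $p\in\Z_{\ge1}$ with $c(p)=(\ell,m)$ and note $\jj_{p^+}=\jj_p$ since $i_{p^+}=i_p=\jj_p$. By Remark~\ref{rem: coordinate} (equivalently Lemma~\ref{lem: ell,m ip}) we have
\[
c(p^+)=\begin{cases}(\ell,m+1) & \text{if }\ell+m\equiv0\mod 2,\\(\ell+1,m) & \text{if }\ell+m\equiv1\mod 2,\end{cases}
\]
so by Proposition~\ref{prop: KR quiver redex}, $\la_p=\tw_{\lb\ell,m\rb^{(\jj_p)}}\La_{\jj_p}$ and
\[
\la_{p^+}=\begin{cases}\tw_{\lb\ell,m+1\rb^{(\jj_p)}}\La_{\jj_p} & \text{in the even case,}\\[.5ex]\tw_{\lb\ell+1,m\rb^{(\jj_p)}}\La_{\jj_p} & \text{in the odd case.}\end{cases}
\]

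In the even case, the reduced factorization $\tw_{\lb\ell,m+1\rb^{(\jj_p)}}= s_{\jj_p-\ell+m+1}\cdots s_{\jj_p+m}\cdot\tw_{\lb\ell,m\rb^{(\jj_p)}}$ reduces $\M_\ttww(p^+,p)$ to the module $\M(\tw_{\lb\ell,m+1\rb^{(\jj_p)}}\La_{\jj_p},\tw_{\lb\ell,m\rb^{(\jj_p)}}\La_{\jj_p})$. This is exactly the ``difference'' determinantial module handled in the inductive step of the proof of Lemma~\ref{lem:PBW}\,(iii) (with $m$ replaced by $m+1$). Checking the hypothesis $\ang{h_{\jj_p+m-k},\cdots\La_{\jj_p}}=1$ as in \eqref{eq:wred} of that proof, an iterated application of Proposition~\ref{prop: weyl left right}\,(i) identifies this module with $\tF_{\jj_p-\ell+m+1}\cdots\tF_{\jj_p+m}\cdot\one = L[\jj_p-\ell+m+1,\jj_p+m]$, as desired.

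In the odd case, I would use the analogous reduced factorization $\tw_{\lb\ell+1,m\rb^{(\jj_p)}}\equiv s_{\jj_p-\ell+m-1}\cdots s_{\jj_p-\ell+1}s_{\jj_p-\ell}\cdot\tw_{\lb\ell,m\rb^{(\jj_p)}}$ (with respect to the commutation relations), which is precisely the computation already carried out inside the proof of Proposition~\ref{prop: p+,p ready}\,(b). Theorem~\ref{thm: p+,p monoidal} then gives $\M_\ttww(p^+,p)\simeq\M(\tw_{\lb\ell+1,m\rb^{(\jj_p)}}\La_{\jj_p},\tw_{\lb\ell,m\rb^{(\jj_p)}}\La_{\jj_p})$, and Proposition~\ref{prop: weyl left right}\,(i) applied to the $\ell$ successive simple reflections yields $\tF_{\jj_p-\ell+m-1}\cdots\tF_{\jj_p-\ell}\cdot\one=\Lrev[\jj_p-\ell+m-1,\jj_p-\ell]$, completing the odd case. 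The only obstacle is routine bookkeeping: confirming that the sequence of highest weights obtained at each step is $\preceq$-comparable and has $\ang{h_{\,\cdot\,},\cdot}\ge0$ so that Proposition~\ref{prop: weyl left right}\,(i) applies at every step.
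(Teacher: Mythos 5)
Your proof is correct, but it takes a different route from the paper's. The paper deduces the corollary by a cancellation argument: it combines Theorem~\ref{thm: p+,p monoidal} (which gives $\M_\ttww(p^+,p)\hconv\M_\ttww(p,0)\simeq\M_\ttww(p^+,0)$) with Proposition~\ref{prop: p+,p ready}\,(b) (which exhibits $\M_\ttww(p^+,0)$ as $L[\cdots]\hconv\M_\ttww(p,0)$, resp.\ $\Lrev[\cdots]\hconv\M_\ttww(p,0)$), and then cancels the common right factor using the injectivity of $\hconv$ by a real simple module (Proposition~\ref{prop: head inj}). You instead compute the difference determinantial module $\M(\la_{p^+},\la_p)$ head-on, using Proposition~\ref{prop: KR quiver redex} to rewrite both extremal weights via $\tw_{\lb\ell,m\rb^{(\jj_p)}}$ and $\tw_{\lb\ell',m'\rb^{(\jj_p)}}$, and then peeling off the extra simple reflections one at a time with Proposition~\ref{prop: weyl left right}\,(i). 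The two key identifications you need — $\M(\tw_{\lb\ell,m+1\rb^{(j)}}\La_j,\tw_{\lb\ell,m\rb^{(j)}}\La_j)\simeq L[j-\ell+m+1,j+m]$ and $\M(\tw_{\lb\ell+1,m\rb^{(j)}}\La_j,\tw_{\lb\ell,m\rb^{(j)}}\La_j)\simeq\Lrev[j-\ell+m-1,j-\ell]$ — are exactly the displayed computations inside the proofs of Lemma~\ref{lem:PBW}\,(iii) and Proposition~\ref{prop: p+,p ready}\,(b), so nothing new has to be proved; you are essentially inlining those computations and skipping the detour through $\M_\ttww(p^+,0)$. What your route buys is independence from the head-injectivity cancellation (and from the implicit left/right symmetric form of Proposition~\ref{prop: head inj} that the paper's proof quietly uses); what it costs is that you must re-verify, at each reflection, the positivity hypotheses $\eta\preceq\zeta$ and $\lan h_i,\eta\ran\ge0$ for Proposition~\ref{prop: weyl left right}\,(i) — bookkeeping that the cited proofs have already carried out via \eqref{eq:wred} and the reducedness of the concatenated words, so this is a genuine but routine obligation rather than a gap.
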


\begin{proof}
By Theorem~\ref{thm: p+,p monoidal} and Proposition~\ref{prop: p+,p ready},
$$\M_\ttww(p^+,p) \hconv \M_\ttww(p,0) \simeq \M_\ttww(p^+,0) $$
and
$$
\begin{cases}
L[\jj_p-\ell+m+1, \jj_p+m] \hconv \M_\ttww(p,0) \simeq \M_\ttww(p^+,0) & \text{ if } \ell+m \equiv 0 \mod 2, \\
\Lrev[\jj_p-\ell+m-1,\jj_p-\ell]  \hconv \M_\ttww(p,0) \simeq \M_\ttww(p^+,0) & \text{ if } \ell+m \equiv 1 \mod 2.
\end{cases}
$$
Then our assertion follows from Proposition~\ref{prop: head inj}.
\end{proof}

\subsection{Quantum monoidal seed $\Seed_\infty$} Since
Proposition~\ref{prop: reduced} tells that $\ttww_{\le p}$ is reduced for every $p \in \Z_{\ge 1}$,
we can consider the quiver $\tQ$ associated to $\ttww$ by taking $p \to \infty$ and using Definition~\ref{def: quiver Q assoc tw}.
The set of vertices of $\tQ$ is $K\seteq\Z_{\ge1}$.
The set of the frozen vertices of $\K$ is empty.
Note that there is a bijection $c\col K\to \Z_{\ge1}\times \Z_{\ge1}$.

\begin{proposition}
Each vertex of the quiver $\tQ$ is of finite degree. Hence the associated matrix $\tB$ satisfies
the conditions in~\eqref{eq: condition B}.
\end{proposition}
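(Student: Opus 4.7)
My approach is to show directly that every vertex $p \in \K = \Z_{\ge 1}$ has finite degree in $\tQ$, which will immediately give condition~\eqref{eq: condition B}(a) for $\tB$. The essential preliminary fact is that $p_+ < \infty$ for every $p$: by the block structure $\ttww = \ww^{(0)} * \ww^{(1)} * \cdots$, and the fact that each reflection $s_j$ appears in every block $\ww^{(t)}$ with $t \ge |j|$, the label $\jj_p$ recurs arbitrarily far to the right of $p$ in $\ttww$. So $p_+$ exists and is finite.

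Given this, I would enumerate the four possible kinds of arrows incident with $p$ prescribed by Definition~\ref{def: quiver Q assoc tw}. Horizontal arrows contribute at most one outgoing arrow $p \to p_-$ (when $p_- \ge 1$) and at most one incoming arrow $p_+ \to p$. Ordinary arrows $p \to t$ require $p < t < p_+$, so the target lies in the finite interval $(p, p_+) \cap \Z$, giving only finitely many possibilities. Ordinary arrows $s \to p$ require $s < p < s_+ < p_+$ together with $|a_{\jj_s, \jj_p}| \ne 0$; since the Cartan matrix $A^J$ of type $A_\infty$ is simply laced, $\jj_s$ must be one of the two Dynkin-neighbors $\jj_p \pm 1$, and for each such label $j$ the condition $s < p < s_+$ pins $s$ down uniquely as the largest position strictly before $p$ with $\jj_s = j$. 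Hence at most two incoming ordinary arrows exist (the further constraint $s_+ < p_+$ can only exclude candidates, never create new ones). Summing these bounds yields that the total degree at $p$ is finite.

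This finite-degree property translates, via~\eqref{eq: bij}, into the statement that every column of $\tB$ indexed by some $j \in \Kex$ has finitely many nonzero entries, which is exactly~\eqref{eq: condition B}(a). The skew-symmetry of the principal part required by~\eqref{eq: condition B}(b) is automatic from~\eqref{eq: bij} once one notes that $\tQ$ has no $2$-cycles, since both the ordinary condition $s < t < s_+ < t_+$ and the horizontal condition $s_- < s$ are strictly directional in the vertex labels. There is no genuine obstacle here; the argument is pure bookkeeping, with the only two ingredients being the simply laced nature of $A_\infty$ (bounding incoming ordinary arrows by $2$) and the periodic recurrence of each simple reflection in $\ttww$ (ensuring $p_+ < \infty$).
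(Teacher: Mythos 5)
Your argument is correct, but it takes a genuinely different route from the paper's. You prove finiteness of the degree \emph{qualitatively}: each label recurs in every block $\ww^{(t)}$ with $t$ large, so $p_+<\infty$; horizontal arrows contribute at most one in each direction; outgoing ordinary arrows land in the finite window $(p,p_+)$; and incoming ordinary arrows are pinned down, via the simply-laced condition $\jj_s=\jj_p\pm1$ together with $s<p<s_+$, to the at most two positions $p^-(\jj_p\pm1)$. All of these steps check out (in particular your implicit exclusion of arrows between positions with equal labels is forced by $t<s_+$, and skew-symmetry of the principal part is automatic from~\eqref{eq: bij}). The paper instead computes the coordinates $c(p_\pm)$ and $c(p^+(\jj_p\pm1))$ explicitly in terms of $(\ell,m)=c(p)$ and derives the \emph{exact} local neighborhoods~\eqref{eq: nbd of initial quiver} and~\eqref{eq: nbd of initial quiver 2}, hence the global identification of $\tQ$ with the square product $Q_{A_\infty}\square Q_{A_\infty}$ in~\eqref{eq: Q with W}. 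What your approach buys is brevity and generality: it would apply verbatim to any infinite sequence in a locally finite simply-laced Weyl group in which every letter occurs infinitely often. What it gives up is the explicit description of $\tQ$, which the paper relies on heavily afterwards (the mutation sequences $\mu_{\Spl}$, $\mu_{\Smi}$ in Lemma~\ref{lem: reversing} and Proposition~\ref{pro: mu S+ S-}, and the truncations $\tQ_N$); so as a replacement for this proof in context, one would still need to supply the computation of the neighborhoods separately.
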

\begin{proof}
Note that for $p,p' \in\Z_{\ge 1}$ with  $c(p)=(\ell,m)$ and $c(p')=(\ell-1,m+1)$,
$$
\begin{cases}
p = p'+1 &\text{ if } \ell+m \equiv 0 \mod 2, \\
p = p'-1 &\text{ if } \ell+m \equiv 1 \mod 2.
\end{cases}
$$

By Lemma~\ref{lem: ell,m ip}, for $p \in \Z_{\ge 1}$ with $c(p)=(\ell,m)$, we have
\begin{equation}\label{eq: cp+-}
\begin{aligned}
&c(p_-) = \begin{cases}
(\ell-1,m) & \text{ if it exists and } \ell+m \equiv 0 \mod 2, \\
(\ell,m-1) & \text{ if it exists and } \ell+m \equiv 1 \mod 2,
\end{cases} \\
&c(p_+) = \begin{cases}
(\ell,m+1) & \text{ if } \ell+m \equiv 0 \mod 2, \\
(\ell+1,m) & \text{ if } \ell+m \equiv 1 \mod 2,
\end{cases}
\end{aligned}
\end{equation}
\vskip -0.8em
\begin{equation} \label{eq: cp+- pm1}
\begin{aligned}
& c(p^+(\jj_p-1))   = \begin{cases}
(\ell-\true(\ell \ne 1),m+2)   & \text{ if } \ell+m \equiv 0 \mod 2, \\
(\ell-\true(\ell \ne 1),m+1)   & \text{ if } \ell+m \equiv 1 \mod 2,
\end{cases} \\
& c(p^+(\jj_p+1)) = \begin{cases}
(\ell+1,m-\true(m \ne 1)) & \text{ if } \ell+m \equiv 0 \mod 2, \\
(\ell+2,m-\true(m \ne 1)) & \text{ if } \ell+m \equiv 1 \mod 2.
\end{cases}
\end{aligned}
\end{equation}

Combining \eqref{eq: cp+-} and \eqref{eq: cp+- pm1}, we have
\begin{align*}
& c(p^+(\jj_p-1)_+)   = \begin{cases}
(\ell,m+2 +\delta(\ell=1) ) & \text{ if } \ell+m \equiv 0 \mod 2, \\
(\ell,m+1 +\delta(\ell=1)) & \text{ if } \ell+m \equiv 1 \mod 2,
\end{cases} \\
& c(p^+(\jj_p+1)_+) = \begin{cases}
(\ell+1 +\delta(m=1),m) & \text{ if } \ell+m \equiv 0 \mod 2, \\
(\ell+2 +\delta(m=1),m) & \text{ if } \ell+m \equiv 1 \mod 2.
\end{cases}
\end{align*}
Hence we have the followings for $c(p)=(\ell,m)$:
\begin{enumerate}[{\rm (i)}]
\item If $\ell+m \equiv 0 \mod 2$ and $m\neq1$, then
\begin{equation*}
p < p^+(\jj_p+1) <  p^+(\jj_p+1)_+ < p_+ <  p^+(\jj_p+1)_{++} \text{ and } p < p_+ < p^+(\jj_p-1).
\end{equation*}
\item If $\ell+m \equiv 0 \mod 2$ and $m=1$, then
\begin{equation*}
p < p^+(\jj_p+1) <  p_+ <  p^+(\jj_p+1)_{+} \text{ and } p < p_+ < p^+(\jj_p-1).
\end{equation*}
\item If $\ell+m \equiv 1 \mod 2$ and $\ell\neq 1$, then
\begin{equation*}
p < p^+(\jj_p-1) <  p^+(\jj_p-1)_+ < p_+ <  p^+(\jj_p-1)_{++} \text{ and } p  <  p_+ < p^+(\jj_p+1).
\end{equation*}
\item If $\ell+m \equiv 1 \mod 2$ and $\ell= 1$, then
\begin{equation*}
p < p^+(\jj_p-1) <  p_+ <  p^+(\jj_p-1)_{+} \text{ and } p  <  p_+ < p^+(\jj_p+1).
\end{equation*}
\end{enumerate}

Hence, Definition~\ref{def: quiver Q assoc tw} tells that
there is only one ordinary arrow with source $p$, which is given by $p \to  p^+(\jj_p+1)_+$ or $p \to  p^+(\jj_p+1)$ if $\ell+m \equiv 0 \mod 2$, and $p \to  p^+(\jj_p-1)_+$ or
 $p \to  p^+(\jj_p-1)$ if $\ell+m \equiv 1 \mod 2$, respectively.
Hence each vertex $p \in J$ with $c(p)=(\ell,m)$ has two incoming arrows and two outgoing arrow unless $\min(\ell,m)=1 \colon$
\begin{align} \label{eq: nbd of initial quiver}
& \raisebox{3em}{\xymatrix@C=3ex@R=3ex{ &  {\scriptstyle p_+} \ar[d]  \\
{\scriptstyle p_-}  & {\scriptstyle p} \ar[l] \ar[r] & {\scriptstyle p^+(\jj_p+1)_+} \\
 & {\scriptstyle p^-(\jj_p+1)}\ar[u]
}} &&
\raisebox{3em}{\xymatrix@C=3ex@R=3ex{   & {\scriptstyle p^+(\jj_p-1)_+}   \\
{\scriptstyle p^-(\jj_p-1)} \ar[r] & {\scriptstyle p} \ar[d]\ar[u]  & {\scriptstyle p_+} \ar[l] \\
 & {\scriptstyle p_-}
}}
\\*
& \quad  \text{ if } \ell+m \equiv 0 \mod 2 && \hspace{8ex} \text{ if } \ell+m \equiv 1 \mod 2. \nonumber
\end{align}
In particular, when {\rm (i)} $\ell+m \equiv 0 \mod 2$ and $m=1$, or {\rm (ii)} $\ell+m \equiv 1 \mod 2$ and $\ell= 1$, the arrows incident with vertex $p$ can be described as follows:
\begin{align} \label{eq: nbd of initial quiver 2}
& {\rm (i)} \quad
\raisebox{3em}{\xymatrix@C=4ex@R=4ex{   & {\scriptstyle p^+}  \ar[d] \\
{\scriptstyle p_-}  & {\scriptstyle p} \ar[l]\ar[r]  & {\scriptstyle p^+(\jj_p+1)}
}} \qquad\qquad
{\rm (ii)} \quad \raisebox{3em}{\xymatrix@C=4ex@R=4ex{ {\scriptstyle p^+(\jj_p-1)}   \\
 {\scriptstyle p}  \ar[u]\ar[d] & {\scriptstyle p_+} \ar[l]\\
{\scriptstyle p_-}
}}
\end{align}

By replacing $p=c^{-1}(\ell,m)$ with $\W^{(\ell)}_{m,\jj_p}$ in the diagrams above, the quiver $\tQ$ can be exhibited as follows:
\begin{align} \label{eq: Q with W}
\raisebox{6em}{\scalebox{0.78}{\xymatrix@C=3ex@R=3ex{
\vdots&\vdots\ar[d] &\vdots & \vdots\ar[d] &\vdots &\vdots \ar[d] & \iddots\\
3&\Wlmj{1}{3}{-1}  \ar@{.>}[r]& \Wlmj{2}{3}{0}\ar@{.>}[u]\ar[d]  & \Wlmj{3}{3}{0}  \ar@{.>}[r]\ar[l]& \Wlmj{4}{3}{1}\ar@{.>}[u]\ar[d]  & \Wlmj{5}{3}{1}  \ar@{.>}[r]\ar[l]& \cdots \\
2&\Wlmj{1}{2}{0} \ar@{.>}[u]\ar[d] & \Wlmj{2}{2}{0} \ar@{.>}[r]\ar[l] & \Wlmj{3}{2}{1} \ar@{.>}[u]\ar[d] & \Wlmj{4}{2}{1} \ar@{.>}[r]\ar[l] & \Wlmj{5}{2}{2}\ar@{.>}[u]\ar[d]  & \cdots \\
1&\Wlmj{1}{1}{0} \ar@{.>}[r]& \Wlmj{2}{1}{1}\ar@{.>}[u]  & \Wlmj{3}{1}{1} \ar@{.>}[r]\ar[l]& \Wlmj{4}{1}{2}\ar@{.>}[u] & \Wlmj{5}{1}{2}  \ar@{.>}[r]\ar[l] & \cdots \\
m / \ell & 1 & 2 & 3 & 4 & 5 & \cdots}}}
\end{align}
Hence our assertion follows.
\end{proof}

Note that the arrows of $\tQ$ oriented right or above are arrows
of horizontal type,
and arrows of $\tQ$ oriented left or below are
arrows of ordinary type. Also the quiver $\tQ$ is known as the {\it square product} $Q_{A_\infty} \square Q_{A_\infty}$ of the bipartite Dynkin quiver $Q_{A_\infty}$ of type $A_\infty$,
which is related to the periodicity conjecture (see~\cite{FZ03,IIKKN13A,IIKKN13B,Ke10,Ke13,Sz09,W12}). Here $Q_{A_\infty}$ is the quiver
$\xymatrix@R=0.5ex@C=4ex{ *{\circ}<3pt> \ar@{->}[r]_<{1  \ }
&*{\circ}<3pt> \ar@{<-}[r]_<{2 \ }  &   *{\circ}<3pt> \ar@{->}[r]_<{3 \ }
&  *{\circ}<3pt> \ar@{.}[r]_<{4 \ } &}
$.

\begin{remark} \label{rem: periodcity}
Take $k \in \Z_{\ge 1}$.  When we restrict the full subquiver $Q^{(k)}$ of $\tQ$ consisting of vertices $(\ell,m)$ with $\ell+m \le k+1$, the $Q^{(k)}$ is mutation equivalent to
the well-known quiver $Q_k^{BFZ}$ of the coordinate ring $\C[N]$ of the unipotent group $N$ of type $A_k$ (see~\cite{BFZ05} and~\cite[Theorem 4.5]{Ke10}). For example $k=3$, $Q^{(3)}$ is given as follows:
$$\xymatrix@R=2ex@C=2ex{ \bullet \\ \bullet \ar[d] \ar[u]  & \bullet \ar[l] \\ \bullet \ar[r] & \bullet  \ar[u] & \bullet \ar[l] }$$

Note that $Q_k^{BFZ}$ is isomorphic to the quiver
associated to some adapted reduced expression
of the longest element of the Weyl group of $A_k$
(see Definition~\ref{def: quiver Q assoc tw}).
\end{remark}

Now we take the skew-symmetric integer-valued $\Z_{\ge 1} \times \Z_{\ge 1}$-matrix $\Lambda$ as follows:
\begin{align*}
\Lambda=\left(\Lambda\bigl(\M_\ttww(i,0),\M_\ttww(j,0)\bigr) \right)_{i,j \in \Z_{\ge 1}}.
\end{align*}

Note that for each $p \in \Z_{\ge 1}$, we can take sufficiently large $t$ such that $p$ is an exchangeable index of
$\ttww_{\le t}$ (see~\eqref{eq: decomposition of indices}).
Thus the following corollary follows from Theorem~\ref{thm: monoidal} for $\ttww_{\le t}$:

\begin{corollary} \label{cor: consequences} \hfill
\begin{enumerate}[{\rm (a)}]
\item For any finite sequence $\Seq$ in $\K$, $\mu_{\Seq}(-\Lambda, \tB)$ is compatible with $d=2$, where $\tB$ is the matrix associated with $\tQ$.
\item For every pair of positive integers $p=c^{-1}(\ell,m)$ and $p'=c^{-1}(\ell',m')$,
$\W^{(\ell)}_{m,\jj_p}$ and $\W^{(\ell')}_{m',\jj_{p'}}$ strongly commute.
\item For each $p \in \Z_{\ge 1}$, there exists $\M_\ttww(p,0)'$ in $R^J\gmod$ satisfying~\eqref{eq:ses graded mutation KLR}.
\end{enumerate}
\end{corollary}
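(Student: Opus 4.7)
The plan is to reduce each statement to the finite case by choosing $t$ large enough that the relevant indices become exchangeable indices of the reduced expression $\ttww_{\le t}$, and then invoke Theorem~\ref{thm: monoidal categorification} for the element $w = \ttww_{\le t} \in W_J$. The key observation making this reduction possible is that for any fixed $p \in \Z_{\ge 1}$, there exists some $t \ge p$ such that $p_+ \le t$ (i.e.\ there is a later occurrence of the simple reflection $s_{\jj_p}$ within the first $t$ terms of $\ttww$); concretely, for $p$ with $c(p) = (\ell, m)$ one can take $t = c^{-1}(\ell+2, m+2)$ or any larger value. For such $t$, $p$ belongs to the exchangeable part of the index set associated with $\ttww_{\le t}$.

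For part (b), given $p, p' \in \Z_{\ge 1}$, choose $t$ large enough that $t \ge \max(p, p')$. Then $\M_\ttww(p,0)$ and $\M_\ttww(p',0)$ both lie among the determinantial modules associated with $\ttww_{\le t}$, so by Theorem~\ref{thm: monoidal categorification} they form part of a quantum monoidal seed in $\shc_{\ttww_{\le t}}$ and hence strongly commute. The identification $\M_\ttww(p,0) \simeq \W^{(\ell)}_{m,\jj_p}$ given by Theorem~\ref{thm :KR in KLR} then yields the strong commutativity of $\W^{(\ell)}_{m,\jj_p}$ and $\W^{(\ell')}_{m',\jj_{p'}}$. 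For part (c), given $p$, choose $t$ large enough that $p$ lies in the exchangeable part of $\ttww_{\le t}$; then Theorem~\ref{thm: monoidal categorification} furnishes the mutated module $\M_\ttww(p,0)'$ and the exact sequence~\eqref{eq:ses graded mutation KLR}. Crucially, by the description of the arrows of $\tQ$ given in~\eqref{eq: nbd of initial quiver}--\eqref{eq: nbd of initial quiver 2}, the neighbors of $p$ in $\tQ$ coincide with its neighbors in the quiver associated with $\ttww_{\le t}$ for all sufficiently large $t$, so the finite-level exact sequence immediately produces the desired one in $R^J\gmod$.

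For part (a), given a finite sequence $\Seq = (k_1, \ldots, k_r)$ in $\K$, I would choose $t$ large enough that (i) all $k_i$ are exchangeable indices of $\ttww_{\le t}$, and (ii) moreover, for each pair $(i,j) \in \K \times \Kex$ that could possibly be affected by the mutation sequence $\mu_\Seq$, both $i$ and $j$ are $\le t$ and $j$ is exchangeable in $\ttww_{\le t}$. The compatibility equation $\sum_k \lambda_{ik} b_{kj} = 2 \delta_{ij}$ is an \emph{a priori} infinite sum, but because each vertex of $\tQ$ has finite degree (so only finitely many $k$ satisfy $b_{kj} \ne 0$), the sum is in fact finite, and after any finite number of mutations the modified matrix $\mu_\Seq(\tB)$ still has this property. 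Hence one can pick $t$ so that the entire verification takes place within the finite data attached to $\ttww_{\le t}$, where compatibility with $d=2$ is guaranteed by Theorem~\ref{thm: monoidal categorification} together with the fact that mutation preserves the compatibility integer.

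The main obstacle is part (a): one must check carefully that after a sequence of mutations, the range of indices on which the compatibility equation involves a nontrivial contribution remains confined to a finite window that fits inside some $\ttww_{\le t}$. This is a bookkeeping issue about how mutation can propagate nonzero entries of $\tB$ and $\Lambda$, and is resolved by noting that each single mutation $\mu_k$ alters only the $k$-th row/column of $\Lambda$ and only entries $b_{ij}$ with $i=k$ or $j=k$ or with $b_{ik}b_{kj} \ne 0$; iterating $r$ times therefore enlarges the support by only finitely many indices, and one simply takes $t$ large enough to contain this enlarged support.
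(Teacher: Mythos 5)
Your proposal is correct and takes essentially the same approach as the paper: the paper's own (very brief) argument is precisely that for each $p\in\Z_{\ge1}$ one can choose $t$ large enough that $p$ is an exchangeable index of $\ttww_{\le t}$, and then all three statements follow from the admissibility of $(\{\M_{\tw}(s,0)\},\tB)$ for the finite reduced expression $\ttww_{\le t}$. Your additional bookkeeping in part (a) — that a finite mutation sequence only propagates nonzero entries within a finite window, so the verification fits inside some $\ttww_{\le t}$ — is a sound elaboration of the same reduction.
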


Let us denote by
\begin{align*} 
\Seed_\infty = (\{\M_\ttww(p,0) \}_{p \in K},\tB).
\end{align*}
Then $\Seed_\infty$ becomes a quantum monoidal seed.
Furthermore, we have
\begin{itemize}
\item the pair $(\{\M_\ttww(p,0) \}_{p \in K},\tB)$ is admissible,

\vs{.5ex}
\item $\Seed_\infty$ admits successive mutations in $R^J\gmod$ for all the directions.
\end{itemize}

Let $\mathscr{A}_{q^{1/2}}(\infty)$
be the quantum cluster algebra associated with the quantum seed
$$ [\Seed_\infty] \seteq (\{q^{-(d_p,d_p)/4}[\M_\ttww(p,0)]\}_{i\in\K}, -\Lambda, \widetilde B)$$
 without  frozen variables.

\begin{remark}
Note that the quantum cluster algebras associated with a quiver of infinite rank is the $\Z[q^{\pm 1/2}]$-subalgebra of skew field $\mathscr{F}$ generated by all elements
obtained from initial cluster variables by {\it finite} sequences of mutations. We refer  to \cite{GG,HL16} for the details of the definition of (quantum) cluster algebra of infinite rank.
\end{remark}

\begin{theorem} \label{Thm: monoidal cat}
The category $R^J\gmod$ is a monoidal categorification of the
quantum cluster algebra $\mathscr{A}_{q^{1/2}}(\infty)$.
\end{theorem}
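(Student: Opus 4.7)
The plan is to verify the three axioms in the definition of monoidal categorification of a quantum cluster algebra, bootstrapping from the finite-rank statement Theorem~\ref{thm: monoidal categorification} applied to each reduced prefix $\ttww_{\le t}$ and then passing to a limit. The five structural conditions (\ref{item: cond1})--(\ref{item: cond5}) for $\shc \seteq R^J\gmod$ are standard consequences of the representation theory of symmetric quiver Hecke algebras: finiteness of length is part of the definition of $\gmod$, $\End(M)=\cor$ for every simple $M$ follows from Theorem~\ref{thm:categorification 2}, the $\rl$-grading by weight provides the required decomposition with $Q=q\mathbf{1}$ acting as the invertible grading shift functor, and the Hom-vanishing across almost all shifts is immediate from the weight grading (morphisms between modules of distinct weight shifts are forced to be zero).

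Next I would establish that $[\Seed_\infty]$ is a quantum seed of $\Ah\otimes_{\Z[q^{\pm 1}]} K(R^J\gmod)$. The compatibility of $(-\Lambda,\tB)$ with $d=2$ and the pairwise strong commutation of $\{\M_\ttww(p,0)\}_{p\in\K}$ are already supplied by Corollary~\ref{cor: consequences}(a,b). Algebraic independence of $\{q^{-(d_p,d_p)/4}[\M_\ttww(p,0)]\}_{p\in\K}$ is a finitary statement: any putative relation involves only finitely many classes, all of which lie in $\shc_{\ttww_{\le t}}$ for $t$ sufficiently large, where Theorem~\ref{thm: monoidal categorification} rules it out. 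Admissibility of successive mutations in every direction $k\in\Kex$ is provided by Corollary~\ref{cor: consequences}(c) together with the observation that mutation at $k$ is a local operation in $\tQ$ whose neighbourhood, described in~\eqref{eq: nbd of initial quiver}--\eqref{eq: nbd of initial quiver 2}, is always finite; hence each individual mutation reduces to a mutation inside some $\shc_{\ttww_{\le t}}$ where Theorem~\ref{thm: monoidal categorification} applies.

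Third, I would identify $\Ah\otimes_{\Z[q^{\pm 1}]} K(R^J\gmod)$ with $\mathscr{A}_{q^{1/2}}(\infty)$. The inclusion $\mathscr{A}_{q^{1/2}}(\infty)\subseteq \Ah\otimes K(R^J\gmod)$ is immediate, since every iterated mutation of $\Seed_\infty$ is realized by a real simple $R^J$-module. For the reverse inclusion, note that by Proposition~\ref{prop:multisegments} every simple object of $R^J\gmod$ corresponds to a multisegment supported on a finite subset of $J$, hence belongs to $\shc_{\ttww_{\le t}}$ for $t\gg 0$. Choosing such $t$ of the form $t=a(s)$ with $s\in\tfrac12\Z_{\ge 1}$, Remark~\ref{rem: longest} identifies $\ttww_{\le t}$ with the longest element of a parabolic subgroup, and Theorem~\ref{thm: monoidal categorification} identifies $\Ah\otimes K(\shc_{\ttww_{\le t}})$ with the quantum cluster algebra $\mathscr{A}_{q^{1/2}}(\n(\ttww_{\le t}))$, a quantum cluster subalgebra of $\mathscr{A}_{q^{1/2}}(\infty)$. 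Taking the union over $t\to\infty$ yields the desired equality.

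The main obstacle is the bookkeeping in this last step: as $t$ grows, vertices of $\tQ_{\ttww_{\le t}}$ that were \emph{frozen} in the finite prefix become \emph{exchangeable} in $\tQ$, and one must verify that the cluster-algebra structures of the finite truncations assemble coherently into the frozen-free structure of $\mathscr{A}_{q^{1/2}}(\infty)$. Concretely, for each $p\in\K$ one has to choose $t$ large enough so that $p$ lies in $\Kex$ for the finite seed, and then check that the corresponding mutation exact sequence~\eqref{eq:ses graded mutation KLR} in $\shc_{\ttww_{\le t}}$ agrees with the one dictated by $\tQ$; this follows from the locality of the quiver data displayed in~\eqref{eq: nbd of initial quiver}--\eqref{eq: nbd of initial quiver 2} and the square-product description~\eqref{eq: Q with W}, but it is the step where the infinite-rank nature of $\ttww$ requires the most care.
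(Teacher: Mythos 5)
Your proposal is correct and follows essentially the same route as the paper: the structural axioms and the quantum monoidal seed properties of $\Seed_\infty$ are exactly what the paper assembles via Corollary~\ref{cor: consequences} before the theorem, and the identification $\Ah\otimes_{\Z[q^{\pm1}]}K(R^J\gmod)=\mathscr{A}_{q^{1/2}}(\infty)$ is proved in the paper by the same two inclusions — every cluster monomial lies in $K(\shc_{\ttww_{\le t}})$ for $t\gg0$, and every simple $R^J(\beta)$-module with $\beta$ supported on $[-p,p]$ lies in $\shc_{\ttww_{\le a(p+1)}}$ by Proposition~\ref{prop:multisegments} and Remark~\ref{rem: longest}, where Theorem~\ref{thm: monoidal categorification} applies. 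Your extra care about frozen vertices of the truncations becoming exchangeable in the limit is a point the paper leaves implicit, but it does not change the argument.
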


\begin{proof}

Theorem~\ref{thm: monoidal categorification} implies that
any cluster monomial of $\mathscr{A}_{q^{1/2}}(\infty)$ is contained in $K(\shc_{\ttww_{\le t}}) \subset \K(R^J\gmod)$ for sufficiently large $t \in \Z_{\ge 1}$.
On the other hand, let $M$ be a simple $R^J(\beta)$-module.
We write $\beta = \sum_{k=-p}^p a_k \alpha_k$. Then
 by Remark~\ref{rem: coordinate} (a), $[M]$ is contained in $K(\shc_{\ttww_{\le a(p+1)}})$.
 Thus we conclude that $\K(R^J\gmod) = \mathscr{A}_{q^{1/2}}(\infty)$, which completes the proof.
\end{proof}

\section{The category $\T_N$ and its cluster structure} \label{Sec: T_N}
\subsection{Category $\T_N$}
We keep the notations in \S\,\ref{Sec: Module}.
In this subsection we briefly recall the quotient category and the localizations of $R\gmod$ introduced
in~\cite[\S 4.4--\S 4.5]{KKK18A}.
For details of the constructions, we refer to \cite[Appendix A and B]{KKK18A}. Then in the next section we apply one of the main results in~\cite{KKO18} to show that a generalized quantum affine Schur-Weyl duality functor
$\F$ factors thorough the category $\T_N$ defined below.

\smallskip

Set $\A_{\beta} \seteq R^J({\beta}) \gmod$ and $\A \seteq \soplus_{{\beta} \in \rl_J^+} \A_{\beta}$.
Let $\Ser_N$ be the smallest Serre subcategory of $\A$ such that
\begin{enumerate}[{\rm (i)}]
\item $\Ser_N$ contains $L[a, a+N]$ for any $a \in J$,
\item $X \conv Y, \ Y \conv X \in \Ser_N$ for all $X \in \A$ and $Y \in \Ser_N$.
\end{enumerate}
Note that $\Ser_N$ contains $L[a,b]$ if $b-a +1 >  N$.

Let us denote by $\A/ \Ser_N$ the quotient category of $\A$ by $\Ser_N$ and denote by $\FQ_N\col \A \rightarrow \A/ \Ser_N$
the canonical functor.

Note that $\A$ and $\A/\Ser_N$ are monoidal categories with the convolution as tensor products. The module $R(0)\simeq \cor$ is a unit object. Note also that $Q\seteq q R(0)$ is an invertible
central object of $\A/\Ser_N$ and $X\mapsto  Q\conv X\simeq X\conv Q$ coincides with the grading shift functor.  Moreover, the functors $\FQ_N$
is a monoidal functor.

\smallskip

\begin{definition} \label{def: quot coeff}
For each $a,j \in J$,  we define
\begin{enumerate}
\item[{\rm (a)}] $L_a \seteq L[a,a+N-1]$, \label{def: L_a}
\item[{\rm (b)}] an abelian group homomorphism $c_a:\rl_J \to \Z$  as  $c_a(\al_j) \seteq (\ep_a+\ep_{a+N},\al_j)$, \label{def: c_a}
\item[{\rm (c)}] a polynomial $f_{a,j}(z) \seteq (-1)^{\delta_{j,a+N}}z^{-\true(a\le j<a+N-1)-\delta_{j,a+N}} \in \cor[z^{\pm1}]$.  \label{def: f_aj}
\end{enumerate}
\end{definition}

\begin{definition}
Let  $\mathsf{S}_N $ be the automorphism of $\rl_J=\soplus_{a\in\Z} \Z\,\al_a$ given by $\mathsf{S}_N(\al_a)=\al_{a+N}$.
We define the bilinear form $\mathfrak{B}_N $ on $\rl_J$ by
\begin{align*} 
\mathfrak{B}_N(x,y) =-\sum_{k>0}(\mathsf{S}_N^k x,y) \quad \text{ for } x,y\in \rl_J.
\end{align*}
\end{definition}

\begin{proposition}[{\cite[Proposition 4.17]{KKK18A}}]
For any $M \in (\A/\Ser_N)_\be$, we have an isomorphism
$$
L_a \conv M \isoto q^{c_a(\be)}M \conv L_a.
$$
in $\A/\Ser_N$ which is functorial, and
\begin{align} \label{eq: ca(be)}
c_a(\be) = -\mathfrak{B}_N(\ep_a-\ep_{a+N},\be)+\mathfrak{B}_N(\be,\ep_a-\ep_{a+N}).
\end{align}
\end{proposition}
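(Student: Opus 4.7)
The plan is to obtain the functorial isomorphism from the R-matrix $\rmat{L_a, M} : L_a \conv M \to q^{-\Lambda(L_a, M)} M \conv L_a$, which is natural in $M$ by construction. Two points need to be verified: (i) $-\Lambda(L_a, M) = c_a(\be)$ whenever $M$ is a simple module of weight $-\be$, and (ii) the kernel and cokernel of $\rmat{L_a, M}$ both lie in $\Ser_N$, so that the morphism descends to an isomorphism in the quotient $\A/\Ser_N$. Naturality in $M$ follows immediately from the functoriality of $\rmat{L_a, -}$, and by taking composition series one reduces to the case where $M$ is simple.

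For (i), I would first reduce to the case $M = L[c,d]$ a single-segment module: Proposition~\ref{prop:multisegments} writes every simple as the head of a convolution of segment modules, and the degree $\Lambda(L_a, -)$ behaves additively under convolutions (via the intertwiner construction of R-matrices). For $M = L[c,d]$ a direct computation using the homogeneous structure of $L[c,d]$ and the form of $Q^J_{i,j}(u,v)$ yields $\Lambda(L_a, L[c,d]) = -(\ep_a + \ep_{a+N}, \ep_c - \ep_{d+1})$. Since the right-hand side is precisely $-c_a(\be)$ for $\be = \ep_c - \ep_{d+1}$, and $c_a$ is additive in $\be$, the identity $-\Lambda(L_a, M) = c_a(\be)$ propagates to all simple $M$.

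For (ii), the crucial combinatorial observation is that every simple subquotient appearing in the kernel or cokernel of $\rmat{L_a, M}$ corresponds to a multisegment in which the segment $[a, a+N-1]$ coming from $L_a$ has combined with part of the multisegment of $M$ to produce a segment of length at least $N+1$. Any such simple then has some $L[b, b+N]$ among its segment factors, hence lies in $\Ser_N$. I would verify this first for $M = L[c,d]$ using the explicit R-matrix combinatorics for segment modules (the only non-trivial situations are when $[a,a+N-1]$ and $[c,d]$ meet so that their union has length $\ge N+1$), and then extend to arbitrary simple $M$ by a d\'evissage through convolutions of segment modules and composition factors. Finally, for the reformulation \eqref{eq: ca(be)}: since $\be$ has finite support in $\rl_J$ and $\mathsf{S}_N$ preserves the symmetric form, the sums defining $\mathfrak{B}_N$ telescope. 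Explicitly,
\[
-\mathfrak{B}_N(\ep_a - \ep_{a+N}, \be) = \sum_{k \ge 1}(\ep_{a+kN} - \ep_{a+(k+1)N}, \be) = (\ep_{a+N}, \be),
\]
and using $\mathsf{S}_N$-invariance,
\[
\mathfrak{B}_N(\be, \ep_a - \ep_{a+N}) = -\sum_{k \ge 1}(\be, \ep_{a-kN} - \ep_{a-(k-1)N}) = (\be, \ep_a);
\]
adding these gives $(\ep_a + \ep_{a+N}, \be) = c_a(\be)$.

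The main obstacle will be part (ii), namely the careful case analysis of kernels and cokernels of R-matrices between $L_a$ and single-segment modules $L[c,d]$, identifying the ``extra'' simple factors that appear and showing each one contains a segment of length $\ge N+1$ in its multisegment description. This rests on the explicit multisegment classification of simples in Proposition~\ref{prop:multisegments} together with the concrete R-matrix formulas for homogeneous segment modules in type $A_\infty$.
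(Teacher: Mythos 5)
The paper gives no proof of this statement---it is imported verbatim from \cite[Proposition 4.17]{KKK18A}---so your argument can only be judged on its own terms, and its central mechanism fails. Both of your key claims (i) and (ii) are false, already for single-segment modules. Take $M=L(a+N)$, so $\be=\al_{a+N}$ and $c_a(\be)=(\ep_a+\ep_{a+N},\al_{a+N})=1$. The segments $[a,a+N-1]$ and $[a+N,a+N]$ are linked, so $\de(L_a,M)=1$; combining $\Lambda(L_a,M)+\Lambda(M,L_a)=2$ with $\tL(L_a,M),\,\tL(M,L_a)\ge 0$ and $(\wt(L_a),\wt(M))=-1$ forces $\Lambda(L_a,M)=1$, hence $-\Lambda(L_a,M)=-1\ne c_a(\be)$. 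Worse, the image of $\rmat{L_a,M}$ is $\hd\bigl(L_a\conv L(a+N)\bigr)\simeq L[a,a+N]$, the merged segment of length $N+1$, which is a generator of $\Ser_N$; thus $\rmat{L_a,M}$ induces the \emph{zero} map in $\A/\Ser_N$, while its kernel is the simple module with multisegment $([a+N,a+N],[a,a+N-1])$, whose segments all have length $\le N$ and which therefore does \emph{not} lie in $\Ser_N$ (Proposition~\ref{prop: le N do not vanish}). So the renormalized $R$-matrix is the wrong morphism: dividing $R_{(L_a)_z,M}$ by the largest power of $z$ that keeps it nonzero in $\A$ is not the same as dividing by the largest power that keeps it nonzero in $\A/\Ser_N$, and only the latter normalization yields the asserted isomorphism. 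This is exactly what the coefficients $f_{a,j}(z)=(-1)^{\delta_{j,a+N}}z^{-\true(a\le j<a+N-1)-\delta_{j,a+N}}$ of Definition~\ref{def: quot coeff} and Theorem~\ref{thm:L_a commutes with X} accomplish; note the extra factor $z^{-\delta_{j,a+N}}$, which removes precisely the obstruction above and accounts for the degree discrepancy of $2=\deg z$ between $-\Lambda(L_a,L(a+N))$ and $c_a(\al_{a+N})$. A related defect: $\rmat{L_a,-}$ is not a natural transformation, since its defining renormalization depends on the target module; only the intertwiner morphism $R_{L_a,-}$ (before specialization) is functorial, so naturality must be built from that map, not from $\rmat{}$.

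The remainder of your outline is sound in spirit: reduction to segment modules is the right d\'evissage, and your telescoping computation verifying \eqref{eq: ca(be)} is correct as written. But the heart of the proposition is the choice of normalization of $R_{L_a,M_z}$ relative to the quotient category, and that is the step your argument gets wrong; as it stands, steps (i) and (ii) would have to be replaced by a computation of the order of vanishing of $R_{L_a,L(j)_z}$ modulo $\Ser_N$ for each $j$, which is where the nontrivial case analysis actually lives.
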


\begin{definition} \label{def: star op}
We define the new tensor product
$\starop\col\A\times \A\to\A$ by
\begin{align} \label{def:star}
X\starop Y=q^{\mathfrak{B}_N(\al,\beta) }  X\conv Y,
\end{align}
where $X\in\A_{\al}$ and $Y\in\A_{\beta}$.
\end{definition}
Then, $\A$ as well as $\A/\Ser_N$ is endowed with a new structure of
a monoidal category by $\starop$ as shown in
\cite[Appendix A.8]{KKK18A}.

\begin{theorem}[{\cite[Theorem 4.21]{KKK18A}}] \label{thm:L_a commutes with X}
The following statements  hold.
\begin{enumerate}
\item[{\rm (i)}] $L_a$ is a {\it central object} in  $\A/\Ser_N$; i.e.,
\begin{enumerate}
\item[{\rm (a)}] $f_{a,j}(z)R_{L_a,L(j)_z}$ induces an isomorphism $R_a(X)\col L_a\starop X\isoto X\starop L_a$  functorial in  $X\in\A/\Ser_N$,
\item[{\rm (a)}] the diagram
$$\xymatrix@C=12ex
{L_a\starop X\starop Y\ar[r]_{R_a(X)\starop Y}\ar@/^2pc/[rr]^-{R_a(X\starop Y)\hspace{2ex} }& X\starop L_a\starop Y\ar[r]_{X\starop R_a(Y)} & X\starop Y \starop L_a }
$$
is commutative in $\A/\Ser_N$ for any $X,Y\in\A/\Ser_N$.
\end{enumerate}
\item[{\rm (ii)}] The isomorphism $R_a(L_a)\col L_a\starop L_a\isoto L_a\starop L_a$ coincides with $\id_{L_a\starop L_a}$ in $\A/\Ser_N$.
\item[{\rm (iii)}] For $a,b\in\Z$, the isomorphisms
\begin{align*}
 R_a(L_b)\col L_a\starop L_b\isoto L_b\starop L_a \ \text{and} \ R_b(L_a)\col L_b\starop L_a\isoto L_a\starop L_b
\end{align*}
in $\A/\Ser_N$ are  inverse to each other.
\end{enumerate}
\end{theorem}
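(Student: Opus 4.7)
The strategy is to construct $R_a(X)$ first when $X = L(j)_z$ is a generator, then extend by naturality and tensor structure. The key technical input is an analysis of the $R$-matrix $R_{L_a, L(j)_z}$ and its behavior at $z = 0$, case by case in $j$.

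First I would study $R_{L_a, L(j)_z} \colon L_a \conv L(j)_z \to L(j)_z \conv L_a$ as a morphism in $\A$. Since $L_a = L[a, a+N-1]$ is a $1$-dimensional segment module, its convolution with $L(j) = L[j,j]$ can be analyzed explicitly using the multisegment classification (Proposition~\ref{prop:multisegments}). A case split on whether $j$ lies in $[a, a+N-2]$, at the right endpoint $j = a+N-1$, just past the segment at $j = a+N$, at $j = a-1$, or outside $[a-1, a+N]$, pins down the order of the zero or pole of $R_{L_a, L(j)_z}$ at $z = 0$. These orders are precisely recorded by the polynomial $f_{a,j}(z)$. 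Consequently $f_{a,j}(z) R_{L_a, L(j)_z}$ has neither a pole nor a zero at $z = 0$ as a morphism in $\A/\Ser_N$; the subtle point is that in the case $j = a+N$ the naive obstruction is the module $L[a, a+N]$, which vanishes in $\A/\Ser_N$, and this is exactly what legitimizes the $-z^{-1}$ in $f_{a,a+N}$. Specializing at $z = 0$ and absorbing the twist $q^{\mathfrak{B}_N(\cdot, \cdot)}$ coming from the definition of $\starop$ yields a degree-zero isomorphism $R_a(L(j)) \colon L_a \starop L(j) \isoto L(j) \starop L_a$ in $\A/\Ser_N$.

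Next I would extend $R_a$ to a natural transformation on all of $\A/\Ser_N$ by declaring, on convolution products,
$$R_a(X_1 \starop X_2) = (X_1 \starop R_a(X_2)) \circ (R_a(X_1) \starop X_2),$$
and verifying independence of the factorization via the Yang--Baxter equation for $R$-matrices of quiver Hecke algebras. Since every object of $\A$ is a subquotient of an iterated convolution product of various $L(j)_z$, this extends uniquely, and the commutative square in (i)(b) holds by construction. For (ii), the morphism $R_a(L_a)$ is built from $R_{L_a, L_a}$; as $L_a$ is homogeneous and one-dimensional, $L_a \conv L_a$ is one-dimensional and $R_{L_a, L_a}$ acts by a scalar which a direct check shows is cancelled by the normalizations from $f_{a,a}(z)$ and the $\mathfrak{B}_N$-twist, giving the identity in $\A/\Ser_N$. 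For (iii), both $R_a(L_b) \circ R_b(L_a)$ and $\id_{L_a \starop L_b}$ are endomorphisms of the one-dimensional object $L_a \starop L_b$, so they differ by a scalar; tracking the $R$-matrix normalizations through the composition (or equivalently invoking uniqueness of the inverse of $R_a(L_b)$) identifies the scalar with $1$.

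The main obstacle is the first step: pinning down the order of the zero or pole of $R_{L_a, L(j)_z}$ at $z = 0$ so that it cancels the exponent appearing in $f_{a,j}$ on the nose, including the sign arising in the case $j = a+N$. This requires a careful analysis of socles and heads of convolutions of segment modules in $\A$, together with identifying exactly which of them survive in $\A/\Ser_N$; the contrast between $j = a+N-1$ (where no correction is needed) and $j = a+N$ (where $L[a, a+N]$ is killed in the quotient, creating the pole that $f_{a,j}$ must cancel and simultaneously providing the sign) is the conceptual heart of why $\Ser_N$ is exactly the right Serre subcategory to quotient by in order to make $L_a$ central.
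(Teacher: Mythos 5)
This theorem is not proved in the paper at all: it is imported verbatim from \cite[Theorem 4.21]{KKK18A}, so the only comparison available is with the proof in that reference, and your outline does follow the same strategy as the cited proof — renormalize $R_{L_a,L(j)_z}$ by $f_{a,j}(z)$ after a case analysis on the position of $j$ relative to the segment $[a,a+N-1]$, observe that the failure of invertibility at $z=0$ in the borderline cases is carried exactly by modules such as $L[a,a+N]$ that die in $\Ser_N$, specialize at $z=0$, and then extend to all of $\A/\Ser_N$ by naturality and the hexagon/Yang--Baxter compatibility, which is precisely the ``commuting family of central objects'' formalism of \cite[Appendix A]{KKK18A}.

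Two points need repair. First, $L_a\conv L_a$ and $L_a\starop L_b$ are \emph{not} one-dimensional: by the shuffle decomposition $M\conv N=\soplus_{w\in\sym_{m,n}}\tau_w(M\etens N)$, the module $L_a\conv L_a$ has dimension $\binom{2N}{N}$. What actually makes your scalar argument in (ii) and (iii) work is that $L_a$ is a real simple segment module, so $L_a\conv L_a$ is simple (and $L_a\starop L_b$ becomes simple in $\A/\Ser_N$ even when $b=a\pm N$, because the offending composition factor $L[a,a+2N-1]$ lies in $\Ser_N$); simplicity gives $\cor\isoto\End$, hence the morphisms in question are scalars. Second, everything you flag as ``the main obstacle'' — the exact order of vanishing of $R_{L_a,L(j)_z}$ at $z=0$ in each case, the sign at $j=a+N$, and the verification that the scalars in (ii) and (iii) are exactly $1$ after the $f_{a,j}$ and $\mathfrak{B}_N$ normalizations — is asserted rather than carried out; this is where essentially all of the content of \cite[Theorem 4.21]{KKK18A} resides (it is done there by explicit computation of the intertwiners on the generating vectors of the one-dimensional segment modules), so as written your text is a correct road map rather than a proof.
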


By the preceding theorem, $\{(L_a, R_a)\}_{a \in J}$ forms a \emph{commuting family of central objects} in $(\A / \Ser_N , \starop)$
(See~\cite[Appendix A. 4]{KKK18A}).
Following~\cite[Appendix A. 6]{KKK18A}, we localize $(\A / \Ser_N , \starop)$ by this commuting family.
Let us denote by  $\T'_N$ the resulting category $(\A / \Ser_N)[L_a^{\starop -1}\mid a\in J]$.
Let $\Upsilon \colon \A / \Ser_N \to \T'_N$ be the  projection functor.
We denote by $\T_N$ the monoidal category $(\A / \Ser_N)\,[L_a\simeq\one\mid a\in J]$ and by $\Xi \col \T'_N \to \T_N$ the canonical functor (see~\cite[Appendix A.7]{KKK18A} and~\cite[Remark 4.22]{KKK18A}).
Thus we have a chain of monoidal functors
$$\A \To[\ {\FQ_N}\ ]\A/\Ser_N\To[\ \Upsilon\ ] \T'_N\seteq(\A / \Ser_N)[L_a^{\starop -1}\mid a\in J] \To[\ \Xi\ ] \T_N\seteq(\A / \Ser_N)[L_a\simeq\one\mid a\in J].$$

We set
\begin{align*} 
\FO_N \seteq \Xi \circ \Upsilon \circ \FQ_N\col \A\To \T_N.
\end{align*}

\begin{theorem}[{\cite[Theorem 4.25]{KKK18A}}]
The categories $\T_N$ and $\T'_N$ are rigid monoidal categories; i.e., every object has a right dual and a left dual.
\end{theorem}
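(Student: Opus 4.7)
The plan is to first establish rigidity of $\T'_N$ and then transport it to $\T_N$ via the canonical monoidal functor $\Xi$, which sends invertible objects to invertible objects and hence preserves duals. The central idea is that, in $\T'_N$, the commuting family of central objects $\{L_a\}_{a\in J}$ becomes invertible (by the localization construction recalled in \cite[Appendix A.6]{KKK18A}), and therefore each $L_a$ is rigid with inverse $L_a^{\star(-1)}$. One then leverages the short exact sequences in $\A/\Ser_N$ relating arbitrary modules to the $L_a$'s to construct duals of arbitrary objects.

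To construct the right dual of $L(i) = L[i,i]$ in $\T'_N$, I would set
$$L(i)^* \seteq \FQ_N\bigl(L[i+1, i+N-1]\bigr) \starop L_i^{\starop(-1)}$$
(up to a grading shift), and analogously ${}^*L(i) \seteq L_{i-N+1}^{\starop(-1)} \starop \FQ_N(L[i-N+1, i-1])$. The evaluation $L(i)\starop L(i)^*\to\one$ is defined as the composition of the head surjection $L(i)\conv L[i+1,i+N-1]\twoheadrightarrow L_i$ in $\A/\Ser_N$ (existing by Proposition~\ref{prop:multisegments}) with the canonical isomorphism $L_i\starop L_i^{\starop(-1)}\simeq\one$. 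The coevaluation is constructed dually from the socle embedding $L_i\hookrightarrow L[i+1,i+N-1]\conv L(i)$, which exists because the image of the R-matrix $\rmat{L(i),L[i+1,i+N-1]}$ is $L_i$ by Lemma~\ref{lem:commute_equiv}.

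The same template gives duals for every $L[a,b]$ with $b-a+1<N$ via the surjection $L[a,b]\conv L[b+1,a+N-1]\twoheadrightarrow L_a$. Once every $L[a,b]$ is rigid, tensor products thereof are rigid, and every simple object of $\A/\Ser_N$ (necessarily associated to a multisegment whose segments have length $<N$, since longer ones lie in $\Ser_N$) appears as the head of such a convolution. Rigidity propagates to all simples via the centrality of the $L_a$'s, which makes the duality constructions functorial under subquotients in the localized category, and then to general finite-length objects by induction on composition length using closure of rigid objects under extensions. Finally, $\T_N$ inherits rigidity from $\T'_N$ because $\Xi$ identifies $L_a$ with $\one$ and hence sends the constructed duality data to duality data in $\T_N$.

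The main obstacle is verifying the zig-zag identities for the evaluation/coevaluation pair defined above. Unpacking them requires tracking how the R-matrix isomorphisms $R_a(X)\colon L_a\starop X\isoto X\starop L_a$ furnished by Theorem~\ref{thm:L_a commutes with X} interact with the head-and-socle morphisms that define $\mathrm{ev}$ and $\mathrm{coev}$, and checking that the composite
$$L(i)\To[\id\starop\mathrm{coev}] L(i)\starop L(i)^*\starop L(i)\To[\mathrm{ev}\starop\id] L(i)$$
agrees with the identity after using the coherence of the central structure. A secondary subtlety is the passage from rigidity of the generators $L[a,b]$ to rigidity of arbitrary simple subquotients, since subobjects of rigid objects are not automatically rigid; this step relies essentially on the localization having killed precisely the "obstructing" composition factors in $\Ser_N$, together with the exactness of $\starop$.
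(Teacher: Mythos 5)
First, a point of reference: the paper does not prove this statement at all; it is imported wholesale from \cite[Theorem 4.25]{KKK18A}, so there is no in-paper argument to compare yours against. Measured against the proof in that reference, your choice of candidate duals is the correct one: the dual of $\FO_N(L(i))$ is indeed a grading shift of $\FO_N(L[i+1,i+N-1])$, the evaluation comes from the head surjection $L(i)\conv L[i+1,i+N-1]\epito L[i,i+N-1]=L_i$, the coevaluation from the dual socle embedding $L_i\hookrightarrow L[i+1,i+N-1]\conv L(i)$ (both correctly justified via Lemma~\ref{lem:commute_equiv}), and the same pattern treats every $L[a,b]$ with $b-a+1<N$. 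Working in $\T'_N$ and carrying the factors $L_a^{\starop(-1)}$ is equivalent to, though more cumbersome than, working directly in $\T_N$, where $L_a\simeq\one$.

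However, both places where you defer the work are genuine gaps, and your proposed resolutions would not close them. (1) The zig-zag identity: the composite $L(i)\to L(i)\starop L(i)^*\starop L(i)\to L(i)$ is \emph{not} ``the identity after using the coherence of the central structure''; a priori it is merely an element of $\End(L(i))\simeq\cor$, and the whole content of the step is to prove this scalar is nonzero (after which one rescales the evaluation). That nonvanishing is not formal: it requires a head/socle nonvanishing argument in the style of \cite[Lemma 3.1.5]{KKKO18} for the triple convolution $L(i)\conv L[i+1,i+N-1]\conv L(i)$, exploiting that the modules involved are real simple with prescribed simple heads and socles. (2) More seriously, the propagation from the generators $L[a,b]$ to arbitrary objects is unsupported. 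A simple object realized as the \emph{head} of a convolution of rigid objects is only a quotient of a rigid object, and quotients of rigid objects need not be rigid in an abelian monoidal category; likewise ``closure of rigid objects under extensions'' is precisely the kind of statement that must be proved rather than invoked, and the centrality of the $L_a$'s provides no functoriality of duality under subquotients. (Your own localization remark does not help: $\Ser_N$ kills segments of length $>N$, not the composition factors obstructing dualizability of a head.) To make this step honest one must either construct the dual of each simple object of $\T_N$ explicitly, again via surjections onto $\one$ and the scalar argument of (1), or prove the closure properties of the class of rigid objects for the specific category $\T_N$; neither is done in your sketch.
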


\subsection{Cluster structure on $\T_N$}
In this and next subsections, we prove that
$K(\T_N)$ has a structure of quantum cluster algebra, and
$\T_N$ is its monoidal categorification.

Let us recall the quiver $\tQ$ in~\eqref{eq: Q with W} associated to the infinite sequence $\ttww$ of $S_J$ in~\eqref{eq: def tw}.
The vertices of $\tQ$ are labeled by $\K=\Z_{\ge 1}$ which is also identified with $\Z_{\ge 1} \times \Z_{\ge 1}$ under the map $c$ sending $p$ to $(\ell,m)$.

\begin{proposition} \label{prop: first mutation}
For each $p \in \K$ with $c(p)=(\ell,m)$, the $R$-module $\M_\ttww(p,0)'$ in~\eqref{eq: prime} is given as follows$\col$
$$
\M_\ttww(p,0)' \simeq \begin{cases}
\W^{(\ell)}_{m,\jj_p+1 } & \text{ if } \ell+m \equiv 0 \mod 2, \\
\W^{(\ell)}_{m,\jj_p-1} & \text{ if } \ell+m \equiv 1 \mod 2.
\end{cases}
$$
\end{proposition}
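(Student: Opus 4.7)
The plan is to apply the mutation formula~\eqref{eq: prime} from Theorem~\ref{thm: monoidal categorification} to $\ttww$:
\[
\M_\ttww(p,0)' \simeq \M_\ttww(p_+, p) \hconv \Bigl(\sodot_{t < p < t_+ < p_+} \M_\ttww(t, 0)^{\snconv |a_{\jj_p, \jj_t}|}\Bigr),
\]
and then to identify each factor explicitly. In type $A_\infty$, only $t$ with $\jj_t = \jj_p \pm 1$ contribute to the product, and the condition $t_+ > p$ forces $t = p^-(\jj_p + 1)$ or $t = p^-(\jj_p - 1)$. Comparing with $c(p_+)$ via the coordinate formulas~\eqref{eq: cp+-} and~\eqref{eq: cp+- pm1} established in the proof of the preceding proposition, I see that $p^+(\jj_p + 1) < p_+ < p^+(\jj_p - 1)$ when $\ell + m \equiv 0 \mod 2$, and the reverse when $\ell + m \equiv 1 \mod 2$. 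Hence only $t = p^-(\jj_p + 1)$ (resp.\ $t = p^-(\jj_p - 1)$) survives in the even (resp.\ odd) case.

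Next, I substitute the explicit forms. Corollary~\ref{cor: cuspidal} identifies $\M_\ttww(p_+, p)$ as a single $L$- or $\Lrev$-module, and Theorem~\ref{thm :KR in KLR}, combined with~\eqref{eq: cp+-}, gives $\M_\ttww(p^-(\jj_p + 1), 0) \simeq \W^{(\ell)}_{m-1, \jj_p + 1}$ in the even case and $\M_\ttww(p^-(\jj_p - 1), 0) \simeq \W^{(\ell-1)}_{m, \jj_p - 1}$ in the odd case. The assertion then reduces to the pair of ``row'' and ``column'' decompositions
\begin{align*}
\W^{(\ell)}_{m, j} &\simeq L[j - \ell + m,\, j + m - 1] \hconv \W^{(\ell)}_{m-1, j},\\
\W^{(\ell)}_{m, j} &\simeq \Lrev[j - \ell + m,\, j - \ell + 1] \hconv \W^{(\ell-1)}_{m, j},
\end{align*}
applied at $j = \jj_p + 1$ and $j = \jj_p - 1$ respectively.

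The row decomposition is already essentially contained in the proof of Lemma~\ref{lem:PBW}\eqref{item:KR}: the reduced factorization $\tw_{\lb\ell, m\rb^{(j)}} = \tw_{[j - \ell + m,\, j + m - 1]} \cdot \tw_{\lb\ell, m-1\rb^{(j)}}$ fits into Theorem~\ref{thm: p+,p monoidal} along the chain $\tw_{\lb\ell, m\rb^{(j)}}\Lambda_j \preceq \tw_{\lb\ell, m-1\rb^{(j)}}\Lambda_j \preceq \Lambda_j$, and Proposition~\ref{prop: weyl left right}\ro i\rf identifies the leading factor as $L[j - \ell + m,\, j + m - 1]$. For the column decomposition, I would use the alternative reduced factorization
\[
\tw_{\lb\ell, m\rb^{(j)}} = (s_{j - \ell + m}\, s_{j - \ell + m - 1} \cdots s_{j - \ell + 1}) \cdot \tw_{\lb\ell-1, m\rb^{(j)}},
\]
verifiable by induction on $m$ using the braid commutation $s_i s_k = s_k s_i$ for $|i-k| \geq 2$. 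Theorem~\ref{thm: p+,p monoidal} then applies along the chain $\tw_{\lb\ell, m\rb^{(j)}}\Lambda_j \preceq \tw_{\lb\ell-1, m\rb^{(j)}}\Lambda_j \preceq \Lambda_j$, and iterating Proposition~\ref{prop: weyl left right}\ro i\rf with the column factor acting on $\tw_{\lb\ell-1, m\rb^{(j)}}\Lambda_j$---each intermediate pairing $\lan h_{j - \ell + k},\, \cdot \ran$ computable to be $1$---identifies the leading factor as $\tF_{j - \ell + m} \cdots \tF_{j - \ell + 1} \cdot \one = \Lrev[j - \ell + m,\, j - \ell + 1]$.

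The main obstacle is the column case: establishing the alternative reduced factorization, and then tracking the sequence of simple reflections together with the weight bookkeeping to confirm each intermediate pairing equals $1$, is combinatorial but somewhat delicate. Finally, in the boundary cases ($m = 1$ in the even case, $\ell = 1$ in the odd case) the index $p^-(\jj_p \pm 1)$ does not exist and the product in~\eqref{eq: prime} is empty, so $\M_\ttww(p, 0)' = \M_\ttww(p_+, p)$; a direct comparison with Corollary~\ref{cor: cuspidal} and the definition~\eqref{eq: def Wlmj} shows that this single-segment module equals $\W^{(\ell)}_{m, \jj_p + 1}$ or $\W^{(\ell)}_{m, \jj_p - 1}$ accordingly.
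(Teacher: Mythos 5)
Your proposal is correct and follows essentially the same route as the paper's proof: apply~\eqref{eq: prime}, observe that only $t=p^-(\jj_p+1)$ (resp.\ $t=p^-(\jj_p-1)$) contributes in the even (resp.\ odd) case, identify $\M_\ttww(p_+,p)$ via Corollary~\ref{cor: cuspidal}, and absorb it into $\W^{(\ell)}_{m,\jj_p\pm1}$ via the row/column head-convolution identities. The ``column'' decomposition you flag as the main obstacle is already established in the proof of Proposition~\ref{prop: p+,p ready}\,(b) (via the commutation-equivalent factorization $w_{\lb \ell+1,m\rb^{(j)}}\equiv s_{j-\ell+m-1}\cdots s_{j-\ell}*w_{\lb \ell,m\rb^{(j)}}$ together with Theorem~\ref{thm: p+,p monoidal}), so no new work is needed there.
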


\begin{proof}By~\eqref{eq: nbd of initial quiver}, \eqref{eq: nbd of initial quiver 2} and Proposition~\ref{prop: KR quiver redex}, the neighborhood of $\M_\ttww(p,0)$ in $\tQ$ with $c(p)=(\ell,m)$ and $\ell+m \equiv 0 \mod 2$ can be described as follows:
$$
\raisebox{3.2em}{\xymatrix@C=3ex@R=3ex{ &  {\scriptstyle \W^{(\ell)}_{m+1,\jj_p}} \ar[d] \\
{\scriptstyle \W^{(\ell-1)}_{m,\jj_p}}  & {\scriptstyle \W^{(\ell)}_{m,\jj_p}} \ar[l] \ar[r] & {\scriptstyle \W^{(\ell+1)}_{m,\jj_p+1}} \\
& {\scriptstyle \W^{(\ell)}_{m-1,\jj_p+1}} \ar[u]
} }.
$$

Then we have
\begin{itemize}
\item $\M_\ttww(p_+,p) \simeq L[\jj_p-\ell+m+1,\jj_p+m]$ by Corollary \ref{cor: cuspidal},
\item $\displaystyle\sodot_{t < p <t_+ <p_+} \M_\ttww(t,0)^{\snconv |a_{\jj_p,\jj_t}|}$ in~\eqref{eq: prime} is isomorphic to $\W^{(\ell)}_{m-1,\jj_p+1}$.
\end{itemize}
Thus ~\eqref{eq: prime} implies that
\begin{align*}
\M_\ttww(p,0)' \simeq L[\jj_p-\ell+m+1,\jj_p+m] \hconv \W^{(\ell)}_{m-1,\jj_p+1} \simeq \W^{(\ell)}_{m,\jj_p+1}.
\end{align*}
Similarly, we can prove the assertion when $\ell+m \equiv 1 \mod 2$.
\end{proof}

Let us take a total order on $\Z_{\ge 1}\times \Z_{\ge 1}$ as follows:
$$ (\ell,m) > (\ell',m') \quad  \text{ if } \ell+m  > \ell'+m', \text{ or } \ell+m = \ell'+m' \text{ and } \ell>\ell'.$$

Let $\Sev$ (resp.\ $\Sod$) be the ascending sequence on the set of all $(\ell,m) \in \Z_{\ge 1} \times \Z_{\ge 1}$ with $\ell+m \equiv 0 \mod 2$ (resp.\ $\ell+m \equiv 1 \mod 2$)$\colon$
\begin{align*}
\Sev&=\bigl( (1,1),  (1,3),(2,2),(3,1), (1,5),(2,4),(3,3),(4,2),(5,1) ,\ldots \bigr),  \\
\text{(resp. \ }\Sod&=\bigl( (1,2),(2,1), (1,4),(2,3),(3,2),(4,1), (1,6),(2,5),(3,4),\ldots \bigr) \text{)}.
\end{align*}

Let $\Spl$ be the sequence $\Sev$ followed by $\Sod$, and
$\Seq^-$ be the sequence $\Sod$ followed by $\Sev\colon$
\begin{align*}
\Spl&=\bigl( (1,1),(1,3),(2,2),(3,1), \ldots, (1,2),(2,1), (1,4),(2,3), \ldots\bigr),  \\
\Smi&=\bigl( (1,2),(2,1), (1,4),(2,3),\ldots, (1,1),(1,3),(2,2),(3,1), \ldots \bigr).
\end{align*}

For a sequence $\Seq=( (\ell_1,m_1),(\ell_2,m_2),\ldots )$ of $\Z_{\ge 1} \times \Z_{\ge 1}$, we denote by
$\mu_{\Seq}(\Seed)$ be the new quantum monoidal seed after performing the sequence of mutations indexed by $\Seq$; that is, regarding the sequence $\Seq$
as a sequence in $\Kex$ by
$$ \Seq = (c^{-1}(\ell_1,m_1),c^{-1}(\ell_2,m_2),\ldots),$$
$\mu_{\Seq}(\Seed)$ is defined as follows:
$$\mu_{\Seq}(\Seed) =  \cdots \mu_{c^{-1}(\ell_2,m_2)}\,\mu_{c^{-1}(\ell_1,m_1)}(\Seed).$$
For $r \in \Z_{\ge 1}$, we denote by $\mu^{(r)}_{\Seq}(\Seed)$ the quantum monoidal seed obtained from $\Seed$ after $r$-repetitions of
the mutation sequence $\mu_{\Seq}$.

From now on, we sometime write $\mu_{(\ell,m)}$ instead of $\mu_{c^{-1}(\ell,m)}$
for simplicity.

\begin{lemma} \label{lem: reversing} We have
$$  \mu_{\Sod}(\tQ) \simeq \mu_{\Sev}(\tQ) \simeq \tQ^{{\rm op}} \quad \text{ as quivers},$$
where $\tQ^{{\rm op}}$ is the quiver obtained by reversing all arrows in $\tQ$.
\end{lemma}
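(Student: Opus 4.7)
The plan is to track the level-by-level effect of the mutation sequence $\mu_{\Sev}$ on $\tQ$, showing that each batch of mutations reverses a further band of arrows and creates transient ``diagonal'' arrows that cancel in the next batch. The starting observation is that $\tQ$ is bipartite: by inspection of~\eqref{eq: nbd of initial quiver} and \eqref{eq: nbd of initial quiver 2}, every arrow of $\tQ$ connects an even vertex (one with $\ell+m$ even) to an odd one, and each vertex has at most four neighbors. As remarked after~\eqref{eq: Q with W}, $\tQ$ is the square product $Q_{A_\infty}\,\square\,Q_{A_\infty}$ of the bipartite orientation of the Dynkin quiver of type $A_\infty$, so each arrow of $\tQ$ lies in a unique elementary $4$-cycle with two even and two odd corners. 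The sequences $\Sev$ and $\Sod$ enumerate the even (resp.\ odd) vertices in order of increasing level $\ell+m$ and then by increasing $\ell$. Since same-parity, same-level vertices are not adjacent in $\tQ$ (indeed $\tQ$ is bipartite with respect to the parity of $\ell+m$), the mutations at them commute, so the intra-level ordering is immaterial for the final outcome.

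I would then proceed by induction on $k\in\Z_{\ge1}$, writing $\tQ^{(k)}$ for the quiver obtained from $\tQ$ by applying the initial segment of $\mu_{\Sev}$ consisting of all mutations at even vertices with $\ell+m\le 2k$. The inductive claim is that $\tQ^{(k)}$ coincides with $\tQ^{\rm op}$ on the subset of arrows with both endpoints at level $\le 2k$, agrees with $\tQ$ on arrows with both endpoints at level $\ge 2k+2$, and differs only by certain transient ``diagonal'' arrows joining pairs of odd neighbors of the most recently mutated even vertices (vertices lying at levels $2k-1$ and $2k+1$). The base case $k=1$ (mutating only at $(1,1)$) is a direct check using~\eqref{eq: nbd of initial quiver 2}(i). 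For the inductive step, I mutate at each even vertex $v$ at level $2k+2$: the mutation reverses the (at most four) arrows at $v$, and for every length-$2$ path $a\to v\to b$ in $\tQ^{(k)}$ adds a composition arrow $a\to b$. The composition arrows between two odd neighbors of $v$ both lying at level $2k+1$ turn out to be exactly the opposites of the transient diagonals produced at the previous step, and hence disappear by the $2$-cycle cancellation rule; meanwhile, fresh transient diagonals between odd vertices at levels $2k+1$ and $2k+3$ appear, ready to be canceled at the next inductive step. Boundary cases, where $\ell=1$ or $m=1$, are handled by the degenerate local pictures in~\eqref{eq: nbd of initial quiver 2}; they merely omit one neighbor and do not disrupt the cancellation pattern.

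Letting $k\to\infty$ gives $\mu_{\Sev}(\tQ)\simeq\tQ^{\rm op}$ (each individual arrow is affected by only finitely many mutations, so the limit is well-defined). The symmetric argument, mutating at odd vertices in the order $\Sod$, yields $\mu_{\Sod}(\tQ)\simeq\tQ^{\rm op}$. The principal obstacle is the careful matching of transient diagonal arrows with the composition arrows produced at the next level, particularly at boundary vertices where the local neighborhoods are reduced; a more conceptual alternative is to identify $\tQ$ with the Keller square product of bipartite Dynkin quivers and invoke the known reversal theorem for the ``compound mutation'' at a full color class of the bipartition (cf.~\cite{Ke10, Ke13}).
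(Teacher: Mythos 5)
Your argument is correct and is essentially the paper's own proof: the paper likewise tracks, for each mutation $\mu_{(\ell,m)}$ in $\Sev$, the four newly created diagonal arrows and observes that two of them are cancelled by arrows produced by earlier mutations in the sequence while the other two will be cancelled by later ones, so the net effect is to reverse every arrow. Your level-by-level induction merely repackages that bookkeeping (the stated invariant needs small adjustments --- e.g.\ arrows between levels $2k$ and $2k+1$ are also already reversed after batch $k$, and only the diagonals with both endpoints at level $2k+1$ survive a completed batch --- but these are details to iron out, not gaps).
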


\begin{proof}
The neighborhoods of $(\ell,m)$ with $\ell+m \equiv 0 \mod 2$ in $\tQ$ and $\mu_{(\ell,m)}(\tQ)$ can be described as follows:
\begin{align}\label{eq: 1st mutation}
\raisebox{3em}{\xymatrix@C=3ex@R=3ex{ {\scriptstyle(\ell-1,m+1)} &  {\scriptstyle(\ell,m+1)} \ar[d] &  {\scriptstyle(\ell+1,m+1)} \\
{\scriptstyle(\ell-1,m)}  & {\scriptstyle(\ell,m)} \ar[l] \ar[r] & {\scriptstyle(\ell+1,m)} \\
{\scriptstyle(\ell-1,m-1)} & {\scriptstyle(\ell,m-1)} \ar[u] & {\scriptstyle(\ell+1,m-1)}
} }
\To[\mu_{(\ell,m)}]
\raisebox{3em}{\xymatrix@C=3ex@R=3ex{ {\scriptstyle(\ell-1,m+1)} &  {\scriptstyle(\ell,m+1)}\ar[dl]\ar[dr]  &  {\scriptstyle(\ell+1,m+1)}   \\
{\scriptstyle(\ell-1,m)} \ar[r] & {\scriptstyle(\ell,m)}\ar[d]\ar[u]    & {\scriptstyle(\ell+1,m)} \ar[l]\\
{\scriptstyle(\ell-1,m-1)} & {\scriptstyle(\ell,m-1)}\ar[ul]\ar[ur]   & {\scriptstyle(\ell+1,m-1).}
} }
\end{align}
When we apply $\mu_{(\ell,m)}$ in the sequence of mutations $\mu_{\Sev}$, there exist
\begin{itemize}
\item the arrow $(\ell-1,m) \to (\ell,m-1)$ made by $\mu_{(\ell-1,m-1)}$,
\item the arrow $(\ell-1,m) \to (\ell,m+1)$ made by $\mu_{(\ell-1,m+1)}$.
\end{itemize}
Thus the arrows $(\ell,m-1)\to (\ell-1,m)$ and $(\ell,m+1)\to (\ell-1,m)$ in~\eqref{eq: 1st mutation} are removed.
Also,
\begin{itemize}
\item the arrow $(\ell,m-1)\to (\ell+1,m)$ in~\eqref{eq: 1st mutation} will be removed when we apply $\mu_{(\ell+1,m-1)}$,
\item the arrow $(\ell,m+1)\to (\ell+1,m)$ in~\eqref{eq: 1st mutation} will be removed when we apply $\mu_{(\ell+1,m+1)}$,
\end{itemize}
by the same reason. Hence our assertion for $\Sev$ follows. The second assertion can be proved in a similar way.
\end{proof}

\begin{proposition} \label{pro: mu S+ S-}  \hfill
\begin{enumerate}[{\rm (a)}]
\item $\mu_{\Spl}(\tQ) \simeq \mu_{\Smi}(\tQ) \simeq \tQ$ as quivers.
\item For $p \in \K$  with $c(p)=(\ell,m)$, we have
$$\text{$\mu_{\Spl}(\M_\ttww(p,0)) = \W^{(\ell)}_{m,\jj_p+1}$ and $\mu_{\Smi}(\M_\ttww(p,0)) = \W^{(\ell)}_{m,\jj_p-1}$.}$$
\end{enumerate}
\end{proposition}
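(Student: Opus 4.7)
We restrict to $\mu_{\Spl}$; the argument for $\mu_{\Smi}$ is entirely parallel upon interchanging the roles of the two parities of $\ell+m$ throughout.

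\textbf{Proof of (a).} By Lemma~\ref{lem: reversing}, $\mu_{\Sev}(\tQ) \simeq \tQ^{\rm op}$. In $\tQ^{\rm op}$ the local neighbourhood around each odd vertex has precisely the shape that even vertices had in $\tQ$, so a verbatim repetition of the proof of Lemma~\ref{lem: reversing} with the roles of the two parities interchanged yields $\mu_{\Sod}(\tQ^{\rm op}) \simeq (\tQ^{\rm op})^{\rm op} = \tQ$. Concatenating, $\mu_{\Spl}(\tQ) = \mu_{\Sod}\circ\mu_{\Sev}(\tQ) \simeq \tQ$.

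\textbf{Proof of (b).} Every arrow of $\tQ$ joins vertices of opposite parity of $\ell+m$, so the neighbours of an even vertex are all odd and vice versa. We establish the result in two steps.

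\emph{Step 1.} After $\mu_{\Sev}$ has been performed, the cluster variable at a vertex $p$ with $c(p)=(\ell,m)$ is $\W^{(\ell)}_{m,\jj_p+1}$ if $\ell+m$ is even, and is the original $\W^{(\ell)}_{m,\jj_p}$ if $\ell+m$ is odd. We prove this by induction on the position $k$ in the sequence $\Sev = (v_1,v_2,\ldots)$. When we mutate at $v_k$, its neighbours are odd and, by the induction hypothesis, still carry the original determinantial modules $\M_\ttww(\cdot,0)$. The specific ordering of $\Sev$ is designed (as is already implicit in the proof of Lemma~\ref{lem: reversing}) so that at the moment of mutating $v_k$ the local quiver around $v_k$ agrees with the corresponding local piece of $\tQ$. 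Thus the exchange relation at $v_k$ coincides with the initial-seed exchange relation treated in Proposition~\ref{prop: first mutation}, giving the mutated module $\W^{(\ell_k)}_{m_k,\jj_{p_k}+1}$.

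\emph{Step 2.} Continuing with $\mu_{\Sod}$ we must show that every odd vertex variable also becomes $\W^{(\ell)}_{m,\jj_p+1}$. Here the even neighbours of the odd vertex being mutated have, in the current seed, the shifted values $\W^{(\ell')}_{m',\jj_{p'}+1}$ produced by Step~1. One now computes the exchange relation directly: identify the left- and right-hand factors of the short exact sequence \eqref{eq:ses graded mutation KLR} using Corollary~\ref{cor: cuspidal} (giving the cuspidal factors $L[a,b]$ or $\Lrev[a,b]$) and Theorem~\ref{thm: p+,p monoidal} together with Lemma~\ref{lem:PBW}(iii), and read off the head via the PBW-type description \eqref{eq: def Wlmj}. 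The net effect of the $+1$ shift on the surrounding even-vertex variables is precisely to cancel the $-1$ shift that Proposition~\ref{prop: first mutation} would produce for an odd vertex in the initial seed, so the mutated module is $\W^{(\ell)}_{m,\jj_p+1}$, as required.

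The main obstacle will be the explicit exchange relation computation in Step~2: the sign of the shift at an odd vertex depends on the shifts already performed at its neighbours, and one must verify by identifying each factor of the mutation exact sequence with an explicit $\W^{(\ell)}_{m,j}$- or $L[a,b]$-module that the head is indeed $\W^{(\ell)}_{m,\jj_p+1}$ and not $\W^{(\ell)}_{m,\jj_p-1}$. Once this local computation is in place, the remainder of the argument is a direct combination of Proposition~\ref{prop: first mutation}, Theorem~\ref{thm :KR in KLR}, Corollary~\ref{cor: cuspidal} and Lemma~\ref{lem: reversing}.
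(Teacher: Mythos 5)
Your strategy is the paper's: part (a) is Lemma~\ref{lem: reversing} applied twice, and in part (b) you first settle the vertices with $\ell+m$ even during $\mu_{\Sev}$ (legitimate, since vertices of equal parity are never adjacent in $\tQ$, so each such mutation is literally the initial-seed mutation of Proposition~\ref{prop: first mutation}), and then treat the vertices with $\ell+m$ odd during $\mu_{\Sod}$. Up to and including your Step~1 this agrees with the paper's proof.

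The problem is Step~2, and you have named it yourself: you assert that the $+1$ shift on the even neighbours "cancels" the $-1$ shift that Proposition~\ref{prop: first mutation} would produce, but that proposition is a statement about the initial seed only; after $\mu_{\Sev}$ both the modules at the neighbours of an odd vertex $p$ and the arrows around $p$ have changed (the local quiver is now the opposite one), so there is no sign-flipped version of it to invoke, and the references you list (Corollary~\ref{cor: cuspidal}, Theorem~\ref{thm: p+,p monoidal}, Lemma~\ref{lem:PBW}) do not by themselves decide whether the new variable is $\W^{(\ell)}_{m,\jj_p+1}$ or $\W^{(\ell)}_{m,\jj_p-1}$ --- which is the entire content of the statement. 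The paper closes this by an explicit computation: with $L\seteq L[\jj_p-\ell+m+1,\jj_p+m]$ one has $L\hconv\Wlmj{\ell}{m-1}{\jj_p+1}\simeq\Wlmj{\ell}{m}{\jj_p+1}$, and the composition
$$\bigl(L \hconv \Wlmj{\ell}{m-1}{\jj_p+1}\bigr)\conv\Wlmj{\ell}{m}{\jj_p}\;\rightarrowtail\;\Wlmj{\ell}{m-1}{\jj_p+1}\conv L\conv\Wlmj{\ell}{m}{\jj_p}\;\twoheadrightarrow\;\Wlmj{\ell}{m-1}{\jj_p+1}\conv\Wlmj{\ell}{m+1}{\jj_p}$$
is non-zero by \cite[Lemma~3.1.5]{KKKO18}, hence surjective because the target is simple (its two factors strongly commute). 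Comparing with the exchange exact sequence \eqref{eq:ses graded mutation KLR} and using the injectivity of $N\mapsto M\hconv N$ for real simple $M$ (Proposition~\ref{prop: head inj}) then pins down the mutated module as $\Wlmj{\ell}{m}{\jj_p+1}$. Some such argument is indispensable; as written, your Step~2 is a restatement of the desired conclusion rather than a proof of it.
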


\begin{proof}
(a) The first assertion can be proved by applying the same argument as
in Lemma~\ref{lem: reversing}.

\smallskip\noi
(b) Note that $\mu_{\Spl} = \mu_{\Sod} \circ \mu_{\Sev}$.  For $(\ell,m)> (\ell',m')$ with $\ell+m \equiv \ell'+m' \equiv 0 \mod 2$, the arrows incident with $\M_\ttww(p,0)$ sitting at the coordinate $(\ell,m)$
are not affected by the mutation $\mu_{(\ell',m')}$. Thus the first assertion for $p$ with $\ell+m \equiv 0 \mod 2$ follows from Proposition~\ref{prop: first mutation}.

After performing $\mu_{\Sev}$, Lemma~\ref{lem: reversing} tells that the neighborhood of $\M_\ttww(p,0)$ with $\ell+m \equiv 1 \mod 2$ in $\mu_{\Sev}(\tQ)$ can be described as follows:
\begin{align}\label{eq: nbd after even}
\raisebox{3.3em}{\xymatrix@C=3ex@R=3ex{ &  {\scriptstyle \W^{(\ell)}_{m+1,\jj_p}} \ar[d]  \\
{\scriptstyle \W^{(\ell-1)}_{m,\jj_p}}  & {\scriptstyle \W^{(\ell)}_{m,\jj_p}} \ar[r]\ar[l]  &  {\scriptstyle \W^{(\ell+1)}_{m,\jj_p+1}}  \\
& {\scriptstyle \W^{(\ell)}_{m-1,\jj_p+1}} \ar[u]
}}.
\end{align}
Then we have (up to grading shifts)
\begin{align*}
& \bigl(L[\jj_p-\ell+m+1,\jj_p+m] \hconv \Wlmj{\ell}{m-1}{\jj_p+1} \bigr) \conv \Wlmj{\ell}{m}{\jj_p} \\
& \hspace{20ex} \rightarrowtail \Wlmj{\ell}{m-1}{\jj_p+1} \conv L[\jj_p-\ell+m+1,\jj_p+m]  \conv \Wlmj{\ell}{m}{\jj_p} \\
& \hspace{30ex} \twoheadrightarrow \Wlmj{\ell}{m-1}{\jj_p+1} \conv \Wlmj{\ell}{m+1}{\jj_p},
\end{align*}
where the composition is non-zero by~\cite[Lemma 3.1.5]{KKKO18}.
Since $\Wlmj{\ell}{m-1}{\jj_p+1}$ and $\Wlmj{\ell}{m+1}{\jj_p}$ strongly commute (Corollary~\ref{cor: consequences}), the composition is an epimorphism.
Note that
$$ L[\jj_p-\ell+m+1,\jj_p+m] \hconv \Wlmj{\ell}{m-1}{\jj_p+1} \simeq \Wlmj{\ell}{m}{\jj_p+1}.$$

Since  $\M_\ttww(p,0)'$ is unique, our first assertion follows from Proposition~\ref{prop: head inj}. The second assertion can be proved in a similar way.
\end{proof}

By applying the argument in the proof of Proposition~\ref{pro: mu S+ S-} repeatedly, we have the following corollary.
\begin{corollary} \label{cor: shift}
For each $p \in \K$ with $c(p)=(\ell,m)$, we have
$$ \mu^{(r)}_{\Spl}(\M_\ttww(p,0)) = \W^{(\ell)}_{m,\jj_p+r} \ \ \text{ and } \ \ \mu^{(r)}_{\Smi}(\M_\ttww(p,0)) = \W^{(\ell)}_{m,\jj_p-r} \ \ \text{ for any $r \in \Z_{\ge 1}$}.$$
In particular, we have the following exact sequence$\colon$ For any $k \in J$,
\begin{align} \label{eq: T-system in qHecke}
0 \to  q \Wlmj{\ell}{m-1}{k+1}  \nconv \Wlmj{\ell}{m+1}{k}  \to  q^{\tL} \Wlmj{\ell}{m}{k}  \conv \Wlmj{\ell}{m}{k+1}  \to   \Wlmj{\ell-1}{m}{k} \nconv \Wlmj{\ell+1}{m}{k+1}  \to 0,
\end{align}
where $\tL=\tL\bigl(\Wlmj{\ell}{m}{k},\Wlmj{\ell}{m}{k+1} \bigr)$.
\end{corollary}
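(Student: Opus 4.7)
The plan is to prove the two shift formulas by induction on $r$ and then extract the exact sequence from the admissibility exchange relation applied at $p$ in a suitably shifted seed. The base case $r=1$ of the induction is Proposition~\ref{pro: mu S+ S-}(b). For the inductive step, Proposition~\ref{pro: mu S+ S-}(a) ensures that $\mu^{(r)}_{\Spl}(\Seed_\infty)$ has underlying quiver $\tQ$, and the induction hypothesis identifies the cluster variable at any vertex $q$ with $c(q)=(\ell',m')$ as $\Wlmj{\ell'}{m'}{\jj_{c^{-1}(\ell',m')}+r}$. The proof of Proposition~\ref{pro: mu S+ S-}(b) relies only on (i) the combinatorial shape of $\tQ$, (ii) the head/convolution identifications of Proposition~\ref{prop: p+,p ready} and Corollary~\ref{cor: cuspidal}, (iii) the strong commutativity of the $\Wlmj{\cdot}{\cdot}{\cdot}$'s supplied by Corollary~\ref{cor: consequences}, and (iv) the injectivity statement in Proposition~\ref{prop: head inj}. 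Each of these inputs is invariant under a uniform translation of the $\jj$-parameter, so the same argument carries over verbatim to $\mu_{\Spl}$ acting on the shifted seed and produces $\Wlmj{\ell}{m}{\jj_p+r+1}$ at $p$; the case of $\mu_{\Smi}$ is entirely symmetric.

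For the exact sequence, fix $(\ell,m)$ and $k\in J$, and set $p=c^{-1}(\ell,m)$. Applying either $\mu^{(k-\jj_p)}_{\Spl}$ (when $k\ge\jj_p$) or $\mu^{(\jj_p-k)}_{\Smi}$ (otherwise) to $\Seed_\infty$ produces a quantum monoidal seed with quiver $\tQ$, cluster variable $\Wlmj{\ell}{m}{k}$ at $p$, and still admissible in the sense of Definition~\ref{def: admissible}; both forms of the exchange sequence in Definition~\ref{def:monoidal mutation} are therefore available. Using the neighbourhood description~\eqref{eq: nbd of initial quiver} together with the closed formula $\jj_{c^{-1}(\ell',m')}=\lceil(\ell'-m')/2\rceil$ from Lemma~\ref{lem: ell,m ip}, one reads off the four neighbours of $p$ in the shifted seed as $\Wlmj{\ell}{m+1}{k}$, $\Wlmj{\ell}{m-1}{k+1}$, $\Wlmj{\ell-1}{m}{k}$ and $\Wlmj{\ell+1}{m}{k+1}$, with the signs of the entries of $\tB$ prescribing the incoming/outgoing partition. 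Substituting into the appropriate form of the exchange sequence, and using Lemma~\ref{lem: tLa} to compute the grading shift $\tL$, then yields precisely~\eqref{eq: T-system in qHecke}.

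The only delicate step is the case analysis in the second paragraph: one must verify that for \emph{both} parities of $\ell+m$ and arbitrary $k\in J$ the four neighbours acquire exactly the $\jj$-shifts appearing in~\eqref{eq: T-system in qHecke}, and that the two forms of the exchange sequence (with $M_k\conv M_k'$ versus $M_k'\conv M_k$) combine correctly with the parity. This is a finite computation using~\eqref{eq: cp+-} and~\eqref{eq: cp+- pm1}; the apparent asymmetry between the two parities is absorbed by choosing the appropriate one of the two exchange sequences in Definition~\ref{def:monoidal mutation}.
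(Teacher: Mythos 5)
Your proposal is correct and follows essentially the same route as the paper, which disposes of the corollary in one line by noting that the argument of Proposition~\ref{pro: mu S+ S-} iterates (your induction on $r$ via translation-invariance of all the inputs is exactly that iteration, and the exact sequence is, as you say, the admissibility exchange sequence \eqref{eq:ses graded mutation KLR} read off at the vertex $c^{-1}(\ell,m)$ in the appropriately shifted seed, with the parity of $\ell+m$ deciding which of the two forms of the sequence produces the pair $\bigl(\Wlmj{\ell}{m}{k},\Wlmj{\ell}{m}{k+1}\bigr)$ in the stated order). No gaps.
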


For $N \in \Z_{\ge 1}$,  let us denote by $\K_{\le N}$, $\K_{\ge N}$ and
$K_{=N}$ the subsets of $\K$ as follows:
\begin{align*}
\K_{\le N} &\seteq \{ p \in \K \ | \  \ell \le N \text{ for } c(p)=(\ell,m)\}, \\
\K_{\ge N} &\seteq \{ p \in \K \ | \  \ell \ge N \text{ for } c(p)=(\ell,m)\},\\
 \K_{=N} &\seteq \{ p \in \K \ | \  \ell= N \text{ for } c(p)=(\ell,m)\}.
\end{align*}
We denote by $\tQ_N$ the subquiver of $\tQ$ obtained by
\begin{align} \label{eq: the procedure}
\text{ deleting the vertices in $\K_{\ge N+1}$ and the arrows joining vertices in $\K_{=N}$}.
\end{align}
Then the set of vertices of $\tQ_N$ are labeled by $\K_N\seteq \K_{\le N}$.

For each $N \in \Z_{\ge 1}$, we decompose $\K_{N}$ into the set of exchange vertices $\Kex_N$ and the set of frozen vertices $\Kfr_N$ by defining
$$\Kex_N \seteq \K_{\le N-1} \quad \text{ and } \quad  \Kfr_N \seteq \K_{=N}.$$

For any quiver $Q$ with $\K$ as its set of vertices,
we denote by $Q_N$
the quiver obtained by applying the procedure
~\eqref{eq: the procedure} to $Q$.

For any sequence $\Seq$ in $\Kex_N$, we have
$$\mu_\Seq(Q_N)=\bl\mu(Q)\br_N.$$

\begin{lemma} \label{lem: in out}
For any sequence $\Seq$ in $\Kex_N$, there exists no arrow in $\mu_\Seq(\tQ)$ between a vertex in $\K_{\ge N+1}$ and a vertex in $\K_{\le N-1}$.
\end{lemma}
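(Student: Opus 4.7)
The plan is to prove, by induction on the length of $\Seq$, the following strengthened invariant$\colon$ in $\mu_\Seq(\tQ)$, every arrow incident to a vertex of $\K_{\ge N+1}$ has its other endpoint in $\K_{\ge N}$. Since $\K_{\ge N}$ is disjoint from $\K_{\le N-1}$, this invariant immediately yields the lemma.

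For the base case $\Seq=\emptyset$, I will read off from the description of $\tQ$ in~\eqref{eq: Q with W} that under the bijection $p\leftrightarrow c(p)=(\ell,m)$, every arrow of $\tQ$, whether of horizontal or ordinary type, joins two lattice neighbors of $\Z_{\ge 1}\times\Z_{\ge 1}$ (differing in exactly one coordinate by $1$). Hence any arrow touching a vertex $(\ell,m)$ with $\ell\ge N+1$ has its other endpoint of the form $(\ell',m')$ with $\ell'\in\{\ell-1,\ell,\ell+1\}\subset\Z_{\ge N}$, and so lies in $\K_{\ge N}$.

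For the inductive step, assume the invariant holds for $\mu_\Seq(\tQ)$ and fix $k\in\Kex_N=\K_{\le N-1}$. Applied to arrows from $\K_{\ge N+1}$ potentially incident to $k$, the invariant guarantees that every neighbor of $k$ in $\mu_\Seq(\tQ)$ lies in $\K_{\le N}$. Inspecting~\eqref{eq: mutation in a direction k}, the entry $\mu_k(\tB)_{ij}$ for $i,j\ne k$ differs from $b_{ij}$ only by a term proportional to $b_{ik}b_{kj}$, and this correction can be nonzero only when both $i$ and $j$ are neighbors of $k$, hence both lie in $\K_{\le N}$. The remaining mutated entries $\mu_k(\tB)_{ik}=-b_{ik}$ and $\mu_k(\tB)_{kj}=-b_{kj}$ again involve $k$ and its neighbors, all in $\K_{\le N}$. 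Consequently, the only arrows altered by $\mu_k$ have both endpoints in $\K_{\le N}$; no arrow incident to $\K_{\ge N+1}$ is created, destroyed, or re-oriented. Any arrow of $\mu_k\mu_\Seq(\tQ)$ incident to $\K_{\ge N+1}$ therefore already occurs in $\mu_\Seq(\tQ)$, and its other endpoint lies in $\K_{\ge N}$ by the inductive hypothesis, closing the induction. The content is purely combinatorial, so I expect no serious obstacle beyond isolating the correct invariant; the usual difficulty that mutations mix faraway vertices is evaded precisely because the exchangeable locus $\Kex_N$ is insulated from $\K_{\ge N+1}$ by the frozen layer $\K_{=N}$.
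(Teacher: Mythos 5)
Your proof is correct and takes essentially the same route as the paper's (which only sketches it in two lines): check the base case from the lattice structure of $\tQ$ in~\eqref{eq: Q with W}, then observe that mutation at $k\in\Kex_N$ only alters arrows among $k$ and its neighbors, all of which lie in $\K_{\le N}$ by the inductive hypothesis. Your strengthened invariant is harmless but not strictly needed, since the lemma's own statement already forces every neighbor of $k\in\K_{\le N-1}$ into $\K_{\le N}$ and hence serves as a sufficient inductive invariant by itself.
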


\begin{proof}
For the initial quiver $\tQ$, it is obvious.
Then our assertion follows from the mutation rule of quivers described in~\eqref{eq: mutation in a direction k} {\rm (b)}.
\end{proof}

For a quiver $Q$, let $\overline{Q}$ be the full subquiver of $Q$
obtained by deleting
all frozen vertices.

\begin{example} The quivers $\tQ_3$ and $\overline{\tQ_3}$ can be described as follows:
$$
\tQ_3 =\raisebox{8em}{\scalebox{0.8}{\xymatrix@C=3ex@R=1.8ex{
\vdots&\vdots &\vdots\ar[d] & \vdots \\
4&\Wlmj{1}{4}{-1} \ar[u]\ar[d] & \Wlmj{2}{4}{-1} \ar[r]\ar[l] & \boxed{\Wlmj{3}{4}{0}} \\
3&\Wlmj{1}{3}{-1}  \ar[r]& \Wlmj{2}{3}{0}\ar[u]\ar[d]  & \boxed{\Wlmj{3}{3}{0}} \ar[l] \\
2&\Wlmj{1}{2}{0} \ar[u]\ar[d] & \Wlmj{2}{2}{0} \ar[r]\ar[l] & \boxed{\Wlmj{3}{2}{1}} \\
1&\Wlmj{1}{1}{0} \ar[r]& \Wlmj{2}{1}{1}\ar[u]  & \boxed{\Wlmj{3}{1}{1}}\ar[l] \\
m /\ell & 1 & 2 & \boxed{3} }}} \qquad \overline{\tQ_3} =
\raisebox{8em}{\scalebox{0.8}{\xymatrix@C=3ex@R=3ex{
\vdots&\vdots &\vdots\ar[d]  \\
4&\Wlmj{1}{4}{-1} \ar[u]\ar[d] & \Wlmj{2}{4}{-1}\ar[l]  \\
3&\Wlmj{1}{3}{-1}  \ar[r]& \Wlmj{2}{3}{0}\ar[u]\ar[d]   \\
2&\Wlmj{1}{2}{0} \ar[u]\ar[d] & \Wlmj{2}{2}{0} \ar[l]\\
1&\Wlmj{1}{1}{0} \ar[r]& \Wlmj{2}{1}{1}\ar[u]  \\
m /\ell & 1 & 2  }}}
$$
where $\boxed{\ast}$ denotes frozen vertices.
\end{example}

\begin{lemma} \label{lem: do not effect}
For any sequence $\Seq$ in $\Kex_N$, we have
$$  \overline{\mu_{\Seq}(\tQ_N)} \simeq \mu_{\Seq}\bigl(\overline{\tQ_N}\bigr)\simeq \overline{\bigl(\mu_{\Seq}(\tQ)\bigr)_N} \quad \text{ as quivers}.$$
\end{lemma}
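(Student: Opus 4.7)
The strategy is that the mutation formula \eqref{eq: mutation in a direction k}{\rm (b)}, when applied at a vertex $k \in \Kex_N$, updates any entry $b_{ij}$ with $i, j \in \Kex_N$ to an expression involving only $b_{ij}$, $b_{ik}$ and $b_{kj}$---all of which lie in the $\Kex_N \times \Kex_N$ principal block. Hence this principal block is closed under any sequence of mutations whose directions lie in $\Kex_N$, and it evolves independently of any data outside the block. The lemma will follow by identifying each of the three quivers in the statement with the $\Kex_N \times \Kex_N$ block of a common mutation process.

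First I will verify that the three quivers $\tQ$, $\tQ_N$ and $\overline{\tQ_N}$ have identical $\Kex_N \times \Kex_N$ principal blocks at the initial stage. This is immediate from the definitions: the procedure \eqref{eq: the procedure} defining $\tQ_N$ only deletes vertices in $\K_{\ge N+1}$ and arrows between two vertices of $\K_{=N}$, and the passage to $\overline{\tQ_N}$ only deletes the frozen vertices; none of these operations affects an arrow between two vertices of $\Kex_N = \K_{\le N-1}$.

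Next I will proceed by induction on the length of $\Seq = (k_1, \ldots, k_r)$. Writing $\Seq^{(s)} = (k_1, \ldots, k_s)$, suppose that the $\Kex_N \times \Kex_N$ blocks of $\mu_{\Seq^{(s-1)}}(\tQ)$, $\mu_{\Seq^{(s-1)}}(\tQ_N)$, and $\mu_{\Seq^{(s-1)}}(\overline{\tQ_N})$ coincide. Since $k_s \in \Kex_N$, the closure observation above shows that the new $\Kex_N \times \Kex_N$ block in each of the three cases is computed identically from the old one, so the three blocks continue to coincide after $\mu_{k_s}$.

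Finally, to conclude the two stated isomorphisms, I will identify each deleted-frozen quiver with the relevant principal block. Directly from the definitions, $\overline{\mu_\Seq(\tQ_N)}$ is the full subquiver of $\mu_\Seq(\tQ_N)$ on the vertex set $\Kex_N$, whose adjacency matrix is exactly the $\Kex_N \times \Kex_N$ block; and since the $N$-procedure does not disturb arrows among $\Kex_N$-vertices, $\overline{(\mu_\Seq(\tQ))_N}$ is the full subquiver of $\mu_\Seq(\tQ)$ on $\Kex_N$, again coinciding with the $\Kex_N \times \Kex_N$ block of $\mu_\Seq(\tQ)$. The preceding Lemma~\ref{lem: in out}, while compatible with this picture, is not logically needed here. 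The proof is thus essentially a routine bookkeeping argument based on the locality of the mutation rule, and no substantial obstacle is expected.
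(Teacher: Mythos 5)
Your proof is correct. The paper states this lemma without any argument, treating it as immediate from the mutation rule \eqref{eq: mutation in a direction k}\,(b); the block-closure induction you give — that for $i,j,k\in\Kex_N$ the updated entry $b_{ij}$ depends only on $b_{ij}$, $b_{ik}$, $b_{kj}$, so the $\Kex_N\times\Kex_N$ block evolves identically in $\tQ$, $\tQ_N$ and $\overline{\tQ_N}$ from a common initial block — is exactly the locality argument the authors are implicitly invoking (it is the same observation behind their one-line proof of Lemma~\ref{lem: in out}). Your remark that Lemma~\ref{lem: in out} is not logically needed here is also accurate: that lemma concerns arrows between $\K_{\ge N+1}$ and $\K_{\le N-1}$, which never enter the computation of the $\Kex_N\times\Kex_N$ block.
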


Let $\Sev_N$, $\Sod_N$, $\Spl_N$ and $\Smi_N$ be the subsequences of $\Sev$, $\Sod$, $\Spl$ and $\Smi$
obtained by removing all the vertices outside $\Kex_N$,
respectively. Then the following lemma can be proved by applying the same arguments as in
the proofs of Lemma~\ref{lem: reversing} and Proposition~\ref{pro: mu S+ S-}.

\begin{lemma} \label{lem: seq mu in TN}
We have
\bnum
\item $\mu_{\Sod_N}\bigl( \overline{\tQ_N} \bigr) \simeq \mu_{\Sev_N}\bigl( \overline{\tQ_N} \bigr) \simeq \left( \overline{\tQ_N} \right)^{{\rm op}}$ as quivers,
\item $\mu_{\Spl_N}\bigl( \overline{\tQ_N} \bigr) \simeq \mu_{\Smi_N}\bigl( \overline{\tQ_N} \bigr) \simeq \overline{\tQ_N} $ as quivers.
\end{enumerate}
\end{lemma}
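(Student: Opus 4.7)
The plan is to adapt the local-neighborhood arguments from Lemma \ref{lem: reversing} and Proposition \ref{pro: mu S+ S-}(a) directly to the truncated quiver $\overline{\tQ_N}$. The key observation that makes this adaptation routine is that in $\overline{\tQ_N}$ all arrows to or from vertices outside $\Kex_N$ have simply been deleted, so the boundary of $\overline{\tQ_N}$ (where $\ell=1$, $\ell=N-1$, or $m=1$) automatically does not contain the problematic arrows that would otherwise require cancellation.

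For part (i), I would focus on $\mu_{\Sev_N}(\overline{\tQ_N}) \simeq (\overline{\tQ_N})^{op}$, since the case of $\Sod_N$ is entirely parallel. The sequence $\Sev_N$ visits the even-parity vertices of $\Kex_N$ in the fixed order inherited from $\Sev$. At the instant $\mu_{(\ell,m)}$ is applied, the relevant local picture in $\overline{\tQ_N}$ is a subdiagram of (\ref{eq: 1st mutation}) in which the potentially absent positions are exactly those $(\ell\pm1, m\pm1)$ outside $\Kex_N$. Repeating the bookkeeping of Lemma \ref{lem: reversing} verbatim, the two diagonal arrows to $(\ell-1,m)$ created by $\mu_{(\ell,m)}$ are cancelled by the earlier mutations $\mu_{(\ell-1, m\pm1)}$, and the two diagonal arrows to $(\ell+1,m)$ are cancelled by the later mutations $\mu_{(\ell+1, m\pm1)}$. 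The one check I would then perform is that when such a cancelling mutation is missing (boundary case), the corresponding arrow is in fact not present in $\overline{\tQ_N}$: if $\ell=1$ then $(\ell-1,m)=(0,m)$ is not a vertex; if $\ell=N-1$ then $(\ell+1,m)=(N,m)\in\Kfr_N$, so arrows incident to it are excluded from $\overline{\tQ_N}$; analogous remarks cover the $m=1$ boundary. This reduces (i) to the combinatorial identity already established in Lemma \ref{lem: reversing} restricted to $\Kex_N$.

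Part (ii) then follows formally from (i). The point is that any sequence of mutations commutes with the operation of reversing all arrows, i.e.\ $\mu_k(Q^{op}) = \bigl(\mu_k(Q)\bigr)^{op}$ for every vertex $k$, simply because the mutation rule (\ref{eq: mutation in a direction k}) is invariant under transposition of $\tB$. Applying this and part (i) gives
\[
\mu_{\Spl_N}(\overline{\tQ_N}) \;=\; \mu_{\Sod_N}\bigl(\mu_{\Sev_N}(\overline{\tQ_N})\bigr) \;=\; \mu_{\Sod_N}\bigl((\overline{\tQ_N})^{op}\bigr) \;=\; \bigl(\mu_{\Sod_N}(\overline{\tQ_N})\bigr)^{op} \;=\; \bigl((\overline{\tQ_N})^{op}\bigr)^{op} \;=\; \overline{\tQ_N},
\]
and the same chain of equalities with the roles of $\Sev_N$ and $\Sod_N$ interchanged gives $\mu_{\Smi_N}(\overline{\tQ_N}) \simeq \overline{\tQ_N}$.

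The only delicate step is the boundary check in part (i), and I expect this to be straightforward once one draws the local neighborhood of a boundary vertex and compares with (\ref{eq: nbd of initial quiver}) and (\ref{eq: nbd of initial quiver 2}); no essentially new idea beyond Lemma \ref{lem: reversing} is needed. The symmetry-under-reversal observation that turns (i) into (ii) is immediate from the mutation formula, and so the bulk of the argument is already encoded in the proofs already provided in the paper.
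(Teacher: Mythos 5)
Your proposal is correct and follows essentially the route the paper intends: the paper's own proof is just the remark that the lemma follows by the same arguments as Lemma~\ref{lem: reversing} and Proposition~\ref{pro: mu S+ S-}, and your part (i) is exactly that cancellation bookkeeping restricted to $\overline{\tQ_N}$, with the boundary vertices handled correctly (the missing cancelling mutations correspond precisely to arrows that are absent because $(0,m)$ is not a vertex and $(N,m)$ is frozen, hence deleted from $\overline{\tQ_N}$). Your derivation of (ii) from (i) via the identity $\mu_k(Q^{\mathrm{op}})=\bigl(\mu_k(Q)\bigr)^{\mathrm{op}}$, which indeed holds because the mutation rule in~\eqref{eq: mutation in a direction k} satisfies $\mu_k(-\tB)=-\mu_k(\tB)$, is a slightly cleaner formal shortcut than redoing the local computation, but it reaches the same conclusion.
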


\subsection{Monoidal categorifications via $\T_N$ and $\Ca_J$}

\begin{proposition}[{\cite[Proposition 4.12, Proposition 4.31]{KKK18A}}] \label{prop: le N do not vanish} \hfill
\bnum
\item If an object $Y$ is simple in $\A/\Ser_N$, then there exists a simple object $M$ in $\A$ satisfying
\bna
\item $\FQ_N(M) \simeq Y$,
\item $b_k-a_k+1\le N-1$ for $1 \le k \le r$, where $([a_1,b_1],\ldots,[a_r,b_r])$ is the multisegment associated with $M$.
\end{enumerate}
\item \label{item:2}Let $([a_1,b_1],\ldots,[a_r,b_r])$ be the multisegment associated with a simple object $M$ in $\A$.
\bna
\item if $b_k- a_k + 1 \le N$ for any $1 \le k \le r$, then $\FO_N(M)$ is simple in $\T_N$,
\item if $b_k- a_k + 1 >N$ for some $k$, then $\FO_N(M)$ vanishes,
\item if $b_k- a_k + 1 = N$ for any $1 \le k \le r$, then $\FO_N(M) \simeq \one$.
\ee
\item By the correspondence in \eqref{item:2},
the set of isomorphism classes of self-dual simple objects
of $\T_N$ is isomorphic to the set of multisegments
$([a_1,b_1],\ldots,[a_r,b_r])$ with
$b_k- a_k + 1<N$ $(1 \le k \le r)$.
\end{enumerate}
\end{proposition}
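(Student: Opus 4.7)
I tackle parts (i)--(iii) in the order (ii)(b), (ii)(a)(c), (i), (iii), pivoting on the following characterization: a simple $M\in\A$ with associated multisegment $([a_1,b_1],\ldots,[a_r,b_r])$ lies in $\Ser_N$ if and only if some $b_k-a_k+1>N$. The ``if'' direction proceeds by induction on length: starting from $L[a,a+N]\in\Ser_N$, the surjections $L[a,c-1]\conv L(c)\twoheadrightarrow L[a,c]$ together with the closure of $\Ser_N$ under bilateral convolution and subquotients yield $L[a,b]\in\Ser_N$ for every $b-a+1>N$; any simple $M$ whose multisegment contains such a long segment is then the head of a convolution in $\Ser_N$, hence in $\Ser_N$. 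The substantive ``only if'' direction, following the weight-support argument of \cite[Proposition~4.12]{KKK18A}, observes that each generator of $\Ser_N$ carries a block of $N+1$ consecutive simple-root multiplicities in its weight, which is preserved by subquotients and convolutions, whereas a simple with only segments of length $\le N$ cannot realize such a block. This immediately gives (ii)(b).

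\textbf{Proof of (ii)(a) and (c).} When all segments of $M$ have length exactly $N$, each factor $L_{a_k}=L[a_k,a_k+N-1]$ is central in $\A/\Ser_N$ by Theorem~\ref{thm:L_a commutes with X}, so the $\FQ_N(L_{a_k})$'s strongly commute under $\starop$, and $\FQ_N(M)$ is, up to grading, their $\starop$-product. Localizing by $\Xi\circ\Upsilon$ identifies each $L_{a_k}$ with $\one$, giving $\FO_N(M)\simeq\one$ and proving (c). For a general simple $M$ with all segments of length $\le N$, the centrality of the length-$N$ factors allows the rewriting $\FQ_N(M)\simeq\FQ_N(L_{a_1})\starop\cdots\starop\FQ_N(L_{a_s})\starop\FQ_N(M')$ in $\A/\Ser_N$, where $M'$ is the simple associated with the sub-multisegment obtained by deleting the length-$N$ segments; applying $\FO_N$ and invoking (c) for the $L_{a_k}$-factors yields $\FO_N(M)\simeq\FO_N(M')$. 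Since $M'\notin\Ser_N$ by the characterization, $\FQ_N(M')$ is simple in $\A/\Ser_N$, and the further localization by the central family $\{L_a\}$ preserves its simplicity (no additional $L_a$-identifications are available in $\FQ_N(M')$), proving (a).

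\textbf{Proof of (i) and (iii), and the main obstacle.} For (i), given a simple $Y\in\A/\Ser_N$, let $M\in\A\setminus\Ser_N$ be the (up to isomorphism unique) simple with $\FQ_N(M)\simeq Y$: the characterization gives $b_k-a_k+1\le N$ for all $k$. Applying the central rewriting of the previous paragraph, one exhibits $\FQ_N(M)\simeq\FQ_N(L_{a_1})\starop\cdots\starop\FQ_N(L_{a_s})\starop\FQ_N(M')$ with $M'$ having all segments of length $\le N-1$; after absorbing the central $\FQ_N(L_{a_k})$-factors as invertible twists in the monoidal structure $\starop$ one obtains $\FQ_N(M')\simeq Y$, proving~(i). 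For (iii), the assignment $M\mapsto\FO_N(M)$ (with the self-dual grading normalization) from multisegments whose segments all have length $<N$ to iso-classes of self-dual simples of $\T_N$ is surjective by (i) followed by (ii)(a), and injective because distinct short-segment multisegments label non-isomorphic simples of $\A$ by Proposition~\ref{prop:multisegments}, and the localization $\T_N=(\A/\Ser_N)[L_a\simeq\one]$ can only identify classes differing by $L_a$-factors---which are absent under the length-$<N$ constraint. The main obstacle is precisely this injectivity combined with the ``only if'' direction of the $\Ser_N$-characterization and the rigorous bookkeeping of the central cancellations in (i); all of these rely on the machinery developed in \cite[Propositions~4.12 and 4.31]{KKK18A}.
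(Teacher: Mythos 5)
The paper itself gives no proof of this proposition: it is imported verbatim from \cite[Propositions 4.12 and 4.31]{KKK18A}, so there is no in-paper argument to compare yours against, and I can only assess your outline on its own terms. Your overall strategy --- characterize the simples lying in $\Ser_N$ by the presence of a segment of length $>N$, then track what the Serre quotient and the localization do to the surviving simples --- is indeed the strategy of the cited results, and the ``if'' direction of your criterion and your treatment of (ii)(c) are fine. But two of your key steps fail as written.

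First, your justification of the ``only if'' direction of the membership criterion --- that a generator of $\Ser_N$ carries a block of $N+1$ consecutive simple-root multiplicities in its weight which a short-segment simple ``cannot realize'' --- is false: the simple $\hd\bl L[a,a+N-1]\conv L[a+N,a+N]\br$ has all segments of length $\le N$ yet is supported on exactly the same weight $-(\ep_a-\ep_{a+N+1})$ as the generator $L[a,a+N]$. Weights do not detect segment lengths. The correct argument is combinatorial: every simple subquotient of $X\conv L[a,a+N]\conv Y$ is indexed by a multisegment obtained from the concatenation of the multisegments of $X$, of $[a,a+N]$ and of $Y$ by Zelevinsky-type degenerations (replacing two linked segments by their union and intersection), and such operations can only increase the maximal segment length, so every simple of $\Ser_N$ retains a segment of length $\ge N+1$. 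Without this, (ii)(b) and everything you build on it is unsupported.

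Second, in your proof of (i) you ``absorb the central $\FQ_N(L_{a_k})$-factors as invertible twists'' inside $\A/\Ser_N$. This is not legitimate: the $L_a$ are central in $\A/\Ser_N$ but are neither invertible there nor isomorphic to $\one$ (they have nonzero weight and the quotient category is still $\rl_J$-graded); they are trivialized only after the further localization $\Xi\circ\Upsilon$ landing in $\T_N$. Consequently your claimed isomorphism $\FQ_N(M')\simeq Y$ in $\A/\Ser_N$ does not follow from your rewriting, and since distinct simples of $\A$ not in $\Ser_N$ remain non-isomorphic in the Serre quotient, this step cannot be repaired by the same device. Relatedly, your assertion in (ii)(a) that the localization ``preserves simplicity (no additional $L_a$-identifications are available)'' is precisely the nontrivial content of \cite[Proposition 4.31]{KKK18A}; as written it is asserted rather than proved.
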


\Prop\label{prop:PBWprod}
Let $M_k$ be a real simple $R$-module for $1\le k\le r$.
Let $m_k$, $n_k\in\Z_{\ge0}$ $(k=1,\ldots,r)$.
We assume that $\tL(M_i,M_j)=0$ if $1\le i<j\le r$.
Then we have
\bnum
\item
$M\seteq\hd\bl M_1^{\circ m_1}\conv\cdots\conv M_r^{\circ m_r}\br$
and $N\seteq\hd\bl M_1^{\circ n_1}\conv\cdots\conv M_r^{\circ n_r}\br$
are simple modules,
\item $L\seteq\hd\bl M_1^{\circ m_1+n_1}\conv\cdots\conv M_r^{\circ m_r+n_r}\br$
is isomorphic to a simple subquotient of $M \conv N$
up to a grading shift.
\ee
\enprop

\Proof
(i) follows from \cite[Corollary 2.10, Lemma 2.6]{KK18} and
$\tL(M_i^{\circ m_i},M_j^{\circ m_j})=0$ for $i<j$.

\smallskip\noi
(ii)\ In the course of the proof, we ignore grading shifts.
Set $L_{2k-1}=M_k^{\circ m_k}$ and $L_{2k}=M_k^{\circ n_k}$.
Set $\beta_k=-\wt(L_k)\in\rl^+$ and $\ell_k=|\beta_k|$,
$m=|-\wt(M)|=\sum_{k=1}^r\ell_{2k-1}$,
$n=|-\wt(N)|=\sum_{k=1}^r\ell_{2k}$.
Then $M\simeq \hd(L_1\conv L_3\conv\cdots \conv L_{2r-1})$,
$N\simeq \hd(L_2\conv L_4\conv\cdots \conv L_{2r})$ and
$L\simeq \hd(L_1\conv L_2\conv\cdots \conv L_{2r})$.
Let $\rmat{L_i,L_j}\col L_i\conv L_j\to L_j\conv L_i$ be the $R$-matrix.
Then if $i<j$ and $i\equiv 0$, $j\equiv 1\mod 2$, then
we have $\tL(L_i,L_j)=0$ and hence
$$\rmat{L_i,L_j}(u\etens v)-\tau_{w[\ell_j,\ell_i]} (v\etens u)
\in\sum_{ w< w[\ell_j,\ell_i]}\tau_w (L_j\etens L_i)\qt{for $u\in L_i$ and $v\in L_j$.} $$
Here, for $\beta\in\rl^+$ and $w\in\sym_{\height{\beta}}$,
$\tau_w=\tau_{i_1}\cdots\tau_{i_\ell}\in R(\beta)$ where
$s_{i_1}\cdots s_{\i_\ell}$ is a reduced expression of $w$.
The element $w[a,b]\in\sym_{a+b}$ is defined by
$w[a,b](k)=k+b$ if $1\le k\le a$ and
$w[a,b](k)=k-a$ if $a<k\le a+b$.

Then we have a homomorphism
\eqn
L_1\conv L_2\conv \cdots \conv L_{2r}
\To[{\mathbf r}] \bl L_1\conv L_3\conv\cdots \conv L_{2r-1}\br
\conv \bl L_2\conv L_4\conv\cdots\conv L_{2r}\br
\eneqn
by a product of $\rmat{L_i, L_j}$'s.
Hence we have
\eqn
&&{\mathbf r}(u_1\etens\cdots\etens  u_{2r})
-\tau_{w_1}\bl(u_2\etens\cdots\etens  u_{2r})\etens
(u_1\etens \cdots \etens u_{2r-1})\br\\*
&&\hs{25ex}\in \sum_{w<w_1}\tau_w\bl L_1\conv L_3\conv\cdots \conv L_{2r-1}\br
\conv \bl L_2\conv L_4\conv\cdots\conv L_{2r}\br
\eneqn
Here $w_1\in \sym_{m+n}$  is a product of $w[\ell_j,\ell_i] $'s.
Note that $w_1$ is given explicitly
$$w_1(c)=
\bc
c+\dsum_{1\le k<s}\ell_{2k}&\text{if $\dsum_{1\le k<s}\ell_{2k-1}<c\le
\dsum_{1\le k\le s}\ell_{2k-1}$, $1\le s\le r$,}\\[2ex]
c-\dsum_{s< k\le r}\ell_{2k-1}&\text{if $ m +  \dsum_{1\le k<s}\ell_{2k}<c\le
 m+\dsum_{1\le k\le s}\ell_{2k}$, $1\le s\le r$.}
\ec$$
Now recall the shuffle lemma:
\begin{align}\label{eq:shuffle}
M\conv N =\soplus_{w\in\sym_{ m,n }}\tau_w(M\etens N).
\end{align}
Here $\sym_{m,n}=\set{w\in\sym_{m+n}} {\text{$w(k)<w(k+1)$ if $1\le k<n+m$ and $k\not= m $}}$.
Note that $w_1 \in \sym_{m,n}$.
Since
$L_1\tens L_3\tens\cdots \tens L_{2r-1}\subset M$
and $L_2\tens L_4\tens\cdots \tens L_{2r}\subset N$,
the shuffle lemma \eqref{eq:shuffle} says that the composition
\eqn
L_1\tens L_2\tens\cdots \tens L_{2r}
\to L_1\conv L_2\conv \cdots \conv L_{2r}
&\to& \bl L_1\conv L_3\conv\cdots \conv L_{2r-1}\br
\conv \bl L_2\conv L_4\conv\cdots\conv L_{2r}\br\\
&\to& M\conv N
\eneqn
is injective.
Hence we obtain a non-zero homomorphism $$L_1\conv L_2\conv \cdots \conv L_{2r}
\to M\conv N.$$ Since $L_1\conv L_2\conv \cdots \conv L_{2r}$ has a simple head $L$,
the second assertion follows.
\QED

\Cor\label{cor:pbwprod}
Let $[a_1,b_1],\ldots, [a_r,b_r]$ be a sequence of segments such that
$[a_k,b_k]>[a_{k+1},b_{k+1}]$.
Let $m_k$, $n_k\in\Z_{\ge0}$ $(k=1,\ldots,r)$.
If $M\seteq\hd\bl(L[a_1,b_1])^{\circ m_1}\conv\cdots\conv(L[a_r,b_r])^{\circ m_r}\br$
and $N\seteq\hd\bl(L[a_1,b_1])^{\circ n_1}\conv\cdots\conv(L[a_r,b_r])^{\circ n_r}\br$
strongly commute, then
$M\conv N$ is isomorphic to
$\hd\bl(L[a_1,b_1])^{\circ m_1+n_1}\conv\cdots\conv(L[a_r,b_r])^{\circ m_r+n_r}\br$
up to a grading shift.
\encor
\Proof
This follows from Proposition~\ref{prop:PBWprod} and
$\tL(L[a_i,b_i],L[a_j,b_j])=0$ if $i<j$.
\QED

For $p \in \K$, let us denote by $\M_N(p,0)\in \T_N$ the image of $\M_{\ttww}(p,0)$ under $\FO_N$; i.e.,
$$\M_N(p,0)  \seteq \FO_N(\M_{\ttww}(p,0))\in\T_N.$$

Since $\M_\ttww(p,0)$ is associated to an ordered multisegment $\lb \ell,m \rb^{(j)}$ consisting of segments $[a,b]$ with $b-a+1=\ell$, where
$c(p)=(\ell,m) \in \Z_{\ge 1} \times \Z_{\ge 1}$,
Proposition~\ref{prop: le N do not vanish} implies
the following lemma.
\Lemma\hfill \label{lem: consequence via functor}
\bnum
\item If $p\in K_{\le N}$, then $\M_N(p,0)$ is simple.
\item If $p\in K_{=N}=\Kfr_N$, then $\M_N(p,0)\simeq\one$.
\item If $p\in K_{\ge N+1}$, then $\M_N(p,0)$ vanishes.
\item For $\{m_p\}_{p\in\Kex_N}\in \Z_{\ge0}^{\oplus \Kex_N}$
and $\{n_p\}_{p\in\Kex_N}\in \Z_{\ge0}^{\oplus \Kex_N}$, assume that
$$\text{ $\sodot_{p\in\Kex_N}\M_N(p,0)^{\snconv m_p}$ and $\sodot_{p\in\Kex_N}\M_N(p,0)^{\snconv n_p}$
are isomorphic.}$$
Then we have $m_p=n_p$ for all $p\in\Kex_N$.
\ee
\enlemma

\Proof
(i)--(iii) follow from Proposition~\ref{prop: le N do not vanish}.
(iv) follows from Corollary~\ref{cor:pbwprod} and
Proposition~\ref{prop: le N do not vanish}.
\QED

Set $$\rl_{J,N}\seteq\rl_J/\sum_{a\in \Z}\Z(\ep_a-\ep_{a+N})
\subset \wl_{J,N}\seteq \dfrac{\soplus\nolimits_{a\in\Z}\Z\ep_a}{\sum_{a\in \Z}\Z(\ep_a-\ep_{a+N})}
.$$
Let $\pi_N\col \rl_J\to\rl_{J,N}$ denote the projection.
Let $\bal_a\in \rl_{J,N}$ be the image of $\al_a$ in $\rl_{J,N}$.
Similarly, let $\bep_a$ denote the image $\ep_a$ in $\wl_{J,N}$.
Then $\bal_{a+N}=\bal_a$, $\rl_{J,N}=\soplus_{a=1}^{N-1}\Z\bal_a$
 and $\wl_{J,N}=\soplus_{a=0}^{N-1}\Z\ep_a$.

The category $\T_N$ is graded by $\rl_{J,N}$, i.e., it has a decomposition
$$\T_N=\soplus_{\al\in \rl_{J,N}}(\T_N)_\al.$$
The duality functor $*$ of $\A$ induces a duality functor $*$ of $\T_N$ which satisfies
\begin{align*} 
(X\star Y)^*\simeq q^{(\al,\beta)_N} Y^*\star X^*.
\end{align*}
Here $(\cdot,\cdot)_N$ is defined by $(\bep_a,\bep_b)_N=\true(a\equiv b\mod N)$.

Let $\tB_N=(b_{ij})_{(i,j)\in\Kex_N\times \Kex_N}$ be the exchange matrix associated
with the quiver  $\overline{\tQ_N}$.
Let $L_N=(\la_{ij})_{i,j\in \Kex_{N}}$ be the skew-symmetric matrix defined by
$$\M_N(i,0) \starop M_N(j,0)\simeq q^{-\la_{ij}}
\M_N(j,0)\starop M_N(i,0).$$

\begin{theorem} \label{thm: monoidal step1}
$\Seed_N = \left(\bigl\{ \M_N(p,0) \bigr\}_{p \in \Kex_{N}}, \tB_N \right)$
is a quantum monoidal seed of $\T_N$ and admits successive mutations in all directions.
\end{theorem}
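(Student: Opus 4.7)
The plan is to transfer the quantum monoidal seed structure of $\Seed_\infty$ in $R^J\gmod$ (Theorem~\ref{Thm: monoidal cat}) to $\T_N$ via the exact monoidal functor $\FO_N\colon R^J\gmod\to\T_N$. First I verify the conditions of Definition~\ref{def:quantum monoidal pair} for $\Seed_N$. By Lemma~\ref{lem: consequence via functor}, each $\M_N(p,0)$ with $p\in\K_N$ is a simple object of $\T_N$, equal to $\one$ precisely when $p\in\Kfr_N$. Since $\M_\ttww(p,0)$ is real simple (Theorem~\ref{thm :KR in KLR} and Proposition~\ref{prop: multi D}~(i)), the module $\M_\ttww(p,0)^{\conv 2}$ is simple and, as its multisegment consists of segments of the same lengths as those of $\M_\ttww(p,0)$ (bounded by $N-1$ for $p\in\Kex_N$), Proposition~\ref{prop: le N do not vanish}~(ii) yields that $\M_N(p,0)\star \M_N(p,0)$ is simple, so $\M_N(p,0)$ is real. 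The strong commutativity of the family $\{\M_N(p,0)\}_{p\in\K_N}$ and the simplicity of arbitrary finite $\star$-products follow in the same way from Corollary~\ref{cor: consequences}~(b) and Corollary~\ref{cor:pbwprod}.

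The exchange matrix $\tilde B_N$ satisfies~\eqref{eq: condition B} by construction. For the compatibility of $(L_N,\tilde B_N)$ with $d=2$ and conditions (v), (vi) of Definition~\ref{def:quantum monoidal pair}, the key input is that $(-\Lambda,\tilde B)$ for $\Seed_\infty$ is already compatible with $d=2$ by Corollary~\ref{cor: consequences}~(a), and that the weight condition $\sum_i b_{ik}d_i=0$ holds for $\Seed_\infty$ by Proposition~\ref{prop: KKKO Prop 7.1.2}. The passage from $\conv$ to $\star$ introduces a correction by $\mathfrak B_N$ (Definition~\ref{def: star op}); using the formula~\eqref{eq: ca(be)} together with Lemma~\ref{lem: in out}---which guarantees that arrows of $\tilde B$ incident with $\Kex_N$-vertices stay within $\K_N$---these correction terms cancel out when summed against the columns of $\tilde B_N$. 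The conditions (v) and (vi) then transfer directly from the analogous statements for $\Seed_\infty$, translated to the quotient weight lattice $\rl_{J,N}$.

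For existence of successive mutations in every direction, I argue by induction on the length of a mutation sequence $\Seq$ in $\Kex_N$. At each step, the exchange sequence
\[0\to q\sodot_{b_{ik}>0}\M_\ttww(i,0)^{\snconv b_{ik}}\to q^{\tL}\M_\ttww(k,0)\conv\M_\ttww(k,0)'\to \sodot_{b_{ik}<0}\M_\ttww(i,0)^{\snconv(-b_{ik})}\to 0\]
in $R^J\gmod$ (Proposition~\ref{prop: KKKO Prop 7.1.2}) is preserved by the exact functor $\FO_N$. Lemma~\ref{lem: in out} ensures that every $i$ with $b_{ik}\neq 0$ belongs to $\K_{\le N}$, so each $\FO_N(\M_\ttww(i,0))$ is non-zero simple; consequently $\FO_N(\M_\ttww(k,0)')$ is non-zero, and being the image of a real simple it is real simple in $\T_N$. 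Lemma~\ref{lem: do not effect} then guarantees that iterated mutations of $\tilde B_N$ are compatible with the corresponding iterated mutations of $\tilde B$ restricted to $\K_N$.

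The main obstacle is the careful verification of the compatibility $\sum_k\la^N_{ik}(\tilde B_N)_{kj}=2\delta_{ij}$ in the transition from $\conv$ to $\star$; specifically, showing that the $\mathfrak B_N$-contributions to the commutation constants cancel against the columns of $\tilde B_N$, which crucially uses both the weight-annihilation condition for $\Seed_\infty$ and the fact that the central objects $L_a$ become isomorphic to $\one$ in $\T_N$, thereby cutting the ambient weight lattice $\rl_J$ down to $\rl_{J,N}$.
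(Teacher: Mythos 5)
Your proposal is correct and follows essentially the same route as the paper: both transfer the quantum monoidal seed $\Seed_\infty$ through the exact monoidal functor $\FO_N$, argue by induction on the length of a mutation sequence in $\Kex_N$ that the image of each mutated seed remains a quantum monoidal seed, and invoke Lemma~\ref{lem: in out} to ensure the exchange sequences only involve vertices of $\K_{\le N}$ (with the $\K_{=N}$ factors becoming $\one$). Your extra detail on realness via Proposition~\ref{prop: le N do not vanish} and Corollary~\ref{cor:pbwprod}, and on the $\mathfrak{B}_N$-corrections for compatibility, fills in steps the paper delegates to ``similar arguments in \cite[Proposition 7.1.2]{KKKO18}.''
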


\begin{proof}
Since the category $\A$ gives a monoidal categorification of the quantum cluster algebra $\mathscr{A}_{q^{1/2}}(\infty)$,
we can make successive mutation
on the initial quantum monoidal seed $\Seed_\infty$ of $\A$.
Let $\Sigma$ be a sequence of
at vertices in $\Kex_N$.
Set
$\Seed \seteq \mu_\Seq(\Seed_\infty) = (\{M_i\}_{i \in \K}, (\tb_{ij})_{(i,j) \in \K \times \K})$.

We shall show that
\begin{eqnarray*} &&
\parbox{75ex}{
$\Omega_N(\Seed)\seteq\bl\{\Omega_N(M_i)\}_{i\in \Kex_N},
(\tb_{ij})_{(i,j) \in \Kex_N \times \Kex_N}\br$, obtained from $\mu_\Seq(\Seed_N)$, is a quantum monoidal seed in $\T_N$.}
\end{eqnarray*}
by applying the induction of the length of $\Sigma$.

In order to see this, by arguing by induction on the length of $\Sigma$,
it is enough to show the following statement:
\eq
\parbox{\mylength}{
let  $\Seed$ be a quantum monoidal seed $(\{M_i\}_{i\in K},\tB)$ in $\A$ such that
$\Omega_N(\Seed)$ is a quantum monoidal seed in $\T_N$. Then for $p\in\Kex_N$,
$\Omega_N(\mu_p \Seed)$ is the mutation of $\Omega_N(\Seed)$ at $p$.}
\eneq

Set $\mu_p(\Seed)=(\{M'_i\}_{i \in \K}, (\tb'_{ij})_{(i,j) \in \K \times \Kex})$. Then $M'_i=M_i$ for $i\not=p$, and we have an exact sequence
\begin{align}\label{eq:ses graded mutation1 prime}
0 \to q \sodot_{\tb_{ip} >0} M_i^{\snconv \tb_{ip}} \to q^{m_p} M_p \conv M_p' \to
 \sodot_{\tb_{ip} <0} M_i^{\snconv (-\tb_{ip})} \to 0,
\end{align}
in $\A$.

Note that we have
$$\text{$M_i=M_\ttww(i,0)$ for $i\in K\setminus \Kex_N$ and
$\FO_N(M_i)\simeq\one$ for $i\in\K_{=N}$.}$$

By Lemma~\ref{lem: in out},
any index $i$ such that $\tb_{ip} \ne 0$ satisfies $i \in \K_{\le N}$.
Hence we obtain
$$\sodot_{\tb_{ip} >0} \Omega_N(M_i)^{\snconv \tb_{ip}}
\simeq\hs{-2ex} \sodot_{i\in\Kex_N,\;\tb_{ip} >0} \hs{-2ex}\Omega_N(M_i)^{\snconv \tb_{ip}}
\qtq \sodot_{\tb_{ip} <0} \Omega_N(M_i)^{\snconv (-\tb_{ip})}\simeq\hs{-2ex}\sodot_{i\in\Kex_N,\,\tb_{ip} <0}\hs{-2ex} \Omega_N(M_i)^{\snconv (-\tb_{ip})}
$$
hold in $\T_N$,

Applying the exact functor $\FO_N$ to \eqref{eq:ses graded mutation1 prime}, we obtain an exact sequence in $\T_N$.
Hence $\bl\{\FO_N(M'_i)\}_{i \in \Kex_N}, (\tb'_{ij})_{i,j \in \Kex_N}\br$
is the mutation of $\Omega_N(\Seed)$ at $p$.
The conditions (iv), (v), (vi) can be checked by using the similar arguments in \cite[Proposition 7.1.2]{KKKO18}.
Thus our assertion follows.
\end{proof}

\begin{proposition} \label{pro: mu S+ S- in TN}
For each $p \in \Kex_N$ with $c(p)=(\ell,m)$ and $r \in \Z_{\ge 1}$, we have
$$ \mu^{(r)}_{\Spl_N}\bigl(\M_N(p,0)\bigr) \simeq \FO_N\bigl(\W^{(\ell)}_{m,\jj_p+r}\bigr) \ \   \text{ and } \ \  \mu^{(r)}_{\Smi_N}\bigl(\M_N(p,0)\bigr) \simeq \FO_N\bigl(\W^{(\ell)}_{m,\jj_p-r}\bigr) \text{ in $\T_N$},$$
up to grading shifts.
\end{proposition}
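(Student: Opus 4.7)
The plan is to prove the statement by induction on $r$, handling the base case $r=1$ by adapting the argument of Proposition~\ref{pro: mu S+ S-} to $\T_N$, and the inductive step via Lemma~\ref{lem: seq mu in TN}. The essential tools are exactness and monoidality of $\FO_N$, Theorem~\ref{thm: monoidal step1} (which guarantees that mutation in $\T_N$ at any $p \in \Kex_N$ is the image under $\FO_N$ of the corresponding mutation in $\A$), and Lemma~\ref{lem: in out} (which ensures that throughout any sequence of mutations at vertices in $\Kex_N$, every arrow incident to a vertex of $\Kex_N$ lands inside $\K_N$). Consequently, every module occurring in the relevant exchange relations has non-zero image in $\T_N$, so the functor $\FO_N$ transports exchange sequences faithfully.

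For the base case $r=1$, decompose $\Spl_N = \Sev_N * \Sod_N$ and fix $p \in \Kex_N$ with $c(p)=(\ell,m)$ and $\ell+m$ even. Exactly as in the proof of Proposition~\ref{pro: mu S+ S-}(b), the preceding mutations in $\Sev_N$ occur at vertices $(\ell',m') < (\ell,m)$ of even parity, and a direct inspection of $\overline{\tQ_N}$ shows that they do not alter the arrows incident to $p$. Hence the exchange sequence in $\T_N$ at $p$ is the $\FO_N$-image of the exchange sequence at $p$ in $\Seed_\infty$; by Proposition~\ref{prop: first mutation}, the mutated $R^J$-module in $\A$ is $\W^{(\ell)}_{m,\jj_p+1}$, so in $\T_N$ we obtain $\FO_N(\W^{(\ell)}_{m,\jj_p+1})$. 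Subsequent mutations in $\Sod_N$ touch only vertices of odd parity and leave the variable at $p$ fixed. The case $\ell+m$ odd and the sequence $\Smi_N$ are handled symmetrically.

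For the inductive step, suppose that after $r$ rounds of $\mu_{\Spl_N}$ the cluster variable at every $q \in \Kex_N$ with $c(q)=(\ell',m')$ equals $\FO_N(\W^{(\ell')}_{m', \jj_q + r})$; by Lemma~\ref{lem: seq mu in TN} the underlying quiver is still $\overline{\tQ_N}$. Then the $(r{+}1)$-st round is structurally identical to the first, with every initial cluster variable uniformly shifted. Applying the exchange exact sequence \eqref{eq: T-system in qHecke} of Corollary~\ref{cor: shift} with parameter $k = \jj_p + r$ and pushing it through the exact functor $\FO_N$ yields the exchange sequence in $\T_N$ at $p$ in the $(r{+}1)$-st round, whose mutated variable is $\FO_N(\W^{(\ell)}_{m,\jj_p+r+1})$. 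The argument for $\Smi_N$ is identical with $r$ replaced by $-r$.

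The main obstacle is the bookkeeping required to confirm at each iteration that the exchange sequence in $\T_N$ is genuinely the $\FO_N$-image of a well-defined exact sequence in $\A$; this amounts to tracking that, after the composite mutation, the neighbors of $p \in \Kex_N$ never escape $\K_N$, so that no participating module becomes zero or collapses uncontrollably to $\one$. This is secured by the combination of Lemma~\ref{lem: in out} and Lemma~\ref{lem: seq mu in TN}. Once these are in hand, the translation of Propositions~\ref{prop: first mutation} and~\ref{pro: mu S+ S-} and Corollary~\ref{cor: shift} from $\A$ to $\T_N$ is essentially formal.
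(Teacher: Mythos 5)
Your proposal is correct and follows essentially the same route as the paper: induction on $r$, quiver bookkeeping via Lemmas~\ref{lem: in out} and~\ref{lem: seq mu in TN} to reduce the mutation in $\T_N$ to the $\FO_N$-image of the corresponding exchange sequence in $\A$, and identification of that sequence with the image of \eqref{eq: T-system in qHecke}, concluding by uniqueness of the mutated variable. The only point the paper makes explicit that you leave implicit is why skipping the mutations at the row $\ell=N$ (present in $\Spl$ but absent from $\Spl_N$, and not covered by Lemma~\ref{lem: in out}, which only treats $\K_{\ge N+1}$) does not disturb the arrows among $\Kex_N$ --- namely, each vertex $(N,\tilde m)$ has only one arrow into $\Kex_N$ --- together with the boundary case $\ell=N-1$, where the factor $\FO_N\bigl(\W^{(N)}_{m,\cdot}\bigr)\simeq\one$ must be inserted to match the two exact sequences.
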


\begin{proof}
We will proceed by induction on $r$.  When $r=0$, it is the definition.
Assume that $r >0$.
Note that  every  index $(\ell,m) \in c(\K)$ appears at most once in the sequence of mutations $\mu_{\Spl}$ and $\mu_{\Spl_N}$.
Let $\mu_{\Spl}(\ell,m)$ and $\mu_{\Spl_N}(\ell,m)$ be the subsequences of  $\mu_{\Spl}$ and $\mu_{\Spl_N}$, respectively, consisting of the mutations  preceding  the one at the vertex $(\ell,m)$.
 By induction, we assume that
 $\mu_{\Spl_N}(\ell,m) \circ \mu^{(r-1)}_{\Spl_N}\bigl(\M_N(p',0)\bigr) \simeq \FO_N\bigl(\W^{(\ell')}_{m',\jj_{p'}+r-1}\bigr)$ for any $p' \in \Kex_N$ such that either $p'=p$ or $c(p')=(\ell',m')$ is subsequent to $c(p)$ in $\mu_{\Spl_N}$, and we assume that
  $\mu_{\Spl_N}(\ell,m) \circ \mu^{(r-1)}_{\Spl_N}\bigl(\M_N(p'',0)\bigr) \simeq \FO_N\bigl(\W^{(\ell'')}_{m'',\jj_{p''}+r}\bigr)$ for any $p''=(\ell'',m'') \in \Kex_N$ such that $c(p'')$ is  preceding  $c(p)$ in $\mu_{\Spl_N}$.

Note that we have
 $\overline {(\mu_{\Spl}(\ell,m)\bigl(\tQ \bigr))_N}= \mu_{\Spl_N}(\ell,m)\bigl(\overline { \tQ _N}\bigr)$
 for all $(\ell,m)\in c(\Kex_N)$.
 Indeed, the mutation at a vertex in $\K_{\ge N+1}$ does not affect the arrows between the vertices in $\Kex_N$ by Lemma \ref{lem: in out},
 and for any $\tilde m$,  the quiver $\mu_{\Spl}(N,\tilde m)(\tQ)$   has  only one arrow connecting the vertex $(N,\tilde m)$ with the ones in $\Kex_N$  so that the mutation at $(N,\tilde m)$ does not affect the  arrows between the vertices in $\Kex_N$, either.

 It follows that
\eqn
0 \to  \FO_N(\Wlmj{\ell}{m-1}{{\jj_p+r}})  \tens  \FO_N( \Wlmj{\ell}{m+1}{{\jj_p+r-1}} )
\to   \FO_N\bigl(\W^{(\ell)}_{m,\jj_{p}+r-1}\bigr)  \tens   \mu^{(r)}_{\Spl_N}\bigl(\FO_N(M(p,0))\bigr) \\  \to   \FO_N(\Wlmj{\ell-1}{m}{{\jj_p+r-1}} ) \tens \FO_N( \Wlmj{\ell+1}{m}{{\jj_p+r}} )  \to 0
\eneqn
up to a power of $q$.
Note that when $\ell=N-1$,  the fourth term should be $\FO_N(\Wlmj{\ell-1}{m}{{\jj_p+r-1}} )$, but it is isomorphic to $ \FO_N(\Wlmj{\ell-1}{m}{{\jj_p+r-1}} ) \tens \FO_N( \Wlmj{\ell+1}{m}{{\jj_p+r}} )$,
since we have $ \FO_N( \Wlmj{\ell+1}{m}{{\jj_p+r}} )\simeq \one$.
By applying $\FO_N$ to
the exact sequence~\eqref{eq: T-system in qHecke} and comparing it with the above exact sequence, we get
\eqn
\mu_{\Spl_N}^{(r)}\bigl(\FO_N(M(p,0))\bigr)=\mu_{(\ell,m)}\bigl( \FO_N\bigl(\W^{(\ell)}_{m,\jj_{p}+r-1}\bigr)\bigr)
\simeq \FO_N\bigl( \Wlmj{\ell}{m}{\jj_p+r}  \bigr),
\eneqn
as desired.
\end{proof}

Let $\mathscr{A}_{q^{1/2}}(N)$ be the quantum cluster algebra whose initial quantum seed is given as follows:
$$ [\Seed_N] = \left( \bigl\{  q^{-\frac{1}{4}(d_p,d_p)_N}[\M_N(p,0)] \bigr\}_{p \in \Kex_N}, -L_N,\tB_N \right).$$
Note that  $\bigl\{  q^{-\frac{1}{4}(d_p,d_p)_N}[\M_N(p,0)] \bigr\}_{p \in \Kex_N}$
is algebraically independent by Lemma~\ref{lem: consequence via functor}.

By Theorem~\ref{thm: monoidal step1}, we can conclude the following:
\begin{corollary}
$\mathscr{A}_{q^{1/2}}(N)$ is a subalgebra in $\Ah \tens_{\Z[q^{\pm 1}]} K(\T_N)$.
\end{corollary}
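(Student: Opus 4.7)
The plan is to deduce the corollary directly from Theorem~\ref{thm: monoidal step1} by promoting the categorical mutations of the quantum monoidal seed $\Seed_N$ in $\T_N$ to algebraic mutations of the quantum seed $[\Seed_N]$ inside $\Ah\tens_{\Z[q^{\pm1}]}K(\T_N)$. The crux is essentially formal, but one must verify that the monoidal-categorical mutations constructed in Section~\ref{Sec: T_N} descend, upon passage to the Grothendieck ring, to the algebraic mutations of~\eqref{eq: mutation in a direction k}, and that every cluster variable arising from an arbitrary sequence of such mutations is realized by the class of an honest object of $\T_N$.

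First, I would observe that by Lemma~\ref{lem: consequence via functor}~(iv) the $L_N$-commuting family $\{q^{-(d_p,d_p)_N/4}[\M_N(p,0)]\}_{p\in\Kex_N}$ is algebraically independent in $\Ah\tens_{\Z[q^{\pm1}]}K(\T_N)$, so $[\Seed_N]$ is a bona fide quantum seed and the ambient skew field $\mathscr{F}$ containing $\mathscr{A}_{q^{1/2}}(N)$ can be identified with the skew field of fractions of the subring generated by this family inside $\Ah\tens K(\T_N)$. Next, given an arbitrary finite sequence $\Seq$ of mutations in $\Kex_N$, Theorem~\ref{thm: monoidal step1} produces a quantum monoidal seed $\mu_{\Seq}(\Seed_N)=(\{M_i^\Seq\}_{i\in \Kex_N},\mu_\Seq(\tB_N))$ in $\T_N$, each $M_i^\Seq$ being a real simple object. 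The defining exact sequences of the mutation (Definition~\ref{def:monoidal mutation}~(i)), once passed to the Grothendieck ring, yield exactly the exchange relations defining $\mu_k$ on the quantum seed level as in~\eqref{eq: mutation in a direction k}; this is precisely the content of the general principle stated in Theorem~\ref{thm: monoidal} applied one step at a time.

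Consequently, by induction on the length of $\Seq$, the $\Ah$-algebraic seed $[\mu_\Seq(\Seed_N)]\seteq(\{q^{-(\wt M_i^\Seq,\wt M_i^\Seq)_N/4}[M_i^\Seq]\}_{i\in\Kex_N},\mu_\Seq(-L_N),\mu_\Seq(\tB_N))$ obtained from $[\Seed_N]$ by successive mutations in $\mathscr{F}$ coincides with the image of the quantum monoidal seed $\mu_\Seq(\Seed_N)$ under the Grothendieck-ring map, and in particular all its cluster variables lie in $\Ah\tens_{\Z[q^{\pm1}]}K(\T_N)$. Since $\mathscr{A}_{q^{1/2}}(N)$ is by definition the $\Ah$-subalgebra of $\mathscr{F}$ generated by the cluster variables of all such seeds, this gives the asserted inclusion $\mathscr{A}_{q^{1/2}}(N)\subset \Ah\tens_{\Z[q^{\pm1}]}K(\T_N)$.

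The main obstacle, although essentially bookkeeping, is checking that the grading-shift normalizations $q^{-(d_p,d_p)_N/4}$ and the compatibility with $(\cdot,\cdot)_N$ (needed so that the exchange relations in $K(\T_N)$ match the algebraic mutation formulas on the nose, including the prefactors $m_k,m_k'$ of~\eqref{eq:mm'}); this is exactly the condition (v) in Definition~\ref{def:quantum monoidal pair}, which was built into Theorem~\ref{thm: monoidal step1}. Once this is verified for the initial seed and shown to be preserved under each individual mutation step, the inductive argument goes through without further incident.
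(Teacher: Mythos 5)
Your proposal is correct and follows the same route as the paper: the paper derives the corollary immediately from Theorem~\ref{thm: monoidal step1} (the quantum monoidal seed $\Seed_N$ admits successive mutations, so every cluster variable is the class of an object of $\T_N$), with the algebraic independence of the initial cluster supplied by Lemma~\ref{lem: consequence via functor}, exactly as you note. You have merely spelled out the standard bookkeeping (passage from categorical to algebraic mutations via Theorem~\ref{thm: monoidal}) that the paper leaves implicit.
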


Now, we shall show that indeed
$$\text{$\mathscr{A}_{q^{1/2}}(N)$ coincides with $\Ah \tens_{\Z[q^{\pm 1}]} K(\T_N)$.}$$

\begin{remark}[{see Lemma~\ref{lem: seq mu in TN} and also \cite[Remark 3.3]{HL16}}]\label{eq: finite sequence}
For each $p\in\Z_{\ge 1}$ with $c(p)=(\ell,m)$ and $r \in\Z_{\ge 1}$, there exists {\it finite} sequences $\Seq_1$ and $\Seq_2$ of mutations satisfying
$$ \mu_{\Seq_1}(\M_N(p,0)) = \FO_N(\W^{(\ell)}_{m,\jj_p+r})  \ \ \text{ and } \ \ \mu_{\Seq_2}(\M_N(p,0)) = \FO_N(\W^{(\ell)}_{m,\jj_p-r}).$$
\end{remark}

By~\cite[Proposition 4.31]{KKK18A}, the Grothendieck ring $K(\T_N)$ of $\T_N$
is generated by $[\FO_N(L[a,b])]$ ($b-a+1<N$) as a $\Z[q^{\pm 1}]$-algebra.

On the other hand,
since $L[a,b]= \W^{(b-a+1)}_{1,b}$, Proposition~\ref{pro: mu S+ S- in TN} and
Remark~\ref{eq: finite sequence} imply  that every $[\FO_N(L[a,b])]$ with $b-a+1<N$
appears as a cluster variable and hence it
is contained in $\mathscr{A}_{q^{1/2}}(N)$.

Thus we have
\begin{theorem} \label{thm: monoidal step2}
 As $\Ah$-algebras, we have an isomorphism
$$\mathscr{A}_{q^{1/2}}(N) \simeq \Ah \tens_{\Z[q^{\pm 1}]}K(\T_N).$$
\end{theorem}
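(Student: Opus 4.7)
The inclusion $\mathscr{A}_{q^{1/2}}(N)\subseteq \Ah\otimes_{\Z[q^{\pm 1}]}K(\T_N)$ has already been established via Theorem~\ref{thm: monoidal step1}, so the plan is to prove the reverse inclusion. First I would invoke \cite[Proposition~4.31]{KKK18A}, which says that $K(\T_N)$ is generated as a $\Z[q^{\pm 1}]$-algebra by the classes $[\FO_N(L[a,b])]$ with $a,b\in\Z$ and $b-a+1<N$. Thus, after tensoring with $\Ah$, it suffices to show that each such generator $[\FO_N(L[a,b])]$ lies in $\mathscr{A}_{q^{1/2}}(N)$.

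Next I would rewrite these generators in terms of the modules that appear in the initial quantum seed. Using~\eqref{eq: def Wlmj}, we have $L[a,b]\simeq \W^{(\ell)}_{1,b}$ with $\ell\seteq b-a+1$, so $1\le \ell\le N-1$. On the other hand, by Theorem~\ref{thm :KR in KLR} the initial cluster of $\Seed_N$ contains, for each such $\ell$, the class of $\M_N(p_\ell,0)=\FO_N(\W^{(\ell)}_{1,\jj_{p_\ell}})$, where $p_\ell\seteq c^{-1}(\ell,1)\in\Kex_N$ and $\jj_{p_\ell}=\lfloor\ell/2\rfloor$ by Lemma~\ref{lem: ell,m ip}. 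Hence to reach $[\FO_N(\W^{(\ell)}_{1,b})]$ from the initial seed, one must translate the second index from $\lfloor\ell/2\rfloor$ to $b$, that is, perform a shift by $r\seteq b-\lfloor\ell/2\rfloor\in\Z$.

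The key input for this translation is Proposition~\ref{pro: mu S+ S- in TN}, which asserts that the iterated sequences $\mu^{(r)}_{\Spl_N}$ and $\mu^{(r)}_{\Smi_N}$ send $\M_N(p,0)$ to $\FO_N(\W^{(\ell)}_{m,\jj_p\pm r})$ up to grading shifts. Applied to $p=p_\ell$ and $m=1$, this produces $\FO_N(\W^{(\ell)}_{1,b})$ for every $b\in\Z$. The mild subtlety here is that $\Spl_N$ and $\Smi_N$ are infinite sequences, while a quantum cluster algebra is defined by finite mutation sequences only; however Remark~\ref{eq: finite sequence} guarantees that for any fixed $p$ and $r$, one can replace $\mu^{(r)}_{\Spl_N}$ or $\mu^{(r)}_{\Smi_N}$ by a finite sequence of mutations producing the same cluster variable, since only finitely many mutations in $\Spl_N$ or $\Smi_N$ actually affect the vertex $p_\ell$ (all other mutations in the sequence take place at vertices far away from $p_\ell$ in $\overline{\tQ_N}$ and so commute past them). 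Therefore $q^{c/2}[\FO_N(\W^{(\ell)}_{1,b})]$ is a cluster variable of $\mathscr{A}_{q^{1/2}}(N)$ for some $c\in\Z$, and in particular lies in $\mathscr{A}_{q^{1/2}}(N)$.

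Combining the two steps, every generator $[\FO_N(L[a,b])]$ of $\Ah\otimes K(\T_N)$ belongs to $\mathscr{A}_{q^{1/2}}(N)$, which yields the reverse inclusion and completes the proof. The main obstacle in writing this out carefully is the finiteness argument in the third step: one must verify that the infinite mutation sequence $\mu^{(r)}_{\Spl_N}$ can be truncated to a finite one producing the cluster variable at the specified vertex, using the local structure of $\overline{\tQ_N}$ described in~\eqref{eq: nbd of initial quiver}--\eqref{eq: nbd of initial quiver 2} together with the argument of Lemma~\ref{lem: seq mu in TN}; all other ingredients have been prepared in the previous sections.
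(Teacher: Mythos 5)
Your proposal is correct and follows essentially the same route as the paper: both establish one inclusion via Theorem~\ref{thm: monoidal step1}, reduce the reverse inclusion to the generators $[\FO_N(L[a,b])]$ using \cite[Proposition 4.31]{KKK18A}, identify $L[a,b]=\W^{(b-a+1)}_{1,b}$, and realize these as cluster variables via Proposition~\ref{pro: mu S+ S- in TN} together with the finite-truncation observation of Remark~\ref{eq: finite sequence}. Your write-up merely spells out the shift $r=b-\lfloor \ell/2\rfloor$ and the truncation argument in more detail than the paper does.
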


By Theorem~\ref{thm: monoidal step1} and Theorem~\ref{thm: monoidal step2},
we obtain the following result.

\begin{theorem}\label{th:monoidalTN}
The category $\T_N$ gives a monoidal categorification of the quantum cluster algebra $\mathscr{A}_{q^{1/2}}(N)$. Hence, we have
\begin{enumerate}
\item[{\rm (i)}] each cluster monomial in $\mathscr{A}_{q^{1/2}}(N)$ corresponds to the isomorphism class of a real simple object of $\T_N$ up to a power of $q^{1/2}$,
\item[{\rm (ii)}] each cluster monomial in $\mathscr{A}_{q^{1/2}}(N)$ is a Laurent polynomial of the initial cluster variables with coefficient in $\Z_{\ge 0}[q^{\pm1/2}]$.
\end{enumerate}
\end{theorem}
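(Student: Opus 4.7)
The plan is to combine Theorems \ref{thm: monoidal step1} and \ref{thm: monoidal step2} with the general formalism of monoidal categorification set up in Section \ref{sec:monoiddal}. First, I would check that the category $\T_N$, equipped with the tensor structure $\starop$ inherited from $\A$, satisfies conditions \eqref{item: cond1}--\eqref{item: cond5} of Section \ref{sec:monoiddal}: every object has finite length and simple objects have scalar endomorphism rings by Proposition \ref{prop: le N do not vanish} (which identifies simples with multisegments of segment length $<N$); the $\rl_{J,N}$-grading of $\T_N$ plays the role of the root-lattice grading; and the invertible central object $Q=qR(0)$ of $\A$ descends to an invertible central object of $\T_N$ satisfying the required Hom-finiteness (condition~\eqref{item: cond5}) because only finitely many grading shifts can occur between fixed simples.

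Once the abstract setup is in place, Theorem \ref{thm: monoidal step1} supplies the quantum monoidal seed $\Seed_N=\bigl(\{\M_N(p,0)\}_{p\in\Kex_N},\tB_N\bigr)$ and guarantees that it admits successive mutations in every exchangeable direction. Theorem \ref{thm: monoidal step2} identifies $\Ah\tens_{\Z[q^{\pm1}]}K(\T_N)$ with the quantum cluster algebra $\mathscr{A}_{q^{1/2}}(N)$ whose initial quantum seed is $[\Seed_N]$. Together these three ingredients are exactly the content of the definition of a monoidal categorification of $\mathscr{A}_{q^{1/2}}(N)$, so the first assertion of the theorem follows.

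For the two consequences: by Proposition \ref{prop: KKKO Prop 7.1.2}, each mutation of a quantum monoidal seed produces a real simple object, so an induction on the number of mutations, using the exact sequences of Definition \ref{def:monoidal mutation} together with the fact that $[\Seed_N]$ is a quantum seed of $\mathscr{A}_{q^{1/2}}(N)$, shows that every cluster variable corresponds to the class of a real simple object of $\T_N$ up to a power of $q^{1/2}$. The quantum cluster monomials then correspond to convolution products $\sodot_{p}M_p^{\snconv a_p}$ of pairwise strongly commuting real simples, which are themselves real simple; this gives (i). Statement (ii) follows from (i) combined with the quantum Laurent phenomenon for $\mathscr{A}_{q^{1/2}}(N)$: since the cluster monomial is expressed as a Laurent polynomial in the initial cluster variables and simultaneously as the class of a genuine object of $\T_N$, the coefficients necessarily lie in $\Z_{\ge 0}[q^{\pm 1/2}]$ via the character map.

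The main obstacle I expect is bookkeeping rather than conceptual: one has to verify carefully that the auxiliary data (the skew-symmetric matrix $L_N$, the weight lattice $\rl_{J,N}$, the shift of grading between $\A$ and $\T_N$) fit the axioms of Section \ref{sec:monoiddal} exactly, and in particular that compatibility $(L_N,\tB_N)$ with $d=2$ and the parity condition $\lambda_{ij}-(d_i,d_j)_N\in 2\Z$ pass from $\A$ to the quotient/localization $\T_N$. This was essentially checked inside the proof of Theorem \ref{thm: monoidal step1}, so no new difficulty arises here.
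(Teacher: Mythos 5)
Your proposal is correct and follows exactly the paper's route: the paper derives Theorem \ref{th:monoidalTN} immediately by combining Theorem \ref{thm: monoidal step1} (the quantum monoidal seed $\Seed_N$ admitting successive mutations) with Theorem \ref{thm: monoidal step2} (the identification of $\Ah\tens_{\Z[q^{\pm1}]}K(\T_N)$ with $\mathscr{A}_{q^{1/2}}(N)$), the consequences (i) and (ii) then being the standard ones for a monoidal categorification as in Theorem \ref{thm: monoidal}. The extra verifications you flag (the axioms of Section \ref{sec:monoiddal} for $\T_N$ and the compatibility data) are indeed already contained in the construction of $\T_N$ and in the proof of Theorem \ref{thm: monoidal step1}, so no new content is needed.
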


\section{Main result} \label{Sec: Main}
In this section, we first assume that we have a family of
 quasi-good modules with {\it certain conditions} which induces a generalized Schur-Weyl duality functor $\F$ of type $\Ainf$,
and investigate properties of $\F$ under the assumption. Then we will prove that some subcategories in $R^J\gmod$ and $\Ca_\g$
give monoidal categorifications of quantum cluster algebras. In the last part, we give families of distinct quasi-good modules for quantum affine algebras
of type $A$, $B$, $C$ and $D$, satisfying the conditions. Note that the families for the types $A$ and $B$  were taken from \cite{KKK18A, KKKO15C}, and \cite{KKO18}, but the ones for types $C$ and $D$ are  new.

\subsection{Schur-Weyl duality functor of type $\Ainf$}
\label{subsec A_inf}

Let $U'_q(\g)$ be a quantum affine algebra
as in \S\;\ref{subsec:quntum affine}.
Let $J=\Z$ be the index set as in the previous section.
We assume that
there exists a family of quasi-good modules $\{V_a\}_{a \in J}$ in $\Ca_\g$ satisfying the following properties:
\begin{eqnarray} &&\hs{2ex}
\parbox{81ex}{
\begin{enumerate}[{\rm(a)}]
\item The quiver $\Gamma^J$ in~\eqref{eq: GammaJ} is of type $\Ainf$.
\label{it:a}
\item There exists an integer $N\ge 2$ such that the followings are satisfied
 for  any $a \in J$, \label{it:b}
\begin{enumerate}[(1)]
\item the head of $V_a \tens V_{a+1} \tens \cdots \tens V_{a+N-1}$ is isomorphic to $\cor$,\label{it:1}
\item $\hd(V_a \tens V_{a+1} \tens \cdots \tens V_{a+k-1}) \tens V_{a+k}$ is not simple  for $1 \le k \le N-1$, \label{it:2}
\item ${}^{**}V_a \simeq V_{a+N}$. \label{it:3}
\end{enumerate}
\end{enumerate}
}
\label{eq: general assumption}
\end{eqnarray}

With the assumption \eqref{it:a} in~\eqref{eq: general assumption}, we have the exact functor in Section~\ref{Subsec: functors}
$$\F \col \soplus_{\beta \in \rl^+_J} R^J( \beta) \gmod \to \Ca_\g.$$
Here $R^{J}(\be)$ is the quiver Hecke algebra of type $\Ainf$ defined in Section~\ref{subsec: Ainfty}.

\begin{lemma} \label{lem: N> to 0}
The family $\{V_a\}_{a\in \Z}$ has the following properties.
\bnum
\item $V_a\not\simeq V_b$ if $a\not=b$,
\item $\hd(V_{a} \tens V_{a+1} \tens \cdots \tens V_{a+\ell})$ is simple
if $0\le \ell\le N-1$,
\item
for any $a \in J$ and $\ell \in \Z_{\ge 1}$,
$$\F(L[a,a+\ell-1]) \simeq  \bc
\hd(V_{a} \tens V_{a+1} \tens \cdots \tens V_{a+\ell-1})
&\text{if $\ell\le N$,}\\
0&\text{otherwise.}
\ec
$$
\item $\F(L[a,a+N-1]) \simeq \cor$.
\ee
\end{lemma}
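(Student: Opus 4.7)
The plan is to establish items (i)--(iv) together, proving (iii) as the central statement by induction on $\ell$, from which (ii) and (iv) follow readily, while (i) is deduced along the way using condition (3) of~\eqref{eq: general assumption}. Two preliminary observations are used throughout: first, since~\eqref{eq: general assumption}(a) says that $\Gamma^J$ is of type $A_\infty$, Theorem~\ref{thm:exact} guarantees that $\F$ is exact; second, in $R^J\gmod$ the one-dimensional modules satisfy the segment recursion $L[a,a+\ell-1] \simeq L[a,a+\ell-2] \hconv L(a+\ell-1)$ with $\de\bl L[a,a+\ell-2],\,L(a+\ell-1)\br = 1$, as follows from the multisegment description in Proposition~\ref{prop:multisegments}.

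I would first prove (iii) in the range $1 \le \ell \le N$ by induction on $\ell$. The base $\ell = 1$ is immediate from $\F(L(a)) = V_a = \hd(V_a)$. For the step, I apply Lemma~\ref{lem: non-simple tensor} with $M = L[a,a+\ell-2]$ and $N = L(a+\ell-1)$: $\de \le 1$ holds; $\F(M)$ is nonzero simple by induction, $\F(N) = V_{a+\ell-1}$ is nonzero; and the non-simplicity of $\F(M) \tens \F(N) = \hd(V_a \tens \cdots \tens V_{a+\ell-2}) \tens V_{a+\ell-1}$ is precisely condition (2) of~\eqref{eq: general assumption} with $k = \ell-1$, valid for $2 \le \ell \le N$. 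Lemma~\ref{lem: non-simple tensor}(ii) then yields
$$\F(L[a,a+\ell-1]) \simeq \F(L[a,a+\ell-2]) \hconv V_{a+\ell-1},$$
which, combined with the inductive identification and the general fact that $\hd(A \tens B) \simeq \hd(A) \hconv B$ when $\hd(A)$ is simple and $B$ is real simple, identifies $\F(L[a,a+\ell-1])$ with the simple module $\hd(V_a \tens \cdots \tens V_{a+\ell-1})$. This simultaneously gives (ii) by a reindexing $\ell \mapsto \ell+1$ and, at $\ell = N$, gives (iv) upon invoking condition (1): $\F(L[a,a+N-1]) \simeq \hd(V_a \tens \cdots \tens V_{a+N-1}) \simeq \cor$.

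Next I would treat the vanishing case $\ell > N$ of (iii), which requires the special instance $V_a \not\simeq V_{a+N}$ of (i). This follows from condition (3) of~\eqref{eq: general assumption}: since ${}^{**}V_a \simeq V_{a+N}$ while the double-dual functor shifts the spectral parameter of any quasi-good module by the nontrivial element $(p^*)^2$, one cannot have $V_a \simeq V_{a+N}$. Granting this, I handle the pivotal case $\ell = N+1$ via the two segment factorizations
$$L[a,a+N] \;\simeq\; L[a,a+N-1] \hconv L(a+N) \;\simeq\; L(a) \hconv L[a+1,a+N].$$
Applying $\F$ and using (iv) on each factor yields surjections $V_{a+N} \twoheadrightarrow \F(L[a,a+N])$ and $V_a \twoheadrightarrow \F(L[a,a+N])$, so $\F(L[a,a+N])$ is a simultaneous simple quotient of the two non-isomorphic simple modules $V_a$ and $V_{a+N}$; this forces $\F(L[a,a+N]) = 0$. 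For $\ell > N+1$, a one-step induction suffices, since $\F(L[a,a+\ell-1])$ is a quotient of $\F(L[a,a+\ell-2]) \tens V_{a+\ell-1}$, which vanishes by hypothesis.

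Finally, the full statement (i) for arbitrary $a \ne b$ is deduced as follows. If $b = a+1$, realness of $V_a$ makes $V_a \tens V_a$ simple, contradicting condition (2) with $k = 1$. The case $b - a = N$ was handled above. The remaining cases are ruled out by iterating ${}^{**}$, using the $A_\infty$ structure of $\Gamma^J$ and the fact that isomorphic vertices must share the same set of quiver adjacencies. The main technical obstacle in this proof is the identification of $\F(L[a,a+\ell-1])$ with the head of the full tensor product (rather than merely a simple quotient) at each step of the induction in the range $1 \le \ell \le N$, which rests on the head-commutation principle for tensor products with a real simple factor.
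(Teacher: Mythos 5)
Your proof follows the paper's argument essentially step for step: the induction on $\ell$ via Lemma~\ref{lem: non-simple tensor} together with condition \eqref{it:2} of \eqref{eq: general assumption} for $\ell\le N$, the two factorizations of $L[a,a+N]$ producing surjections from both $V_a$ and $V_{a+N}$ which force $\F(L[a,a+N])\simeq 0$ once $V_a\not\simeq V_{a+N}$ is known, and a one-step induction for $\ell>N+1$; the identification $\hd(A)\hconv V_{a+\ell-1}\simeq\hd(A\tens V_{a+\ell-1})$ that you flag as the technical crux is used by the paper in exactly the same way. The only divergence is in (i), where the paper handles all $a\ne b$ at once by observing that $V_{a-1}\tens V_a$ is reducible while $V_{a-1}\tens V_b$ is simple for $b>a$ (the adjacency argument you reserve for the ``remaining cases''), which renders your separate treatments of $b=a+1$ via realness and of $b=a+N$ via the double-dual spectral shift unnecessary, though not incorrect.
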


\begin{proof}
(i) Assume $a<b$. Then $V_{a-1}\tens V_a$ is not simple, but $V_{a-1}\tens V_{b}$ is simple by \eqref{eq: general assumption}~\eqref{it:a}.

\smallskip\noi
(ii) follows from  \eqref{eq: general assumption}~\eqref{it:1}.

\smallskip\noi
(iii) We first prove the statement for $\ell\le N$ by induction on $\ell$.
When $\ell=1$, $\F(L(a)) \simeq V_a$ by Proposition~\ref{prop:image of tau} {\rm (a)}. By induction on $\ell$, we may assume that
$\F(L[a,a+\ell-2])\simeq \hd(V_{a} \tens V_{a+1} \tens \cdots \tens V_{a+\ell-2})$
if $\ell\le N$.
Since $\de(L[a,a+\ell-2],L(a+\ell-1)) = 1$ by~\cite[Theorem 4.3]{KKK18A}
and  $L[a,a+\ell-1] \simeq L[a,a+\ell-2] \hconv L(a+\ell-1)$
the condition \eqref{eq: general assumption}~\eqref{it:2} and Lemma \ref{lem: non-simple tensor} imply
\begin{align*}
\F(L[a,a+\ell-1]) & \simeq \hd(V_{a} \tens V_{a+1} \tens \cdots \tens V_{a+\ell-2}) \hconv V_{a+\ell-1} \\
&  \simeq \hd(V_{a} \tens V_{a+1} \tens \cdots \tens V_{a+\ell-2} \tens V_{a+\ell-1}).
\end{align*}
Hence we have proved (iii) when $\ell\le N$.
In particular, $\F(L[a,a+N-1]) \simeq \cor$ by \eqref{it:1}.
Hence we have
\begin{align*}
& V_a\simeq \F(L(a))\simeq \F(L(a)) \tens \F(L[a+1,a+N])  \twoheadrightarrow \F(L[a,a+N])\\
& V_{a+N}\simeq\F(L(a+N))\simeq \F(L[a,a+N-1]) \tens \F(L(a+N)) \twoheadrightarrow \F(L[a,a+N])
\end{align*}
for all $a \in \Z$.  If $\F(L[a,a+N])$ did not vanish, then we
would have
$$V_a \simeq \F(L[a,a+N])\simeq V_{a+N},$$
which contradicts (i).

Hence we conclude that
$$\F(L[a,a+N])\simeq0 \qquad \text{ for all } a \in \Z.$$
Now assume that $\ell>N+1$. Then we have a surjective homomorphism
$$0 \simeq \F(L[a,a+N]) \tens \F(L[a+N+1,a+\ell-1]) \twoheadrightarrow \F(L[a,a+\ell-1]),$$
which yields $\F(L[a,a+\ell-1]) \simeq 0$.
\end{proof}

Let us denote by $\Ca_J$ is the smallest abelian full subcategory of  $\Ca_\g$ such that
\begin{enumerate}
\item[{\rm (a)}] $\Ca_J$ contains $\{V_a \}_{a \in J}$,
\item[{\rm (b)}] it is stable under taking submodules, quotients, extensions and tensor products.
\end{enumerate}
Then we have an exact functor
\begin{align}\label{eq: the F}
\F \col \soplus_{\beta \in \rl^+_J} R^J( \beta) \gmod \to \Ca_J \subset \Ca_\g.
\end{align}

Now we shall show that the functor $\F$ under the assumption~\eqref{eq: general assumption} factors through the
category $\T_N$ with a {\it suitable} choice of  duality coefficient $\{P_{i,j}(u,v)\}_{i,j\in J}$. To do that, we employ the framework of \cite[\S 2.6]{KKO18}.

\begin{lemma}
For $a,b \in J$, set $\tz=z_{V_b}$,
\begin{align*}
&\Rnorm_{a,b}= (\Rnorm_{V_a,(V_b)_{\tz} }  \tens 1 \tens \cdots \tens 1) \circ
(1 \tens \Rnorm_{V_{a+1},(V_b)_{\tz} } \tens 1 \tens \cdots \tens 1) \circ
\cdots \\
&\hskip 1em  \circ (1\tens \cdots \tens 1 \tens  \Rnorm_{V_{a+k}, (V_b)_{\tz}  }  \tens 1 \tens \cdots \tens 1)
\circ \cdots \circ  (1\tens \cdots \tens 1 \tens  \Rnorm_{V_{a+N-1},(V_b)_{\tz}  })
\end{align*}
and
\begin{align*}
g_{a,b}(z) \seteq \prod_{k=0}^{N-1} a_{V_a,V_b}(1+z)^{-1}
\end{align*}
where $z=\tz-1$ and $a_{M,N}(z_N)$ denotes the ratio
of $\Runiv_{M,N}$ and $\Rnorm_{M,N}$ in~\eqref{eq: aMN}.
Then the following diagram is commutative
\begin{align*}
\scalebox{0.93}{\xymatrix{
\bigl( V_a \tens \cdots \tens V_{a+N-1} \bigr) \tens (V_b)_{\tz} \ar[r]^{\Rnorm_{a,b}}
\ar@{>>}[d]^{\varphi \tens (V_b)_{\tz} }
& (V_b)_{\tz} \tens  \bigl( V_a \tens \cdots \tens V_{a+N-1} \bigr)
\ar@{>>}[d]^{(V_b)_{\tz} \tens \varphi}  \\
\cor \tens (V_b)_{\tz}  \ar[d]_*[@]{\sim}& (V_b)_{\tz} \tens \cor \ar[d]_*[@]{\sim} \\
(V_b)_{\tz} \ar[r]^{ g_{a,b}(z) } & (V_b)_{\tz}
}}
\end{align*}
for any surjective  homomorphism $\varphi\col  V_a \tens \cdots \tens V_{a+N-1}  \epito \cor$.
\end{lemma}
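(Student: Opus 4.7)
The plan is to recognize $\Rnorm_{a,b}$ as the single normalized $R$-matrix for the tensor product $M \seteq V_a \otimes \cdots \otimes V_{a+N-1}$ with $(V_b)_{\tz}$, and then to deduce the commutativity from the naturality of the universal $R$-matrix applied to the surjection $\varphi$.

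The first step is to establish the multiplicativity $a_{A \otimes B, N}(\tz) = a_{A, N}(\tz) \cdot a_{B, N}(\tz)$ by evaluating $\Runiv_{A \otimes B, N_{\tz}}$ on the dominant extremal vector $u_A \otimes u_B \otimes (u_N)_{\tz}$ via the factorization $(\Runiv_{A, N_{\tz}} \otimes 1_B) \circ (1_A \otimes \Runiv_{B, N_{\tz}})$ and comparing with~\eqref{eq: aMN}. Iterating this gives $a_{M, V_b}(\tz) = \prod_{k=0}^{N-1} a_{V_{a+k}, V_b}(\tz)$, and consequently
$$\Rnorm_{a,b} = a_{M, V_b}(\tz)^{-1}\, \Runiv_{M, (V_b)_{\tz}}.$$
(The iterated composition in the definition of $\Rnorm_{a,b}$ matches this because the same multiplicative factorization lifts $\Runiv_{M, (V_b)_{\tz}}$ to the product of its one-step pieces.)

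The second and decisive step is the naturality of the universal $R$-matrix. Applied to $\varphi\colon M \epito \cor$ and $(V_b)_{\tz}$, it yields
$$\Runiv_{\cor, (V_b)_{\tz}} \circ (\varphi \otimes 1_{(V_b)_{\tz}}) = (1_{(V_b)_{\tz}} \otimes \varphi) \circ \Runiv_{M, (V_b)_{\tz}}.$$
Since the trivial module satisfies $a_{\cor, V_b}(\tz) = 1$ and $\Runiv_{\cor, (V_b)_{\tz}}$ is simply the canonical flip isomorphism, the identifications $\cor \otimes (V_b)_{\tz} \simeq (V_b)_{\tz} \simeq (V_b)_{\tz} \otimes \cor$ collapse the left-hand side to $\varphi \otimes 1$. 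Substituting the relation from the first step then yields
$$(1 \otimes \varphi) \circ \Rnorm_{a,b} = a_{M, V_b}(\tz)^{-1}\, (\varphi \otimes 1) = g_{a,b}(z) \cdot (\varphi \otimes 1),$$
which is exactly the commutativity of the stated diagram.

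The only technical subtlety is justifying the multiplicative decomposition of $\Runiv$ on tensor products; this is a standard consequence of the compatibility of $\Runiv$ with the coproduct $\Delta$ of $U_q'(\g)$. No other obstacle appears: the heart of the lemma is simply naturality of the universal $R$-matrix together with the triviality of its specialization at the unit object $\cor$. Note in particular that one cannot read off the scalar $g_{a,b}(z)$ directly from evaluation on the dominant extremal vector $u_a \otimes \cdots \otimes u_{a+N-1} \otimes (u_b)_{\tz}$, since that vector typically has non-zero weight and is annihilated by $\varphi$; the universal-to-normalized conversion is what supplies the scalar.
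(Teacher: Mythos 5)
Your proof is correct and follows essentially the same route as the paper: the paper likewise introduces the composition $\Runiv_{a,b}$ of universal $R$-matrices, observes that the diagram commutes with $\Runiv_{a,b}$ and $\id$ in place of $\Rnorm_{a,b}$ and $g_{a,b}(z)$ (i.e.\ naturality of the universal $R$-matrix together with its triviality on the unit object $\cor$), and then converts back using the scalar relation between $\Runiv_{a,b}$ and $\Rnorm_{a,b}$. You merely spell out the two ingredients the paper leaves implicit — the hexagon-type factorization identifying $\Runiv_{a,b}$ with $\Runiv_{V_a\otimes\cdots\otimes V_{a+N-1},\,(V_b)_{\tz}}$ and the resulting multiplicativity of the coefficients $a_{M,N}$ — and your reading of the (typo-ridden) formula for $g_{a,b}(z)$ as $\prod_{k=0}^{N-1}a_{V_{a+k},V_b}(\tz)^{-1}$ is the intended one.
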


\begin{proof}
Let
\begin{align*}
&\Runiv_{a,b}= (\Runiv_{V_a,(V_b)_{\tz} }  \tens 1 \tens \cdots \tens 1) \circ
(1 \tens \Runiv_{V_{a+1},(V_b)_{\tz} } \tens 1 \tens \cdots \tens 1) \circ
\cdots \\
 & \hspace{15ex} \circ \cdots \circ  (1\tens \cdots \tens 1 \tens  \Runiv_{V_{a+N-1},(V_b)_{\tz}  }).
\end{align*}
By replacing $\Rnorm_{a,b}$ and $g_{a,b}(z)$ with $\Runiv_{a,b}$ and $\id$ respectively, the diagram is commutative.

Then our assertion follows from $\Runiv_{a,b}=g_{a,b}(z)\Rnorm_{a,b}$.
\end{proof}

Now we temporarily fix a duality coefficient $\{P^\dT_{i,j}(u,v)\}_{i,j\in J}$ satisfying the conditions in~\eqref{eq:Pij}. We denote the corresponding functor from $\A/\Ser_N$ to $\Ca_J$ by $\F^\dT$.

Recall $L_a=L[a,a+N-1]$ and $f_{a,b}(z)$ in Definition~\ref{def: quot coeff}.

\begin{proposition} For $a,b \in J$, let
$$ h_{a,b}(z) \seteq f_{a,b}(z)g_{a,b}(z)\prod_{k=a}^{a+N-1} P^\dT_{k,b}(0,z).$$
Then the following diagrams are commutative$\col$
$$
\scalebox{0.94}{\xymatrix@C=5ex@R=6ex{
\Ft(L_a \starop L(b)_z)  \ar[r]^{\Ft(R_a(L(b)_z))} \ar[d]_*[@]{\sim}
& \Ft(L(b)_z \starop L_a) \ar[d]_*[@]{\sim} \\
\Ft(L_a) \tens \Ft(L(b)_z)    \ar[d]_*[@]{\sim}  ^{\varphi_a \tens \Ft(L(b)_z)}
& \Ft(L(b)_z) \tens  \Ft(L_a) \ar[d]_*[@]{\sim}  ^{ \Ft(L(b)_z) \tens \varphi_a}  \\
\cor \tens \Ft(L(b)_z) \ar[d]_*[@]{\sim}
& \Ft(L(b)_z) \tens \cor \ar[d]_*[@]{\sim} \\
\Ft(L(b)_z) \ar[r]^{h_{a,b}(z)}
& \Ft(L(b)_z)
}}
\hspace{-1ex}
\scalebox{0.94}{\xymatrix@C=5ex@R=6.4ex{
\Ft(L_a \starop L_b)  \ar[r]^{\Ft(R_a(L_b))} \ar[d]_*[@]{\sim} & \Ft(L_b \starop L_a) \ar[d] _*[@]{\sim}\\
\Ft(L_a) \tens \Ft(L_b)   \ar[d]_*[@]{\sim}^{\varphi_a \tens \varphi_b} & \Ft(L_b)  \tens \Ft(L_a) \ar[d]_*[@]{\sim}^{\varphi_b \tens \varphi_a}  \\
\cor \tens \cor  \ar[d]_*[@]{\sim} & \cor \tens \cor  \ar[d]_*[@]{\sim} \\
\cor  \ar[r]^{ \prod_{k=b}^{b+N-1} h_{a,k}(0)}  & \cor}}
$$
for any $\varphi_a\col\Ft(L_a) \isoto \cor$ and $\varphi_b\col\Ft(L_b) \isoto \cor$. Furthermore $h_{a,b}(z)$ has no poles and no zeros at $z=0$.
\end{proposition}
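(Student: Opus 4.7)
The plan is to trace through the definitions to reduce everything to the preceding lemma and the functorial identification of $R$-matrices under $\Ft$. The key observation is that by Proposition~\ref{prop:multisegments} and the definition of $L[a,b]$, the module $L_a = L[a, a+N-1]$ is the head of the convolution $L(a) \conv L(a+1) \conv \cdots \conv L(a+N-1)$; denote this surjection by $\pi\col L(a) \conv \cdots \conv L(a+N-1) \twoheadrightarrow L_a$.

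For the first diagram, I would first use the functoriality of $R$-matrices to write $R_{L_a, L(b)_z}$ as the image under $\pi \conv 1$ and $1 \conv \pi$ of the $R$-matrix on $\bigl( L(a) \conv \cdots \conv L(a+N-1) \bigr) \conv L(b)_z$, which decomposes as the composition of intertwiners moving $L(b)_z$ past each $L(a+k)$ from right to left. Next, by Theorem~\ref{thm:L_a commutes with X}(i)(a) together with Definition~\ref{def: quot coeff}, the central isomorphism $R_a(L(b)_z)$ equals $f_{a,b}(z)$ times this induced $R$-matrix (up to the identification of $\starop$ with $\conv$). Applying the exact functor $\Ft$ and using Proposition~\ref{prop:image of tau}(ii), each factor $R_{L(a+k), L(b)_z}$ is sent to $P^\dT_{a+k, b}(0, z)\, \Rnorm_{V_{a+k}, V_b}\bigl|_{z_{V_{a+k}} = 1,\, z_{V_b} = 1 + z}$, so the composition becomes $\prod_{k=a}^{a+N-1} P^\dT_{k, b}(0, z) \cdot \Rnorm_{a,b}$. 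The preceding lemma then identifies the composite with $\varphi \tens \id_{(V_b)_{\tz}}$ as multiplication by $g_{a,b}(z)$, yielding the scalar $h_{a,b}(z) = f_{a,b}(z)\, g_{a,b}(z) \prod_{k=a}^{a+N-1} P^\dT_{k,b}(0, z)$, which establishes commutativity of the first diagram.

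For the second diagram, I would iterate this argument: realizing $L_b$ as the head of $L(b) \conv \cdots \conv L(b+N-1)$ and using the functoriality of $R_a$ on tensor products, the map $R_a(L_b)$ decomposes (via the analogous surjection) as a composition of maps $R_a(L(b+j))$ for $j = 0, \ldots, N-1$. Specializing the first diagram at $z = 0$ for each $j$ shows that $\Ft(R_a(L(b+j)))$ acts as multiplication by $h_{a, b+j}(0)$, and hence the composite scalar around the second diagram is $\prod_{k=b}^{b+N-1} h_{a,k}(0)$. The no poles and no zeros assertion at $z = 0$ then follows immediately from the commutativity of the first diagram: since $R_a(L(b)_z)$ is an isomorphism in $\A / \Ser_N$ by Theorem~\ref{thm:L_a commutes with X}(i)(a) and $\Ft$ preserves isomorphisms, multiplication by $h_{a,b}(z)$ must act as an automorphism of $\Ft(L(b)_z) \simeq \cor[[z]] \tens_{\cor[z_{V_b}^{\pm 1}]} (V_b)_{\aff}$ as a $\cor[[z]]$-module, which forces $h_{a,b}(z)$ to be a unit in $\cor[[z]]$. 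The main technical subtlety will be to carefully track the grading shifts introduced by the quotient functor $\FQ_N$ and by the modified tensor product $\starop$ from Definition~\ref{def: star op}, and to verify that the factor $f_{a,b}(z)$, designed precisely to compensate for the non-trivial central structure of $L_a$ inside $\A / \Ser_N$, indeed arises correctly when comparing $R_a$ with the underlying $R$-matrix on the unreduced convolution product.
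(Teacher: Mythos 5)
Your argument is correct and is essentially the proof the paper intends: the paper itself only writes ``the proof is the same as \cite[Proposition 2.6.2, Corollary 2.6.3, Proposition 2.6.4]{KKO18}'', and your reconstruction (decompose $R_{L_a,L(b)_z}$ through the surjection $L(a)\conv\cdots\conv L(a+N-1)\epito L_a$, apply Proposition~\ref{prop:image of tau} factorwise, invoke the preceding lemma for $g_{a,b}(z)$, and iterate via the coherence of $R_a$ for the second diagram) is exactly that argument. The only presentational remark is that the no-pole/no-zero claim, which you deduce at the end from invertibility of $\Ft(R_a(L(b)_z))$, is logically needed before specializing at $z=0$ in the second diagram, so it should be established in between.
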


\begin{proof}
The proof is the same as one of \cite[Proposition 2.6.2, Corollary 2.6.3, Proposition 2.6.4]{KKO18}.
\end{proof}

\begin{corollary} \label{cor: la a b}
Set $\lambda_{a,b} \seteq \prod_{k=b}^{b+N-1} h_{a,k}(0)$. Then we have
$$  \lambda_{a,a}=1 \ \ (a \in J) \quad \text{ and } \quad  \lambda_{a,b}\lambda_{b,a}=1 \ \ (a,b \in J).$$
\end{corollary}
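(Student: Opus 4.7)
The plan is to deduce both identities directly from Theorem~\ref{thm:L_a commutes with X} by applying the functor $\F^\dT$ and chasing the commutative diagrams established in the preceding proposition. The key observation is that the scalar $\lambda_{a,b} = \prod_{k=b}^{b+N-1} h_{a,k}(0)$ is precisely the constant that $\F^\dT$ assigns to the central isomorphism $R_a(L_b) \col L_a \star L_b \isoto L_b \star L_a$ when one trivializes $\F^\dT(L_a)$ and $\F^\dT(L_b)$ via the surjections $\varphi_a$ and $\varphi_b$ to $\cor$. Indeed, the second square of the proposition, applied iteratively over the $N$ factors of $L_b$, yields exactly the scalar $\prod_{k=b}^{b+N-1} h_{a,k}(0) = \lambda_{a,b}$.

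For the first assertion $\lambda_{a,a} = 1$, I would apply $\F^\dT$ to the identity $R_a(L_a) = \id_{L_a \star L_a}$ given by Theorem~\ref{thm:L_a commutes with X}(ii). Since $\F^\dT$ is a functor, it sends the identity morphism to the identity; via the second diagram of the preceding proposition (with $b=a$), this is the same scalar as $\lambda_{a,a}$, so $\lambda_{a,a} = 1$.

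For the second assertion $\lambda_{a,b} \lambda_{b,a} = 1$, I would apply $\F^\dT$ to the composition
\[
L_a \star L_b \To[R_a(L_b)] L_b \star L_a \To[R_b(L_a)] L_a \star L_b,
\]
which equals $\id_{L_a \star L_b}$ by Theorem~\ref{thm:L_a commutes with X}(iii). Trivializing both $\F^\dT(L_a)$ and $\F^\dT(L_b)$ via $\varphi_a$ and $\varphi_b$, the second diagram of the preceding proposition identifies $\F^\dT(R_a(L_b))$ with multiplication by $\lambda_{a,b}$ on $\cor$ and $\F^\dT(R_b(L_a))$ with multiplication by $\lambda_{b,a}$ on $\cor$; their composition is multiplication by $\lambda_{a,b}\lambda_{b,a}$, which must equal $1$.

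There is no serious obstacle: the whole content is a formal consequence of the central-object axioms for $L_a$ together with functoriality of $\F^\dT$, once the explicit form of the scalar appearing in the second diagram of the preceding proposition is invoked. The only mild bookkeeping is to iterate the single-factor diagram $N$ times in order to pass from $L(b)_z$-style scalars $h_{a,k}(z)$ to the genuine $L_b = L[b,b+N-1]$ scalar $\lambda_{a,b}$, but this is already implicit in the statement of the proposition and in the definition of $\lambda_{a,b}$.
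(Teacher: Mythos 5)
Your proposal is correct and follows essentially the same route as the paper: both deduce $\lambda_{a,a}=1$ by applying $\F^\dT$ to $R_a(L_a)=\id_{L_a\starop L_a}$ from Theorem~\ref{thm:L_a commutes with X}(ii) and reading off the scalar via the second diagram of the preceding proposition, and both obtain $\lambda_{a,b}\lambda_{b,a}=1$ from the fact that $R_a(L_b)$ and $R_b(L_a)$ are mutually inverse (Theorem~\ref{thm:L_a commutes with X}(iii)). The paper states this very tersely, while you spell out the diagram chase, but the argument is the same.
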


\begin{proof}
By Theorem~\ref{thm:L_a commutes with X} {\rm (ii)}, we have $R_a(L_a)=\id_{L_a \starop L_a}$. Thus $\Ft(R_a(L_a))$ is the identity map on $\cor$, which implies
$\lambda_{a,a}=1$ by the previous proposition. Similarly, Theorem~\ref{thm:L_a commutes with X} {\rm (iii)} implies the second assertion.
\end{proof}

\begin{lemma}[{\cite[Lemma 2.6.6, Lemma 2.6.7]{KKO18}}] \label{cor: la a b h a b}
Assume that
\begin{align*}
\lambda_{a,b}\seteq\prod_{k=b}^{b+N-1} h_{a,k}(0) \quad (a,b \in J)
\end{align*}
satisfies that
\begin{align*}
\lambda_{a,a}=1 \quad (a \in \Z) \quad \text{and} \quad \lambda_{a,b} \lambda_{b,a} =1 \quad (a,b \in J).
\end{align*}
Then there exists a  family $\{c_{a,b}(u,v)\ | \ a,b \in J \}$ of elements in $\cor[[u,v]]$
satisfying
\begin{equation} \label{eq: c ab}
\begin{aligned}
& c_{a,a}(u,v)=1 \quad  (a \in J), \quad c_{a,b}(u,v) c_{b,a}(u,v)=1 \quad (a,b \in J), \\
& \hspace{10ex} h_{a,b}(z)=\prod_{k=a}^{a+N-1} c_{k,b}(0,z) \quad (a,b \in J).
\end{aligned}
\end{equation}
\end{lemma}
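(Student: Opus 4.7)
The plan is to reduce the existence of the family $\{c_{a,b}(u,v)\}$ to the existence of an invertible power series family $d_{a,b}(z)\in\cor[[z]]^\times$ (for $a,b\in J$) satisfying
\[
\mathrm{(A)}\ d_{a,a}(z)=1,\qquad \mathrm{(B)}\ d_{a,b}(z)\,d_{b,a}(z)=1,\qquad \mathrm{(C)}\ h_{a,b}(z)=\prod_{k=a}^{a+N-1}d_{k,b}(z).
\]
Once such a $d$ is constructed, the trivially-extended family $c_{a,b}(u,v)\seteq d_{a,b}(v)$ manifestly lies in $\cor[[u,v]]$ and fulfills all three requirements of \eqref{eq: c ab}.

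A quick necessity check recovers the hypotheses of the lemma: substituting (C) into the definition of $\lambda_{a,b}$ gives $\lambda_{a,b}=\prod_{a\le j<a+N,\;b\le k<b+N}d_{j,k}(0)$, and (A), (B) immediately force both $\lambda_{a,a}=1$ and $\lambda_{a,b}\lambda_{b,a}=1$ by direct cancellation. Conversely, the construction of $d_{a,b}(z)$ can be split into two independent subproblems by passing to logarithms: since the base field $\cor$ has characteristic zero, there is an isomorphism of abelian groups $\cor[[z]]^\times \simeq \cor^\times\oplus z\cor[[z]]$ induced by $\exp$, so the constant-term problem and the higher-degree coefficient problem can be solved separately.

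For the constant-term problem I would solve the multiplicative system $\prod_{a\le j<a+N,\,b\le k<b+N}d_{j,k}(0)=\lambda_{a,b}$ in $\cor^\times$, subject to (A) and (B), by induction: well-order the pairs $(a,b)$ with $a<b$ (for instance by $\max(|a|,|b|)$ then by $b-a$), at each step introduce one new unknown $d_{a,b}(0)$ using a single $\lambda$-equation (together with all the $d_{j,k}(0)$'s already determined), and set $d_{b,a}(0)\seteq d_{a,b}(0)^{-1}$ to enforce antisymmetry. For the higher-degree coefficients, the analogous procedure works degree by degree on $\log d_{a,b}(z)$: the equations become \emph{additive} in $\cor$, and the corresponding linear obstructions vanish automatically because the $h_{a,b}(z)$'s themselves already satisfy the shape of (C).

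The main obstacle is the bookkeeping needed to confirm that the scalar inductive step is consistent — namely, that at each stage the new $\lambda$-equation is solvable given the previously chosen values and compatibly with (A), (B). The hypotheses $\lambda_{a,a}=1$ and $\lambda_{a,b}\lambda_{b,a}=1$ are precisely what is required for this: they encode the cancellations that force the system to be free of redundant or contradictory relations on any diagonal or swapped pair. Once the purely combinatorial verification of solvability is completed at the level of constant terms, the extension to arbitrary power series is formal by the characteristic-zero splitting noted above, and setting $c_{a,b}(u,v)\seteq d_{a,b}(v)$ delivers the required coefficients.
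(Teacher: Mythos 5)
Your reduction to a one-variable family $c_{a,b}(u,v)\seteq d_{a,b}(v)$ is where the argument breaks down. With that ansatz, \eqref{eq: c ab} becomes the system $h_{a,b}(z)=\prod_{k=a}^{a+N-1}d_{k,b}(z)$ together with $d_{a,a}=1$ and $d_{a,b}(z)\,d_{b,a}(z)=1$ \emph{as power series}, and this system is over-determined. For instance, when $N=2$ the equation with $a=b$ forces $d_{b+1,b}(z)=h_{b,b}(z)$, the equation for target $b+1$ and window $\{b,b+1\}$ forces $d_{b,b+1}(z)=h_{b,b+1}(z)$, and antisymmetry then demands $h_{b,b}(z)\,h_{b,b+1}(z)=1$ identically in $z$. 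The hypotheses only give this identity at $z=0$ (it is exactly $\lambda_{b,b}=1$), and nothing forces it for the higher coefficients of the specific series $h_{a,b}(z)=f_{a,b}(z)g_{a,b}(z)\prod_{k}P^\dT_{k,b}(0,z)$; so your assertion that the higher-degree obstructions ``vanish automatically'' is unfounded, and in general your system has no solution. The two-variable nature of $c_{a,b}(u,v)$ is essential here: the antisymmetry actually used in the paper immediately after the lemma is $c_{a,b}(u,v)c_{b,a}(v,u)=1$ (with swapped arguments; the unswapped version displayed in the statement should be read that way), which ties the slice $c_{a,b}(0,v)$ to the \emph{other} slice $c_{b,a}(u,0)$ rather than to $c_{b,a}(0,v)$. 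Since the two slices of a two-variable invertible series are independent apart from their common constant term, all higher-order equations decouple and only the constant terms remain genuinely constrained --- which is precisely where the hypotheses on $\lambda_{a,b}$ enter. Your $d_{a,b}(v)$ also fails the swapped identity, so even if your system were solvable the output would not serve the purpose for which the lemma is invoked.

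A secondary gap: the multiplicative system you propose to solve for the constant terms, $\prod_{a\le j<a+N,\;b\le k<b+N}d_{j,k}(0)=\lambda_{a,b}$, is only a \emph{consequence} of what is needed, namely $h_{a,b}(0)=\prod_{k=a}^{a+N-1}c_{k,b}(0,0)$ with $c_{a,b}(0,0)\,c_{b,a}(0,0)=1$; solving the $\lambda$-system does not produce a solution of the $h(0)$-system. Showing that the latter is solvable under $\lambda_{a,a}=1$ and $\lambda_{a,b}\lambda_{b,a}=1$ is the genuine content of the result cited from \cite{KKO18}, and it is exactly the step your proposal defers to ``bookkeeping.''
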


By Lemma~\ref{cor: la a b h a b}, we have a family $\{c_{a,b}(u,v)\ | \ a,b \in J \}$ of elements in $\cor[[u,v]]$ satisfying~\eqref{eq: c ab}. Using $\{c_{a,b}(u,v)\ | \ a,b \in J \}$, we define
new duality coefficient $\{P^\new_{a,b}(u,v) \ | \ a,b \in J  \}$ as follows:
$$   P^\new_{a,b}(u,v) \seteq c_{a,b}(u,v)^{-1}P^\dT_{a,b}(u,v)=c_{b,a}(v,u)P^\dT_{a,b}(u,v).$$
We denote the corresponding functor from $\A/\Ser_N$ to $\Ca_J$ by $\F'$. Since $c_{a,b}(u,v)c_{b,a}(v,u)=1$, we have
$$  Q_{a,b}(u,v)=\true(a \ne b)P^\new_{a,b}(u,v)P^\new_{b,a}(v,u).$$

\begin{theorem}
Under the assumption of~\eqref{eq: general assumption}, there exists a duality coefficient $\{P^\new_{i,j}(u,v)\}_{i,j\in J}$ that makes the following diagram commutative$\colon$
\begin{align*}
\xymatrix@C=8.5ex{
\F'(L_a\starop M)\ar[d]^-{\F'(R_a(M))}\ar[r]^-\sim
&\F'(L_a)\tens\F'(M)\ar[r]^-{ g_a \tens \F'(M)}
&\cor\tens \F'(M)\ar[dr]\\
\F'(M\starop L_a)\ar[r]^-\sim&\F'(M)\tens\F'(L_a)\ar[r]^-{\F'(M)\tens
g_a} &\F'(M)\tens  \cor  \ar[r]&\F'(M) }
\end{align*}
for any $a \in J$, $M \in \A/\Ser_N$ and an isomorphism $g_a \col\F'(L_a)\isoto\cor$.
\end{theorem}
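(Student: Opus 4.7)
The plan is to exhibit $\{P^{\new}_{i,j}(u,v)\}$ as the explicit modification already constructed before the theorem, and then verify the diagram in two stages: first on the ``generators'' $M=L(b)_z$, where it reduces to the identity $h^{\new}_{a,b}(z)=1$, and then on an arbitrary $M\in\A/\Ser_N$ via naturality and the cocycle property of the central structure $R_a$.

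First, set $P^{\new}_{a,b}(u,v)\seteq c_{a,b}(u,v)^{-1}P^{\dT}_{a,b}(u,v)$, with $\{c_{a,b}(u,v)\}$ as in Lemma~\ref{cor: la a b h a b}. One checks it is a duality coefficient: condition (a) is trivial since $c_{a,a}=1$, condition (b) holds because $c_{a,b}(0,0)\in\cor^{\times}$ (as $c_{a,b}c_{b,a}\equiv 1$), and condition (c) follows from $c_{a,b}(u,v)c_{b,a}(v,u)=1$, which makes the product $P^{\new}_{a,b}P^{\new}_{b,a}$ equal to $P^{\dT}_{a,b}P^{\dT}_{b,a}=Q_{a,b}$. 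Now the key calculation: substituting into the definition of the ``$h$''-function and using relation \eqref{eq: c ab}, we obtain
\[
h^{\new}_{a,b}(z)=f_{a,b}(z)g_{a,b}(z)\prod_{k=a}^{a+N-1}P^{\new}_{k,b}(0,z)
 = h_{a,b}(z)\prod_{k=a}^{a+N-1}c_{k,b}(0,z)^{-1}=1.
\]

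Applying the previous proposition to the functor $\F'$ determined by $P^{\new}$ and to $M=L(b)_z$, the vertical composition on the bottom row becomes multiplication by $h^{\new}_{a,b}(z)=1$, so the diagram of the theorem commutes for every $M=L(b)_z$ with $b\in J$.

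For general $M$, both compositions in the diagram define natural transformations $\Phi_1,\Phi_2\colon \F'(L_a\star -)\Rightarrow \F'(-)$ between exact additive functors $\A/\Ser_N\to\Ca_J$. I would argue that the class
$\mathcal{E}\seteq\{M\in\A/\Ser_N\mid \Phi_1(M)=\Phi_2(M)\}$
is closed under subobjects, quotients, extensions, and the monoidal product $\star$. Closure under $\star$ uses the cocycle identity
$R_a(X\star Y)=(X\star R_a(Y))\circ (R_a(X)\star Y)$
from Theorem~\ref{thm:L_a commutes with X}\,(i)(b) together with the monoidality of $\F'$: the hexagon splits into two squares, each of which commutes by the hypothesis for $X$ and for $Y$. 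Closure under sub/quotient/extension is automatic from naturality of $\Phi_1$, $\Phi_2$ and the exactness of $\F'$, via a straightforward five-lemma argument applied to the induced diagram of exact sequences. Since $\mathcal{E}$ contains all $L(b)_z$ and the category $\A/\Ser_N$ is generated, in the above sense, by such modules (every simple module in $\A$ is a subquotient of an iterated convolution of $L(b)$'s, and $\Ser_N$ is a Serre subcategory), we conclude $\mathcal{E}=\A/\Ser_N$.

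The main obstacle will be the propagation step from $M=L(b)_z$ to general $M$: one must argue carefully that a common normalization of the isomorphisms $g_a$ can be chosen coherently in each instance of the diagram so that the equality $\Phi_1=\Phi_2$ is independent of choices, and that the monoidal/exactness argument really reduces to the generators $L(b)_z$ (rather than, say, to $L(b)$ only). Once these compatibilities are pinned down, the rest is formal.
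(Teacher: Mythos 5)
Your proposal is correct and takes essentially the same route as the paper: the paper's entire proof consists of the reduction to $M=L(b)_z$ followed by the one-line computation $f_{a,b}(z)g_{a,b}(z)\prod_{k=a}^{a+N-1}P^{\new}_{k,b}(0,z)=h_{a,b}(z)\prod_{k=a}^{a+N-1}c_{k,b}(0,z)^{-1}=1$ via \eqref{eq: c ab}, which is exactly your key calculation. Your propagation discussion for general $M$ (closure under $\star$ via the cocycle property of $R_a$, naturality, and generation by the $L(b)_z$) elaborates a step the paper states without proof, and it is the standard justification within this framework.
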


\begin{proof}
It is enough to show that our assertion holds when $M=L(b)_z$ for some $b \in J$. Equivalently, it is enough to check that
$f_{a,b}(z) g_{a,b}(z) \prod_{k=a}^{a+N-1} P^{\rm{new}}_{k,b}(0,z)=1$. By the choice of $\{c_{a,b}(u,v)\ | \ a,b \in J \}$, we have
$$
\displaystyle f_{a,b}(z) g_{a,b}(z) \prod_{k=a}^{a+N-1} P^{\rm{new}}_{k,b}(0,z)
=h_{a,b}(z) \prod_{k=a}^{a+N-1} c_{k,b}(0,z)^{-1} =1,$$
by \eqref{eq: c ab}. Thus our assertion follows.
\end{proof}

Now, \cite[Proposition A.11, Proposition A.12]{KKK18A} imply the following theorem:
\begin{theorem} \label{thm: Factoring}
Under the assumption of~\eqref{eq: general assumption},
there exists an exact monoidal functor $\tFF \col \T_N\to \Ca_J$ such that the following diagram quasi-commutes$\colon$
\begin{equation*}
 \xymatrix@C+7ex{
\A\ar[r]^-{\FQ_N}\ar[drr]_-{\F}&\A/\Ser_N\ar[r]^{\Upsilon }\ar[rd]^(.55){\F'} &\T'_N\ar[d]\ar[r]^{ \Xi  }&\T_N\ar[dl]^-{\tFF}\\
&&\Ca_J\,.}
\end{equation*}
In particular, $\tFF$ induces a surjective ring homomorphism $\phi_{\tFF} \col K(\T_N)\vert_{q=1} \twoheadrightarrow K(\Ca_J)$, where $K(\T_N)\vert_{q=1} \seteq K(\T_N)/(q-1)K(\T_N)$.
\end{theorem}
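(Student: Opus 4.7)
The plan is to apply the universal property of categorical localization at a commuting family of central objects, as developed in the appendix of \cite{KKK18A}, specifically Propositions~A.11 and~A.12 there. The preceding theorem in the text already supplies the decisive compatibility: after replacing the duality coefficients $\{P^{\dT}_{i,j}\}$ by $\{P^{\new}_{i,j}\}$, the resulting functor $\F'\col \A/\Ser_N \to \Ca_J$ sends each central object $L_a$ to $\cor$ in such a way that the braiding $R_a(M)\col L_a \starop M \isoto M \starop L_a$ becomes the identity on $\F'(M)$ once $\F'(L_a)$ is identified with $\cor$ via the isomorphism $g_a$.

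First I would check that this compatibility is exactly the hypothesis of \cite[Proposition~A.11]{KKK18A}. That proposition then produces a factorization $\F' \simeq \overline{\F'} \circ \Upsilon$ through the localization $\T'_N = (\A/\Ser_N)[L_a^{\starop -1}\mid a\in J]$, with $\overline{\F'} \col \T'_N \to \Ca_J$ exact and monoidal. Since $g_a$ identifies $\F'(L_a)$ with the unit $\cor$ of $\Ca_\g$, and $\Xi \col \T'_N \to \T_N$ is precisely the further localization imposing $L_a\simeq\one$, applying \cite[Proposition~A.12]{KKK18A} gives a second factorization $\overline{\F'} \simeq \tFF \circ \Xi$. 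This yields the desired exact monoidal functor $\tFF \col \T_N \to \Ca_J$, and quasi-commutativity of the big diagram follows from the fact that the original $\F$ and the renormalized $\F'$ differ only by the scalar factors $c_{a,b}(u,v)$, which induce a natural isomorphism of the two monoidal functors.

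For the induced map on Grothendieck rings, exactness of $\tFF$ together with its monoidal structure yields a ring homomorphism $K(\T_N)\to K(\Ca_J)$; specializing $q\mapsto 1$ produces $\phi_{\tFF}\col K(\T_N)|_{q=1}\to K(\Ca_J)$. To obtain surjectivity, recall that $\Ca_J$ is the smallest abelian monoidal subcategory of $\Ca_\g$ containing $\{V_a\}_{a\in J}$, hence $K(\Ca_J)$ is generated as a ring by the classes $[V_a]$; by Lemma~\ref{lem: N> to 0}(iii) we have $\F(L(a))\simeq V_a$, so $\tFF(\FO_N(L(a)))\simeq V_a$ and every generator $[V_a]$ lies in the image of $\phi_{\tFF}$.

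The main obstacle, such as it is, lies entirely in verifying the compatibility hypothesis of \cite[Proposition~A.11]{KKK18A}; this is what the preceding theorem has been constructed to accomplish, via the careful choice of $\{c_{a,b}(u,v)\}$ in Lemma~\ref{cor: la a b h a b} together with Corollary~\ref{cor: la a b}. A minor secondary point is the exactness of $\F$ in type $A_\infty$, which is not literally covered by Theorem~\ref{thm:exact}, but reduces to the type $A_n$ case because any fixed $\beta\in\rl^+_J$ is supported on finitely many nodes of $\Gamma^J$.
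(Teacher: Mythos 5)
Your proposal matches the paper's argument: the paper's entire proof is the citation of \cite[Propositions A.11, A.12]{KKK18A}, applied exactly as you describe, with the preceding theorem (built on the choice of $\{c_{a,b}(u,v)\}$ from Lemma~\ref{cor: la a b h a b} and Corollary~\ref{cor: la a b}) supplying the compatibility hypothesis that the braiding $R_a(M)$ becomes the identity under $\F'$ after identifying $\F'(L_a)\simeq\cor$. Your additional remarks on surjectivity of $\phi_{\tFF}$ and on exactness in type $A_\infty$ are correct and consistent with what the paper leaves implicit.
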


\begin{corollary}
The functor $\tFF$ sends a non-zero object in $\T_N$ to a non-zero module in $\Ca_J$. In particular, it sends a simple to a simple and $\Ca_J$ is rigid.
\end{corollary}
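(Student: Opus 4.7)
The plan is to establish the three assertions in turn. For the first, that $\tFF$ sends non-zero objects to non-zero ones, I would reduce by exactness of $\tFF$ to the case of a simple $X \in \T_N$. Using the rigidity of $\T_N$ established in the preceding theorem, $X$ admits a right dual ${}^*X$ with coevaluation $\eta \colon \one_{\T_N} \to X \otimes {}^*X$. Since $\one_{\T_N}$ is simple (being the image of the trivial $R(0)$-module) and $\eta \ne 0$ by rigidity, $\eta$ is injective. Applying the exact monoidal $\tFF$ and using $\tFF(\one_{\T_N}) \simeq \cor$ yields an injection $\cor \hookrightarrow \tFF(X) \otimes \tFF({}^*X)$, which forces $\tFF(X) \ne 0$. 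A corollary is that $\tFF$ is faithful: a non-zero morphism factors through its non-zero image, whose $\tFF$-image is non-zero by the previous step.

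For the second assertion, that $\tFF$ sends simples to simples, let $S$ be simple in $\T_N$. By Proposition \ref{prop: le N do not vanish}(i), there is a simple $M \in R^J\gmod$, whose associated multisegment has only segments of length at most $N-1$, such that $\FO_N(M) \simeq S$ up to grading shift. Theorem \ref{thm: Factoring} then gives $\tFF(S) \simeq \F(M)$, which is non-zero by the first step. The simplicity of $\F(M)$ follows from the multisegment description (Proposition \ref{prop:multisegments}), Lemma \ref{lem: N> to 0} identifying $\F(L[a,b])$ as a simple head of a fundamental-module tensor product for $b - a + 1 \le N$, and the exactness of $\F$ (Theorem \ref{thm:exact} applied to the finite type-$A$ subquivers of $\Ainf$), which together pass the unique-simple-head property through $\F$.

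For the third assertion, the rigidity of $\Ca_J$, I would use that the monoidal $\tFF$ transfers duals: for any $X \in \T_N$, the unit and counit of the duality in $\T_N$ are sent by $\tFF$ to morphisms exhibiting $\tFF({}^*X)$ and $\tFF(X^*)$ as a right and left dual of $\tFF(X)$ inside $\Ca_J$. Each generator $V_a \simeq \tFF(\FO_N(L(a)))$ thus has both duals in $\Ca_J$, and the class of objects admitting both duals in $\Ca_J$ is closed under tensor products, subobjects, quotients, and extensions; by the defining generation of $\Ca_J$ it must equal all of $\Ca_J$. The principal obstacle is the simplicity step in the second assertion: establishing that $\F(M)$ is genuinely simple (not merely non-zero) requires the unique-simple-head property for the tensor products $\F(L[a_1,b_1]) \otimes \cdots \otimes \F(L[a_r,b_r])$ in $\Ca_J$, which ultimately rests on cyclicity of convolution products inherited from the quasi-good hypothesis on $\{V_a\}_{a \in J}$.
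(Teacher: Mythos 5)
Your proof of the first assertion is, up to taking mirror images, the paper's own: the paper uses rigidity of $\T_N$ to produce an epimorphism $M^*\starop M\twoheadrightarrow \one$ and applies the exact monoidal functor $\tFF$ to obtain an epimorphism onto $\cor$, whereas you use the coevaluation $\one\to X\starop{}^*X$, note it is injective because $\one$ is simple, and obtain a monomorphism out of $\cor$. Both are correct and of identical substance; your preliminary reduction to simple $X$ is harmless but unnecessary, since the paper's (and your) duality argument applies verbatim to any non-zero object. Your derivation of rigidity of $\Ca_J$ (transport of duals along the monoidal $\tFF$, then closure of the class of objects whose duals lie in $\Ca_J$ under the generating operations, which is legitimate here because duals exist and dualization is exact in the ambient category $\Ca_\g$) is sound and in fact more explicit than anything the paper writes down.

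The genuine gap is in the clause ``sends a simple to a simple.'' The paper's proof of the corollary stops after non-vanishing; the statement about simples is actually established only in Theorem~\ref{thm: Gro iso}, by importing the arguments of [KKO18, Lemmas 2.6.11, 2.6.12, Theorem 2.6.13]. You instead attempt a direct proof: lift a simple $S\in\T_N$ to a simple $M\in\A$ whose multisegment has all segment lengths $<N$ (Proposition~\ref{prop: le N do not vanish}), identify $\tFF(S)$ with $\F(M)$ via Theorem~\ref{thm: Factoring}, and assert that simplicity of $\F(M)$ ``follows'' from the multisegment description, the computation of $\F(L[a,b])$, and exactness. That assertion is precisely the hard point and is not delivered by the ingredients you list: Lemma~\ref{lem: non-simple tensor} yields $\F(M\hconv N)\simeq\F(M)\hconv\F(N)$ only under the hypotheses $\de(M,N)\le 1$ and $\F(M)\otimes\F(N)$ not simple, so it cannot be applied in one stroke to the full ordered convolution $L[a_1,b_1]\conv\cdots\conv L[a_r,b_r]$; one needs the renormalized $R$-matrix/normal sequence analysis of [KKO18] to show that $\F$ carries the head of the ordered convolution to the head of $\F(L[a_1,b_1])\otimes\cdots\otimes\F(L[a_r,b_r])$ and that the latter is simple. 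You acknowledge this as the ``principal obstacle,'' but acknowledging it does not close it, and non-vanishing of $\tFF$ alone does not formally imply preservation of simplicity. As written, this part of the statement remains unproved in your proposal.
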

\begin{proof}
Let $M$ be a non-zero object in $\T_N$. Since $\T_N$ is rigid, there exists $M^* \in \T_N$ such that there is an epimorphism $M^* \starop M \twoheadrightarrow \cor$.
Since $\tFF$ is exact, we have an epimorphism $\tFF(M^*) \tens \tFF(M) \twoheadrightarrow \cor$. Hence $\tFF(M)$ is non-zero.
\end{proof}

Using the same argument in \cite[Lemma 2.6.11, Lemma 2.6.12, Theorem 2.6.13]{KKO18}, we can conclude the following:

\begin{theorem} \label{thm: Gro iso}
The functor $\tFF$ sends simples to simples bijectively and induces a ring isomorphism
$$ \phi_{\tFF} \col K(\T_N)\vert_{q=1} \isoto K(\Ca_J).$$
\end{theorem}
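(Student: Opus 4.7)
My plan is to build on Theorem~\ref{thm: Factoring} (surjectivity of $\phi_{\tFF}$ and exactness of $\tFF$) and the subsequent corollary (that $\tFF$ preserves non-vanishing, hence sends simple objects to simple objects), and to upgrade these to the claimed bijection on simples together with a ring isomorphism on Grothendieck rings. The strategy mirrors the outline of \cite[Lemma 2.6.11, Lemma 2.6.12, Theorem 2.6.13]{KKO18}: establish surjectivity on isomorphism classes of simples from the generation property of $\Ca_J$, then establish injectivity on simples from a lifting/PBW argument using Proposition~\ref{prop:multisegments} and Proposition~\ref{prop: le N do not vanish}, and finally convert the bijection on simples into the ring isomorphism.

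First I would verify surjectivity of $\tFF$ on (isomorphism classes of) self-dual simples. By Proposition~\ref{prop:image of tau} and the factorization in Theorem~\ref{thm: Factoring}, one has $\tFF(\overline{L(a)}) \simeq V_a$ for every $a \in J$, where $\overline{L(a)}$ denotes the image of $L(a)$ in $\T_N$. Since $\Ca_J$ is by definition the smallest abelian subcategory of $\Ca_\g$ containing $\{V_a\}_{a \in J}$ and stable under subquotients, extensions and tensor products, every simple $V$ of $\Ca_J$ is (up to isomorphism) a composition factor of some $V_{a_1}\otimes\cdots\otimes V_{a_r} \simeq \tFF(\overline{L(a_1)\conv\cdots\conv L(a_r)})$. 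Exactness of $\tFF$ and the fact that $\tFF$ sends simples to simples then force $V \simeq \tFF(S)$ for some simple $S$ in $\T_N$.

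Next I would prove injectivity of $\tFF$ on isomorphism classes of self-dual simples. By Proposition~\ref{prop: le N do not vanish}, every self-dual simple $S$ of $\T_N$ lifts canonically to a self-dual simple $\widetilde S$ of $\A$ whose associated multisegment $\big([a_1,b_1],\ldots,[a_r,b_r]\big)$ has all segments of length $<N$, and this assignment is a bijection. If $\tFF(S_1) \simeq \tFF(S_2)$ for two such simples with lifts $\widetilde S_1, \widetilde S_2$, then $\F(\widetilde S_1) \simeq \F(\widetilde S_2)$ in $\Ca_J$. I would then show that the multisegment of $\widetilde S_i$ is recovered from the $U_q'(\g)$-module $\F(\widetilde S_i)$: using Lemma~\ref{lem: N> to 0}(iii), each $\F(L[a,b])$ with $b-a+1<N$ is the simple head of $V_a\otimes\cdots\otimes V_b$, and by the monoidality of $\F$ and the head characterization in Proposition~\ref{prop:multisegments}, the composition $V$-factor structure of $\F(\widetilde S_i)$ determines the ordered multisegment uniquely (the argument runs parallel to \cite[Lemma 2.6.11]{KKO18} and relies on Corollary~\ref{cor:pbwprod} to control how convolution products decompose, together with the fact that $V_a \not\simeq V_b$ for $a\ne b$ from Lemma~\ref{lem: N> to 0}(i)). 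Hence $\widetilde S_1 \simeq \widetilde S_2$ and thus $S_1 \simeq S_2$.

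Finally, the ring isomorphism. Since the isomorphism classes of self-dual simples form a $\Z$-basis of $K(\T_N)|_{q=1}$ and of $K(\Ca_J)$ respectively, the bijection on simples established above shows that $\phi_{\tFF}$ sends a $\Z$-basis to a $\Z$-basis. Combined with the surjective ring homomorphism property from Theorem~\ref{thm: Factoring}, this yields that $\phi_{\tFF}\col K(\T_N)|_{q=1} \isoto K(\Ca_J)$ is an isomorphism of rings. The main obstacle is the injectivity step: one must verify that the multisegment attached to a simple of $\T_N$ can be recovered from the underlying $U_q'(\g)$-module, which requires a careful induction on the length of the multisegment and a careful use of exactness of $\F$ together with the head description of $\F(L[a,b])$.
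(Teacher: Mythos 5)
Your proposal is correct and follows essentially the same route as the paper, which itself proves Theorem~\ref{thm: Gro iso} simply by invoking the arguments of \cite[Lemma 2.6.11, Lemma 2.6.12, Theorem 2.6.13]{KKO18}; your three steps (surjectivity on simples from the generation of $\Ca_J$ by the $V_a$'s and exactness, injectivity via recovering the multisegment of the lift in $\A$ from its image using Proposition~\ref{prop: le N do not vanish} and the head description of $\F(L[a,b])$, then passing from the bijection on simples to the ring isomorphism) are exactly the content of those cited lemmas.
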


Recall that we have an exact functor $\F\col \A\to \Ca_J$.
Theorem~\ref{thm: Gro iso} along with Theorem~\ref{th:monoidalTN}
implies the following theorem:

\begin{theorem} \label{Thm: main}
Let $\mathscr{A}(N)$ be the cluster algebra whose initial seed is given as follows:
$$\left( \left\{   \left[\F(\M_\ttww(p,0)\right] \right\}_{p \in \Kex_N},\tB_N \right).$$
Then the category $\Ca_J$ gives a monoidal categorification of the cluster algebra $\mathscr{A}(N)$. Hence, we have
\begin{enumerate}
\item[{\rm (i)}] any cluster monomial in $\mathscr{A}(N)$ corresponds to the isomorphism class of a real simple object in $\Ca_J$,
\item[{\rm (ii)}] any cluster monomial in $\mathscr{A}(N)$ is a Laurent polynomial of the initial cluster variables with coefficient in $\Z_{\ge 0}$.
\end{enumerate}
\end{theorem}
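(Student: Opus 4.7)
The plan is to transport the monoidal categorification structure from $\T_N$ to $\Ca_J$ using the exact monoidal functor $\tFF \col \T_N \to \Ca_J$ provided by Theorem~\ref{thm: Factoring} and the ring isomorphism $\phi_{\tFF}\col K(\T_N)|_{q=1}\isoto K(\Ca_J)$ of Theorem~\ref{thm: Gro iso}. Specializing at $q=1$ in Theorem~\ref{th:monoidalTN}, we obtain that $\mathscr{A}(N) \seteq \mathscr{A}_{q^{1/2}}(N)|_{q=1}$ is isomorphic to $K(\T_N)|_{q=1}$, and hence, composing with $\phi_{\tFF}$, to $K(\Ca_J)$. The initial seed $\Bigl(\bigl\{q^{-(d_p,d_p)/4}[\M_N(p,0)]\bigr\}_{p\in\Kex_N},\tB_N\Bigr)$ of $\mathscr{A}_{q^{1/2}}(N)$ is sent by the composition to $\bigl(\bigl\{[\tFF(\M_N(p,0))]\bigr\}_{p\in\Kex_N},\tB_N\bigr)$, and by Theorem~\ref{thm: Factoring}, $\tFF(\M_N(p,0)) = \tFF(\FO_N(\M_\ttww(p,0))) \simeq \F(\M_\ttww(p,0))$. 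This identifies the distinguished seed specified in the statement.

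Next I would verify that cluster monomials correspond to real simple objects of $\Ca_J$. By Theorem~\ref{th:monoidalTN}(i), every cluster monomial in $\mathscr{A}_{q^{1/2}}(N)$ is the class (up to a power of $q^{1/2}$) of a real simple object $M$ in $\T_N$. Applying $\tFF$, Theorem~\ref{thm: Gro iso} guarantees that $\tFF(M)$ is simple in $\Ca_J$. Since $\tFF$ is a monoidal functor and $M\starop M$ is simple, $\tFF(M)\tens\tFF(M)\simeq\tFF(M\starop M)$ is simple as well, so $\tFF(M)$ is real. Thus every cluster monomial in $\mathscr{A}(N)$ corresponds, under the isomorphism $\mathscr{A}(N)\simeq K(\Ca_J)$, to the isomorphism class of a real simple object in $\Ca_J$, giving (i).

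For (ii), Theorem~\ref{th:monoidalTN}(ii) says that every cluster monomial in $\mathscr{A}_{q^{1/2}}(N)$ is a Laurent polynomial in the initial cluster variables with coefficients in $\Z_{\ge 0}[q^{\pm 1/2}]$. Specializing $q^{1/2}\mapsto 1$ preserves non-negativity and sends the initial cluster of $\mathscr{A}_{q^{1/2}}(N)$ to the initial cluster $\bigl\{[\F(\M_\ttww(p,0))]\bigr\}_{p\in\Kex_N}$ of $\mathscr{A}(N)$, so the Laurent positivity persists after the specialization, yielding~(ii).

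The main technical point is to be fully rigorous about the identification of the initial seeds under the isomorphism $\mathscr{A}(N)\simeq K(\Ca_J)$; that is, checking that the exchange matrix $\tB_N$ and the cluster variables $\{[\F(\M_\ttww(p,0))]\}_{p\in\Kex_N}$ indeed arise from specializing the quantum seed $[\Seed_N]$ at $q=1$ and then applying $\phi_{\tFF}$. This is essentially bookkeeping, using Theorem~\ref{thm: Factoring} to commute $\tFF\circ\FO_N$ with $\F$ on the determinantial modules. No genuinely new obstacle arises beyond combining the rigid frameworks established in Theorems~\ref{th:monoidalTN}, \ref{thm: Factoring}, and~\ref{thm: Gro iso}; the theorem is a direct consequence of these, together with the observation that exact monoidal functors preserve realness of simple objects.
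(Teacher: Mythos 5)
Your proposal is correct and follows exactly the route the paper takes: the paper derives Theorem \ref{Thm: main} directly from Theorem \ref{thm: Gro iso} (the ring isomorphism $K(\T_N)\vert_{q=1}\isoto K(\Ca_J)$ induced by $\tFF$, which sends simples to simples) combined with Theorem \ref{th:monoidalTN} (the monoidal categorification of $\mathscr{A}_{q^{1/2}}(N)$ by $\T_N$). Your write-up merely spells out the bookkeeping the paper leaves implicit — the seed identification via the quasi-commuting diagram of Theorem \ref{thm: Factoring}, the preservation of realness under the exact monoidal functor, and the specialization $q^{1/2}\mapsto 1$ for positivity — all of which is accurate.
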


\subsection{The category $\Ca_J$} \label{Sec: Ca_J}
 In this subsection,
we shall give explicitly families of quasi-good modules $\{V_a\}_{a \in J}$ which satisfy~\eqref{eq: general assumption}
for quantum affine algebras of type $A$, $B$, $C$ and $D$.
Thus
one can apply the results in the previous subsections. In particular, for quantum affine algebras of type $A$ and $B$, the category $\Ca_J$ coincides with the Hernandez-Leclerc category $\Ca_\g^0$
in Section~\ref{sec:HL}.

\subsubsection{Type A} \label{Sec: CJ for affine A}
Throughout this subsection, $\g^{(t)}$ denotes the affine Kac-Moody algebra of type $A^{(t)}_{N-1}$ $(t=1,2)$. Here $N\ge2$ if $t=1$ and
$N\ge 3$ if $t=2$.
Let us denote by $V^{(t)}(\varpi_i)$ the fundamental module over $\g^{(t)}$.
For each $a \in J=\Z$, define $V^{(t)}_a$ as follows(see \cite[\S 4.1]{KKK18A} and \cite[\S 3.1]{KKKO15C}):
$$  V^{(t)}_a \seteq V^{(t)}(\varpi_1)_{q^{2a}}.$$

Then, by Theorem~\ref{thm: denominators},~\cite[Lemma B.1]{AK} and~\cite[Theorem 3.5, Theorem 3.9]{Oh15}, we have the followings (see~\cite{KKK18A,KKKO15C} for details):
\begin{enumerate}[{\rm (i)}]
\item The family of quasi-good modules $\{ V^{(t)}_a \}_{a \in J}$ satisfies the conditions in~\eqref{eq: general assumption}.
\item The subcategory $\Ca_J$ coincides with the category $\Ca^{0}_{\g^{(t)}}$ in
\S\;\ref{sec:HL}.
\end{enumerate}

Then, by Theorem~\ref{thm: Factoring}, we have an exact functor
$$ \F^{(t)} \col \A=\soplus_{\be \in \rl^+_J} R^J(\be)\gmod \to \Ca^{0}_{\g^{(t)}}$$
which factors thorough $\T_N$ via $\tFF^{(t)}$
\begin{align*}
\raisebox{2em}{\xymatrix@C=8ex@R=3ex{\A\ar[rr]^-{\FO_N}\ar[dr]_-{\F^{(t)}}
&&\T_N.\ar[dl]^-{\tFF^{(t)}}\\
& \Ca^0_{\g^{(t)}}   }}
\end{align*}

For $i \in I_0$, $k \in \Z_{\ge 1}$ and $r \in \cor^\times$, let us denote by $W^{(i)}_{k,r}$  the {\it Kirillov-Reshetikhin module}
$$ W^{(i)}_{k,r} \seteq \hd(V^{(t)}(\varpi_i)_{rq^{2(k-1)}} \tens \cdots \tens  V^{(t)}(\varpi_i)_{rq^{2}} \tens V^{(t)}(\varpi_i)_{r}),   $$
which is known as a quasi-good module.

\begin{proposition}[{\cite[Proposition 3.9]{KKK18A},~\cite[Proposition 3.3]{KKKO15C}}]
For a segment $[a,b]$ with $\ell \seteq b-a+1 <N$, we have
$$ \F^{(t)}(L[a,b]) \simeq  \begin{cases}
V^{(t)}(\varpi_\ell)_{(-q)^{a+b}} & \text{ if } t=1, \text{ or } t=2 \text{ and } 1 \le \ell \le \lfloor N/2 \rfloor, \\
V^{(2)}(\varpi_{N-\ell})_{(-1)^N(-q)^{a+b}} & \text{ if } t=2 \text{ and } \lfloor N/2 \rfloor < \ell < N.
\end{cases}  $$
\end{proposition}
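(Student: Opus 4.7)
The plan is to reduce the statement via Lemma~\ref{lem: N> to 0}(iii) to an identification of the head of a tensor product of $V^{(t)}(\varpi_1)$'s with a spectral shift of a fundamental module. Since $\ell \seteq b-a+1 \le N-1 < N$, Lemma~\ref{lem: N> to 0}(iii) immediately gives
$$
\F^{(t)}(L[a,b]) \simeq \hd\bigl(V^{(t)}(\varpi_1)_{q^{2a}} \tens V^{(t)}(\varpi_1)_{q^{2(a+1)}} \tens \cdots \tens V^{(t)}(\varpi_1)_{q^{2b}}\bigr),
$$
and by definition the right hand side is the Kirillov--Reshetikhin module $W^{(1)}_{\ell,\,q^{2a}}$ over $U_q'(\g^{(t)})$. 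Hence the problem reduces to recognising $W^{(1)}_{\ell,\,q^{2a}}$ as a fundamental module with the spectral parameter stated in the proposition.

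For $t=1$ the identification $W^{(1)}_\ell \simeq V^{(1)}(\varpi_\ell)$ up to spectral parameter shift is classical in untwisted type $A$. The shift is pinned down by locating the ``centre'' of the endpoints $q^{2a}$, $q^{2b}$, which in the normalization of $\Cquiver_0(A^{(1)}_{N-1})$ is $(-q)^{a+b}$; the parity condition $p \equiv i+1 \mod 2$ that defines $\Cquiver_0(A^{(1)}_{N-1})$ is satisfied automatically because $a+b = 2a+\ell-1 \equiv \ell+1 \mod 2$. This gives $\F^{(1)}(L[a,b]) \simeq V^{(1)}(\varpi_\ell)_{(-q)^{a+b}}$.

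For $t=2$ the argument splits on $\ell$. When $1 \le \ell \le \lfloor N/2 \rfloor$ the node $\varpi_\ell$ is present on the twisted Dynkin diagram and the same KR-to-fundamental identification applies, yielding $V^{(2)}(\varpi_\ell)_{(-q)^{a+b}}$, with the appropriate spectral parameter conditions in $\Cquiver_0(A^{(2)}_{2n-1})$ holding by the same parity computation. When $\lfloor N/2 \rfloor < \ell < N$, there is no node $\varpi_\ell$ and one must re-express $W^{(1)}_{\ell,\,q^{2a}}$ in terms of $\varpi_{N-\ell}$. The required isomorphism can be deduced from the description of the underlying twisted structure in \cite{Oh15} (Theorems~3.5 and~3.9), or equivalently from \cite[Proposition~3.3]{KKKO15C}; it produces $V^{(2)}(\varpi_{N-\ell})$ at spectral parameter $(-1)^N(-q)^{a+b}$, the extra factor $(-1)^N$ arising from the Dynkin involution $i \mapsto N-i$ combined with the $A^{(2)}_{N-1}$ peculiarity $V(\varpi_n)_x \simeq V(\varpi_n)_{-x}$.

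The genuinely non-trivial part is not the reduction to KR modules nor the qualitative identification with fundamental modules, but the precise bookkeeping of spectral parameters. The main hurdle will be verifying that the normalizations built into the Schur--Weyl functor $\F^{(t)}$, the convention $V^{(t)}_a = V^{(t)}(\varpi_1)_{q^{2a}}$, and the labelling of $\Cquiver_0(\g^{(t)})$ combine to produce exactly $(-q)^{a+b}$ (respectively $(-1)^N(-q)^{a+b}$) rather than some other sign or power; once these normalizations are fixed, the identification is essentially contained in \cite[Proposition~3.9]{KKK18A} and \cite[Proposition~3.3]{KKKO15C} and no further argument is required.
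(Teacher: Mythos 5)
Your first step is fine: Lemma~\ref{lem: N> to 0}(iii) does give $\F^{(t)}(L[a,b]) \simeq \hd\bigl(V^{(t)}(\varpi_1)_{q^{2a}} \tens \cdots \tens V^{(t)}(\varpi_1)_{q^{2b}}\bigr)$, and since the paper offers no proof of this proposition beyond the citations to \cite{KKK18A} and \cite{KKKO15C}, deferring the spectral-parameter bookkeeping to those references is consistent with what the paper itself does. But the intermediate identification is wrong in a way that matters. The paper defines $W^{(i)}_{k,r} \seteq \hd\bigl(V^{(t)}(\varpi_i)_{rq^{2(k-1)}} \tens \cdots \tens V^{(t)}(\varpi_i)_{r}\bigr)$, i.e.\ with spectral parameters in \emph{decreasing} order, whereas your tensor product has them in \emph{increasing} order. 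The head of a tensor product of simple modules is not invariant under reordering: when $M\tens N$ is reducible of length two one has $\hd(M\tens N)\simeq\soc(N\tens M)$, so the two orderings have non-isomorphic heads. Concretely, for $\ell=2$ the product $V(\varpi_1)_{q^{2a}}\tens V(\varpi_1)_{q^{2a+2}}$ has the two composition factors $V(\varpi_2)_{(-q)^{2a+1}}$ (of dimension $\binom{N}{2}$) and $W^{(1)}_{2,q^{2a}}$ (of dimension $\binom{N+1}{2}$); the increasing order has the fundamental module as its head and the decreasing order has the KR module as its head. So the right-hand side of your first display is \emph{not} $W^{(1)}_{\ell,q^{2a}}$.

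The second error compounds the first: the claim that ``$W^{(1)}_{\ell}\simeq V^{(1)}(\varpi_\ell)$ up to spectral shift is classical'' is false, since these modules have different dimensions ($\binom{N+\ell-1}{\ell}$ versus $\binom{N}{\ell}$); the correct classical statement is $W^{(\ell)}_{1,r}=V(\varpi_\ell)_r$, and you have transposed the roles of the node index and the number of boxes. Your two mistakes cancel to land on the stated formula, but the argument as written, taken literally, would instead produce the symmetric-power KR module $W^{(1)}_{\ell}$, contradicting the proposition. The fix is to drop the KR detour entirely: the increasing-order head $\hd\bigl(V(\varpi_1)_{q^{2a}}\tens\cdots\tens V(\varpi_1)_{q^{2b}}\bigr)$ is identified \emph{directly} with the fundamental module $V^{(t)}(\varpi_\ell)_{(-q)^{a+b}}$ (for $t=1$, or $t=2$ with $\ell\le\lfloor N/2\rfloor$) by an induction on $\ell$ using the surjections $V(\varpi_{\ell-1})_{x}\tens V(\varpi_1)_{y}\twoheadrightarrow V(\varpi_\ell)_{z}$ at the appropriate parameters together with Lemma~\ref{lem: non-simple tensor} --- exactly the pattern the paper carries out in detail for types $C$ and $D$ in Propositions~\ref{prop:segmentimage C step1} and~\ref{prop:segmentimage D}, and the content of the cited \cite[Proposition 3.9]{KKK18A} and \cite[Proposition 3.3]{KKKO15C}.
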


Thus we can conclude that the image of an $R^J$-module $\Wlmj{\ell}{m}{j}$ in~\eqref{eq: def Wlmj} by $\F^{(t)}$
is a Kirillov-Reshetikhin module:

\begin{corollary} For an $R^J$-module $\Wlmj{\ell}{m}{j}$ $(\ell <N)$, we have
$$ \F^{(t)}(\Wlmj{\ell}{m}{j}) \simeq  \begin{cases}
W^{(\ell)}_{m,\;(-q)^{2j-\ell+1}} & \text{ if } t=1, \text{ or } t=2 \text{ and } 1 \le \ell \le \lfloor N/2 \rfloor,\\[1ex]
W^{(N-\ell)}_{m,\;(-1)^N(-q)^{2j-\ell+1}} & \text{ if }  t=2 \text{ and } \lfloor N/2 \rfloor < \ell < N.\\
\end{cases}
$$
\end{corollary}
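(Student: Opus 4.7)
The plan is to compute $\F^{(t)}$ termwise on the defining epimorphism of $\Wlmj{\ell}{m}{j}$ and then read off the image as the head of the tensor product defining the Kirillov--Reshetikhin module. By \eqref{eq: def Wlmj} there is a surjective $R^J$-homomorphism
$$L[j-\ell+m,j+m-1]\conv L[j-\ell+m-1,j+m-2]\conv \cdots \conv L[j-\ell+1,j] \twoheadrightarrow \Wlmj{\ell}{m}{j}.$$
Since $\F^{(t)}$ is exact by Theorem~\ref{thm:exact} (the quiver $\Gamma^J$ being of type $A_\infty$) and monoidal as recalled in \S\,\ref{Subsec: functors}, applying $\F^{(t)}$ produces a surjective $U_q'(\g^{(t)})$-homomorphism
$$ \F^{(t)}\bigl(L[j-\ell+m,j+m-1]\bigr)\otimes\cdots\otimes \F^{(t)}\bigl(L[j-\ell+1,j]\bigr) \twoheadrightarrow \F^{(t)}(\Wlmj{\ell}{m}{j}).$$

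Next I invoke the preceding proposition on each tensor factor. The $k$-th segment from the right, $[j-\ell+k,j+k-1]$, has length $\ell<N$ and endpoint-sum $2j-\ell+2k-1$, so
$$\F^{(t)}\bigl(L[j-\ell+k,j+k-1]\bigr)\simeq
\begin{cases}V^{(t)}(\varpi_\ell)_{(-q)^{2j-\ell+2k-1}}&\text{in case one,}\\[.5ex]
V^{(2)}(\varpi_{N-\ell})_{(-1)^N(-q)^{2j-\ell+2k-1}}&\text{in case two.}
\end{cases}$$
Setting $r\seteq(-q)^{2j-\ell+1}$ (resp.\ $r\seteq(-1)^N(-q)^{2j-\ell+1}$), the $k$-th spectral parameter becomes $rq^{2(k-1)}$, so the domain of the epimorphism above is exactly
$V^{(t)}(\varpi_\ell)_{rq^{2(m-1)}}\otimes\cdots\otimes V^{(t)}(\varpi_\ell)_{r}$
(resp.\ with $\varpi_{N-\ell}$ in place of $\varpi_\ell$). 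By the definition of the Kirillov--Reshetikhin module, the head of this tensor product is $W^{(\ell)}_{m,r}$ (resp.\ $W^{(N-\ell)}_{m,r}$), which is in particular simple.

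Finally, I need to identify $\F^{(t)}(\Wlmj{\ell}{m}{j})$ with that head. Since every segment appearing in the ordered multisegment associated with $\Wlmj{\ell}{m}{j}$ has length $\ell<N$, Proposition~\ref{prop: le N do not vanish}(ii) (equivalently Lemma~\ref{lem: consequence via functor}) shows that $\FO_N(\Wlmj{\ell}{m}{j})$ is a non-zero simple object of $\T_N$. Combining the quasi-commutative square of Theorem~\ref{thm: Factoring} with Theorem~\ref{thm: Gro iso} (which says $\tFF^{(t)}$ sends simples to simples bijectively, as $\Ca_J=\Ca_{\g^{(t)}}^0$ in types $A_{N-1}^{(1)}$ and $A_{N-1}^{(2)}$), we obtain that $\F^{(t)}(\Wlmj{\ell}{m}{j})\simeq \tFF^{(t)}\bigl(\FO_N(\Wlmj{\ell}{m}{j})\bigr)$ is a non-zero simple $U_q'(\g^{(t)})$-module. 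Being a non-zero simple quotient of the tensor product above, it must coincide with its unique simple quotient, namely the claimed Kirillov--Reshetikhin module. No step here is technically delicate; the only bookkeeping is the arithmetic of spectral parameters, which is the computation displayed above.
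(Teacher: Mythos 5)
Your proof is correct and is essentially the argument the paper leaves implicit: the paper states this corollary with no written proof, treating it as an immediate consequence of the preceding proposition via exactness and monoidality of $\F^{(t)}$. Your spectral-parameter bookkeeping checks out, and your use of the factorization through $\T_N$ together with the simplicity/non-vanishing statements (Proposition~\ref{prop: le N do not vanish} and Theorem~\ref{thm: Gro iso}) correctly supplies the one point that needs justification, namely that $\F^{(t)}(\Wlmj{\ell}{m}{j})$ is a non-zero simple quotient of the tensor product and hence equals its simple head.
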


Then the exact sequence~\eqref{eq: T-system in qHecke} in $R^J\gmod$ can be translated into the exact sequence
\begin{equation}\label{eq: T-system A1}
\begin{aligned}
0 \to  W^{(\ell)}_{m-1,\;(-q)^{k+2}} \tens
W^{(\ell)}_{m+1,\;(-q)^{k}}  & \to  W^{(\ell)}_{m,\;(-q)^{k}} \tens
W^{(\ell)}_{m,\;(-q)^{k+2}} \\ & \hspace{-2.3ex} \to
W^{(\ell-1)}_{m,\;(-q)^{k+1}} \tens W^{(\ell+1)}_{m,\;(-q)^{k+1}}
\to 0
\end{aligned}
\end{equation}
in $\Ca_{\g^{(1)}}^{0}$ if $t=1$, and
the exact sequence 
\begin{equation}\label{eq: T-system A2}
\begin{aligned}
 0 \to  \pi^{(2)}\bigl(W^{(\ell)}_{m-1,(-q)^{k+2}}\bigr) \tens \pi^{(2)}\bigl(W^{(\ell)}_{m+1,(-q)^{k}}\bigr)
 &\to  \pi^{(2)}\bigl(W^{(\ell)}_{m,(-q)^{k}}\bigr) \tens \pi^{(2)}\bigl(W^{(\ell)}_{m,(-q)^{k+2}}\bigr) \\
& \hspace{-4.3ex} \to
\pi^{(2)}\bigl(W^{(\ell-1)}_{m,(-q)^{k+1}}\bigr) \tens
\pi^{(2)}\bigl(W^{(\ell+1)}_{m,(-q)^{k-1}}\bigr) \to 0
\end{aligned}
\end{equation}
in $\Ca_{\g^{(2)}}^{0}$ if $t=2$, where
$$\pi^{(2)}\bigl(W^{(\ell)}_{m,r}\bigr) \seteq \begin{cases}
W^{(\ell)}_{m,r} & \text{ if } 0 \le \ell \le \lfloor N/2 \rfloor, \\
W^{(N-\ell)}_{m,(-1)^{N-1}r} & \text{ if } \lfloor N/2 \rfloor< \ell \le N.
\end{cases} $$
Here we understand $W^{(0)}_{k,r}$ and $W^{(N)}_{k,r}$ as trivial modules.
Note that the exact sequences~\eqref{eq: T-system A1} and~\eqref{eq: T-system A2} are well-known as (twisted) T-system of $U_q'(A^{(t)}_{N-1})$~\cite{Her06,Her10,KNS,Nak}.

Since $\Ah \tens_{\Z[q^{\pm 1}]} K(\T_N)$ has a quantum cluster algebra structure, the isomorphism $\phi_{\tFF^{(t)}}$ in Theorem~\ref{thm: Gro iso} endows the {\it cluster algebra} structure on
$K\bigl(\Ca^{0}_{\g^{(t)}}\bigr)$ whose initial seed can be identified with
$$ [\Seed_N]_{q=1} \seteq \left( \bigl\{ [ \F^{(t)}(\Wlmj{\ell}{m}{\jj_p})  ] \bigr\}_{p \in \K_{\le N-1}}, \tB_N \right).$$

Now we can conclude the following theorem:

\begin{theorem} \label{thm apllication A}
The category $\Ca^0_{\g^{(t)}}$ $(t=1,2)$ gives a monoidal categorification of the cluster algebra $\mathscr{A}(N)$.
Hence we have the followings$\col$
\begin{enumerate}
\item[{\rm (i)}] Each cluster monomial in $\mathscr{A}(N)$ corresponds to an isomorphism class of a real simple object in $\Ca^{0}_{\g^{(t)}}$.
\item[{\rm (ii)}] Each cluster monomial in $\mathscr{A}(N)$ is a Laurent polynomial of the initial cluster variables with coefficient in $\Z_{\ge0}$.
\end{enumerate}
\end{theorem}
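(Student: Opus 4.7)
The plan is to deduce Theorem~\ref{thm apllication A} as a direct application of the general monoidal categorification result in Theorem~\ref{Thm: main} to the specific family of quasi-good modules $\{V^{(t)}_a = V^{(t)}(\varpi_1)_{q^{2a}}\}_{a\in J}$. The whole machinery of Sections~\ref{Sec: Module}--\ref{Sec: Main} is set up so that once we check the three items \eqref{it:a}, \eqref{it:b}, and $\Ca_J=\Ca^0_{\g^{(t)}}$ that are explicitly listed at the start of \S\,\ref{Sec: CJ for affine A}, the conclusion is automatic. So the proof is essentially a verification that the hypotheses of Theorem~\ref{Thm: main} hold in types $A^{(1)}_{N-1}$ and $A^{(2)}_{N-1}$.

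First, I would compute the quiver $\Gamma^J$ from Theorem~\ref{thm: denominators}. In type $A^{(1)}_{N-1}$, $d_{1,1}(z)=(z+q^2)$ has a single simple zero, and comparing with $V^{(1)}_a$ and $V^{(1)}_b$ we see that the ratio of spectral parameters equals $-q^2$ only when $b=a+1$; hence $\Gamma^J$ has exactly one arrow from $a$ to $a+1$ and is the quiver of type $A_\infty$. In type $A^{(2)}_{N-1}$, $d_{1,1}(z)=(z-(-q)^2)(z+q^N)$ has two simple zeros, but only the first contributes an arrow in the index set $J=\Z$; again $\Gamma^J$ is of type $A_\infty$. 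This establishes \eqref{it:a}. For \eqref{it:b}, condition~\eqref{it:3} follows from the dual formulas $\bl V^{(t)}(\varpi_1)_x\br^{**}\simeq V^{(t)}(\varpi_1)_{x(p^*)^2}$ together with the fact that, for the conventions in Table~\ref{table:Dynkin}, $(p^*)^2=q^{2N}$, so ${}^{**}V^{(t)}_a\simeq V^{(t)}_{a+N}$. Conditions~\eqref{it:1} and~\eqref{it:2} are already established in \cite[\S 4.1]{KKK18A} for type $A^{(1)}_{N-1}$ and in \cite[\S 3.1]{KKKO15C} for type $A^{(2)}_{N-1}$; the point is that $V_a\tens V_{a+1}\tens\cdots \tens V_{a+k-1}$ has as its head the $k$-th antisymmetric fundamental module $V^{(t)}(\varpi_k)_{(-q)^{2a+k-1}}$ (or its dual, in the second half of the Dynkin diagram when $t=2$) and this module is non-trivial for $0<k<N$ but becomes trivial for $k=N$.

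Next, I would verify that $\Ca_J=\Ca^0_{\g^{(t)}}$. Since $\Ca_J$ contains every $V^{(t)}_a$ by definition and is closed under tensor products and subquotients, it contains the head $V^{(t)}(\varpi_\ell)_{(-q)^{a+b}}$ of $V^{(t)}_a\tens\cdots\tens V^{(t)}_b$ (and, in type $A^{(2)}_{N-1}$ for $\ell>\lfloor N/2\rfloor$, the corresponding $V^{(2)}(\varpi_{N-\ell})_{(-1)^N(-q)^{a+b}}$ obtained by the duality identification). Running over all $a\le b$ in the appropriate parity, these exhaust the fundamental modules at the spectral parameters listed in $\Cquiver_0(A^{(t)}_{N-1})$ in~\eqref{eq:S0B^(1)_n}, so $\Ca_J\supseteq\Ca^0_{\g^{(t)}}$. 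The reverse inclusion is immediate since $V^{(t)}_a\in\Ca^0_{\g^{(t)}}$.

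Once these verifications are in place, Theorem~\ref{Thm: main} applies and yields exactly the two conclusions (i) and (ii): cluster monomials in $\mathscr{A}(N)$, with initial cluster given by the classes $[\F^{(t)}(\M_\ttww(p,0))]$ for $p\in\Kex_N$, correspond to isomorphism classes of real simple objects in $\Ca^0_{\g^{(t)}}$, and they are positive Laurent polynomials in the initial cluster variables. The only substantive content beyond bookkeeping is the computation that the heads of the tensor products are the required Kirillov--Reshetikhin modules; the main obstacle is therefore really the verification of \eqref{it:1}, \eqref{it:2}, but this is already in the literature, and the T-system exact sequences~\eqref{eq: T-system A1} and~\eqref{eq: T-system A2} provide an independent sanity check via the image under $\F^{(t)}$ of the quiver Hecke exact sequence~\eqref{eq: T-system in qHecke}.
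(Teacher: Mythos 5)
Your proposal is correct and follows essentially the same route as the paper: verify the conditions \eqref{eq: general assumption} (with \eqref{it:1}, \eqref{it:2} imported from \cite{KKK18A,KKKO15C} and \eqref{it:3} from the double-dual shift by $(p^*)^2$) and the identification $\Ca_J=\Ca^0_{\g^{(t)}}$, then invoke Theorems~\ref{thm: Factoring}, \ref{thm: Gro iso} and \ref{Thm: main}. The only blemish is a harmless sign slip: for $A^{(1)}_{N-1}$ one has $d_{1,1}(z)=z-(-q)^2=z-q^2$, so the zero occurs at spectral ratio $q^2$ (i.e.\ $b=a+1$), not $-q^2$; this does not affect the conclusion that $\Gamma^J$ is of type $A_\infty$.
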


As we have mentioned in Section~\ref{subsec: HL cluster -}, Hernandez and Leclerc give a cluster algebra structure  $\mathscr{A}$ on the Grothendieck ring of a ``half'' of $\Ca_{A_{N-1}^{(1)}}^0$, denoted by $\Ca_{A^{(1)}_{N-1}}^-$ in~\cite{HL16},
which is associated to the initial quiver $G^-_{A^{(1)}_{N-1}}$ with infinite rank. For example, the quiver $G^-_{A^{(1)}_{3}}$ is given as follows:

$$\raisebox{8em}{\scalebox{0.8}{\xymatrix@C=4ex@R=4ex{
 \vdots\ar[d] &\vdots\ar[d]  &\vdots\ar[d]   & \\
 \circ\ar[ur]\ar[d] & \circ \ar[d]\ar[r]\ar[l]  & \circ\ar[d]\ar[ul] \\
 \circ\ar[ur]\ar[d] & \circ \ar[d]\ar[r]\ar[l]  & \circ\ar[d]\ar[ul] \\
 \circ\ar[ur] & \circ \ar[r]\ar[l] & \circ\ar[ul] }}}
$$
Note that the quiver $G^-_{A^{(1)}_{N-1}}$ satisfies~\eqref{eq: condition B} and does not have frozen vertex either.

The quiver $G^-_{A^{(1)}_{N-1}}$ is mutation equivalent to $\overline{\tQ}_N$ via a sequence of mutation of {\it infinite} length.
To show that, we need to introduce sequences of mutations:
For each $s \in \Z_{\ge 1}$, let ${}_s\Kex_{0}$ (resp.\ ${}_s\Kex_{1}$)
be the vertices $(\ell,m)$ in $\Kex_N$ with $m=s$ and $\ell \equiv 0 \mod 2$ (resp.\ $\ell \equiv 1 \mod 2$).
Note that ${}_s\Kex_{0}$ and ${}_s\Kex_{1}$ are finite subsets of $\Kex_N$.
Let $\Seq^{(s,0)}_{N}$ (resp.\ $\Seq^{(s,1)}_{N}$) be the ascending sequence on $_{s}\Kex_{0}$ (resp.\  $_{s}\Kex_{1}$).
Finally, we set the sequence $\Seq_{N-1}^{{\rm HL}}$ as follows:
\begin{align*}  \Seq^{{\rm HL}}_{N-1} =
\left( \bigl(\Seq_{N}^{(2,1)},\Seq_{N}^{(3,0)}, \cdots \bigr), \bigl(\Seq_{N}^{(3,1)},\Seq_{N}^{(4,0)},\cdots  \bigr), \cdots   \right).
\end{align*}
Then we have
$$  \mu_{\Seq_{N-1}^{{\rm HL}}}\circ \mu_{\Sev_N} (\overline{\tQ}_N) \simeq \mu_{\Seq_{N-1}^{{\rm HL}}}\circ \mu_{\Sod_N} (\overline{\tQ}_N)  \simeq   \mu_{\Seq_{N-1}^{{\rm HL}}}\bigl( (\overline{\tQ}_N)^{{\rm op}} \bigr)  \simeq   G^-_{A^{(1)}_{N-1}},$$
as quivers (see also~\cite[Exercise 2.6.5, Exercise 2.6.6]{FWZ}).

We remark here that
\begin{itemize}
\item each mutation in $\mu_{\Seq_{N-1}^{{\rm HL}}}\circ \mu_{\Sev_N}$ corresponds to the T-system described in~\eqref{eq: T-system A1},
\item for $p \in \Z_{\ge 1}$ with $c(p)=(\ell,m)$ and $\ell<N$, $\mu_{\Seq_{N-1}^{{\rm HL}}}\circ \mu_{\Sev_N}( W^{(\ell)}_{m,(-q)^{2\jj_p-\ell+1}} )$ is a Kirillov-Reshetikhin module. More precisely,
$$\mu_{\Seq_{N-1}^{{\rm HL}}}\circ \mu_{\Sev_N}( W^{(\ell)}_{m,(-q)^{2\jj_p-\ell+1}} ) \simeq \begin{cases}
W^{(\ell)}_{m,(-q)^1} & \text{ if } \ell \equiv 0 \mod 2,\\
W^{(\ell)}_{m,(-q)^{2}} & \text{ if } \ell \equiv 1 \mod 2.
\end{cases}$$
\end{itemize}

\begin{remark}\label{eq: finite sequence2}
As Remark~\ref{eq: finite sequence}, for each $k \in \Z_{\ge 1}$, there exists a finite subsequence of mutations $\mu_\Seq$  of $\mu_{\Seq_{N-1}^{{\rm HL}}}\circ \mu_{\Sev_N}$ such that
\begin{align*}
\mu_{\Seq}( W^{(\ell)}_{m,(-q)^{2\jj_p-\ell+1}} ) & \simeq
W^{(\ell)}_{m,(-q)^{2-\true_\ell}}
\end{align*}
for  any $(\ell,m)\in  I_0 \times \Z_{\ge 1}$ with $\ell+m \le k$,
where $\true_\ell \seteq \true(\ell \equiv 0 \mod 2)$.
\end{remark}

As a corollary of Theorem~\ref{thm apllication A},
we can give a proof of Conjecture~\ref{conj: HL}
for $\Ca^-_{A^{(1)}_{N-1}}$:

\begin{theorem} \label{thm: HL}
The cluster monomials of $\mathscr{A}$ associated with $\g$ of type $A_{N-1}^{(1)}$ in Theorem~\ref{thm: HL16} can be identified with
the real simple modules in $\Ca^-_{A_{N-1}^{(1)}}$.
\end{theorem}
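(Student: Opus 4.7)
The strategy is to bootstrap from the monoidal categorification $\Ca^0_{A_{N-1}^{(1)}}$ of $\mathscr{A}(N)$ (Theorem~\ref{thm apllication A}) to the desired categorification of $\mathscr{A}$ by transporting cluster monomials along the mutation equivalence $\overline{\tQ}_N\rightsquigarrow G^-_{A_{N-1}^{(1)}}$ discussed just before the theorem.

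For the setup, note that in type $A_{N-1}^{(1)}$ we have $\Ca_J=\Ca^0_\g$, so Theorem~\ref{thm apllication A} identifies $K(\Ca^0_\g)$ with $\mathscr{A}(N)$ as cluster algebras, with initial cluster $\{[\F(\W^{(\ell)}_{m,\jj_p})]\}_{p\in\Kex_N}$ and quiver $\overline{\tQ}_N$, and it says every cluster monomial corresponds to a real simple of $\Ca^0_\g$. On the other hand, Theorem~\ref{thm: HL16} identifies $\mathscr{A}$ with $K(\Ca^-_\g)$, hence (via the natural embedding $K(\Ca^-_\g)\hookrightarrow K(\Ca^0_\g)$) with a subalgebra of $\mathscr{A}(N)$; the initial cluster $\mathbf{z}$ of $\mathscr{A}$ is the family of Kirillov--Reshetikhin modules $W^{(\ell)}_{m,(-q)^{2-\true_\ell}}$ ($\ell<N$) attached to the vertices of $G^-$. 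The bullet points preceding Remark~\ref{eq: finite sequence2} identify the (infinite) sequence $\mu_{\Seq^{\rm HL}_{N-1}}\circ\mu_{\Sev_N}$ as the coordinate change carrying the initial seed of $\mathscr{A}(N)$ to the initial seed of $\mathscr{A}$.

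The main step is a finite-rank approximation. Given any cluster monomial $y$ of $\mathscr{A}$, by the definition of cluster algebra of infinite rank, $y$ is obtained from $\mathbf{z}$ by a \emph{finite} mutation sequence $\Seq'$ touching only finitely many vertices of $G^-$. Choose $k\in\Z_{\ge 1}$ so that every vertex involved lies in the truncation $\{(\ell,m):\ell+m\le k\}$. By Remark~\ref{eq: finite sequence2}, a finite subsequence $\mu_\Seq$ of $\mu_{\Seq^{\rm HL}_{N-1}}\circ\mu_{\Sev_N}$, applied to the initial seed of $\mathscr{A}(N)$, already produces a seed whose restriction to this truncation coincides with the restriction of the initial seed of $\mathscr{A}$ both as a quiver and as a list of cluster variables in $K(\Ca^0_\g)$. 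Consequently $\Seq'\circ\mu_\Seq$ is a finite mutation sequence inside $\mathscr{A}(N)$ whose output equals $y$, so $y$ is itself a cluster monomial of $\mathscr{A}(N)$. Theorem~\ref{thm apllication A} then gives $y=[M]$ for a real simple module $M\in\Ca^0_\g$.

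To finish, $M$ must lie in $\Ca^-_\g$: the classes of simples of $\Ca^-_\g$ form part of the basis of $K(\Ca^0_\g)$ indexed by its simples, and $y=[M]\in K(\Ca^-_\g)$, so $M$ is already a simple of $\Ca^-_\g$. The main obstacle I anticipate is the seed-matching used in the previous paragraph: one must verify both that $\mu_\Seq$ genuinely produces the restricted initial seed of $\mathscr{A}$ on the chosen window (quiver and cluster variables), and that subsequent mutations in $\mu_{\Seq^{\rm HL}_{N-1}}\circ\mu_{\Sev_N}$ occurring outside the window do not later affect the window under the local mutation rule~\eqref{eq: mutation in a direction k}, so that the two mutation sequences $\Seq'$ (applied in $\mathscr{A}$) and $\Seq'\circ\mu_\Seq$ (applied in $\mathscr{A}(N)$) yield the same element of $K(\Ca^0_\g)$. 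Both points are combinatorial consequences of the layer-by-layer structure of $\mu_{\Seq^{\rm HL}_{N-1}}$ described in the bullet points, together with the analysis of incoming/outgoing arrows already carried out in Lemma~\ref{lem: in out} and Lemma~\ref{lem: seq mu in TN}.
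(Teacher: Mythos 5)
Your overall strategy --- transporting cluster monomials of $\mathscr{A}$ back to $\mathscr{A}(N)$ through the mutation equivalence $\mu_{\Seq_{N-1}^{\mathrm{HL}}}\circ\mu_{\Sev_N}$ and its finite truncations (Remark~\ref{eq: finite sequence2}), then invoking Theorem~\ref{thm apllication A} --- is the same skeleton the paper uses, and the finite-rank approximation step is handled correctly. But there is a genuine gap in your seed-matching step, and it is exactly where the paper has to do real work.

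You assert that a finite truncation $\mu_\Seq$ of $\mu_{\Seq_{N-1}^{\mathrm{HL}}}\circ\mu_{\Sev_N}$, applied to the initial seed of $\mathscr{A}(N)$, reproduces on a window the initial seed of $\mathscr{A}$ ``both as a quiver and as a list of cluster variables in $K(\Ca^0_\g)$'', and that $K(\Ca^-_\g)$ embeds naturally into $K(\Ca^0_\g)$. Neither holds with the paper's conventions. The mutated seed has cluster variables $W^{(\ell)}_{m,(-q)^{2-\true_\ell}}$ (see the bullet points preceding Remark~\ref{eq: finite sequence2}), whereas the Hernandez--Leclerc initial seed consists of the modules $W^{(\ell)}_{m,(-q)^{\true_\ell-2m+1}}$; these are different modules, not merely differently labelled ones. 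Worse, $\Cquiver_-(A^{(1)}_{N-1})$ imposes the parity $p\equiv i\mod 2$ while $\Cquiver_0(A^{(1)}_{N-1})$ imposes $p\equiv i+1\mod 2$, so $\Ca^-_\g$ is not a subcategory of $\Ca^0_\g$ at all: the two are related only by a spectral parameter shift. Hence the conclusion that a cluster monomial $y$ of $\mathscr{A}$ ``is itself a cluster monomial of $\mathscr{A}(N)$'' cannot be read off literally.

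The missing ingredient is the bar involution. The paper establishes in \eqref{eq: bar behaviour} that $\overline{W^{(\ell)}_{m,(-q)^{2-\true_\ell}}}\simeq W^{(\ell)}_{m,(-q)^{\true_\ell-2m}}$, so the HL initial variables are obtained from the mutated $\mathscr{A}(N)$-seed by applying $M\mapsto\overline{M}$ followed by the parameter shift by $-q$, and moreover that the bar involution carries the exchange exact sequences to exchange exact sequences (it swaps sub and quotient, which is harmless for the exchange relation in the Grothendieck ring), hence commutes with arbitrary finite mutation sequences. Only after composing with this involution-plus-shift --- an operation that manifestly preserves simplicity and reality of modules --- does the transport of cluster monomials from $\mathscr{A}$ to $\mathscr{A}(N)$ go through. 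Without it your argument matches the wrong seed and the final identification does not follow.
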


\begin{proof}

By \eqref{eq: bar structure} and \cite{AK,Kas02}, one can check that
\begin{eqnarray} &&
\parbox{85ex}{
\begin{itemize}
\item[{\rm (i)}] for an exact sequence $0 \to C \to A' \otimes A \to B \to 0$ in $\Ca_{A^{(1)}_{N-1}}$, we have an exact sequence
$0 \to \overline{C} \to \overline{A} \otimes \overline{A'} \to \overline{B}\to 0$   in $\Ca_{A^{(1)}_{N-1}}$ and hence
$$0 \to \overline{B} \to \overline{A'} \otimes \overline{A} \to \overline{C}\to 0,$$
\item[{\rm (ii)}] $\overline{W^{(\ell)}_{m,(-q)^{2-\true_\ell }}} \simeq W^{(\ell)}_{m,(-q)^{ \true_\ell-2m }}$.
\end{itemize}
}\label{eq: bar behaviour}
\end{eqnarray}

Note that the cluster variables
$\{ z_{\ell,\true_\ell-2m+1} \}_{(\ell,m) \in I_0 \times \Z_{\ge 1}}$ of the initial seed
$\Seed_{HL}$ of $\mathscr{A}$ are identified with $\{
W^{(\ell)}_{m,(-q)^{\true_\ell-2m+1}} \}_{(\ell,m) \in I_0
\times \Z_{\ge 1}}$ via {\it truncated $q$-characters} (see
\cite[(3), \S 3.2.3]{HL16}). Thus cluster monomials of $\Seed_{HL}$
can be identified with the tensor product of modules in $\left\{
W^{(\ell)}_{m,(-q)^{\true_\ell-2m+1}} \right\}_{(\ell,m)
\in I_0 \times \Z_{\ge 1}}$, which can be obtained from the set of modules
$$\left\{ \overline{W^{(\ell)}_{m,(-q)^{
2-\true_\ell }}} = W^{(\ell)}_{m,(-q)^{ \true_\ell-2m }}
\right\}_{(\ell,m) \in I_0 \times \Z_{\ge 1}}$$ by taking parameter
shift by $-q$.

Thus, by taking the finite sequence of mutations $\mu_\Seq$ in
Remark~\ref{eq: finite sequence2} for $k \gg 0$, \eqref{eq: bar
behaviour} implies that, for $M \in \{ W^{(\ell)}_{m,(-q)^{
2-\true_\ell} } \}_{(\ell,m) \in I_0 \times \Z_{\ge 1}}$ and
a finite sequence $((\ell_i,m_i))_{1 \le i \le r} \in \bl I_0 \times
\Z_{\ge 1}\br^r$, we have
$$ \overline{\mu_{(\ell_r,m_r)} \circ \cdots  \circ \mu_{(\ell_1,m_1)}( M )} \simeq \mu_{(\ell_r,m_r)} \circ \cdots  \circ \mu_{(\ell_1,m_1)}(\overline{M}).$$
Hence Theorem~\ref{thm apllication A} tells that any cluster
monomial of $\mathscr{A}$ can be identified with a real simple in
$\Ca^-_{A_{N-1}^{(1)}}$.
\end{proof}

\subsubsection{Type B} Now $\g$ denotes the affine Kac-Moody algebra of type $B^{(1)}_{n}$ ($n\ge2$) and set $N=2n$.
For each $a \in J=\Z$, define $i_a \in I_0$ as follows (see \cite[\S 2.2]{KKO18}):
$$ i_a \seteq  \begin{cases}
 \ \ n & \text{ if } a \equiv -1,0 \mod N, \\
 \ \ 1 & \text{ otherwise}.
\end{cases}$$

We define a map $X\col J \to \cor^\times$ as follows:
\begin{align*}
& X(0) = q^0, \ \  X(j) =q_\kappa \,q^{2(j-1)} \ (1 \le j \le N-2), \ \  X(N-1) = q^{3N-5}
\end{align*}
where $q_\kappa=(-1)^{n+1}q^{n+1/2}$,
and extend it by
\begin{align} \label{eq: double dual}
X(j+kN)\seteq X(j)q^{k(2N-2)}=X(j)(p^{*})^{2k} \quad \text{ for }0\le j\le N-1 \ \text{ and } \ k\in\Z.
\end{align}

For each $a \in J=\Z$, define $V_a$ as follows:
$$  V_a \seteq V(\varpi_{i_a})_{X(a)}.$$

By~\eqref{eq:denom B^(1) le n} and~\eqref{eq:denom B^(1) n}, the quiver $\Gamma^J$ in~\eqref{eq: GammaJ} is of type $\Ainf$.
Furthermore,~\eqref{eq: double dual} tells that the family of quasi-good modules $\{ V_a \}$ satisfies ${\rm (iii)}$ in~\eqref{eq: general assumption}.

\begin{proposition}[{\cite[Proposition 2.4.2]{KKO18},\cite[\S 4]{Oh15}}] \hfill
\bnum
\item The family of quasi-good modules $\{ V_a \}_{a \in J}$ satisfies the conditions in~\eqref{eq: general assumption}.
\item The subcategory $\Ca_J$ coincides with the category $\Ca^{0}_{\g}$ in~\eqref{eq:S0B^(1)_n}.
\end{enumerate}
\end{proposition}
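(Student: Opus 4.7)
The plan is to verify conditions (a) and (b) of~\eqref{eq: general assumption} with $N=2n$ by direct computation from the denominator formulas in Theorem~\ref{thm: denominators} together with the explicit definition of $X$, and then to identify $\Ca_J$ with $\Ca^0_\g$ by extracting every fundamental module in $\Cquiver_0(B^{(1)}_n)$ as a simple subquotient of a tensor product of the $V_a$'s. Condition (a) follows immediately by computing $d_{i_a,i_b}\bigl(X(b)/X(a)\bigr)$ from Theorem~\ref{thm: denominators}\,(c) and~\eqref{eq:denom B^(1) n}: the choices of $i_a$ and the staircase shifts in $X$ are arranged so that the denominator has a zero of order exactly one at $u=1$ precisely when $|a-b|=1$, and no zero otherwise, so $\Gamma^J$ is the simply-laced $A_\infty$-quiver on $\Z$ with a single arrow for each pair of consecutive integers. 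Condition (3) is immediate from \cite[Appendix~A]{AK}: since ${}^{**}V(\varpi_i)_x\simeq V(\varpi_i)_{x(p^*)^2}$ and $i_{a+N}=i_a$, formula~\eqref{eq: double dual} gives ${}^{**}V_a\simeq V_{a+N}$.

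The substantive work is in conditions (1) and (2). The plan is to proceed by induction on $k$, identifying
\[
M_k\seteq \hd(V_a\otimes V_{a+1}\otimes\cdots\otimes V_{a+k-1})
\]
as a specific Kirillov--Reshetikhin (or fundamental) module with an explicit spectral parameter. At each inductive step one uses that $M_{k-1}$ is real simple and sits in an exact sequence
\[
0\to \soc(M_{k-1}\otimes V_{a+k-1})\to M_{k-1}\otimes V_{a+k-1}\to M_k\to 0.
\]
The definition of $X$, including the jump to $X(N-1)=q^{3N-5}$ at the transition from $i_a=1$ back to $i_a=n$, is tailored precisely so that the recursion produces the fundamental module $V(\varpi_k)_{X_k}$ for a suitable $X_k$ at each intermediate step, and terminates with $M_N\simeq\cor$ at step $k=N$. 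Non-simplicity of $M_{k-1}\otimes V_{a+k-1}$ for $1\le k\le N-1$ is detected by verifying that $d_{M_{k-1},V_{a+k-1}}(u)$ vanishes at $u=1$; once $M_{k-1}$ is identified with the claimed fundamental module, this reduces once more to Theorem~\ref{thm: denominators}. This analysis is precisely what is carried out in \cite[Proposition~2.4.2]{KKO18} and \cite[\S 4]{Oh15}, which can be cited directly.

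For part (ii), the inclusion $\Ca_J\subset \Ca^0_\g$ is immediate since each $V_a$ lies in $\Ca^0_\g$ by~\eqref{eq:S0B^(1)_n} together with the definition of $X$. For the reverse inclusion, the explicit identification of the modules $M_k$ appearing in the induction above shows that every fundamental module $V(\varpi_i)_x$ with $(i,x)\in \Cquiver_0(B^{(1)}_n)$ arises as the head of some tensor product of the $V_a$'s; since $\Ca_J$ is closed under subquotients, it contains all generators of $\Ca^0_\g$, whence $\Ca_J=\Ca^0_\g$.

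The main obstacle will be the careful bookkeeping of spectral parameters across the transition at $k=N-1$, where the fundamental weight $\varpi_n$ (corresponding to the spinorial node) re-enters the picture. The interplay between the shift $q^{3N-5}$, the constant $q_\kappa=(-1)^{n+1}q^{n+1/2}$, and the fact that $V(\varpi_n)$ has denominator formula~\eqref{eq:denom B^(1) n} rather than~\eqref{eq:denom B^(1) le n}, is what forces the specific numerical values in the definition of $X$; getting this transition correct is precisely the content that must be read off from \cite[\S 4]{Oh15} and fed into the inductive identification of the $M_k$.
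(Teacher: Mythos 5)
Your proposal matches the paper's treatment: the paper likewise reads condition (a) of~\eqref{eq: general assumption} off the denominator formulas~\eqref{eq:denom B^(1) le n} and~\eqref{eq:denom B^(1) n}, gets condition (3) from the periodicity~\eqref{eq: double dual} of $X$ together with ${}^{**}V(\varpi_i)_x\simeq V(\varpi_i)_{x(p^*)^2}$, and defers conditions (1)--(2) and the identification $\Ca_J=\Ca^{0}_{\g}$ entirely to \cite[Proposition 2.4.2]{KKO18} and \cite[\S 4]{Oh15}, exactly as you do. One small correction to your sketch of the cited argument: the intermediate heads $M_k=\hd(V_a\otimes\cdots\otimes V_{a+k-1})$ are \emph{not} all fundamental modules --- by Proposition~\ref{prop: image B}\,(iii), for $a\ge 1$ and $k\ge n$ one obtains $V(\varpi_n)_{q^{2b}}\hconv V(\varpi_n)_{q^{2a+N-3}}$, a simple but non-fundamental module, so the non-simplicity check at those steps requires computing $\de(M_{k-1},V_{a+k-1})$ for such heads rather than merely consulting Theorem~\ref{thm: denominators}.
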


Then we have an exact functor
$$ \F \seteq \soplus_{\be \in \rl^+_J} R^J(\be)\gmod \to \Ca^{0}_{\g}$$
which factors thorough $\T_N$ via $\tFF\col \T_N\to \Ca^{0}_{\g}$.

\begin{proposition}[{\cite[Proposition 3.2.1]{KKO18}}] \label{prop: image B}
Let $0\le a \le N-1$ and $1\le b-a+1\le N-1$.
 Then we have
\begin{enumerate}[{\rm (i)}]
\item $\F(L[0,b]) \simeq V(\varpi_n)_{q^{2b}}$  for   $0 \le b \le N-2$.
\item $\F(L[a,N-1]) \simeq V(\varpi_n)_{q^{2a+N-3}}$ for $1 \le a \le N-1$.
\item $ \F(L[a,b]) \simeq  \begin{cases}
V(\varpi_{b-a+1})_{(-1)^{b-a}q_{\kappa} q^{a+b-2}} & \text{for} \ 1 \le a \le b \le N-2, \ b-a+1 < n, \\
V(\varpi_n)_{q^{2b}} \hconv V(\varpi_n)_{q^{2a+N-3}} & \text{for} \ 1 \le a \le b \le N-2, \ b-a+1 \ge  n, \\
V(\varpi_n)_{q^{2a+N-3}} \hconv V(\varpi_n)_{q^{2b-2}} & \text{for} \  1\le a\le N-1<b, \ b-a+1 \le  n, \\
V(\varpi_{N-b+a-1})_{(-1)^{b-a}q_{\kappa} q^{a+b-3}} & \text{for} \ 1\le a\le N-1<b,  \  b-a+1 > n.
\end{cases}  $
\end{enumerate}
\end{proposition}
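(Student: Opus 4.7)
The plan is to proceed by induction on $\ell = b - a + 1$, starting from the identification
\begin{equation*}
\F(L[a,b]) \simeq \hd(V_a \tens V_{a+1} \tens \cdots \tens V_b)
\end{equation*}
provided by Lemma~\ref{lem: N> to 0}(iii); the hypothesis $\ell \le N - 1 < N$ needed there is built into the assumptions of the statement. The base case $\ell = 1$ follows from Proposition~\ref{prop:image of tau}(i) together with the explicit prescription $V_a = V(\varpi_{i_a})_{X(a)}$, which verifies the three formulas at $b = a$ for $a = 0$, $a = N - 1$, and $1 \le a \le N - 2$ separately.

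For the inductive step I will exploit the factorization $L[a,b] \simeq L[a,b-1] \hconv L(b)$, an instance of Theorem~\ref{thm: p+,p monoidal} applied to the type-$A_\infty$ reduced expression. In the quiver Hecke algebra of type $A_\infty$ one has $\de(L[a,b-1], L(b)) = 1$, so Lemma~\ref{lem: non-simple tensor} will give $\F(L[a,b]) \simeq \F(L[a,b-1]) \hconv V_b$ as soon as $\F(L[a,b-1]) \tens V_b$ is reducible. This reducibility is read off from the quiver $\Cquiver_0(B^{(1)}_n)$ via Theorem~\ref{thm: denominators}(c): case by case, I check that the vertex carrying $\F(L[a,b-1])$ under the inductive hypothesis is joined by an arrow to the vertex of $V_b$, so that the head is a proper nonzero simple module which remains to be identified.

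The identification splits according to the subcases. In case (i), the inductive head $V(\varpi_n)_{q^{2(b-1)}} \hconv V(\varpi_1)_{q_\kappa q^{2(b-1)}}$ is identified with $V(\varpi_n)_{q^{2b}}$ by matching dominant extremal weights and using the factor $d_{1,n}(z) = z - q_\kappa$ from Theorem~\ref{thm: denominators}(c) to pin the spectral parameter. Case (ii) is parallel, reading from the opposite end. In the all-$V(\varpi_1)$ range of case (iii), namely $b - a + 1 < n$ with $1 \le a \le b \le N - 2$, the head collapses to the Kirillov-Reshetikhin module $V(\varpi_{b-a+1})$ at the stated spectral parameter by a standard fusion argument. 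In the middle two subcases of (iii), the predicted answer $V(\varpi_n) \hconv V(\varpi_n)$ is forced by strong commutativity of the two spin factors, verified by noting that the relevant spectral ratio is not a zero of $d_{n,n}(z)$ as listed in Theorem~\ref{thm: denominators}(c).

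The main obstacle will be the last subcase of (iii), namely $1 \le a \le N-1 < b$ with $b - a + 1 > n$. Here the wrap-around spin factor $V_{N-1} = V(\varpi_n)_{q^{3N-5}}$ sits inside the convolution, and the answer is predicted to be a single fundamental module $V(\varpi_{N - b + a - 1})$ rather than a product of two spin modules. To close this case I would invoke the T-system identity coming from a mutation relation in the seed built from $\tQ_N$ (Theorem~\ref{th:monoidalTN}), or equivalently a short exact sequence in $\Ca^0_\g$ whose outer terms cancel the two spin factors and whose middle term forces the head of the full convolution to be a fundamental module. This collapse, which mirrors the pattern in \cite[\S 3]{KKO18}, is the delicate step requiring careful bookkeeping of spectral parameters.
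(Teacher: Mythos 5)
First, note that the paper does not prove this proposition at all: it is quoted verbatim from \cite[Proposition 3.2.1]{KKO18}. The closest internal model is the type-$C$ analogue, Proposition~\ref{prop:segmentimage C step1}, whose proof follows exactly your skeleton (induction on $b-a+1$, the factorization $L[a,b]\simeq L[a,b-1]\hconv L(b)$ with $\de=1$, and Lemma~\ref{lem: non-simple tensor}), but whose heads are identified not by weight considerations but by the \emph{explicit surjections} of Proposition~\ref{prop: Cn1 morphism} (\cite[Prop.~C.2]{AK}). This is the first gap in your argument: ``matching dominant extremal weights'' cannot identify $\hd\bigl(V(\varpi_n)_{q^{2(b-1)}}\otimes V(\varpi_1)_{q_\kappa q^{2(b-1)}}\bigr)$ with $V(\varpi_n)_{q^{2b}}$, since the naive extremal weight of a head is $\varpi_n+\varpi_1$; the head drops to $V(\varpi_n)$ only because the renormalized $R$-matrix annihilates the product of extremal vectors when $d_{n,1}$ vanishes, and pinning the resulting spectral parameter $q^{2b}$ genuinely requires the known Dorey-type homomorphism $V(\varpi_n)_x\otimes V(\varpi_1)_{xq_\kappa}\twoheadrightarrow V(\varpi_n)_{xq^2}$ (or an $\ell$-weight computation), which you do not supply.

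The more serious problem is the middle two subcases of (iii). Your claim that the two spin factors strongly commute is false: for $1\le a\le b\le N-2$ with $b-a+1\ge n$ the ratio of spectral parameters in one order is $q^{2(b-a)-2n+3}$, an odd power of $q$ lying between $q^1$ and $q^{2n-3}$, hence a zero of $d_{n,n}(z)=\prod_{s=1}^{n}(z-q^{2s-1})$ from \eqref{eq:denom B^(1) n}. (E.g.\ for $n=3$, $a=1$, $b=3$ one gets $d_{3,3}(q)=0$.) This is precisely why the statement is written with $\hconv$ rather than $\otimes$. Moreover, even if the two factors did commute, your argument would not establish anything: the content of these subcases is that the head of a convolution of $b-a+1\ge n$ \emph{vector} modules $V(\varpi_1)$ collapses onto (the head of a product of) two \emph{spin} modules, and no step of your proposal connects the inductive head to that expression. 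The same applies to the wrap-around subcase, which you correctly flag as delicate but do not close; the T-system sequence \eqref{eq: T-system in qHecke} pushed through $\F$ is a reasonable tool there, but using it presupposes the images of the initial cluster variables $\F(\W^{(\ell)}_{m,j})$, which in turn rest on the very identifications being proved. In short, the inductive frame is the right one (and matches the paper's type-$C$/$D$ proofs), but the head identifications --- the actual substance of \cite[Prop.~3.2.1]{KKO18} --- are either misjustified or missing.
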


In this case, the exact sequence~\eqref{eq: T-system in qHecke} in $R^J\gmod$ is not translated into the known $T$-system of $U_q'(B_n^{(1)})$ in~\cite{Her06,KNS,Nak}.

\smallskip

Now we have a $U_q'(B_n^{(1)})$-version of Theorem~\ref{thm apllication A} as follows:
\begin{theorem} \label{thm apllication B}
The category $\Ca^0_{\g}$ gives a monoidal categorification of the cluster algebra $\mathscr{A}(N)$.
In particular, we have an isomorphism $(t=1,2)$ of cluster algebras
$$\phi_t \colon K\bigl(\Ca^{0}_{A_{2n-1}^{(t)}}\bigr) \isoto K\bigl(\Ca^{0}_{B_n^{(1)}}\bigr).$$
\end{theorem}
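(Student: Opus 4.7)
The plan is to deduce Theorem~\ref{thm apllication B} as a direct corollary of the general framework we have already set up. First, the cited Proposition 2.4.2 of \cite{KKO18} (together with the denominator formulas in Theorem~\ref{thm: denominators}~(c)) establishes precisely the two ingredients we need: that the quiver $\Gamma^J$ attached to $\{V_a\}_{a\in J}$ is of type $A_\infty$, and that the three conditions \eqref{it:1}--\eqref{it:3} in \eqref{eq: general assumption} hold. The shift \eqref{eq: double dual} is engineered so that ${}^{\ast\ast}V_a\simeq V_{a+N}$, while \eqref{it:1}--\eqref{it:2} follow from the head computation in Proposition~\ref{prop: image B}: iterating the $\hconv$-description shows $\F(L[0,N-1])\simeq \cor$ and that each intermediate head $\F(L[0,k])\tens V_{k}$ is reducible for $1\le k<N$.

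Once \eqref{eq: general assumption} holds, Theorem~\ref{thm: Factoring} produces the exact monoidal functor $\tFF\col \T_N\to \Ca_J$, Theorem~\ref{thm: Gro iso} gives the ring isomorphism $\phi_{\tFF}\col K(\T_N)|_{q=1}\isoto K(\Ca_J)$, and Theorem~\ref{Thm: main} upgrades this to the full monoidal categorification statement with initial seed $\bigl(\{[\F(\M_\ttww(p,0))]\}_{p\in\Kex_N},\widetilde B_N\bigr)$. Combined with the identification $\Ca_J=\Ca^0_\g$ for type $B_n^{(1)}$ (also part of Proposition 2.4.2 of \cite{KKO18}), this establishes the first assertion of the theorem.

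For the second assertion, I would observe that Theorem~\ref{thm apllication A} provides, for each $t\in\{1,2\}$, an analogous isomorphism of (unquantized) cluster algebras
\[
\phi_{\tFF^{(t)}}\vert_{q=1}\col K(\T_N)\vert_{q=1}\isoto K\bigl(\Ca^0_{A^{(t)}_{N-1}}\bigr),
\]
and the first part of Theorem~\ref{thm apllication B} provides the corresponding isomorphism for $B^{(1)}_n$ with the same $N=2n$. Setting
\[
\phi_t\seteq \phi_{\tFF_{B}}\vert_{q=1}\circ\bigl(\phi_{\tFF^{(t)}}\vert_{q=1}\bigr)^{-1}
\]
then yields the desired cluster-algebra isomorphism between $K(\Ca^0_{A^{(t)}_{2n-1}})$ and $K(\Ca^0_{B^{(1)}_n})$, intertwining the two initial seeds coming from $\Seed_N$ via the respective Schur--Weyl functors.

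The hard part of the argument is genuinely the verification of \eqref{eq: general assumption}, which requires the precise denominator computation of $d_{i,j}(z)$ at the spectral parameters prescribed by $X(a)$ — but this is already available from \cite{Oh15, KKO18}. Everything else is a purely formal application of the machinery developed in Sections~\ref{Sec: Module}--\ref{Sec: T_N}. One subtle point worth flagging is that, unlike type $A$, the image $\F(\Wlmj{\ell}{m}{j})$ in $\Ca^0_{B^{(1)}_n}$ need not be a Kirillov--Reshetikhin module (as is already visible in Proposition~\ref{prop: image B}~(iii), where the image can be a head $V(\varpi_n)_{\ast}\hconv V(\varpi_n)_{\ast}$), so the exact sequence \eqref{eq: T-system in qHecke} gives a new family of three-term exact sequences in $\Ca^0_{B^{(1)}_n}$ that does not coincide with the classical $T$-system; this is consistent with the remark made after Proposition~\ref{prop: image B}.
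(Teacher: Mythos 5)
Your proposal follows essentially the same route as the paper: the verification of \eqref{eq: general assumption} is delegated to \cite[Proposition 2.4.2]{KKO18} together with the denominator formulas of Theorem~\ref{thm: denominators}~(c), the monoidal categorification then follows formally from Theorems~\ref{thm: Factoring}, \ref{thm: Gro iso} and \ref{Thm: main} combined with the identification $\Ca_J=\Ca^0_\g$, and $\phi_t$ is obtained by composing the two isomorphisms through $K(\T_{2n})\vert_{q=1}$, exactly as intended by the paper's ``in particular''. The one caveat is that you should not invoke Proposition~\ref{prop: image B} to justify conditions \eqref{it:1}--\eqref{it:2}: those conditions are the hypotheses under which Lemma~\ref{lem: N> to 0} identifies $\F(L[a,b])$ with the head of the corresponding tensor product, so the logical order runs the other way; since the verification is in any case carried out directly in \cite{KKO18}, this does not affect the argument.
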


\subsubsection{Type C}  Let $\g$ be the affine Kac-Moody algebra of type $C^{(1)}_{n}$ $(n \ge 3)$ and set $N=n+1$.
Recall the denominator formulas $d_{k,l}(z)$ for $U_q'(C^{(1)}_{n})$.

\begin{theorem}[{\cite{AK}}] \label{thm: denominators C}
For $\g=C^{(1)}_{n}$ $(n \ge 3)$, we have
\begin{align} \label{eq:denom C^(1)_n}
d_{k,l}(z)=  \displaystyle  \prod_{i=1}^{ \min(k,l,n-k,n-l)}
\big(z-(-q_s)^{|k-l|+2i}\big) \prod_{i=1}^{ \min(k,l)} \big(z-(-q_s)^{2n+2-k-l+2i}\big),
\end{align}
where $q_s \seteq q^{1/2}$.
\end{theorem}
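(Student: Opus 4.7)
The plan is to follow the strategy of Akasaka--Kashiwara, reducing the computation for arbitrary $(k,l)$ to the base case $(1,1)$ via a fusion construction on the vector representation. The key structural input is that every fundamental module $V(\varpi_k)$ of $U_q'(C_n^{(1)})$ admits a realization as a subquotient (the head, up to normalization) of an iterated convolution
\[ V(\varpi_1)_{x(-q_s)^{k-1}} \tens V(\varpi_1)_{x(-q_s)^{k-3}} \tens \cdots \tens V(\varpi_1)_{x(-q_s)^{1-k}}, \]
which makes $\Rnorm_{V(\varpi_k)_a,V(\varpi_l)_b}$ accessible from the iterated $V(\varpi_1)$-R-matrices.

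First, I would establish the base case $d_{1,1}(z)$ directly from the decomposition of $V(\varpi_1)\tens V(\varpi_1)$. Since $V(\varpi_1)$ is the natural $2n$-dimensional module, one identifies two reducibility points of $V(\varpi_1)_x \tens V(\varpi_1)_y$: the symmetric/antisymmetric split at $y/x=(-q_s)^2$ and the appearance of the trivial subquotient at $y/x=(-q_s)^{2n}$. This yields $d_{1,1}(z)=(z-(-q_s)^2)(z-(-q_s)^{2n})$, matching the formula with $k=l=1$.

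Second, using the fusion realization, I would express $\Rnorm_{V(\varpi_k)_a,V(\varpi_l)_b}$ as a composition of $kl$ R-matrices between vector representations at shifted spectral parameters. A priori this produces $2kl$ candidate poles, all lying at shifts of the two base values $(-q_s)^2$ and $(-q_s)^{2n}$. The task is to determine which candidate poles actually survive after projecting onto the quotient $V(\varpi_k)\tens V(\varpi_l)$ on both tensor factors. By induction on $k+l$ and careful analysis of the extremal-vector action, the surviving poles organize into the two product ranges in the asserted formula: the $|k-l|+2i$ family (with $\min(k,l,n-k,n-l)$ members, arising from "short" intra-representation factors) and the $2n+2-k-l+2i$ family (with $\min(k,l)$ members, arising from the "long" factor corresponding to the trivial subquotient of $V(\varpi_1)^{\tens 2}$).

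The main obstacle will be the pole-cancellation bookkeeping. On the one hand, one must show that each listed pole is genuine by producing explicit non-trivial cokernels of $\Rnorm_{k,l}$ at each asserted root of $d_{k,l}(z)$; on the other hand, one must rule out all remaining candidate poles by establishing holomorphy of the projected operator there. The cutoff $\min(k,l,n-k,n-l)$ in the first product (sharper than the naive $\min(k,l)$) reflects a folding phenomenon specific to type $C$: the spin-type symmetry at the right end of the Dynkin diagram causes a block of candidate poles to collapse against the trivial module, and rigorously locating this cutoff—together with verifying that the long-root family has exactly $\min(k,l)$ members and not more—is typically where the detailed crystal-basis or global-basis argument of \cite{AK} is required.
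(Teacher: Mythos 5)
First, note that the paper does not prove this statement at all: it is quoted from Akasaka--Kashiwara \cite{AK} (Appendix C), so there is no internal proof to compare against. Your overall strategy --- establish $d_{1,1}$ from the reducibility points of $V(\varpi_1)_x\otimes V(\varpi_1)_y$, then reach general $(k,l)$ by realizing $V(\varpi_k)$ as the head of a fused product of shifted vector representations and tracking which of the $2kl$ candidate poles survive --- is indeed the standard route taken in \cite{AK}, so the plan is the right one in outline.

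However, there is a concrete error already in your base case, and it contradicts the very formula you are trying to prove. Evaluating \eqref{eq:denom C^(1)_n} at $k=l=1$ gives
$d_{1,1}(z)=\bigl(z-(-q_s)^{2}\bigr)\bigl(z-(-q_s)^{2n+2}\bigr)$,
whereas you assert the second reducibility point is $y/x=(-q_s)^{2n}$. The correct location is forced by duality: for type $C_n^{(1)}$ one has $1^*=1$ and $p^*=q_s^{2n+2}$ (as recorded in \S\ref{subsec:quntum affine} and \S 6.2.3 of the paper), so the trivial subquotient of $V(\varpi_1)_x\otimes V(\varpi_1)_y$ occurs precisely when $V(\varpi_1)_y\simeq{}^*\bigl(V(\varpi_1)_x\bigr)=V(\varpi_1)_{xp^*}$, i.e.\ at $y/x=q_s^{2n+2}=(-q_s)^{2n+2}$, not $(-q_s)^{2n}$. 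Since your induction shifts exponents off the base values, this off-by-two error would propagate and yield the wrong exponent $2n-k-l+2i$ in the second product throughout. Separately, even granting a corrected base case, the heart of the argument --- proving that each listed root is a genuine pole of $\Rnorm$ and that all other candidate poles cancel, which is exactly where the cutoffs $\min(k,l,n-k,n-l)$ and $\min(k,l)$ are decided --- is only announced as ``bookkeeping'' and not carried out, so the proposal does not yet constitute a proof.
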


For each $a \in J=\Z$, define $i_a \in I_0$ as follows:
$$ i_a \seteq  \begin{cases}
 \ \ n & \text{ if } a \equiv 0 \mod N, \\
 \ \ 1 & \text{ otherwise}.
\end{cases}$$

Note that $p^*=q_s^{2n+2}$.  Set
$$  q_{\kappa} \seteq  (-q_s)^{n+3}.$$

We define a map $X\col J \to \cor^\times$ as follows:
\begin{align*}
& X(0) = q_s^0=1, \ \  X(j) = q_{\kappa}q_s^{2(j-1)}  \ (1 \le j \le N-1),
\end{align*}
and extend it by
\begin{align} \label{eq: double dual C}
X(j+kN)\seteq X(j){p^{*}}^{2k} \quad \text{ for }0\le j\le N-1 \ \text{ and } \ k\in\Z.
\end{align}

For each $a \in J=\Z$, define $V_a$ as follows:
$$  V_a \seteq V(\varpi_{i_a})_{X(a)}.$$

\begin{proposition}[{\cite[Proposition C.2]{AK}}] \label{prop: Cn1 morphism} For each $i,j \in I_0$ with $i+j \le n$, there exists a $U'_q(C_n^{(1)})$-homomorphism given as follows:
\begin{align}\label{eq: Cn1 homo}
V(\varpi_i)_{(-q_s)^{-j}} \tens V(\varpi_j)_{(-q_s)^{i}} \twoheadrightarrow V(\varpi_{i+j}).
\end{align}
\end{proposition}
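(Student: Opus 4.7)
The plan is to realize the surjection as arising from the reducibility of $V(\varpi_i)_{(-q_s)^{-j}}\otimes V(\varpi_j)_{(-q_s)^i}$ predicted by the denominator formula~\eqref{eq:denom C^(1)_n}, and then identify the resulting simple quotient with $V(\varpi_{i+j})$.

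First, I would verify the reducibility. Since $i+j\le n$ forces $n-i\ge j$ and $n-j\ge i$, we have $\min(i,j,n-i,n-j)=\min(i,j)$, so the first product in~\eqref{eq:denom C^(1)_n} for $d_{i,j}(z)$ contains exactly one factor vanishing at $z=(-q_s)^{i+j}$, namely the one with $s=\min(i,j)$, which gives $z-(-q_s)^{|i-j|+2\min(i,j)}=z-(-q_s)^{i+j}$. The second product contributes no zero there: $(-q_s)^{2n+2-i-j+2s}=(-q_s)^{i+j}$ would force $i+j=n+1+s\ge n+2$, contradicting $i+j\le n$. Hence $d_{i,j}(z)$ has a simple zero at $z=(-q_s)^{i+j}$, which is exactly the ratio $(-q_s)^i/(-q_s)^{-j}$ appearing in the statement. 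The normalized $R$-matrix $R^{\mathrm{norm}}_{V(\varpi_i)_{z_1},V(\varpi_j)_{z_2}}$ therefore has a simple pole at this specialization, so $V(\varpi_i)_{(-q_s)^{-j}}\otimes V(\varpi_j)_{(-q_s)^i}$ is a reducible good module whose head is a simple good module.

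Second, I would construct the surjection by fusion from the vector representation. For $k\le n$, the fundamental module $V(\varpi_k)_c$ is known to arise (up to a parameter shift) as the simple head of an iterated convolution $V(\varpi_1)_{c_1}\otimes\cdots\otimes V(\varpi_1)_{c_k}$ along an arithmetic progression in $(-q_s)^2$. Choosing such progressions for the two factors $V(\varpi_i)_{(-q_s)^{-j}}$ and $V(\varpi_j)_{(-q_s)^i}$ and concatenating them, the resulting $(i+j)$-term progression is precisely the one whose head realizes $V(\varpi_{i+j})$. The canonical epimorphism from the iterated convolution $V(\varpi_1)^{\otimes(i+j)}$ at the combined parameters onto its head $V(\varpi_{i+j})$ then factors through the ordered tensor $V(\varpi_i)_{(-q_s)^{-j}}\otimes V(\varpi_j)_{(-q_s)^i}$, giving the claimed homomorphism. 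Non-vanishing is checked by tracking the classical $U_q(C_n)$-highest weight vector: since $i+j\le n$, the branching $V(\varpi_i)\otimes V(\varpi_j)$ contains $V(\varpi_{i+j})$ with multiplicity one (symplectic contractions that would otherwise produce lower summands $V(\varpi_{i+j-2r})$ appear only once $i+j$ exceeds $n$), so $u_{\varpi_i}\otimes u_{\varpi_j}$ maps to the $\varpi_{i+j}$-highest weight vector of $V(\varpi_{i+j})$, confirming surjectivity onto the simple quotient.

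The principal obstacle is establishing rigorously that the head of the renormalized tensor product is exactly the simple module $V(\varpi_{i+j})$ rather than a module containing it properly; this is where the hypothesis $i+j\le n$ is essential, as it keeps us strictly below the symplectic trace locus responsible for extra fundamental-weight summands in the classical branching rule. The detailed argument is carried out by Akasaka--Kashiwara in \cite[Appendix C]{AK}, where the explicit crystal basis of the fundamental modules of type $C$ is used to pin down the head and to verify that the fusion map does not degenerate at the critical parameter.
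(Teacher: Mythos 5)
The paper offers no proof of this proposition: it is imported verbatim as \cite[Proposition C.2]{AK}, so there is no internal argument to compare against. Your first paragraph is a correct and worthwhile consistency check: with $i+j\le n$ one indeed has $\min(i,j,n-i,n-j)=\min(i,j)$, the factor $s=\min(i,j)$ of the first product in \eqref{eq:denom C^(1)_n} vanishes at $z=(-q_s)^{i+j}=(-q_s)^{i}/(-q_s)^{-j}$, and the second product cannot vanish there since $2n+2-i-j+2s=i+j$ would force $i+j=n+1+s\ge n+2$. This confirms reducibility of $V(\varpi_i)_{(-q_s)^{-j}}\otimes V(\varpi_j)_{(-q_s)^{i}}$ and hence the existence of a nontrivial simple quotient, but it does not identify that quotient, which is the entire content of the proposition.

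The identification of the head with $V(\varpi_{i+j})$ is exactly the step you defer to \cite[Appendix C]{AK}, i.e.\ to the reference being quoted; so the proposal does not amount to an independent proof but rather a plausibility argument wrapped around the same citation. Two specific points need repair if you want to make the sketch self-contained. First, the claim that symplectic contractions producing summands $V(\varpi_{i+j-2r})$ ``appear only once $i+j$ exceeds $n$'' is false: already $V(\varpi_1)\otimes V(\varpi_1)$ for $U_q(C_n)$ contains the trivial module for every $n$. What is true, and what you actually need, is only that $V(\varpi_{i+j})$ occurs with multiplicity one in the restriction of the tensor product to $U_q(C_n)$ when $i+j\le n$. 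Second, multiplicity one at the classical level does not by itself pin down the affine simple quotient; one must show that the simple head of the ordered affine tensor product has dominant extremal weight $\varpi_{i+j}$ with the correct spectral parameter, which is where the fusion/crystal analysis of Akasaka--Kashiwara (or an explicit computation with $q$-characters) is genuinely required.
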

In particular, from \eqref{eq: Cn1 homo}, we have a $U'_q(C_n^{(1)})$-homomorphism as follows:
\begin{align*}
V(\varpi_{n-u+1})_{(-q_s)^{-1}} \tens V(\varpi_1)_{(-q_s)^{n+u+1}} \twoheadrightarrow V(\varpi_{n-u})
\end{align*}
for $1 \le u \le n-1$.

By~\eqref{eq:denom C^(1)_n}, the quiver $\Gamma^J$ in~\eqref{eq: GammaJ} is of type $\Ainf$. Then we have an exact functor
$$ \F \col \soplus_{\be \in \rl^+_J} R^J(\be)\gmod \to \Ca_J \subset \Ca_\g^0.$$

\begin{proposition} \label{prop:segmentimage C step1}
Let $0\le a \le N-1$ and $b-a+1\le N-1$. Then we have
\begin{enumerate}[{\rm (i)}]
\item $\F(L[0,b]) \simeq V(\varpi_{n-b})_{(-q_s)^{b}}$  for $0 \le b \le N-1$.

\item $\F(L[a,b]) \simeq V(\varpi_{b-a+1})_{q_{\kappa}(-q_s)^{a+b-2}}$ for $1 \le a \le b \le N-1$.
\end{enumerate}
Here we understand $V(\varpi_{0})$ and $V(\varpi_{N})$ as the trivial module $\cor$.
\end{proposition}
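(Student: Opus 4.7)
The plan relies on Lemma~\ref{lem: N> to 0}, which for $b-a+1 \le N-1$ identifies $\F(L[a,b])$ with the simple head of $V_a \tens V_{a+1} \tens \cdots \tens V_b$. It therefore suffices to identify these heads explicitly and to chase the spectral parameters.

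For part~(ii), I would iterate Proposition~\ref{prop: Cn1 morphism}. Starting from $V(\varpi_1)_{(-q_s)^{-1}} \tens V(\varpi_1)_{(-q_s)} \twoheadrightarrow V(\varpi_2)$ and inductively using the surjection $V(\varpi_k)_{(-q_s)^{-1}} \tens V(\varpi_1)_{(-q_s)^{k}} \twoheadrightarrow V(\varpi_{k+1})$ (valid since throughout the induction $k+1 \le N-1 = n$), one obtains an epimorphism
\begin{align*}
V(\varpi_1)_{(-q_s)^{-k}} \tens V(\varpi_1)_{(-q_s)^{-k+2}} \tens \cdots \tens V(\varpi_1)_{(-q_s)^{k}} \twoheadrightarrow V(\varpi_{k+1}).
\end{align*}
Substituting $k = b-a$ and translating every spectral parameter by the common factor $q_\kappa(-q_s)^{a+b-2}$ (noting that $V_c = V(\varpi_1)_{q_\kappa (-q_s)^{2(c-1)}}$ for $1 \le c \le N-1$), I extract a surjection $V_a \tens V_{a+1} \tens \cdots \tens V_b \twoheadrightarrow V(\varpi_{b-a+1})_{q_\kappa(-q_s)^{a+b-2}}$. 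Since the head of the source is already known to be simple, this forces $\F(L[a,b]) \simeq V(\varpi_{b-a+1})_{q_\kappa(-q_s)^{a+b-2}}$.

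For part~(i), the key identity is $L[0, N-1] \simeq L[0, b] \hconv L[b+1, N-1]$, which gives a surjection $L[0, b] \tens L[b+1, N-1] \twoheadrightarrow L[0, N-1]$ in $R^J \gmod$. Applying the exact monoidal functor $\F$ and using $\F(L[0, N-1]) \simeq \cor$ from Lemma~\ref{lem: N> to 0}(iv), one obtains a non-zero surjection
\begin{align*}
\F(L[0, b]) \tens \F(L[b+1, N-1]) \twoheadrightarrow \cor.
\end{align*}
By the adjunction $\Hom(X \tens Y, \cor) \simeq \Hom(Y, {}^*X)$, this corresponds to a non-zero morphism between the simple modules $\F(L[b+1, N-1])$ and ${}^*\F(L[0, b])$, which must be an isomorphism. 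Hence $\F(L[0, b]) \simeq \F(L[b+1, N-1])^*$, and part~(ii) applied to the segment $[b+1, N-1]$ gives $\F(L[b+1, N-1]) \simeq V(\varpi_{n-b})_{q_\kappa(-q_s)^{n+b-1}}$. Taking the right dual with $p^* = q_s^{2n+2} = (-q_s)^{2n+2}$ yields $\F(L[0, b]) \simeq V(\varpi_{n-b})_{(-q_s)^b}$, completing the main case; the boundary values $b = 0$ and $b = N-1$ are verified directly.

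I expect the main obstacle to be the careful bookkeeping of spectral parameters, particularly the identifications $q_s^{2m} = (-q_s)^{2m}$, the translation factor $q_\kappa(-q_s)^{a+b-2}$ centred at the midpoint of the spectral parameters in part~(ii), and the dual shift by $1/p^* = (-q_s)^{-(2n+2)}$ in part~(i). A secondary delicate point is that Proposition~\ref{prop: Cn1 morphism} only produces a single surjection with target $V(\varpi_{k+1})$; it is Lemma~\ref{lem: N> to 0} that guarantees the head of $V_a \tens \cdots \tens V_b$ is already simple, so that the constructed surjection necessarily identifies this head with the stated target rather than a larger composition factor.
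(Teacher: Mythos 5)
Your proof is built on Lemma~\ref{lem: N> to 0}, and this creates a genuine circularity. That lemma is proved under the standing hypothesis~\eqref{eq: general assumption} on the family $\{V_a\}_{a\in J}$; but for the type-$C$ family this hypothesis is only verified in Proposition~\ref{prop: general assumption C}, whose proof explicitly invokes Proposition~\ref{prop:segmentimage C step1}. In particular, the two facts you extract from the lemma --- that $\F(L[a,b])$ is the \emph{simple} head of $V_a\otimes\cdots\otimes V_b$, and that $\F(L[0,N-1])\simeq\cor$ --- are consequences of conditions (1) and (2) of~\eqref{eq: general assumption}(b), which are precisely what is not yet known for type $C$ at this stage. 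Worse, $\F(L[0,N-1])\simeq\cor$ \emph{is} the $b=N-1$ instance of part (i) (under the convention $V(\varpi_0)=\cor$), so your treatment of part (i), which deduces the general case from this one by duality, assumes the hardest instance of the statement it is proving; it is not ``verified directly.''

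The paper avoids this by a one-letter-at-a-time induction: $L[a,b]\simeq L[a,b-1]\hconv L(b)$ with $\de=1$, and at each step Theorem~\ref{thm: denominators C} is used to check that $\F(L[a,b-1])\otimes V_b$ is \emph{not} simple, so that Lemma~\ref{lem: non-simple tensor} (which needs no global assumption on the family, only exactness of $\F$) yields $\F(L[a,b])\simeq\F(L[a,b-1])\hconv V_b$; the head is then identified via the surjection of Proposition~\ref{prop: Cn1 morphism}. Your argument for part (ii) --- the iterated surjection $V_a\otimes\cdots\otimes V_b\twoheadrightarrow V(\varpi_{b-a+1})_{q_{\kappa}(-q_s)^{a+b-2}}$ and the spectral-parameter bookkeeping, which I checked and is arithmetically correct --- would complete the proof \emph{if} you first supplied, by such an induction, the facts that $\F(L[a,b])$ equals the head of $V_a\otimes\cdots\otimes V_b$ and that this head is simple. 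Part (i) must likewise be reworked along inductive lines (or $\F(L[0,N-1])\simeq\cor$ established independently) before your duality trick, which is otherwise a pleasant shortcut, can legitimately be used.
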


\begin{proof}
{\rm (i)} When $b=0$, it is obvious by Proposition~\ref{prop:image of tau} {\rm (a)}. By induction on $b \le N-1$, we may assume that
$\F(L[0,b-1]) \simeq V(\varpi_{n-b+1})_{(-q_s)^{b-1}}$.

On the other hand, Theorem~\ref{thm: denominators C} and Proposition~\ref{prop: Cn1 morphism} imply that
\begin{align*}
&\F(L[0,b-1]) \hconv \F(L(b))  \\
& \hspace{5ex}
\simeq\begin{cases}
V(\varpi_{n-b+1})_{(-q_s)^{b-1}} \hconv V(\varpi_1)_{(-q_s)^{n+3+2(b-1)}} \simeq V(\varpi_{n-b})_{(-q_s)^{b}}  & \text{ if } b \le N-2, \\
V(\varpi_{1})_{(-q_s)^{n-1}} \hconv V(\varpi_1)_{(-q_s)^{3n+1}} \simeq \cor & \text{ if } b = N-1.
\end{cases}
\end{align*}
By Lemma~\ref{lem: non-simple tensor}, we have
$\F(L[0,b])\simeq\F(L[0,b-1]) \hconv \F(L(b))$. Hence, we obtain the desired result.

\smallskip\noi
{\rm (ii)} When $a=N-1$, it is obvious by Proposition~\ref{prop:image of tau} {\rm (a)}.  By descending induction on $a$, we may assume that
$\F(L[a+1,N-1]) \simeq V(\varpi_{N-a-1})_{q_{\kappa}(-q_s)^{N-2+a}}$.

On the other hand, Theorem~\ref{thm: denominators C} and Proposition~\ref{prop: Cn1 morphism} imply that
\begin{align*}
&\F(L(a)) \hconv \F(L[a+1,N-1])   \simeq  \\
& \hspace{5ex} \begin{cases}
V(\varpi_1)_{q_{\kappa}q_s^{2(a-1)}} \hconv  V(\varpi_{N-a-1})_{q_{\kappa}(-q_s)^{N-2+a}} \simeq  V(\varpi_{N-a})_{q_{\kappa}(-q_s)^{N-3+a}} & \text{ if } 0 < a \le N-2, \\
V(\varpi_n) \hconv V(\varpi_{n})_{q_{\kappa}(-q_s)^{n-1}}  \simeq \cor & \text{ if } a = 0.
\end{cases}
\end{align*}
By Lemma~\ref{lem: non-simple tensor} again, we have
$\F(L[a,N-1])\simeq\F(L(a)) \hconv \F(L[a+1,N-1])$
and we obtain the desired result.

\smallskip\noi
{\rm (iii)} By fixing $a$ or $b$, one can apply the same argument as in {\rm (i)} or {\rm (ii)}.
\end{proof}

\begin{proposition} \label{prop: general assumption C}
The family of quasi-good modules $\{ V_a \}_{a \in J}$ satisfies
condition~\eqref{eq: general assumption}.
\end{proposition}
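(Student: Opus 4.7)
The plan is to verify in turn the four conditions of \eqref{eq: general assumption} for the family $\{V_a\}_{a\in J}$, using the denominator formula of Theorem~\ref{thm: denominators C} and the computations already performed in Proposition~\ref{prop:segmentimage C step1}.

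Condition (b)(3), ${}^{**}V_a\simeq V_{a+N}$, is immediate from the identity ${}^{**}V(\varpi_i)_x\simeq V(\varpi_i)_{x(p^*)^2}$ recalled in Section~\ref{subsec:quntum affine}, combined with the $N$-periodicity $i_{a+N}=i_a$ and the extension rule $X(a+N)=X(a)(p^*)^2$ from \eqref{eq: double dual C}. For condition (a), I first specialize the denominator formula to obtain
\begin{align*}
d_{1,1}(z)=(z-q_s^2)(z-q_s^{2n+2}),\quad d_{1,n}(z)=d_{n,1}(z)=z-q_\kappa,\quad d_{n,n}(z)=\prod_{i=1}^n(z-q_s^{2+2i}),
\end{align*}
and then write each $a\in J$ uniquely as $a=kN+r$ with $0\le r\le N-1$, so that $X(a)=X(r)(p^*)^{2k}$ with $p^*=q_s^{2n+2}$. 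The task reduces to verifying that $X(b)/X(a)$ is a simple zero of $d_{i_a,i_b}(z)$ exactly when $b=a+1$. Since $q_s$ is transcendental, the equations become integer relations in $(k_b-k_a,r_a,r_b)$; reducing modulo $N$ rules out every pair with $|a-b|\ge 2$, while the three consecutive cases are verified by the direct computations $X(a+1)/X(a)\in\{q_s^2,q_\kappa\}$.

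Conditions (b)(1) and (b)(2) will be proved in tandem by induction on $k$, with the inductive claim that $\hd(V_a\otimes\cdots\otimes V_{a+k-1})$ is a simple module isomorphic to $\F(L[a,a+k-1])$ for all $1\le k\le N$. The key ingredient is the general fact that if a module $M$ has simple head $S$ and $N$ is a real simple module, then $\hd(M\otimes N)\simeq S\hconv N$; this follows by combining Lemma~\ref{lem:commute_equiv} applied to $S\otimes N$ with an adjunction argument using the left dual $N^*$ that identifies every simple quotient of $M\otimes N$ with $S\hconv N$. Given this, Proposition~\ref{prop: Cn1 morphism} supplies a proper surjection $\F(L[a,a+k-1])\otimes V_{a+k}\twoheadrightarrow\F(L[a,a+k])$ for each $1\le k\le N-1$, which both furnishes condition (b)(2) and verifies hypothesis (c) of Lemma~\ref{lem: non-simple tensor}, allowing the induction to proceed. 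Specializing at $k=N$ yields $\hd(V_a\otimes\cdots\otimes V_{a+N-1})\simeq\F(L[a,a+N-1])\simeq\cor$, establishing (b)(1).

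The main obstacle is the case analysis of condition (a) for non-consecutive pairs: one must check that no combination of residues $(r_a,r_b)$ modulo $N$ together with $k_b-k_a\in\Z$ produces an additional root of $d_{i_a,i_b}(z)$, and this splits into subcases according to whether $i_a,i_b\in\{1,n\}$. Each subcase is handled by reducing the integer equation modulo $N$, which forces $k_b-k_a$ to be a half-integer unless $b=a\pm 1$; since this is incompatible with $k_b-k_a\in\Z$, only the consecutive pairs can contribute, completing the verification.
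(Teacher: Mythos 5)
Your verification of condition (a) via the specialized denominator formulas, and of (b)(3) via $X(a+N)=X(a)(p^*)^2$ together with ${}^{**}V(\varpi_i)_x\simeq V(\varpi_i)_{x(p^*)^2}$, matches the paper, which disposes of (a) in one line before the proposition and treats (b)(3) as immediate from \eqref{eq: double dual C}. The gap is in your treatment of (b)(1) and (b)(2) for general $a$. Your induction rests on the claim that Proposition~\ref{prop: Cn1 morphism} supplies a proper surjection $\F(L[a,a+k-1])\otimes V_{a+k}\twoheadrightarrow \F(L[a,a+k])$ for every $a\in J$ and every $1\le k\le N-1$. That proposition only produces maps $V(\varpi_i)_{x}\otimes V(\varpi_j)_{y}\twoheadrightarrow V(\varpi_{i+j})$ with $i+j\le n$ (and the derived maps onto $V(\varpi_{n-u})$), so it applies only while $\F(L[a,a+k-1])$ is itself a fundamental module. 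By Proposition~\ref{prop:segmentimage C step1} this holds exactly for segments contained in a single period $[kN,(k+1)N-1]$; for $a\not\equiv 0$ modulo $N$ the segment $[a,a+k-1]$ crosses a multiple of $N$ before reaching length $N$, at which point the next tensor factor is a $V(\varpi_n)$ (which Proposition~\ref{prop: Cn1 morphism} cannot absorb, since it would require $i+n\le n$) and the subsequent partial heads are no longer fundamental. The induction therefore cannot be carried past that point, and this wrap-around case is precisely the substance of the proposition.

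The paper closes that case by a different organization of the computation: it splits the length-$N$ segment as $[kN+t,(k+1)N-1]\cup[(k+1)N,(k+1)N+t-1]$, identifies the heads of the two blocks as $V(\varpi_{N-t})_{q_{\kappa}(-q_s)^{N-3+t}(p^{*})^{2k}}$ and $V(\varpi_{N-t})_{(-q_s)^{t-1}(p^{*})^{2k+2}}$ using Proposition~\ref{prop:segmentimage C step1} and \eqref{eq: double dual C}, and observes that the ratio of spectral parameters is $p^*$, so the second module is the left dual of the first and the head of their product is $\cor$; condition (b)(2) for the wrap-around values of $k$ is handled by the same device. You need an argument of this kind for those segments. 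A secondary point: your ``key ingredient'' $\hd(M\otimes N)\simeq \hd(M)\hconv N$ is not a formal consequence of $M$ having simple head and $N$ being real simple --- a simple quotient of $M\otimes N$ need not annihilate $\mathrm{rad}(M)\otimes N$ --- so you should either establish simplicity of the head of the relevant products of fundamental modules directly or, as the paper does, run the induction on the quiver Hecke side through the exact functor $\F$ and Lemma~\ref{lem: non-simple tensor}.
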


\begin{proof}
By Proposition~\ref{prop:segmentimage C step1}, condition~\eqref{eq: general assumption}~\eqref{it:b}
for $a=0$ is guaranteed. Note that for a segment $[a,b]$ with $b-a+1=N$,
$[a,b]$ is one of the following forms:
$$
\begin{cases}
[kN,(k+1)N-1], \\
[kN+t,(k+1)N-1] \cup [(k+1)N,(k+1)N+t-1].
\end{cases}
$$
for some $k \in \Z$ and $1 \le t \le N-1$.
By~\eqref{eq: double dual C}, one can conclude that
$$
\begin{cases}
\F(L[kN,(k+1)N-1]) \simeq \cor, \\[1ex]
\F(L[kN+t,(k+1)N-1] \hconv F(L[(k+1)N,(k+1)N+t-1]) \\
\hs{24ex} \simeq  V(\varpi_{N-t})_{q_{\kappa}(-q_s)^{N-3+t}{p^{*}}^{2k}} \hconv  V(\varpi_{N-t})_{(-q_s)^{t-1}{p^{*}}^{2k+2}}  \simeq \cor,
\end{cases}
$$
which implies~\eqref{eq: general assumption} \eqref{it:1}. Using similar argument to a segment $[a,b]$ with $b-a+1 \le N$, one can prove
~\eqref{eq: general assumption} \eqref{it:2}. Hence our assertion follows.
\end{proof}

By Theorem~\ref{thm: Factoring} and Proposition~\ref{prop: general assumption C}, the functor $\F$ factors thorough $\T_N$.

\begin{remark}
The subcategory $\Ca_J$ is a proper subcategory of $\Ca_\g^0$.  For example, the quasi-good module
$V(\varpi_1)_{(-q_s)^{-2}}$ is not contained in $\Ca_J$, since
$V(\varpi_1)_{(-q_s)^{-2}}$  does not strongly
commute with infinitely many $\F(\Wlmj{1}{m}{\jj_p})$'s.
\end{remark}

\begin{theorem} \label{thm apllication C}
The category $\Ca_{J}$ gives a monoidal categorification of the cluster algebra $\mathscr{A}(N)$.
\end{theorem}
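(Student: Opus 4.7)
The plan is to deduce Theorem \ref{thm apllication C} as a direct application of the general framework already assembled, namely Theorem \ref{Thm: main}. All the heavy lifting has been done: we have the exact functor $\F\col \A \to \Ca_J$ coming from the generalized Schur-Weyl duality, and Proposition \ref{prop: general assumption C} verifies exactly the condition \eqref{eq: general assumption} needed to invoke the factorization machinery. So the proof is essentially a matter of citing the correct sequence of earlier results in the correct order.

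First, I would apply Theorem \ref{thm: Factoring}: since $\{V_a\}_{a\in J}$ is a family of quasi-good modules satisfying \eqref{eq: general assumption} by Proposition \ref{prop: general assumption C}, there exists a duality coefficient $\{P^\new_{i,j}(u,v)\}_{i,j\in J}$ such that the associated Schur-Weyl duality functor $\F\col \A \to \Ca_J$ factors through the canonical functor $\FO_N\col \A \to \T_N$. This produces an exact monoidal functor $\tFF\col \T_N \to \Ca_J$ making the triangle
\[
\xymatrix@C+5ex{
\A\ar[r]^-{\FO_N}\ar[dr]_-{\F}&\T_N\ar[d]^-{\tFF}\\
&\Ca_J\,.}
\]
quasi-commutative. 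Then Theorem \ref{thm: Gro iso} applied to $\tFF$ yields that $\tFF$ sends simples to simples bijectively and induces a ring isomorphism $\phi_{\tFF}\col K(\T_N)|_{q=1}\isoto K(\Ca_J)$.

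With this isomorphism in hand, the cluster algebra $\mathscr{A}(N)\seteq \mathscr{A}_{q^{1/2}}(N)|_{q=1}$, whose monoidal categorification in $\T_N$ was established in Theorem \ref{th:monoidalTN}, is transported via $\phi_{\tFF}$ to a cluster algebra structure on $K(\Ca_J)$ with initial seed $\bigl(\{[\F(\M_\ttww(p,0))]\}_{p\in\Kex_N},\tB_N\bigr)$. Since $\tFF$ is an exact monoidal functor sending simples to simples, it transports the quantum monoidal seed $\Seed_N$ of $\T_N$ and all its iterated mutations to quantum monoidal seeds in $\Ca_J$. This is precisely the content of Theorem \ref{Thm: main}, which gives that $\Ca_J$ is a monoidal categorification of $\mathscr{A}(N)$, and in particular every cluster monomial of $\mathscr{A}(N)$ corresponds to the isomorphism class of a real simple object of $\Ca_J$.

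There is essentially no obstacle in this final step since all the hard work -- the verification of \eqref{eq: general assumption} via the explicit analysis in Proposition \ref{prop:segmentimage C step1} and Proposition \ref{prop: general assumption C}, and the general categorification mechanism of Theorems \ref{thm: Factoring}, \ref{thm: Gro iso}, \ref{th:monoidalTN}, and \ref{Thm: main} -- has already been carried out. If anything, the only subtle point worth flagging explicitly is that, in contrast to types $A$ and $B$, here $\Ca_J$ is a proper monoidal subcategory of $\Ca_\g^0$, so the theorem does not give a cluster structure on the whole of $K(\Ca_\g^0)$, only on $K(\Ca_J)$.
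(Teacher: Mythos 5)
Your proposal is correct and follows exactly the route the paper intends: Theorem \ref{thm apllication C} is stated without a separate proof precisely because, once Proposition \ref{prop: general assumption C} verifies condition \eqref{eq: general assumption} for the type $C$ family $\{V_a\}_{a\in J}$, it is an immediate instance of Theorem \ref{Thm: main} via Theorems \ref{thm: Factoring}, \ref{thm: Gro iso} and \ref{th:monoidalTN}. Your closing remark that $\Ca_J$ is a proper subcategory of $\Ca_\g^0$ in this case also matches the paper's own remark following the theorem.
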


\subsubsection{Type D}  Let $\g^{(t)}$ be the affine the Kac-Moody algebra of type $D^{(t)}_{n}$ $(n \ge 4$ if $t=1,2$, and $n=4$ if $t=3$) and set $N=n$.
Let us denote by $V^{(t)}(\varpi_i)$ the fundamental module over $U_q'(D_{n}^{(t)})$ $(t=1,2,3)$. The denominator formulas for these types were calculated in~\cite{KKK15B,Oh15,OT18}.

Note that
$$ V^{(t)}(\varpi_i)_x \simeq V^{(t)}(\varpi_i)_{y} \Longleftrightarrow x^t=y^t  \quad
\begin{cases}
\text{if $t=1$ and $1\le i\le n$},\\
\text{if $t=2$ and $1\le i\le n-2$},\\
\text{if $t=3$ and $i=2$.}
\end{cases}
$$

For each $a \in J=\Z$, define $i_a \in I_0$ as follows:
$$ i_a \seteq  \begin{cases}
 \ \ n+1-t & \text{ if } t=1,2  \text{ and }  a \equiv 0,1 \mod N, \\
 \ \ 1 & \text{ otherwise}.
\end{cases}$$

Set
$$ q_{\kappa} \seteq \begin{cases}
(-q)^n  & \text{ if } t=1, \\
(\sqrt{-1})^{n-1} q^n & \text{ if } t=2,  \\
\omega q^4& \text{ if } t=3,
\end{cases}
$$
where $\omega$ is  a primitive third root of unity.

We define a map $X\col J \to \cor^\times$ as follows:
\begin{align*}
& X(0) =   q^0, \ \ X(1) = q^2  \text{ and }  X(j) = q_{\kappa}q^{2(j-1)}  \ \ (2 \le j \le N-1),
\end{align*}
and extend it by
\begin{align*} 
X(j+kN)\seteq X(j){p^{*}}^{2k} \quad \text{ for }0\le j\le N-1 \ \text{ and } \ k\in\Z.
\end{align*}

For each $a \in J=\Z$, define $V^{(t)}_a$ as follows:
$$  V^{(t)}_a \seteq V^{(t)}(\varpi_{i_a})_{X(a)}.$$

Now we shall present the denominator formulas only related to the choice of the family of distinct quasi-good modules $\{ V^{(t)}_a \} \colon$

\begin{theorem}[{\cite{KKK15B,KMOY07,Oh15,OT18}}] \label{thm: denominators D}
\begin{enumerate}[{\rm (a)}]
\item For $t=1$, we have
$$d_{1,1}(z)=(z-q^2)(z-(-q)^{2n-2}), \  d_{1,n}(z)=(z-(-q)^{n}), \  d_{n,n}(z)=\displaystyle \prod_{s=1}^{\lfloor \frac{n}{2} \rfloor} \big(z-(-q)^{4s-2}\big).$$
\item For $t=2$, we have
$$d_{1,1}(z)\hspace{-.3ex}=\hspace{-.3ex}(z^2-q^4)(z^2-(-q^2)^{2n-2}), \  d_{1,n-1}(z)\hspace{-.3ex}=\hspace{-.3ex}(z^2+(-q^2)^{n}), \  d_{n-1,n-1}(z)\hspace{-.3ex}=\hspace{-.3ex}\displaystyle \prod_{s=1}^{n-1} \big(z+(-q^2)^{s}\big).$$
\item For $t=3$, we have
$$d_{1,1}(z)  = (z-q^{2})(z-q^{6})(z-\omega q^{4})(z-\omega^2 q^{4}).$$
\end{enumerate}
\end{theorem}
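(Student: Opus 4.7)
The statement of Theorem~\ref{thm: denominators D} is attributed to \cite{KKK15B,KMOY07,Oh15,OT18}, so strictly speaking the ``proof'' in the present paper reduces to a pointer to those sources. Nevertheless, let me sketch the approach one follows to establish such denominator formulas in general. The plan has three stages. First, one needs a concrete realization of the fundamental modules $V^{(t)}(\varpi_i)$ appearing in each formula: for $D_n^{(1)}$ the vector module $V(\varpi_1)$ is $2n$-dimensional with the standard crystal basis, while the spin modules $V(\varpi_n)$, $V(\varpi_{n-1})$ admit Clifford-type or path realizations; for the twisted types $D_{n+1}^{(2)}$ and $D_4^{(3)}$ one either folds the untwisted realization by the Dynkin diagram automorphism of order $2$ or $3$, or uses the explicit vertex-operator / path models of \cite{KMN2, KMOY07}.

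Second, one computes $a_{V_i,V_j}(z)$ in \eqref{eq: aMN} by evaluating the universal $R$-matrix on the tensor of dominant extremal vectors, producing the normalized $R$-matrix $\Rnorm_{V_i,V_j}$. The denominator $d_{i,j}(z)$ is then characterized as the monic polynomial of minimal degree such that $d_{i,j}(z)\Rnorm_{V_i,V_j}$ lands in $V_j\tens V_i$. Equivalently, its zeros are exactly the values of $z$ at which $V_i\tens (V_j)_z$ fails to be simple; these reducibility points can be located by producing explicit singular vectors, or more systematically by decomposing $V_i\tens V_j$ as a classical $U_q(\g_0)$-module and invoking the fusion procedure. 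For $d_{1,1}$ in type $D_n^{(1)}$, the classical decomposition $V(\varpi_1)^{\tens 2}\simeq V(2\varpi_1)\oplus V(\varpi_2)\oplus \mathbf{1}$ produces at most two reducibility points, giving the factors $(z-q^2)$ and $(z-(-q)^{2n-2})$. The mixed case $d_{1,n}$ uses $V(\varpi_1)\tens V(\varpi_n)\simeq V(\varpi_1+\varpi_n)\oplus V(\varpi_{n-1})$, yielding a single linear factor.

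The main obstacle in such arguments is the spin--spin denominator $d_{n,n}$, since $\dim V(\varpi_n)=2^{n-1}$ grows exponentially and direct reducibility analysis becomes unwieldy; here one typically proceeds by iterating a $T$-system (à la Hernandez--Leclerc), by exploiting Kirillov--Reshetikhin fusion, or by reading the poles off the spinor path realization in \cite{KMOY07}, which produces the full product $\prod_{s=1}^{\lfloor n/2\rfloor}(z-(-q)^{4s-2})$. For the twisted types (b) and (c), the strategy is the same but the zeros come in Galois orbits under the order-$t$ diagram automorphism, accounting systematically for the substitution $z\mapsto z^t$ and the twisted roots of unity visible in Theorem~\ref{thm: denominators D}(b), (c). In each case one finally checks that the polynomial so obtained is in fact the minimal one by showing that at each candidate zero a non-trivial element of the kernel or cokernel of $\Rnorm$ actually exists, which for twisted types (b) and (c) is handled by transporting the untwisted computation via folding.
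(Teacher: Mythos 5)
The paper offers no proof of this theorem: it is stated with a citation to \cite{KKK15B,KMOY07,Oh15,OT18}, exactly as you observe, so there is no argument in the paper to compare your sketch against, and your identification of the situation is correct. Your outline of how those references establish the formulas (explicit realizations of the fundamental modules, locating the poles of the normalized $R$-matrix via classical decompositions, fusion or $T$-system arguments for the spin--spin case, and folding for the twisted types) is consistent with the cited literature; the one imprecision worth flagging is that the zeros of $d_{i,j}(z)$ are the \emph{poles} of $\Rnorm_{V_i,V_j}$, whereas the full reducibility locus of $V_i\otimes (V_j)_z$ is cut out by $d_{i,j}(z)\,d_{j,i}(1/z)$ (e.g.\ for $d_{1,1}$ in type $D_n^{(1)}$ the module is also reducible at $z=q^{-2}$ and $z=(-q)^{2-2n}$, which are not zeros of $d_{1,1}$), so ``reducibility point'' and ``zero of the denominator'' must not be identified without the final minimality check you mention at the end.
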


Then one can check the following holds:
\begin{corollary}
The graph $\Gamma^J$ associated to $\{ V^{(t)}_a \}$ is of type $\Ainf$.
\end{corollary}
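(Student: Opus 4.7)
The plan is a direct verification using Theorem~\ref{thm: denominators D} together with the explicit definitions of $i_a$ and $X(a)$. For each ordered pair $(a,b) \in \Z \times \Z$ I will compute the order of vanishing of $d_{V^{(t)}_a, V^{(t)}_b}(u)$ at $u = 1$, and show that it equals $1$ precisely when $|a-b| = 1$ and equals $0$ otherwise. Combined with the absence of loops (each $V^{(t)}_a$ is a good module, so $V^{(t)}_a \otimes V^{(t)}_a$ is simple), this will force $\Gamma^J$ to be the $A_\infty$ quiver.

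First I would set up the case reduction. Writing $a = pN + r$ and $b = qN + s$ with $0 \le r, s \le N-1$, the index $i_a$ depends only on $r \bmod N$, and one has
\[
X(b)/X(a) = (p^*)^{2(q-p)} \, X(s)/X(r).
\]
Since each denominator $d_{i,j}(z)$ in Theorem~\ref{thm: denominators D} has only finitely many roots, the question of whether $d_{V^{(t)}_a,V^{(t)}_b}(1) = 0$ reduces to matching $(p^*)^{2(q-p)} X(s)/X(r)$ against this finite list for each triple $(r,s,q-p)$. Only finitely many values of $q-p$ can possibly give a match, because $(p^*)^2$ is a fixed large positive power of $q$ (resp.\ of $q^2$ for $t = 2$, resp.\ of $q$ times a cube root of unity for $t = 3$) whose exponent exceeds the maximum exponent appearing among the roots of the relevant denominators.

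For the existence of arrows between consecutive indices, a short case check suffices in each affine type. For $t = 1$: when $(r,s) = (0,1)$ we have $i_a = i_b = n$ and $X(1)/X(0) = q^2 = (-q)^{4 \cdot 1 - 2}$ is the $s=1$ root of $d_{n,n}$; when $(r,s) = (1,2)$ we have $X(2)/X(1) = q_{\kappa} = (-q)^n$ is the unique root of $d_{1,n}$; when $2 \le r = s-1 \le N-2$ we have $i_a = i_b = 1$ and $X(s)/X(r) = q^2$ is the first root of $d_{1,1}$. All of these are simple zeros. The boundary pair $(r,s) = (N-1, 0)$ with $b = a+1$ corresponds to $b = (p+1)N$, i.e.\ $s = 0, q = p+1$, and the ratio $(p^*)^2 X(0)/X(N-1)$ is seen to be $(-q)^n$, again a simple zero of $d_{1,n}$. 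The analogous computations for $t = 2$ and $t = 3$ proceed using the denominator formulas listed in Theorem~\ref{thm: denominators D}~(b),(c).

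The principal obstacle is showing that \emph{no} arrows appear when $|a-b| \ge 2$. The danger is that the higher-exponent roots, such as $(-q)^{2n-2}$ in $d_{1,1}$ or the roots $(-q)^{4s-2}$ with $s \ge 2$ in $d_{n,n}$ for $t = 1$, could be accidentally matched by some distant pair, either at the same period ($q-p = 0$, $|s-r| \ge 2$) or at a shifted period ($q-p \ne 0$). The clean way to rule these out is by an exponent comparison in the multiplicative group generated by $-q$ (or $-q^2$, or $q$ together with the third root of unity $\omega$, depending on $t$): for $|b-a| \le N-1$ the ratio $X(s)/X(r)$ has $q$-exponent in an interval strictly smaller than the periodicity $\mathrm{ord}\,(p^*)^2$, so at most one periodicity shift could in principle contribute a match, and one checks by inspection of the finite list of roots that these matches occur only for $|b-a| = 1$. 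This last step is the technically most delicate part of the argument and must be carried out separately for each $t \in \{1,2,3\}$.
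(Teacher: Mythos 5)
Your proposal is correct and is exactly the verification the paper has in mind: the paper offers no argument beyond ``one can check,'' and the intended check is precisely your case-by-case comparison of the ratios $X(b)/X(a)$ against the roots of the denominators in Theorem~\ref{thm: denominators D}, using $d_{ij}d_{ji}=0$ to get $a^J_{ab}=-1$ for $|a-b|=1$ and the periodicity $(p^*)^2$ to rule out accidental zeros for $|a-b|\ge 2$. The sample computations you carry out for $t=1$ (including the boundary pair $(N-1,N)$ giving $(-q)^n$ as a root of $d_{1,n}$) are accurate.
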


By the morphisms in~\cite{KKK15B,Oh15,OT18},  we have the following propositions
by applying the same arguments as in Proposition~\ref{prop:segmentimage C step1} and Proposition~\ref{prop: general assumption C}:

\begin{proposition} \label{prop:segmentimage D}
Let $0\le a \le N-1$ and $b-a+1\le N-1$. Then we have
\begin{enumerate}[{\rm (i)}]
\item $\F^{(t)}(L[0,b]) \simeq V^{(t)}(\varpi_{N+1-t-b-\delta})_{q^{b}}$  for $0 \le b \le N-2$, \\
where
$\delta =
\begin{cases}
1 & \text{if $b \ne 0$ and $t=1$},\\
0 & \text{ otherwise.}
\end{cases}
$

In particular, $\F^{(t)}(L[0,N-1]) \simeq \cor$.
\item $\F^{(t)}(L[1,b]) \simeq
\begin{cases}
V^{(1)}(\varpi_{n-\ep})_{q^{2(b-1)}} & \text{ if $t=1$},\\
V^{(2)}(\varpi_{n-1})_{(-1)^\ep q^{2(b-1)}} & \text{ if  $t=2$},\\
V^{(3)}(\varpi_{1})_{\omega^\ep q^{2(b-1)}} & \text{ if  $t=3$},
\end{cases}
\quad \text{ for } b \le N-1,
$ \\
where $\ep \in \{ 0,1\}$ such that $b-1 \equiv \ep \mod 2$.
\item $\F^{(t)}(L[a,b]) \simeq V(\varpi_{b-a+1})_{q_{\kappa}(-q)^{a+b-2}}$ for $2 \le a \le b \le N-1$.
\end{enumerate}
\end{proposition}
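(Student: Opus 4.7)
The plan is to mimic the inductive strategy used in the proofs of Proposition~\ref{prop:segmentimage C step1} and Proposition~\ref{prop: image B}, where each $\F^{(t)}(L[a,b])$ is built up from $\F^{(t)}(L[a,b-1])$ and $\F^{(t)}(L(b))$ (or the other end) by applying Lemma~\ref{lem: non-simple tensor}. The two data I will rely on at each step are: the denominator formulas in Theorem~\ref{thm: denominators D}, which control when a fundamental module tensor product fails to be simple and which $\de$-invariant equals $1$; and the existence of nonzero $U_q'(\g^{(t)})$-homomorphisms of the form $V^{(t)}(\varpi_i)_{x}\tens V^{(t)}(\varpi_1)_{y}\epito V^{(t)}(\varpi_{i+1})_{z}$ with prescribed spectral parameters, which pin down the isomorphism class of the head. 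Such morphisms are available in the references \cite{KKK15B, Oh15, OT18} cited just before the proposition statement; in particular they provide the spinor-to-vector transitions needed in the boundary cases $a\in\{0,1\}$.

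For part (i), I will induct on $b$ with base case $b=0$, where $\F^{(t)}(L(0))=V^{(t)}_0=V^{(t)}(\varpi_{n+1-t})$ by Proposition~\ref{prop:image of tau}. For $b\geq 1$ the induction hypothesis together with the computation $\de(L[0,b-1],L(b))=1$ and the relevant morphism among fundamental modules gives a surjection of the appropriate tensor product onto $V^{(t)}(\varpi_{N+1-t-b-\delta})_{q^{b}}$, and Lemma~\ref{lem: non-simple tensor} lets me conclude $\F^{(t)}(L[0,b])$ is that head; the shift parameter $\delta$ records that after one step the index type switches from the spin family to the vector family (only for $t=1$ is this strictly enforced because of the identification of spin nodes). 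The base case $b=N-1$ is the key sanity check: the resulting module must be $\cor$, which forces $X(N-1)$ to be the correct twisted dual parameter, and this is exactly the content of $X(j+kN)=X(j)(p^\ast)^{2k}$ that I will verify using Theorem~\ref{thm: denominators D} together with the values of $p^\ast$ for each of the three types.

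Parts (ii) and (iii) are analogous inductions, but anchored at $a=1$ (resp.\ on either endpoint). For (ii) the denominator $d_{1,1}(z)$ in Theorem~\ref{thm: denominators D} tells me that $\de(\F^{(t)}(L(1)),\F^{(t)}(L(2)))=1$ with the spectral parameters $X(1),X(2)$, and the parity $\ep$ enters because for $t=1$ the spin representation $V^{(1)}(\varpi_n)$ versus $V^{(1)}(\varpi_{n-1})$ is selected by the parity of the length $b-1$; for $t=2$ the sign twist $(-1)^\ep$ arises from $V^{(2)}(\varpi_i)_x \simeq V^{(2)}(\varpi_i)_{-x}$ for $i\leq n-2$; for $t=3$ the factor $\omega^\ep$ similarly comes from the order-$3$ ambiguity. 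For (iii) no spin complication remains since $a\geq 2$, so the argument is formally identical to Proposition~\ref{prop:segmentimage C step1}(iii) with $q_s$ replaced by $q$ and the appropriate $q_\kappa$.

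The main obstacle is part (i), more specifically tracking the transition in iterate $b=1$ from $V^{(t)}(\varpi_{n+1-t})$ (a spin representation for $t=1,2$) to $V^{(t)}(\varpi_1)$ (the vector representation) and back, because this is the place where the shift $\delta$, the choice of $q_\kappa$, and the identification ambiguities (e.g.\ $V^{(1)}(\varpi_n)\leftrightarrow V^{(1)}(\varpi_{n-1})$ as spin nodes) all interact. Once this transition is handled unambiguously using the explicit spin-to-vector homomorphisms from \cite{KKK15B, Oh15, OT18}, the rest of the induction is essentially bookkeeping with the denominator formulas and the functoriality of $\F^{(t)}$, and the $t=3$ case reduces to checking the triality-twisted analogue of the same bookkeeping.
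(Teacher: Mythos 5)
Your proposal matches the paper's proof: the paper gives no independent argument for this proposition, stating only that it follows "by the morphisms in \cite{KKK15B,Oh15,OT18}... by applying the same arguments as in Proposition~\ref{prop:segmentimage C step1}," which is precisely the induction you describe — base case from Proposition~\ref{prop:image of tau}, then Lemma~\ref{lem: non-simple tensor} combined with the type-$D$ denominator formulas and the surjections among fundamental modules to identify each successive head. Your additional attention to the spin/vector transition at $b=1$ and the parity/sign ambiguities for $t=1,2,3$ is exactly the bookkeeping the paper leaves implicit.
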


\begin{proposition} The family of distinct quasi-good modules $\{ V_a \}_{a \in J}$ satisfies the conditions in~\eqref{eq: general assumption}.
\end{proposition}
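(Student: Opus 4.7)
The plan is to follow the template set by Proposition~\ref{prop: general assumption C}, with the roles of Theorem~\ref{thm: denominators C} and Proposition~\ref{prop:segmentimage C step1} now played by Theorem~\ref{thm: denominators D} and Proposition~\ref{prop:segmentimage D}. Condition~(a) of \eqref{eq: general assumption}, that $\Gamma^J$ is of type $A_\infty$, has just been recorded in the corollary preceding this proposition. Condition~\eqref{it:3} amounts to the identity ${}^{**}V(\varpi_{i_a})_{X(a)}\simeq V(\varpi_{i_a})_{X(a)(p^*)^2}$: we check by inspection of the definition that $i_{a+N}=i_a$ (the assignment is manifestly periodic of period $N$) and that the extension formula $X(a+N)=X(a)(p^*)^2$ is exactly what is required, so ${}^{**}V_a\simeq V_{a+N}$ is automatic.

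For the remaining conditions~\eqref{it:1} and~\eqref{it:2}, I would apply the exact monoidal functor $\F^{(t)}\col R^J\gmod\to \Ca_J$ and use its compatibility with convolution. Since any segment $[a,a+N-1]$ can be shifted by a multiple of $N$ using \eqref{it:3}, it suffices to treat $0\le a\le N-1$. For $a=0$, Proposition~\ref{prop:segmentimage D}~(i) directly yields $\F^{(t)}(L[0,N-1])\simeq\cor$, which is condition~\eqref{it:1}; then for $1\le k\le N-1$, the same proposition identifies $\F^{(t)}(L[0,k-1])$ as a nontrivial fundamental module, and since $\F^{(t)}(L[0,k])$ (also a nontrivial fundamental module) is a proper simple quotient of $\F^{(t)}(L[0,k-1])\tens V_k$, that tensor product is not simple, giving~\eqref{it:2}.

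For the remaining indices $1\le a\le N-1$, I would decompose $[a,a+N-1]$ as the concatenation $[a,N-1]*[N,a+N-1]$ and use the periodicity $\F^{(t)}(L[N,a+N-1])\simeq \F^{(t)}(L[0,a-1])$ with spectral parameter rescaled by $(p^*)^2$. Proposition~\ref{prop:segmentimage D}~(ii)--(iii) then presents $\F^{(t)}(L[a,N-1])$ and $\F^{(t)}(L[N,a+N-1])$ as two fundamental modules of matching type; by Theorem~\ref{thm: denominators D} the relevant ratio of spectral parameters is a zero of $d_{i,i}(z)$ corresponding to the trivial representation, so their head is $\cor$. This proves~\eqref{it:1} in general, and an analogous argument applied to the shorter segment $[a,a+k-1]$ for $1\le k\le N-1$ gives~\eqref{it:2}.

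The one genuinely delicate point is the case analysis at the spin nodes: when $t=1,2$ the index $i_a$ flips between the spin node $n+1-t$ and the vector node $1$, and the double dual $p^*$, the scalar $q_\kappa$, and the ``twist factor'' in Proposition~\ref{prop:segmentimage D}~(ii) interact differently in each type. The hard part, such as it is, will be to verify uniformly across $t=1,2,3$ that the spectral parameters emerging from the left and right halves of the split $[a,N-1]*[N,a+N-1]$ line up with the zeros of $d_{1,1}(z)$, $d_{1,n}(z)$, $d_{n,n}(z)$ and $d_{n-1,n-1}(z)$ listed in Theorem~\ref{thm: denominators D}; this is a finite book-keeping task but must be carried out for each of the three twisted types separately.
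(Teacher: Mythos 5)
Your proposal takes essentially the same route as the paper: the paper gives no separate argument for this proposition, stating only that it follows ``by applying the same arguments as in Proposition~\ref{prop:segmentimage C step1} and Proposition~\ref{prop: general assumption C}'', and your outline --- condition~\eqref{it:3} from the extension formula for $X$, conditions~\eqref{it:1} and~\eqref{it:2} from the computed images $\F^{(t)}(L[a,b])$ in Proposition~\ref{prop:segmentimage D} together with the decomposition of segments across multiples of $N$ and the denominator formulas of Theorem~\ref{thm: denominators D} --- is precisely that type~$C$ argument transplanted to type~$D$. The case-by-case spectral-parameter bookkeeping you flag at the spin nodes is real but is exactly the part the paper also leaves implicit, so there is no gap relative to the paper's own treatment.
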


Now we have functors $\F^{(t)}$ and $\tFF^{(t)}$ onto $\Ca_J^{(t)}$ as in the previous subsections.

\begin{theorem} \label{thm apllication D}
The category $\Ca^{(t)}_{J}$ gives a monoidal categorification of the cluster algebra $\mathscr{A}(N)$.
\end{theorem}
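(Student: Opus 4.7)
The plan is to deduce Theorem \ref{thm apllication D} as a direct application of the general result in Theorem \ref{Thm: main}, using the type $D$ specific data assembled just above the statement. The verification that the family $\{V^{(t)}_a\}_{a\in J}$ of quasi-good $U_q'(\g^{(t)})$-modules satisfies all three conditions of \eqref{eq: general assumption} has already been carried out$\colon$ the corollary following Theorem \ref{thm: denominators D} shows that the associated quiver $\Gamma^J$ is of type $A_\infty$ (condition (a)), while Proposition \ref{prop:segmentimage D} together with the proposition immediately preceding Theorem \ref{thm apllication D} ensure conditions (b)(1)--(b)(3), namely that $\hd(V^{(t)}_a\tens\cdots\tens V^{(t)}_{a+N-1})\simeq\cor$, that truncated heads of length $k<N$ fail to be simple after one more tensoring, and that ${}^{**}V^{(t)}_a\simeq V^{(t)}_{a+N}$ (the last being built into the shift rule $X(j+kN)=X(j){p^*}^{2k}$).

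With \eqref{eq: general assumption} in hand, the construction of Section \ref{Sec: Main} produces the Schur--Weyl duality functor $\F^{(t)}\col\A\to\Ca^{(t)}_J$ of~\eqref{eq: the F}, and Theorem \ref{thm: Factoring} then provides an exact monoidal functor $\tFF^{(t)}\col\T_N\to\Ca^{(t)}_J$ making the triangle
\[
\xymatrix@C+4ex{\A\ar[r]^-{\FO_N}\ar[dr]_-{\F^{(t)}}&\T_N\ar[d]^-{\tFF^{(t)}}\\ &\Ca^{(t)}_J}
\]
quasi-commutative. Theorem \ref{thm: Gro iso} upgrades this to a ring isomorphism $\phi_{\tFF^{(t)}}\col K(\T_N)|_{q=1}\isoto K(\Ca^{(t)}_J)$.

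The conclusion then follows by transporting the quantum monoidal seed structure of $\T_N$ established in Theorem \ref{th:monoidalTN} through $\tFF^{(t)}$, exactly as in the proof of Theorem \ref{Thm: main}$\colon$ the cluster algebra $\mathscr{A}(N)=\mathscr{A}_{q^{1/2}}(N)|_{q=1}$ is identified with $K(\Ca^{(t)}_J)$, with initial seed $\bigl(\{[\F^{(t)}(\M_\ttww(p,0))]\}_{p\in\Kex_N},\tB_N\bigr)$, and every cluster monomial becomes the class of a real simple module of $\Ca^{(t)}_J$. Since the admissibility of the initial seed and the compatibility of successive mutations with the functor $\FO_N$ are already established in Theorems \ref{Thm: monoidal cat} and \ref{th:monoidalTN}, no new verification is required at this stage.

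I expect no significant obstacle in this proof$\colon$ all the categorical machinery is in place, and the type $D$ specificity enters only through the explicit choice of spectral parameters and the identification of images of segment modules in Proposition \ref{prop:segmentimage D}. The only point that might merit a brief remark is that, as in the type $C$ case, $\Ca^{(t)}_J$ is in general a proper subcategory of $\Ca^0_{\g^{(t)}}$, so the resulting statement is about cluster structure on $K(\Ca^{(t)}_J)$ rather than on the full Hernandez--Leclerc category.
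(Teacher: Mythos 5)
Your proposal is correct and follows exactly the route the paper intends: the type-$D$ content is entirely contained in verifying condition \eqref{eq: general assumption} via the denominator formulas and Proposition \ref{prop:segmentimage D}, after which Theorems \ref{thm: Factoring}, \ref{thm: Gro iso} and \ref{Thm: main} yield the conclusion with no further argument. The paper itself gives no separate proof for this theorem, treating it as an immediate consequence of the same chain of results you cite.
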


\begin{remark}
The subcategory $\Ca^{(t)}_{J}$ is a proper subcategory of  $\Ca_{\g^{(t)}}^0$.
For example, the quasi-good module
$V(\varpi_1)_{q_\kappa}$ is not contained in $\Ca_J$ of type $D_n^{(1)}$
since $V(\varpi_1)_{q_\kappa}$  does not strongly
commute with infinitely many $\F(\Wlmj{1}{m}{\jj_p})$'s.
\end{remark}

\end{document}